\DeclareMathOperator{\Aut}{Aut}
\DeclareMathOperator{\im}{im}
\DeclareMathOperator{\Hom}{Hom}
\DeclareMathOperator{\colim}{colim}
\DeclareMathOperator{\Spec}{Spec}
\DeclareMathOperator{\Spf}{Spf}
\DeclareMathOperator{\Sub}{Sub}
\DeclareMathOperator{\Level}{Level}
\DeclareMathOperator{\Sum}{Sum}
\DeclareMathOperator{\fib}{fib}
\DeclareMathOperator{\cof}{cof}
\DeclareMathOperator{\Mod}{\mathrm{Mod}}
\DeclareMathOperator{\Cok}{Cok}
\theoremstyle{theorem}
\newtheorem{theorem}{Theorem}[section]
\newtheorem{proposition}[theorem]{Proposition}
\newtheorem{corollary}[theorem]{Corollary}
\newtheorem{lemma}[theorem]{Lemma}
\newtheorem*{theorem*}{Theorem}
\newtheorem*{corollary*}{Corollary}
\theoremstyle{definition}
\newtheorem{definition}[theorem]{Definition}
\newtheorem{example}[theorem]{Example}
\theoremstyle{remark}
\newtheorem{remark}[theorem]{Remark}
\let\SK@label\label\fi
 \let\your@thm\@thm
 \def\@thm#1#2#3{\gdef\currthmtype{#3}\your@thm{#1}{#2}{#3}}
 \def\mylabel#1{{\let\your@currentlabel\@currentlabel\def\@currentlabel
  {\currthmtype~\your@currentlabel}
 \SK@label{#1@}}\label{#1}}
 \def\myref#1{\ref{#1@}}
\newcommand{\powser}[1]{[\![#1]\!]}
\newcommand{\G}{\mathbb{G}}
\newcommand{\F}{\mathbb{F}}
\newcommand{\Q}{\mathbb{Q}}
\newcommand{\Z}{\mathbb{Z}}
\newcommand{\N}{\mathbb{N}}
\newcommand{\QZ}[1]{\Q_p/\Z_p^{#1}}
\newcommand{\al}{\alpha}
\newcommand{\lra}[1]{\overset{#1}{\longrightarrow}}
\newcommand{\lla}[1]{\overset{#1}{\longleftarrow}}
\newcommand{\Prod}[1]{\underset{#1}{\prod}}
\newcommand{\Oplus}[1]{\underset{#1}{\bigoplus}}
\newcommand{\Otimes}[1]{\underset{#1}{\bigotimes}}
\newcommand{\Coprod}[1]{\underset{#1}{\coprod}}
\newcommand{\Colim}[1]{\underset{#1}{\colim}}
\newcommand{\E}{E_{n}}
\newcommand{\hotimes}{\hat{\otimes}}
\newcommand{\Loops}{\mathcal{L}}
\newcommand{\upperRomannumeral}[1]{\uppercase\expandafter{\romannumeral#1}}
\DeclareMathOperator{\coker}{\mathrm{coker}}
\DeclareMathOperator{\Sp}{\mathrm{Sp}}
\DeclareMathOperator{\Top}{\mathrm{Top}}
\DeclareMathOperator{\sk}{\mathrm{sk}}
\newcommand{\bP}{\mathbb{P}}
\newcommand{\BP}{\mathbb{T}^{BP}}
\numberwithin{equation}{section}
\date{\today}
\begin{document}

\title{Brown--Peterson cohomology from Morava $E$-theory}
\author{Tobias Barthel}
\email{tbarthel@math.ku.dk}
\address{Department of Mathematical Sciences, University of Copenhagen, Universitetsparken 5, 2100 K{\o}penhavn {\O}, Denmark}
\author[Nathaniel Stapleton]{Nathaniel Stapleton, with an appendix by Jeremy Hahn}
\email{nat.j.stapleton@gmail.com}
\address{Fakult{\"a}t f{\"u}r Mathematik, Universit{\"a}t Regensburg, 93040 Regensburg, Germany}
%
%
\shortauthors{Tobias Barthel and Nathaniel Stapleton}
\classification{55N20 (primary), 55N22, 55R40 (secondary)}

\keywords{Brown--Peterson spectrum, Morava $E$-theory, transchromatic character theory.}

\begin{abstract}
We prove that the $p$-completed Brown--Peterson spectrum is a retract of a product of Morava $E$-theory spectra. As a consequence, we generalize results of Ravenel--Wilson--Yagita and Kashiwabara from spaces to spectra and deduce that the notion of good group is determined by Brown--Peterson cohomology. Furthermore, we show that rational factorizations of the Morava $E$-theory of certain finite groups hold integrally up to bounded torsion with height-independent exponent, thereby lifting these factorizations to the rationalized Brown--Peterson cohomology of such groups. 
\end{abstract}

\maketitle

\section{Introduction}

Many important cohomology theories $E$ are constructed from complex cobordism $MU$ or Brown--Peterson cohomology $BP$ via Landweber's exact functor theorem. Viewing this process as a simplification, one might wonder what kind of information about $BP^*(X)$ is retained in $E^*(X)$, for $X$ a space or a spectrum. Motivated by this question, the goal of this paper is two-fold: In the first part, we show that many properties of the $BP$-cohomology of a spectrum are determined by the collection of its Morava $K$-theories $K(n)^*(X)$. In the second part, transchromatic character theory is used to factor the rationalized $BP$-cohomology of classifying spaces of certain finite groups by establishing height-independent bounds on the torsion in Morava $E$-theory. This has the curious consequence that the $BP$-cohomology of finite groups behaves more algebro-geometrically than one might expect. 

Let $E_n$ denote Morava $E$-theory of height $n$, which is a Landweber exact $E_{\infty}$-ring spectrum with coefficients $E_n^*=W(\kappa)\llbracket u_1,\ldots,u_{n-1}\rrbracket[u^{\pm 1}]$. The key observation of this paper is a natural extension of a theorem due to Hovey~\cite[Thm.~3.1]{hoveycsc}, realizing $BP$ as a summand in a product of simpler cohomology theories. 

\begin{theorem*}
The $p$-completed Brown--Peterson spectrum $BP_p$ is a retract of $\prod_{n>0}E_n$, the product over all Morava $E$-theories $E_n$.
\end{theorem*}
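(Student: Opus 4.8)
The plan is to imitate Hovey's proof of \cite[Thm.~3.1]{hoveycsc}, with Morava $E$-theories replacing the completed Johnson--Wilson spectra $\widehat{E(n)}$.

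I would first construct the comparison map. Each $E_n$ is a Landweber exact $E_\infty$-ring and hence complex orientable; fixing a $p$-typical orientation gives a ring map $BP\to E_n$, and assembling these over all $n>0$ produces $\phi\colon BP\to\prod_{n>0}E_n$. As $\pi_*E_n$ is $p$-complete and $p$-torsion free for every $n$, each $E_n$ is $p$-complete as a spectrum, and $p$-complete spectra are closed under products; hence $\prod_{n>0}E_n$ is $p$-complete and $\phi$ factors through $\widehat\phi\colon BP_p\to\prod_{n>0}E_n$. It then suffices to produce a retraction $r$ with $r\circ\widehat\phi\simeq\mathrm{id}_{BP_p}$.

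The cleanest way to do this is to reduce to Hovey's theorem. One checks that $\widehat{E(n)}$ is a retract of $E_n$: passing from $\widehat{E(n)}$ to $E_n$ amounts to $2$-periodification (under which $\widehat{E(n)}$ is a summand, since $v_n$ is already invertible) followed by the faithfully flat base change $\Z_p\to W(\kappa)$ (under which $\widehat{E(n)}$ is again a summand, since $\Z_p$ is a direct summand of $W(\kappa)$ as a $\Z_p$-module); as both spectra are Landweber exact with $BP_*$-flat $BP$-homology, the resulting algebraic splitting is realized by a retraction $E_n\to\widehat{E(n)}$ of spectra. Taking products, $\prod_{n>0}\widehat{E(n)}$ is a retract of $\prod_{n>0}E_n$, and composing with Hovey's retraction of the analogous map $BP_p\to\prod_{n>0}\widehat{E(n)}$ gives the desired $r$. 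Alternatively, one can re-run Hovey's argument directly with $E_n$ in place of $\widehat{E(n)}$: the only input about coefficient rings is that, in the $n$-th factor, $v_n$ maps to a unit while $v_i$ maps into the maximal ideal $\m_n=(p,u_1,\dots,u_{n-1})$ for $i\neq n$, which holds verbatim for $(E_n)_*$.

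The hard part will be the interaction with the infinite product --- already present in Hovey's argument, and untouched by the passage to $E_n$: $BP$-homology and the Adams-type spectral sequences computing maps of spectra do not commute with $\prod_{n>0}$, so $r$ cannot be built one index $n$ at a time; one must instead control $[\prod_{n>0}E_n,BP_p]$ directly, via Landweber exactness together with the vanishing of the relevant $\varprojlim^1$-term coming from the degreewise $p$-completeness of homotopy groups. Routing through Hovey's theorem imports this difficulty wholesale from \cite{hoveycsc}, leaving only the retract $\widehat{E(n)}\hookrightarrow E_n$ and its topological realization as genuinely new; a direct proof additionally requires re-examining the degreewise splitting of $BP_*\to\prod_{n>0}(E_n)_*$ as $BP_*$-modules, i.e.\ tracking which monomials in the $v_i$ survive in each $(E_n)_*$.
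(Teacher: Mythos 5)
Your first route rests on a citation that is not available in the form you need. Hovey's \cite[Thm.~3.1]{hoveycsc} does not supply a retraction of $BP_p$ off $\prod_{n>0}\hat{E}(0,n)$: as the remark following \myref{cor:bpretract} explains, what one can extract from the literature (Hovey combined with \cite[Thm.~B]{hovsadofsky}) is a splitting of $BP_p$ off $\prod_{n>0}L_{K(n)}\bigl(\bigvee_{r\in S(n)}\Sigma^{r}L_{K(n)}E(n)\bigr)$, i.e.\ off a product of $K(n)$-localized \emph{infinite wedges} of suspensions of completed Johnson--Wilson spectra, which is much larger than $\prod_{n>0}E_n$ and cannot be funnelled through it (there is no map $E_n\to L_{K(n)}BP$ under $BP$ to compose with). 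The statement you propose to quote --- that the diagonal $BP_p\to\prod_{n>0}\hat{E}(0,n)$ is split --- is precisely the $m=0$ case of \myref{cor:bpretract}, i.e.\ the theorem to be proved, so route (a) is circular. (Your auxiliary observation that $\hat{E}(0,n)$ splits off $E_n$ as an $\hat{E}(0,n)$-module spectrum is correct, but it is the easy direction and is not needed: the paper's criterion applies to $\prod_{n>0}E_n$ directly.)

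Your fallback --- re-running Hovey's argument with the product of Morava $E$-theories --- is indeed the paper's approach, but the content is not where you locate it. The paper first proves a general splitting criterion (\myref{thm:generalretract}): if $D$ is a $p$-complete, Landweber flat $BP$-module spectrum and $BP_*/I_k\to D_*/I_k$ is injective for all $k$, then the map from $BP$ splits after $p$-completion. The mechanism is $BP_p\simeq I^2BP$ (Brown--Comenetz duality, using degreewise finite generation of $\pi_*BP$), Margolis's lemma that f-phantom maps into $IX$ vanish (\myref{margolislemma}), Spanier--Whitehead duality to reduce to injectivity of $BP_*(X)\to D_*(X)$ for finite $X$, and the Landweber filtration theorem plus the snake lemma --- not a direct analysis of $[\prod_{n}E_n,BP_p]$ via $\lim^1$-terms, and no $BP_*$-module splitting of homotopy groups or bookkeeping of monomials in the $v_i$ is involved. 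The genuinely new verifications for $D=\prod_{n>0}E_n$ are then: $p$-completeness ($E_n$ is $K(n)$-local and products of local objects are local); injectivity of $BP_*/I_k\to\prod_n(E_n)_*/I_k$, which is where the infinite product is essential since $(E_n)_*/I_k=0$ for $k>n$, and which is more than your unit/maximal-ideal condition on the generators; and, crucially, Landweber flatness of the infinite product, which is not a coefficient-level fact but the Chase-type closure statement \myref{lem:lflatproducts}, proved using that $BP^*/I_k$ is finitely presented. Your proposal leaves exactly these points to be ``imported wholesale'' from \cite{hoveycsc}, and that is the step which is not available.
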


The general idea is now as follows: The behavior of Morava $E$-theory at height $n$ is closely connected to the behavior of Morava $K$-theory $K(n)$ at the same height, and these cohomology theories determine each other in many cases. Starting from knowledge about the Morava $K$-theories $K(n)^*(X)$ of a spectrum $X$, we can then deduce properties of $E_n^*(X)$, which collectively control $BP_p^*(X)$ by the above splitting. Conversely, the $BP$-cohomology of \emph{spaces} determines their Morava $K$-theory. 

The class of spectra amenable to such comparison results are those with evenly concentrated Morava $K$-theory for all heights or, more generally, with Landweber flat $BP$-cohomology; examples abound. Using the previous theorem and certain base change formulas, we then obtain generalizations of the main structural results of \cite{rwy,rwy2} to spectra. 

\begin{theorem*}
Let $X$ be a spectrum. If $K(n)^*X$ is even for all $n > 0$, then $BP_p^*X$ is even and Landweber flat. Moreover, for $f\colon X \lra{} Y$ a map between spectra with even Morava $K$-theories, $BP_p^*(f)$ is injective or surjective if $K(n)^*(f)$ is injective or surjective for all $n$. 
\end{theorem*}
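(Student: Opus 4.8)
The idea is to reduce both assertions, via the splitting of the preceding Theorem, to a height-by-height statement about Morava $E$-theory plus elementary parity arguments, applied not only to $BP_p$ but to all of its quotients $BP_p/I_k$ by initial segments of the regular sequence $p,v_1,v_2,\dots$. Write $I_k=(p,v_1,\dots,v_{k-1})$, let $BP_p/I_k$ denote the iterated cofibre of this sequence on $BP_p$ — so $BP_p/I_0=BP_p$ and $BP_p/I_{k+1}=\mathrm{cofib}(v_k\colon\Sigma^{|v_k|}BP_p/I_k\to BP_p/I_k)$ — and define $E_n/I_k$ likewise. Fix maps $i\colon BP_p\to\prod_{n>0}E_n$ and $r\colon\prod_{n>0}E_n\to BP_p$ of spectra with $ri\simeq\id$. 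Since $BP_p/I_k$ is a finite cell $BP_p$-module and we work in a stable setting, $(-)\wedge_{BP_p}(BP_p/I_k)$ commutes with the product $\prod_{n}E_n$, so $ri\simeq\id$ smashes to a retraction exhibiting $BP_p/I_k$ as a retract of $\prod_{n>0}(E_n/I_k)$. Applying $F(X,-)$, which preserves products, and passing to homotopy shows that for every spectrum $X$ and every $k\ge 0$ the graded abelian group $(BP_p/I_k)^*X$ is a retract of $\prod_{n>0}(E_n/I_k)^*X$.

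The input at each height is: \emph{if $Z$ is any spectrum with $K(n)^*Z$ concentrated in even degrees, then $E_n^*Z$ is concentrated in even degrees and pro-free over $E_n^*$.} I would invoke Hovey--Strickland for this, or reprove it: $F(Z,E_n)$ is $K(n)$-local, so $E_n^*Z$ is $L$-complete for the maximal ideal $\m=(p,u_1,\dots,u_{n-1})$ of $E_n^*$, and the associated $\m$-adic (Bockstein) spectral sequence — whose $E_1$-page is built from $K(n)^*Z$ and the (even-degree) generators of $\m$ — degenerates without extension problems as soon as $K(n)^*Z$ is even; pro-freeness, i.e.\ flatness over $E_n^*$, then follows since $\m$ is generated by a regular sequence. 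Since the image of $I_k$ in $E_n^*$ is generated by a regular sequence when $k\le n$ and is the unit ideal when $k>n$ (so $E_n/I_k$ is contractible), flatness of $E_n^*Z$ over $E_n^*$ gives $(E_n/I_k)^*Z=E_n^*Z/I_kE_n^*Z$, again concentrated in even degrees. Applying this with $Z=X$ and using the previous paragraph, each $(BP_p/I_k)^*X$ — in particular $BP_p^*X$ — is concentrated in even degrees.

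Granting this, the rest is formal. Apply $(-)^*X$ to the cofibre sequence $\Sigma^{|v_k|}(BP_p/I_k)\xrightarrow{v_k}BP_p/I_k\to BP_p/I_{k+1}$: because $|v_k|$ is even and both $(BP_p/I_k)^*X$ and $(BP_p/I_{k+1})^*X$ are even, every connecting homomorphism in the resulting long exact sequence vanishes for parity reasons, so it decomposes into short exact sequences $0\to(BP_p/I_k)^{*+|v_k|}X\xrightarrow{\,v_k\,}(BP_p/I_k)^*X\to(BP_p/I_{k+1})^*X\to 0$. Starting from $(BP_p/I_0)^*X=BP_p^*X$, induction on $k$ identifies $(BP_p/I_k)^*X$ with $BP_p^*X/I_kBP_p^*X$ and shows that $v_k$ acts injectively on it — that is, $BP_p^*X$ is Landweber flat. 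For the last statement, put $C=\mathrm{cofib}(f)$; a short diagram chase in the long exact sequence of $K(n)^*(-)$, using evenness of $K(n)^*X$ and $K(n)^*Y$, shows that if $K(n)^*f$ is surjective for all $n$ then $K(n)^*C$ is even for all $n$, and if $K(n)^*f$ is injective for all $n$ then $K(n)^*C$ is concentrated in odd degrees for all $n$. In the first case the evenness statement just proved applies to $C$, in the second to $\Sigma C$; either way $BP_p^*C$ is even, respectively concentrated in odd degrees. Feeding this, together with evenness of $BP_p^*X$ and $BP_p^*Y$, into the long exact sequence of $BP_p^*(-)$ attached to $X\xrightarrow{f}Y\to C$, the identical parity chase forces it to break up so that $BP_p^*f$ is surjective, respectively injective.

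The one genuinely nonformal step is the height-$n$ input: proving that evenness of $K(n)^*Z$ forces $E_n^*Z$ to be even and pro-free for an \emph{arbitrary} spectrum $Z$, which is exactly where the $K(n)$-local structure of $F(Z,E_n)$, the $L$-completeness of its homotopy, and the convergence of the $\m$-adic spectral sequence all have to be used — this being the natural replacement, for spectra, of the Atiyah--Hirzebruch and Bockstein arguments available for spaces. Everything else is bookkeeping with parities and long exact sequences.
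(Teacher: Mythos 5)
Your height-$n$ input (evenness of $K(n)^*Z$ forces $E_n^*Z$ to be even and pro-free) is exactly the paper's Lemma on passing from $\hat{E}(m,n)$ to $K(n)$, and your parity chase with the cofibre of $f$ is a correct and genuinely different route to the injectivity/surjectivity statement: the paper instead uses a detection lemma for maps of complete flat $\hat{E}(m,n)^*$-modules (injectivity/surjectivity is detected after $-\otimes_{\hat{E}(m,n)^*}K(n)^*$) together with the facts that products and retracts preserve injections and surjections. The evenness of $BP_p^*X$ also goes through as you say, since it only needs the additive retraction $BP_p^*X\to\prod_{n>0}E_n^*X\to BP_p^*X$ of graded abelian groups.

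The genuine gap is in your treatment of Landweber flatness, at the step where you smash the splitting with $BP_p/I_k$ over $BP_p$. The retraction $r\colon\prod_{n>0}E_n\to BP_p$ produced by the splitting theorem is only a map of spectra — the paper explicitly remarks that the splitting is additive and not a map of ring spectra, and in particular it is not known to be a map of $BP_p$-module spectra. The base change $-\wedge_{BP_p}BP_p/I_k$ can only be applied to $BP_p$-module maps, and there is no reason that $r$ commutes, even up to homotopy, with multiplication by $v_0,\dots,v_{k-1}$ on the two sides (these are not maps induced from the sphere). So the claimed retraction exhibiting $(BP_p/I_k)^*X$ as a retract of $\prod_{n>0}(E_n/I_k)^*X$ is unjustified, and with it the evenness of the quotient theories, the degeneration of the Bockstein long exact sequences, and hence the injectivity of each $v_k$ on $BP_p^*X/I_k$. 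Note also that evenness of $BP_p^*X$ alone does not start the induction: without knowing $p$ acts injectively you cannot conclude that $(BP_p/I_1)^*X$ is even. The paper handles this differently: it proves the retract statement separately for each quotient theory $P(m)$ (its Theorem 3.3 and Corollary 3.4, whose proof for $m>0$ uses the Yagita--Yoshimura generalization of the Landweber filtration theorem), and it deduces Landweber flatness of $BP_p^*X$ not from evenness of quotients but directly, from the fact that each $\hat{E}(0,n)^*(X)$ is Landweber flat, that Landweber flat modules are closed under products (its Lemma 2.3), and that they are closed under retracts. To repair your argument you would either have to rerun the splitting construction for each $P(k)$ (as the paper does), or upgrade the retraction to a $BP_p$-linear one, which the splitting theorem does not provide.
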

 
Moreover, we give conditions on a spectrum $X$ that implies that $BP_p^*(\bP X)$ is Landweber flat, where $\bP X$ denotes the free commutative algebra spectrum on $X$. Combining the retract theorem together with work of Rezk~\cite{rezkcongruence} as well as the first author with Frankland~\cite{frankland}, we construct a functor 
\[\BP\colon \Mod_{BP_p^*}^c \longrightarrow \Mod_{BP_p^*}.\]
For a large class of spaces, containing spheres and higher Eilenberg--Mac Lane spaces, we then show that $BP_p^*(\bP X)$ is functorially determined by $BP_p^*(X)$. 

\begin{theorem*}
There  exists a functor $\BP$ on the category $\Mod_{BP_p^*}^c$ of (topological) $BP_{p}^*$-modules such that, for $X$ a space with $K(n)_*(X)$ even and degreewise finite for all $n>0$, there is a natural isomorphism
\[\xymatrix{\BP BP_p^*(X) \ar[r]^-{\cong} & BP_p^*(\bP X)}\]
of $BP_p^*$-modules.
\end{theorem*}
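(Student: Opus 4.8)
The plan is to construct the functor $\BP$ by transporting, along the retract of the first theorem, the analogous functor that already exists on the Morava $E$-theory side, and then to identify the value $\BP BP_p^*(X)$ with $BP_p^*(\bP X)$ by checking the comparison after applying $E_n^*(-) = E_n^* \hotimes_{BP_p^*} (-)$ at every height $n>0$. More precisely, for each $n$ the work of Rezk on the congruence criterion and of Barthel--Frankland provides a functor, call it $\mathbb{T}^{E_n}$, on the category of (complete) $E_n^*$-modules together with a natural isomorphism $\mathbb{T}^{E_n} E_n^*(X) \cong E_n^*(\bP X)$ whenever $K(n)_*(X)$ is even and degreewise finite; this uses that for such $X$ the spectrum $\bP X$ has even Morava $K$-theory and that $E_n^*(\bP X)$ is determined by a free power operation construction on $E_n^*(X)$. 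The idea is to assemble these into a single functor on $\Mod_{BP_p^*}^c$ by setting $\BP(M)$ to be the retract (cut out by Hovey's idempotent from the retract theorem) of the product $\prod_{n>0} \mathbb{T}^{E_n}(E_n^* \hotimes_{BP_p^*} M)$, after verifying that the retract idempotent on $\prod E_n^*$ is compatible with the power operation structure and hence induces a self-map of this product.

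The key steps, in order, are: (1) recall the height-$n$ statements: $\mathbb{T}^{E_n}$ exists, is functorial on complete $E_n^*$-modules, and computes $E_n^*(\bP X)$ from $E_n^*(X)$ for $X$ with even degreewise-finite Morava $K$-theory --- this is exactly the input cited from \cite{rezkcongruence} and \cite{frankland}; (2) show, using the second theorem of the excerpt together with the evenness of $K(n)_*(\bP X)$ (which follows from the space-level results of Ravenel--Wilson--Yagita once $K(n)_*X$ is even), that $BP_p^*(\bP X)$ is even and Landweber flat, so that the base-change maps $E_n^* \hotimes_{BP_p^*} BP_p^*(\bP X) \to E_n^*(\bP X)$ are isomorphisms for all $n$; (3) define $\BP$ as above, as the Hovey retract of $\prod_n \mathbb{T}^{E_n}(E_n^* \hotimes_{BP_p^*} -)$, checking that the idempotent is natural and respects the module structure; (4) produce the natural comparison map $\BP BP_p^*(X) \to BP_p^*(\bP X)$: the target is, by the retract theorem applied to $\bP X$, the Hovey retract of $\prod_n E_n^*(\bP X) \cong \prod_n \mathbb{T}^{E_n} E_n^*(X) \cong \prod_n \mathbb{T}^{E_n}(E_n^*\hotimes_{BP_p^*} BP_p^*(X))$, where the first isomorphism is step (1) and the second is Landweber flatness of $BP_p^*(X)$; taking Hovey retracts of both sides yields the desired map; (5) conclude it is an isomorphism because it is so after every $E_n^*\hotimes_{BP_p^*}(-)$ and both sides are Landweber flat $BP_p^*$-modules, so the retract decomposition detects isomorphisms.

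The main obstacle I expect is step (3)--(4): one must ensure that Hovey's idempotent splitting of $\prod_n E_n$ is genuinely compatible with the free-power-operation functors $\mathbb{T}^{E_n}$, i.e. that the self-map of $\prod_n E_n^*(X)$ coming from the retract is carried by the natural isomorphisms $\mathbb{T}^{E_n} E_n^*(X) \cong E_n^*(\bP X)$ to the self-map induced by $\bP$ of the retract on $\prod_n E_n^*(\bP X)$. This is a naturality/coherence check that the retract theorem is a statement about spectra (so it automatically commutes with the functor $\bP(-)$ and with $E_n^*(-)$), but making the bookkeeping precise --- in particular tracking completeness and degreewise-finiteness hypotheses through the infinite product, and verifying that $\mathbb{T}^{E_n}$ applied to the base-changed module $E_n^*\hotimes_{BP_p^*} M$ only depends on $M$ functorially in a way compatible across heights --- is where the real content lies. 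A secondary subtlety is defining the target category $\Mod_{BP_p^*}^c$ and the topology on $BP_p^*(X)$ carefully enough that the completed tensor products $E_n^*\hotimes_{BP_p^*}(-)$ behave well and the retract is continuous; I would handle this by using that $BP_p^*(X)$ for the spaces in question is a profinite (or at least complete filtered) $BP_p^*$-module, so that the constructions of \cite{frankland} apply levelwise and glue.
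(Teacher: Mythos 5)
Your overall strategy coincides with the paper's: define $\BP$ by applying the idempotent $\epsilon$ of the retract theorem to $\prod_{n>0}\mathbb{T}^{E_n}(E_n^*\hotimes_{BP_p^*}-)$, use Morava's little structure theorem to identify $E_n^*\hotimes_{BP_p^*}BP_p^*(X)\cong E_n^*(X)$ for the space $X$, and compare with $BP_p^*(\bP X)$ via the retract theorem applied to $\bP X$. The genuine gap is in your step (1): the isomorphism $\mathbb{T}^{E_n}E_n^*(X)\cong E_n^*(\bP X)$ is not an off-the-shelf input from \cite{rezkcongruence} and \cite{frankland}. What those references provide is the comparison map $\alpha_n(M)\colon \mathbb{T}^{E_n}\pi_*L_{K(n)}M\to\pi_*L_{K(n)}\bP^{E_n}M$ and its being an isomorphism for $E_n$-modules $M$ with flat homotopy, i.e.\ a statement about completed \emph{homology} of free $E_n$-algebras on module spectra. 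Converting this into a statement about the \emph{cohomology} $E_n^*(\bP_d X)$ is exactly where the degreewise-finiteness hypothesis does its work and where the real argument lies: $K(n)_*(X)$ degreewise finite means $X$ is $K(n)$-locally dualizable \cite[Thm.~8.6]{hoveystrickland}, so $E_n^X\simeq E_n\wedge D_{K(n)}X$; one then has to commute $\bP_d$ past $K(n)$-local duality, which uses the equivalence of homotopy orbits and homotopy fixed points for finite groups in the $K(n)$-local category \cite{greenlees-sadofsky}, together with the $K(n)$-local dualizability of $\bP_d X$ itself (from finiteness of $K(n)_*(B\Sigma_d)$). Writing ``$E_n^*(\bP X)$ is determined by a free power operation construction on $E_n^*(X)$'' assumes precisely what has to be proved; without the duality/ambidexterity step the argument does not close.

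Two secondary points. Your steps (2) and (5) lean on the base-change isomorphism $E_n^*\hotimes_{BP_p^*}BP_p^*(\bP X)\cong E_n^*(\bP X)$, but \myref{prop:littlemorava} is a statement about spaces and fails for general spectra (\myref{rwyex}); this could be repaired by observing that each $\bP_d X=X^{\wedge d}_{h\Sigma_d}$ is a suspension spectrum of a space, but it is unnecessary: in the paper the final identification $\epsilon\prod_{n}E_n^*(\bP_d X)\cong BP_p^*(\bP_d X)$ is simply \myref{cor:bpretract} applied to the spectrum $\bP_d X$, and neither evenness nor Landweber flatness of $BP_p^*(\bP X)$ enters the proof of the theorem (that is a separate, preceding proposition). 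Relatedly, the ``main obstacle'' you anticipate---compatibility of the idempotent with the power-operation functors---does not actually arise: one never needs $\epsilon$ to commute with $\mathbb{T}^{E_n}$, only that the chain of isomorphisms $\mathbb{T}^{E_n}_d(E_n^*\hotimes_{BP_p^*}BP_p^*(X))\cong E_n^*(\bP_d X)$ is natural in $X$, after which applying $\epsilon$ degreewise in $d$ to the product over $n$ gives the result.
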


Roughly speaking, this result says that the structure of the corresponding K{\"u}nneth and homotopy orbit spectral sequences for $BP_p^*(\bP X)$ is completely controlled by the topological module $BP_p^*(X)$. 

As another consequence of our results, we deduce that various notions of good groups coincide and are controlled by the $BP$-cohomology of the group. This motivates the second part of the paper, where we take the above ideas one step further. Transchromatic character theory establishes a link between Morava $E$-theories of different heights, leading to the question of which properties of $E_n^*(X)$ and $BP^*(X)$ are detected by height $1$ and height $0$, i.e., by topological $K$-theory and rational cohomology. It turns out that the connection is surprisingly strong, an observation that has already been exploited in~\cite{genstrickland, bscentralizers}.

The second part of the paper focuses on the rationalization of the retract theorem. We apply the resulting map to classifying spaces of finite groups. Algebro-geometrically, $\E^*(BA)$ for a finite abelian group $A$ corepresents the scheme $\Hom(A^*, \G_{\E})$ that parametrizes maps from the Pontryagin dual of $A$ to the formal group $\G_{\E}$ associated to Morava $E$-theory. Rationally, there is then a decomposition into schemes classifying level structures
\[\Hom(A^*, \G_{\E}) \cong_{\Q} \coprod_{H \subseteq A} \Level(H^*, \G_{\E}),  \]
where $H$ runs through the subgroups of $A$, see~\cite{subgroups}. Up to torsion, the scheme of level structures $\Level(H^*, \G_{\E})$ is corepresented by $\E^*(BH)/I$, where $I$ is the transfer ideal, i.e., the ideal generated by transfers from the maximal subgroups of $H$. We show that this statement holds integrally up to bounded integral torsion, where the exponent of the torsion is in fact independent of the height we work at. This uses a result about level structures proven by Jeremy Hahn, which forms the appendix to this paper. 

\begin{theorem*}
Let $A$ be a finite abelian group, then the exponent of the torsion in the cokernel of the natural map 
\[\E^*(BA) \lra{} \Prod{H \subseteq A} \E^*(BH)/I \] 
is bounded independent of the height $n$. 
\end{theorem*}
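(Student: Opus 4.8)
\emph{Strategy and reductions.} I would first reduce to the case that $A$ is a finite abelian $p$-group and then induct on $|A|$, arranging things so that the only height-dependent objects in sight --- the rings $\E^0(BH)$, of rank $|H|^n$ over $\E^0$ --- enter purely through transfer/restriction formalism that is insensitive to $n$. Write $\psi^G \colon \E^0(BG) \to \Prod{H \subseteq G} \E^0(BH)/I$, $x \mapsto (\mathrm{res}^G_H(x) \bmod I)_H$, for the natural map of the theorem attached to a group $G$. If $H$ has a non-trivial prime-to-$p$ part, choose a prime $\ell \mid |H|$ with $\ell \neq p$ and a subgroup $M \subseteq H$ of index $\ell$; under the canonical isomorphism $\E^*(BH) \cong \E^*(BH_{(p)})$ the transfer $\mathrm{tr}^H_M$ becomes multiplication by the unit $\ell \in \E^0$, so $\E^*(BH)/I = 0$. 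Since $\E^*(BC_m) = \E^*$ for $p \nmid m$ and K{\"u}nneth holds for the even Landweber exact theory $\E$, we have $\E^*(BA) = \E^*(BA_{(p)})$, so the theorem for $A$ reduces to bounding the exponent of $\coker(\psi^{A_{(p)}})$. Assume then that $A$ is a finite abelian $p$-group; then $\E^0(BA)$ is $\E^0$-free, hence $p$-torsion free. Combining~\cite{subgroups} with the appendix --- which identifies $\E^*(BH)/I$ with $\Sect(\Level(H^*, \G_{\E}))$ up to torsion of exponent bounded independently of $n$ --- the map $\psi^A$ becomes an isomorphism after inverting $p$, so $C_n := \coker(\psi^A)$ is $p$-power torsion (being finitely generated over the Noetherian $\Z_p$-algebra $\E^0$). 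The task is to bound its exponent uniformly in $n$.

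\emph{The induction.} The $H = A$ component of $\psi^A$ is the surjection $\E^0(BA) \twoheadrightarrow \E^0(BA)/I$, so $C_n$ is the cokernel of the induced map $\bar\psi \colon I_A \to \Prod{H \subsetneq A} \E^0(BH)/I$, $z \mapsto (\mathrm{res}^A_H(z) \bmod I)_H$. Each summand $\mathrm{tr}^A_M(\E^0(BM))$ of the transfer ideal is an ideal (Frobenius reciprocity), so $I_A = \sum_M \mathrm{tr}^A_M(\E^0(BM))$ over maximal $M \subsetneq A$; precomposing $\bar\psi$ with the surjection $q_A \colon \Oplus{M} \E^0(BM) \twoheadrightarrow I_A$, $(w_M) \mapsto \sum_M \mathrm{tr}^A_M(w_M)$, and using the double coset formula (transparent since $A$ is abelian),
\[
\mathrm{res}^A_H \mathrm{tr}^A_M(w) \;=\; \begin{cases} p \cdot \mathrm{res}^M_H(w), & H \subseteq M, \\ \mathrm{tr}^H_{H \cap M}\,\mathrm{res}^M_{H \cap M}(w) \in I, & H \not\subseteq M, \end{cases}
\]
the second case being a transfer from the proper subgroup $H \cap M$, together with surjectivity of $\mathrm{res}^M_H$ for $H \subseteq M$ (it corresponds to a closed immersion $\Hom(H^*, \G_{\E}) \hookrightarrow \Hom(M^*, \G_{\E})$), one finds $\bar\psi \circ q_A = p \cdot \Psi$, where
\[
\Psi \colon \Oplus{M} \E^0(BM) \lra{} \Prod{H \subsetneq A} \E^0(BH)/I, \qquad (w_M) \longmapsto \Big( \textstyle\sum_{M \supseteq H} \mathrm{res}^M_H(w_M) \bmod I \Big)_{H \subsetneq A}.
\]
Thus $C_n = \coker(p\Psi)$ fits in a short exact sequence $0 \to \mathrm{im}(\Psi)\otimes_{\Z}\Z/p \to C_n \to \coker(\Psi) \to 0$ with left-hand term killed by $p$. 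Moreover $\Psi$ equals the surjection $\Oplus{M}\big(\Prod{H \subseteq M}\E^0(BH)/I\big) \twoheadrightarrow \Prod{H \subsetneq A}\E^0(BH)/I$ (sum over maximal $M \supseteq H$) precomposed with $\Oplus{M}\psi^M$, so $\coker(\Psi)$ is a quotient of $\Oplus{M}\coker(\psi^M)$. Writing $p^{d(G)}$ for an annihilator of $\coker(\psi^G)$, this gives $d(A) \leq 1 + \max_M d(M)$ over maximal $M \subsetneq A$; and $\coker(\psi^{\{e\}}) = 0$ since $\E^0(B\{e\})/I = \E^0$. Induction on $|A|$ now yields $d(A) \leq v_p(|A|)$, so $C_n$ is annihilated by $|A_{(p)}|$ --- independently of $n$.

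\emph{The main obstacle.} Everything hinges on the uniformity in $n$: the modules involved have ranks growing like $|H|^n$, yet the annihilating power of $p$ must not grow with the height. This works because $n$ enters only through height-insensitive inputs --- the Mackey formula, the ideal property of $\mathrm{tr}^A_M(\E^0(BM))$, surjectivity of restriction along subgroup inclusions, and $\E^0(B\{e\})/I = \E^0$ --- and not through the (height-dependent) sizes of the rings. The one genuinely delicate ingredient, besides the bookkeeping of the induction, is the level-structure comparison used in the reduction: the matching $n$-independent bound relating $\E^*(BH)/I$ to $\Sect(\Level(H^*, \G_{\E}))$ is precisely the content of the appendix. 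Heuristically the obstruction modules there are supported on the locus in $\Spf \E^0$ where $\G_{\E}$ is closest to being \'etale, where a one-parameter degeneration of $p$-divisible groups looks identical at every height --- a rigidity already visible in $n$-independent identities for the $p$-series of $\G_{\E}$, such as $[p]'(0) = p$.
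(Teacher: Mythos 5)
Your proof is correct, but it takes a genuinely different route from the paper's, and for this particular statement it is both more elementary and sharper. The paper proves the theorem by combining two pieces of machinery: a transchromatic reduction (base change along $C_1$ to height $1$, then along the hand-crafted faithfully flat theories $F_1$ and $\bar{F}_1$ down to $p$-adic $K$-theory, where an $\Aut(\QZ{n-1})$-equivariance argument shows only finitely many ``fiber maps'' can occur, giving the $n$-independent bound for the map into $\prod_H \Gamma\Level(H^*,\G_{\E})$), together with the appendix's Drinfeld-level-structure induction bounding the torsion of $\E^*(BH)/I$ itself. You instead run a direct Mackey-theoretic induction on $|A|$: after reducing to $p$-groups, you identify $\coker(\psi^A)$ with the cokernel of the transfer ideal mapped to the proper-subgroup factors, precompose with the surjection from $\bigoplus_M \E^0(BM)$ over maximal $M$ (Frobenius reciprocity), and use the abelian double coset formula --- coefficient $[A:HM]=p$ when $H\subseteq M$, and a term landing in $I$ when $H\not\subseteq M$ --- to factor the resulting map as $p\cdot\Psi$ with $\coker(\Psi)$ a quotient of $\bigoplus_M\coker(\psi^M)$; this yields the explicit bound that $|A_{(p)}|$ annihilates the \emph{entire} cokernel, independent of $n$, which I have checked against the paper's toy computations for $\Z/p^k$ and which is consistent. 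What each approach buys: yours avoids character theory, the auxiliary theories $F_1,\bar F_1$, and level structures altogether, and gives a clean quantitative bound; the paper's heavier apparatus is not wasted, however, since the same method is reused where your factorization is unavailable --- for $\Sigma_m$ (where $[G:HM]$ need not be divisible by $p$ and conjugation intervenes, so no clean factor of $p$ appears) and for Artin induction --- and it additionally produces the finer statement about $\prod_H\Gamma\Level(H^*,\G_{\E})$ (the paper's \myref{leveliso}) rather than only about $\prod_H\E^*(BH)/I$. One small remark: your appeal to \cite{subgroups} and to the appendix (\myref{transfertorsion}) to see that the cokernel is $p$-power torsion is superfluous --- your induction annihilates the whole cokernel by a power of $p$, so torsionness comes for free --- and likewise the surjectivity of $\mathrm{res}^M_H$ is never actually used; neither affects the validity of the argument.
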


In order to prove this, we construct a variant of the transchromatic character maps of~\cite{tgcm,bscentralizers} from $E$-theory at height $n$ to height $1$, which allows for tight control over the torsion. Since products of torsion abelian groups with a common torsion exponent are torsion as well, it follows that the natural map
\[
\Prod{n} \E^*(BA) \lra{} \Prod{n} \Prod{H \subseteq A} \E^*(BH)/I
\]
is a rational isomorphism. It is possible to apply the retract theorem to immediately deduce a similar decomposition for the $BP$-cohomology of finite abelian groups. 

\begin{corollary*}
Let $A$ be a finite abelian group and let $I$ denote the transfer ideal, then the natural map
\[BP_p^*(BA) \lra{} \Prod{H \subseteq A} BP_p^*(BH)/I\]
is a rational isomorphism.
\end{corollary*}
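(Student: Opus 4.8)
The plan is to realise the map of the corollary as a retract of the rational isomorphism $\Prod{n>0}\E^*(BA) \lra{} \Prod{n>0}\Prod{H\subseteq A}\E^*(BH)/I$ supplied by the preceding theorem on height-independent torsion bounds, and then apply the elementary fact that a retract, in the arrow category of an abelian category, of an isomorphism is an isomorphism (if $g$ is invertible and $f$ is a retract of $g$ via $i,r,j,s$ with $ri=\id$, $sj=\id$, $gi=jf$ and $sg=fr$, then $r\circ g^{-1}\circ j$ is a two-sided inverse of $f$). First I would invoke the retract theorem to fix maps of spectra $\iota\colon BP_p \to E$ and $\rho\colon E \to BP_p$ with $\rho\circ\iota\simeq\id$, where $E:=\prod_{n>0}E_n$. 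Applying $(-)^*(BH)$ for each subgroup $H\subseteq A$ and using the identification $E^*(Y)=\Prod{n>0}E_n^*(Y)$, these yield maps $\iota_*,\rho_*$ exhibiting $BP_p^*(BH)$ as a retract of $\Prod{n>0}E_n^*(BH)$, naturally in $H$ and compatibly with the restriction maps, which are themselves natural in the spectrum variable.

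The one point needing care --- and the only real obstacle, though a mild one --- is compatibility with the transfer ideals: the retract theorem produces $\iota$ and $\rho$ only as maps of spectra, so $\iota_*$ and $\rho_*$ need not be ring homomorphisms, and a priori there is no reason they should preserve the ideal $I$. I would circumvent this by recording that the transfer ideal is the \emph{image} of a single additive map built from transfers: by the projection formula $\mathrm{tr}^H_{H'}(x)\cdot y=\mathrm{tr}^H_{H'}(x\cdot\mathrm{res}\,y)$, each $\im(\mathrm{tr}^H_{H'})$ is already an ideal, so $I=\im\big(\bigoplus_{H'\text{ maximal in }H}F^*(BH')\to F^*(BH)\big)$ for any multiplicative cohomology theory $F$. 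Transfers are natural in $F$, being induced by a fixed stable transfer map of suspension spectra, so $\iota_*$ and $\rho_*$ carry transfer ideals into transfer ideals; and since $H$ has only finitely many maximal subgroups, the transfer ideal of $\Prod{n>0}E_n^*(BH)$ is $\Prod{n>0}$ of the transfer ideals of the $E_n^*(BH)$. Hence $\iota_*$ and $\rho_*$ descend to the quotients, and together with the restriction maps they exhibit $\phi\colon BP_p^*(BA)\to\Prod{H\subseteq A}BP_p^*(BH)/I$ as a retract, in the arrow category of graded abelian groups, of $\psi\colon\Prod{n>0}\E^*(BA)\to\Prod{n>0}\Prod{H\subseteq A}\E^*(BH)/I$.

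To conclude, I would recall from the introduction that $\psi$ is a rational isomorphism: its cokernel is $\Prod{n>0}$ of the cokernels of the height-$n$ maps, which by the torsion theorem have exponent dividing one integer independent of $n$, so the product is of bounded exponent and therefore rationally trivial, and similarly for the kernel. This is exactly the step where the height-independence of the torsion bound is used, and it is also why it is cleaner to cite the rational isomorphism for $\psi$ than to argue one height at a time --- note that $(-)\otimes\Q$ does not commute with the product over $n$. Finally, since $(-)\otimes\Q$ is exact it preserves retract diagrams, so $\phi\otimes\Q$ is a retract of the isomorphism $\psi\otimes\Q$ in the arrow category of graded $\Q$-vector spaces; hence $\phi\otimes\Q$ is an isomorphism by the lemma above, i.e.\ $\phi$ is a rational isomorphism.
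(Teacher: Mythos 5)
Your proposal is correct and takes essentially the same route as the paper: the paper first records a lemma showing the spectrum-level splitting of $BP_p$ off $\prod_{n>0}E_n$ is compatible with transfers (hence descends to the quotients by transfer ideals, which is exactly your projection-formula/naturality step), and then concludes by exhibiting the $BP_p$-map as a retract of the rational isomorphism $\prod_{n>0}E_n^*(BA)\to\prod_{n>0}\prod_{H\subseteq A}E_n^*(BH)/I$, just as you do.
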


Similarly, the scheme corepresented by $\E^*(B\Sigma_m)$ decomposes rationally into a product of subgroup schemes $\Gamma\Sub_{\lambda \vdash m}(\G_{\E})$, indexed by partitions $\lambda \vdash m$ of $m$. Again, we prove that this statement holds integrally up to globally bounded integral torsion.

\begin{theorem*}
The exponent of the torsion in the cokernel of the natural map 
\[
\E^*(B\Sigma_{m}) \lra{} \Prod{\lambda \vdash m}\Gamma \Sub_{\lambda \vdash m}(\G_{\E})
\] 
is bounded independent of the height $n$. Consequently, there is a rational isomorphism
\[
BP_{p}^*(B\Sigma_{m}) \lra{} \Prod{\lambda \vdash m} (BP_{p}^*(B\Sigma_{\lambda})/I_{\lambda})^{\Sigma_{\underline{a}}},
\]
where $I_{\lambda}$ is a certain transfer ideal.
\end{theorem*}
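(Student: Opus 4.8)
The plan is to reduce the symmetric group statement to the abelian case already handled in the preceding theorem, by exploiting the well-known fact that $E$-theory, rationally, only "sees" abelian subgroups. First I would recall the algebro-geometric description: $E_n^*(B\Sigma_m)$ corepresents the scheme $\Sub_m(\G_{\E})$ of subgroups of order $m$ of the formal group (Strickland), and rationally this scheme decomposes according to the isomorphism type of a subgroup, i.e., along partitions $\lambda = (a_1 \geq a_2 \geq \cdots) \vdash m$ thought of as a finite abelian $p$-group $\bigoplus_i \Z/p^{a_i}$ — equivalently along the conjugacy classes of abelian $p$-subgroups. The target $\Gamma\Sub_{\lambda\vdash m}(\G_{\E})$ is corepresented by $(E_n^*(B\Sigma_{\lambda})/I_{\lambda})^{\Sigma_{\underline a}}$, where $\Sigma_\lambda$ is the product of symmetric groups permuting the blocks of a set partition of shape $\lambda$, $I_\lambda$ is the transfer ideal from proper "sub-block" subgroups, and $\Sigma_{\underline a}$ permutes blocks of equal size. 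The natural map in the statement is assembled from restrictions to these subgroups followed by the quotient by transfers.

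The key step is to bound the torsion in the cokernel of this map independently of $n$. My approach is to factor the problem through finite abelian groups: each $\Sigma_\lambda$-piece involves, after passing to the transfer quotient, the $E$-theory of a wreath-type construction on $B(\Z/p^{a_i})$'s, and the Hopkins--Kuhn--Ravenel / Strickland machinery (together with the decomposition $\Hom(A^*,\G_{\E}) \cong_{\Q} \coprod_{H}\Level(H^*,\G_{\E})$ quoted from \cite{subgroups}) expresses $\Sub_m(\G_{\E})$ rationally in terms of the schemes $\Level(H^*,\G_{\E})$ for $H$ ranging over abelian $p$-groups of order dividing $m$. So I would: (i) use the appendix result of Hahn on level structures to get a height-independent torsion bound relating $E_n^*(BA)/I$ to $\Level(A^*,\G_{\E})$ for each relevant abelian $A$; (ii) assemble these, using that there are only finitely many partitions of $m$ and that finite products and finite extensions of bounded-exponent torsion groups again have bounded exponent, with the exponent depending only on $m$; (iii) track that the colimit/transfer arguments identifying the $\Sigma_\lambda$-piece with a symmetrized power do not introduce height-dependent denominators — here one uses that the relevant indexing groups $\Sigma_{\underline a}$ have order dividing $m!$, so taking invariants costs at worst a factor of $m!$. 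Combining (i)--(iii) gives the first assertion.

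For the second assertion — the rational isomorphism for $BP_p^*(B\Sigma_m)$ — I would apply the retract theorem: $BP_p$ is a retract of $\prod_{n>0}E_n$, so $BP_p^*(B\Sigma_m)$ is a retract of $\prod_{n>0}E_n^*(B\Sigma_m)$, and similarly for the target (using that each factor $BP_p^*(B\Sigma_\lambda)/I_\lambda$ is correspondingly a retract, which requires checking that the transfer ideal $I_\lambda$ is compatible with the retract — this follows since transfers are natural and the retract is realized by a map of spectra). Since a height-independent torsion exponent makes $\prod_{n>0}(\text{cokernel})$ itself torsion of bounded exponent, hence rationally trivial, the map $\prod_n E_n^*(B\Sigma_m) \to \prod_n \prod_\lambda \Gamma\Sub_\lambda$ is a rational isomorphism; passing to the $BP_p$-retract preserves this. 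One must also verify injectivity rationally, which comes from the fact that the restriction maps to abelian (in fact elementary abelian, after a transfer argument) subgroups are rationally injective on $E$-theory — a standard HKR-type detection statement.

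The main obstacle I anticipate is step (iii): carefully identifying the $\lambda$-summand of $\Sub_m(\G_{\E})$ with the symmetrized transfer quotient $(E_n^*(B\Sigma_\lambda)/I_\lambda)^{\Sigma_{\underline a}}$ \emph{with uniform control over torsion}, rather than merely rationally. The rational statement is essentially in \cite{rwy, subgroups}, but upgrading it to "integrally up to bounded torsion" requires that every comparison map in the chain — Strickland's identification of $\Sub_m$, the passage to level structures via Hahn's appendix, and the wreath-product/transfer bookkeeping — be shown to have cokernel (and, where needed, kernel) annihilated by an integer depending only on $m$. Keeping these bounds from accumulating $n$-dependence, especially through the inductive structure on partitions (subgroups of $\Sigma_\lambda$ of the same shape type), is the delicate point; I expect the cleanest route is an induction on $m$ using the product decomposition of $\Sigma_\lambda$ and the height-independence already established in the abelian case as the base input.
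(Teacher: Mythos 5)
Your second assertion (the passage from the $E$-theory bound to the rational isomorphism for $BP_p^*(B\Sigma_m)$ via the retract of \myref{cor:bpretract} and compatibility of the splitting with transfer ideals) matches the paper's argument and is fine. The problem is the first assertion: your proposed reduction to the abelian case does not actually contain a mechanism for bounding the cokernel of $\E^*(B\Sigma_m) \lra{} \Prod{\lambda \vdash m}\Gamma\Sub_{\lambda\vdash m}(\G_{\E})$. Two specific points. First, step (i) is misdirected: by Strickland's theorem each $\E^*(B\Sigma_{\lambda_i})/I$ is a finitely generated \emph{free} $\E^*$-module, so the target rings have no torsion and Hahn's \myref{transfertorsion} (which concerns the torsion inside $\E^*(BH)/I$ for abelian $H$) is neither needed nor helpful here; the entire difficulty is how much of the free target is hit by the image of $\E^*(B\Sigma_m)$. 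Second, knowing height-independent bounds for the abelian cokernels of \myref{leveliso} does not transfer to $\Sigma_m$: the toy-case trick for $\Sigma_p$ (embedding $\Cok_{\E}(\Sigma_p)$ into $\Cok_{\E}(\Z/p)$) works only because $\Z/p$ is a Sylow subgroup of index prime to $p$, so $B\Sigma_p$ splits off $B\Z/p$ stably; for general $m$ the $p$-Sylow of $\Sigma_m$ is a wreath product, and no map is exhibited that makes the symmetric-group cokernel a subobject or retract of a product of abelian cokernels. Rational injectivity of restriction to abelian subgroups (HKR) says nothing about the torsion exponent of a cokernel. Your step (iii), which you yourself flag as the obstacle, is exactly where the content lies, and ``induction on $m$'' plus a factor of $m!$ for $\Sigma_{\underline a}$-invariants is a heuristic, not an argument.

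What the paper does instead is prove the bound for $\Sigma_m$ directly (in fact before the abelian case, which is then treated by the same method): base change the short exact sequence along the transchromatic character ring $C_1^*$ to rewrite the source as $L_1$-cohomology of centralizers of commuting $(n-1)$-tuples in $\Sigma_m$; use the commutative-algebra lemmas (\myref{locflat}, \myref{extension}, built on the fracture square \myref{pb} and \myref{free}) to pass to $F_1$ and then to the faithfully flat extension $\bar F_1$ without changing the torsion exponent; use \myref{equivs} to convert everything to $p$-adic $K$-theory; and finally observe that the resulting map fibers over the finite set $\Sum_{\leq m}(\QZ{n-1})$, on which $\Aut(\QZ{n-1})$ acts so that, for $n$ large, only finitely many isomorphism types of fiber maps occur, each with torsion cokernel --- this is the actual source of height-independence. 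If you want to salvage your outline, you would need to supply a height-uniform comparison of this kind (or an explicit splitting reducing $\Cok_{\E}(\Sigma_m)$ to previously bounded cokernels), which your proposal currently leaves open.
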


Finally, we give a further illustration of the methods of the paper by proving a well-known version of Artin induction for the $BP$-cohomology of good groups.

\subsection{Relation to the literature}

The question of when and how $E^*(X)$ can be computed algebraically from $BP^*(X)$ or, conversely, what kind of information about $BP^*(X)$ is retained in $E^*(X)$ has a long history. After pioneering work of Johnson and Wilson~\cite{jwbpdim, jwbpops} and Landweber~\cite{landweber}, these problems have been studied systematically in a series of papers by Ravenel, Wilson, and Yagita~\cite{rwy,rwy2} and Kashiwabara~\cite{kashiwabarabps2n,kashiwabarabpqx}. Their methods are based on a careful study of the associated Atiyah--Hirzebruch spectral sequences. As a consequence, the results contained in these papers are unstable, i.e., only valid for spaces rather than arbitrary spectra. 

The results of our paper generalize the main structural theorems of \cite{rwy,rwy2} from spaces to spectra, by replacing the Atiyah--Hirzebruch spectral sequence arguments by the above retract result. This is remarkable, as such extensions seemed previously impossible, see the remark following Theorem 1.8 in \cite[p.2]{rwy}. Furthermore, the retract result has the pleasant side effect of simplifying many of the proofs, at the cost of losing control over any explicit generators and relations descriptions. In particular, we are unable to recover the computations for spheres, Eilenberg--Mac Lane spaces, and other spaces appearing in certain $\Omega$-spectra given in the aforementioned papers. 

In contrast to this, passing from $BP^*(X)$ to $K(n)^*(X)$ is more subtle, due to the existence of counterexamples to the generalizations of certain base change formulas to arbitrary spectra. Here, our results do not improve on the results in \cite{rwy,rwy2}.

In~\cite{kashiwabarabpqx}, Kashiwabara studies the following question: To what extent does $BP_p^*(X)$ control $BP_p^*(QX)$ for $X$ a space. He shows that $BP_p^*(QX)$ inherits Landweber flatness from $BP_p^*(X)$ and constructs a functor $\mathcal{D}$ such that 
\[\xymatrix{\mathcal{D}BP_{p}^*(X) \ar[r]^-{\cong} & BP_{p}^*(\Omega^{\infty} X),}\]
is an isomorphism under certain algebraic conditions on $X$. Our description of $BP_{p}^*(\bP X)$ is related to Kashiwabara's result via the Snaith splitting; the precise relation between Kashiwabara's theorem and ours is, however, not completely clear at the moment. It should be noted, however, that his functor allows for explicit calculations, whereas $\BP$ involves the idempotent given by the retract and is thus much less computable. 

\subsection{Outline}

We begin in Section 2 by reminding the reader about the cohomology theories and basic concepts that are used in this paper. These theories are related to each other by various base change formulas, most of which are collected from the literature. The first goal of Section 3 is the proof of the result which the rest of our paper is based on, namely that $BP_p$ is a retract of a product of Morava $E$-theories. We then deduce our main theorems about the structure of the $BP$-cohomology of spectra with even Morava $K$-theory. Moreover, we discuss the $BP$-cohomology of free commutative algebras and show that various notions of good groups are equivalent.  

The second part of the paper starts in Section 4 with a toy example, obtaining the existence of a height independent torsion exponent in the case of finite cyclic groups and $\Sigma_p$ by an explicit calculation. A number of auxiliary commutative algebra results are proven before introducing a variant of the transchromatic character map. The rest of the section contains the proofs of the main theorems for Morava $E$-theory and their consequences for the rational $BP$-cohomology of abelian and symmetric groups. A key result for the case of abelian groups is deferred to an appendix, written by Jeremy Hahn.

\subsection{Notation and conventions}

Throughout this paper we fix a prime $p$ and work in the category $\Sp$ of $p$-local spectra. By space, we shall always mean a CW complex of finite type viewed as a suspension spectrum. The results in the first part of the paper are formulated and proven more generally for the theories $P(m)$, but the reader not familiar with these might want to specialize to $P(0)  =BP_p$. We will also always write $\QZ{n}$ for $(\QZ{})^n$.

\acknowledgements{
We would like to thank Tyler Lawson, Eric Peterson, Charles Rezk, Tomer Schlank, and Neil Strickland for helpful conversations, and the Max Planck Institute for Mathematics for its hospitality.}

\section{The Brown--Peterson spectrum and related cohomology theories}

After introducing the complex oriented ring spectra that will be used throughout the paper, we recall the notion of Landweber flatness relative to the theory $P(m)$ and prove that Landweber flat modules are closed under products. We then give base change formulas relating these cohomology theories. Although most of these results are well-known, we hope the reader might like to see them collected in one place.

\subsection{Preliminaries}

We recall some basic terminology and facts that will be used in this paper; for more details, see for example~\cite{rwy}. Let $BP$ be the Brown--Peterson spectrum with coefficients $BP^*=\Z_{(p)}[v_1,v_2,\ldots]$, where the $v_i$ are Hazewinkel generators of degree $|v_i|=-2(p^i-1)$ with $v_0=p$; as usual, none of the constructions or results in this paper depend on this choice of generators. By \cite{landweberideals}, the invariant prime ideals in $BP^*$ are given by $I_n = (p,v_1,\ldots,v_{n-1})$ for $n \ge 0$. 

Recall that $P(m)$ denotes the $BP$-module spectrum with coefficients 
\[P(m)^*= BP^*/I_m,\]
which can be constructed using Baas--Sullivan theory of singularities or the methods of \cite{ekmm}. We set $P(0) = BP_p$, the $p$-completion of $BP$. The ideals $I_{m,n}=(v_{m},\ldots,v_{n-1})$ for $n \ge m$ are then precisely the invariant prime ideals of $P(m)^*$.

The Landweber filtration theorem~\cite{landweber} for $BP_*BP$-comodules that are finitely presented as $BP_*$-modules admits the following generalization to $P(m)$, independently due to Yagita and Yoshimura.

\begin{theorem}[\cite{yagitaleft,yosimuraleft}]\mylabel{thm:generalizedlft}
Suppose $M$ is a $P(m)^*P(m)$-module which is finitely presented as a $P(m)^*$-module, then there exists a finite filtration of $M$ by $P(m)^*P(m)$-modules 
\[0 =M_0 \subseteq M_1 \subseteq \ldots  \subseteq M_s = M\]
with filtration quotients $M_{i+1}/M_i \cong P(m)^*/I_{m,n_i}$ up to a shift and for some $n_i\ge m$. 
\end{theorem}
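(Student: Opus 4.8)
The plan is to follow Landweber's original proof of the filtration theorem for $BP_*BP$-comodules, making the adjustments forced by the fact that $(P(m)^*,P(m)^*P(m))$ is not a Hopf algebroid once $m\geq 1$. By induction it suffices to show that every nonzero comodule $M$ of the stated type contains a sub-comodule isomorphic to a shift of $P(m)^*/I_{m,n}$ for some $n\geq m$: the quotient is then again a $P(m)^*P(m)$-comodule, and it is finitely presented over $P(m)^*$ (a quotient of a finitely presented module by a finitely generated submodule is always finitely presented), so one may iterate. That this process terminates after finitely many steps is handled exactly as in the classical case; the one subtlety, that $P(m)^*$ is not Noetherian, is circumvented by working inside the abelian category of coherent $P(m)^*P(m)$-comodules (using that $P(m)^*$ is coherent and $P(m)^*P(m)$ is free over it) and running the same multiplicity induction Landweber uses for $BP$.

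The heart of the proof is the analogue of Landweber's key lemma: every nonzero comodule $M$ of this type has an invariant associated prime. To produce one, choose a nonzero homogeneous $x\in M$ whose annihilator $\mathfrak p$ is maximal among annihilators of nonzero homogeneous elements; a standard argument shows $\mathfrak p$ is prime. One then analyzes the coaction $\psi(x)$ inside $P(m)^*P(m)\otimes_{P(m)^*}M$. Expressing $P(m)^*P(m)$ as a free left $P(m)^*$-module on its standard basis --- the monomials in the Quillen generators $t_i$, together with the exterior classes dual to the Milnor primitives $Q_0,\ldots,Q_{m-1}$ that occur once $m\geq 1$ --- the counit identity forces the coefficient of the basis element $1$ in $\psi(x)$ to be $x$, and a leading-term argument with respect to a suitable filtration of $P(m)^*P(m)$, of the same shape as Landweber's original one, forces $\mathfrak p$ to be invariant. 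By the classification of the invariant primes of $P(m)^*$ recalled above, $\mathfrak p=I_{m,n}$ for some $n\geq m$. Finally, the maximality of $\mathfrak p$ implies that every nonzero element of the sub-$P(m)^*$-module $M[\mathfrak p]=\{y\in M:\mathfrak p\,y=0\}$ again has annihilator exactly $\mathfrak p$; since $\mathfrak p$ is invariant, $M[\mathfrak p]$ is a sub-comodule, and it is torsion-free over the graded domain $P(m)^*/\mathfrak p$, so --- just as in the classical argument --- it contains a sub-comodule isomorphic to $\Sigma^{|x|}P(m)^*/I_{m,n}$. This completes the inductive step.

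The principal obstacle, and the only place where this is more than a transcription of the $BP$-argument, is precisely the non-commutativity of $P(m)$ for $m\geq 1$: since $P(m)^*P(m)$ is then not a Hopf algebroid, one must carry out the entire discussion with the one-sided notion of (left) $P(m)^*P(m)$-comodule developed by Yagita and Yoshimura, re-establishing the comodule identities that are used --- coassociativity and counitality of $\psi$, and the behaviour of annihilators under the coaction --- and redoing the leading-term computation directly in terms of the explicit structure of $P(m)^*P(m)$ as a left comodule-algebra over $P(m)^*$, rather than appealing to Hopf-algebroid formalism. The necessary structural input on $P(m)^*P(m)$ is exactly what is worked out in~\cite{yagitaleft,yosimuraleft}. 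Granting this, the steps above assemble into the desired finite filtration with subquotients $P(m)^*/I_{m,n_i}$ up to shift.
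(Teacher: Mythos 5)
The first thing to say is that the paper contains no proof of this statement: it is imported wholesale from Yagita and Yoshimura, with \cite{yagitaleft,yosimuraleft} as the only justification, so there is no internal argument to compare yours against. Your outline is the standard Landweber-style strategy that those references (and Landweber's original $BP$ case) actually follow: maximal annihilator ideals are invariant primes, invariant primes of $P(m)^*$ are the $I_{m,n}$, extract a cyclic subcomodule isomorphic to a shift of $P(m)^*/I_{m,n}$, pass to the quotient (still finitely presented, as you correctly note), and iterate. In that sense your route is the same as the one the paper implicitly relies on.

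As a blind proof, however, two of the genuinely delicate steps are asserted rather than argued. First, the existence of a homogeneous element whose annihilator is \emph{maximal} among annihilators is not automatic, because $P(m)^*$ is not Noetherian; you invoke non-Noetherianness only when discussing termination, but it already threatens the very first step. The standard repair uses the finite presentation hypothesis in an essential way: descend $M$ to a finitely presented module over a Noetherian subring $\F_p[v_m,\dots,v_N]$ (resp.\ $\Z_p[v_1,\dots,v_N]$ when $m=0$), over which annihilators of elements are extended ideals and maximal ones exist. Second, knowing that $M[\mathfrak p]$ is a nonzero subcomodule that is torsion-free over $P(m)^*/I_{m,n}$ does not by itself produce a cyclic \emph{subcomodule}: the cyclic $P(m)^*$-submodule generated by an arbitrary element of $M[\mathfrak p]$ need not be closed under the coaction. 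One needs a primitive element, e.g.\ a nonzero element of $M[\mathfrak p]$ of lowest degree (using connectivity of $P(m)^*P(m)$ over $P(m)^*$ and the counit identity), and then the shift is the degree of that element, not necessarily $|x|$. Finally, a small correction: for odd primes $(P(m)_*,P(m)_*P(m))$ is in fact a Hopf algebroid (by W\"urgler's work); the failure of commutativity and the attendant one-sided comodule formalism are an issue specifically at $p=2$. None of these points derails the strategy, but since the structural input on $P(m)^*P(m)$ is in any case deferred to \cite{yagitaleft,yosimuraleft}, what you have is a faithful outline of the cited proof rather than a self-contained argument.
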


\begin{remark}
The classical Landweber filtration theorem is usually stated for $BP$, not for $P(0) = BP_p$. However, this will not affect any of the arguments or statements appearing later in this paper. 
\end{remark}

From $P(m)$ we can construct, for $n\ge m$, spectra $E(m,n)$ by taking the quotient by the ideals $(v_{n+1},v_{n+2},\ldots)$ and then inverting $v_n$, i.e.,
\[E(m,n)^* = 
\begin{cases}
\Z_{p}[v_1,\ldots,v_n][v_n^{-1}] & \text{if}\, m=0, \\
\F_p[v_m,\ldots,v_n][v_n^{-1}] & \text{if}\, m>0.
\end{cases}
\]
In particular, if $m=n$, then $E(m,m) = K(m)$ is Morava $K$-theory, and if $m=0$, then $E(0,n) = E(n)$ is $p$-complete Johnson--Wilson theory. Moreover, we define $\hat{E}(m,n)$ to be the $K(n)$-localization of $E(m,n)$,
\[\hat{E}(m,n) = L_{K(n)}E(m,n),\]
where $L_{K(n)}$ denotes Bousfield localization with respect to $K(n)$. Finally, we denote by $E_n$ height $n$ Morava $E$-theory, which is a 2-periodic finite free extension of $\hat{E}(0,n)$ given by adjoining an element $u$ of degree 2 with $u^{1-p^n} = v_n$ and extending coefficients to the ring $W(\kappa)$ of Witt vectors over a perfect field $\kappa$ of characteristic $p$.

\subsection{Landweber flatness}

Recall the following definition from \cite{rwy,rwy2}.

\begin{definition}
A module $M \in \Mod_{P(m)^*}$ is called Landweber flat if it is a flat module for the category of $P(m)^*P(m)$-modules which are finitely presented as $P(m)^*$-modules, i.e., if the functor
\[\xymatrix{-\otimes_{P(m)^*} M \colon \Mod_{P(m)^*P(m)}^{\text{fp}} \ar[r] & \Mod_{P(m)^*}}\]
is exact.
\end{definition}

The Landweber filtration theorem for $P(m)$ easily implies the following characterization of Landweber flatness, see for example \cite[Thm.~3.9]{rwy}.

\begin{proposition}\mylabel{prop:lflatcriterion}
A $P(m)^*$-module $M$ is Landweber flat if and only if 
\[v_n\colon P(n)^* \otimes_{P(m)^*}M \longrightarrow P(n)^* \otimes_{P(m)^*}M\]
is injective for all $n\ge m$. 
\end{proposition}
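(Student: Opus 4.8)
The plan is to deduce this from the Landweber filtration theorem for $P(m)$ (Theorem~\ref{thm:generalizedlft}) exactly as in the classical case, following \cite[Thm.~3.9]{rwy}. The statement asserts the equivalence of two conditions on a $P(m)^*$-module $M$: (i) exactness of $-\otimes_{P(m)^*} M$ on the category of $P(m)^*P(m)$-comodules that are finitely presented over $P(m)^*$, and (ii) injectivity of multiplication by $v_n$ on $P(n)^*\otimes_{P(m)^*} M$ for all $n\ge m$. The direction (i)$\Rightarrow$(ii) is the easy one: for each $n\ge m$ the sequence
\[
0 \longrightarrow P(n)^* \lra{v_n} P(n)^* \longrightarrow P(n+1)^* \longrightarrow 0
\]
is a short exact sequence of $P(m)^*P(m)$-comodules, each term finitely presented over $P(m)^*$ (here I use that $P(n+1)^* = P(n)^*/(v_n)$ and that the $I_{m,n}$ are the invariant prime ideals of $P(m)^*$). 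Applying the exact functor $-\otimes_{P(m)^*} M$ preserves exactness, and in particular the injectivity of $v_n$ on $P(n)^*\otimes_{P(m)^*} M$ drops out.

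For the converse (ii)$\Rightarrow$(i), I would argue that it suffices to check that $\Tor_1^{P(m)^*}(N, M) = 0$ for every $N$ in $\Mod_{P(m)^*P(m)}^{\mathrm{fp}}$, since then a short exact sequence $0\to N'\to N\to N''\to 0$ of such comodules yields, from the long exact $\Tor$-sequence, exactness of $0\to N'\otimes M\to N\otimes M\to N''\otimes M\to 0$. Now apply Theorem~\ref{thm:generalizedlft} to $N$: there is a finite filtration of $N$ with subquotients of the form $\Sigma^{?}P(m)^*/I_{m,n_i} = \Sigma^{?}P(n_i)^*$. Using the long exact sequence in $\Tor$ repeatedly and the fact that $\Tor$ is insensitive to suspension, it is enough to show $\Tor_1^{P(m)^*}(P(n)^*, M) = 0$ for all $n\ge m$. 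This I would prove by induction on $n$: the base case $n=m$ is trivial since $P(m)^*$ is free over itself, and for the inductive step I tensor the short exact sequence $0\to P(n)^* \xrightarrow{v_n} P(n)^* \to P(n+1)^*\to 0$ with $M$ over $P(m)^*$, obtaining a piece of the long exact sequence
\[
\Tor_1^{P(m)^*}(P(n)^*, M) \longrightarrow \Tor_1^{P(m)^*}(P(n+1)^*, M) \longrightarrow P(n)^*\otimes_{P(m)^*} M \lra{v_n} P(n)^*\otimes_{P(m)^*} M.
\]
The left-hand term vanishes by the inductive hypothesis, and the rightmost map is injective by hypothesis (ii), so $\Tor_1^{P(m)^*}(P(n+1)^*, M) = 0$, completing the induction.

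The only point requiring a little care — and the one I expect to be the main obstacle — is the reduction in the converse direction to the single $\Tor_1$ vanishing statement, i.e., justifying that vanishing of $\Tor_1$ against all finitely presented comodules really does imply exactness of $-\otimes_{P(m)^*} M$ as a functor on $\Mod_{P(m)^*P(m)}^{\mathrm{fp}}$, together with the bookkeeping that the Landweber filtration has finite length so that the dévissage through the subquotients terminates. Both are standard once one notes that $\Mod_{P(m)^*P(m)}^{\mathrm{fp}}$ is closed under kernels of surjections and that every short exact sequence of comodules in this category is in particular a short exact sequence of $P(m)^*$-modules; I would spell this out only briefly and otherwise refer to \cite[Thm.~3.9]{rwy} and \cite{landweber} for the analogous argument.
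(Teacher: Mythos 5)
Your argument is correct and is essentially the argument the paper has in mind: the paper offers no proof of its own, deducing the proposition from \myref{thm:generalizedlft} with a citation to \cite[Thm.~3.9]{rwy}, and your easy direction plus the $\Tor_1$-vanishing dévissage and induction on $n$ via $0\to \Sigma P(n)^*\lra{v_n} P(n)^*\to P(n+1)^*\to 0$ is exactly that standard reduction. One cosmetic point: the paper's category $\Mod_{P(m)^*P(m)}^{\mathrm{fp}}$ consists of $P(m)^*P(m)$-\emph{modules} finitely presented over $P(m)^*$ rather than comodules, which changes nothing in your argument since the invariance of $I_{m,n}$ and of $v_n$ modulo $I_{m,n}$ is precisely what places your short exact sequences in that category.
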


In the next section, we need a closure property for the collection of Landweber flat modules. Since we do not know of a published reference for this fact, we include the proof. 

\begin{lemma}\mylabel{lem:lflatproducts}
The collection of Landweber flat $P(m)^*$-modules is closed under products.
\end{lemma}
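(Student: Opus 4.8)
The plan is to use the characterization of Landweber flatness from Proposition~\ref{prop:lflatcriterion}: a $P(m)^*$-module $M$ is Landweber flat if and only if multiplication by $v_n$ on $P(n)^* \otimes_{P(m)^*} M$ is injective for every $n \ge m$. So let $\{M_\alpha\}_{\alpha \in A}$ be a family of Landweber flat $P(m)^*$-modules and set $M = \prod_\alpha M_\alpha$; we must check the injectivity condition for $M$.

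The first step is to relate $P(n)^* \otimes_{P(m)^*} M$ to the product $\prod_\alpha \big(P(n)^* \otimes_{P(m)^*} M_\alpha\big)$. Tensor product does not commute with infinite products in general, so there is only a natural map
\[
P(n)^* \otimes_{P(m)^*} \Big(\prod_\alpha M_\alpha\Big) \longrightarrow \prod_\alpha \big(P(n)^* \otimes_{P(m)^*} M_\alpha\big),
\]
and this map need not be injective or surjective for an arbitrary ring map. The key point is that $P(n)^* = P(m)^*/I_{m,n}$ is a quotient of $P(m)^*$ by a finitely generated ideal, namely $I_{m,n} = (v_m, \ldots, v_{n-1})$. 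For a finitely generated ideal $J = (x_1,\ldots,x_r)$ in a ring $R$, the functor $R/J \otimes_R (-)$ is the iterated cokernel of multiplication by the $x_i$, i.e., it is a finite colimit construction; more precisely $R/J \otimes_R N \cong N/JN$, and $N/JN$ can be computed by the right-exact Koszul-type presentation $N^r \to N \to N/JN \to 0$. Since products are exact in the category of abelian groups (equivalently, of $P(m)^*$-modules) and cokernels commute with products of right-exact diagrams of this shape, the natural map above is in fact an isomorphism. Concretely: $\big(\prod_\alpha M_\alpha\big)/I_{m,n}\big(\prod_\alpha M_\alpha\big) \cong \prod_\alpha \big(M_\alpha / I_{m,n} M_\alpha\big)$ because forming the cokernel of the map given by the matrix of the generators $v_m,\ldots,v_{n-1}$ commutes with the product. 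I would spell this out as a short lemma or inline argument: for a finitely generated ideal, base change along $R \to R/J$ commutes with arbitrary products of modules.

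With that identification in hand, the rest is immediate. Multiplication by $v_n$ on $P(n)^* \otimes_{P(m)^*} M \cong \prod_\alpha \big(P(n)^* \otimes_{P(m)^*} M_\alpha\big)$ is the product of the individual multiplication-by-$v_n$ maps on $P(n)^* \otimes_{P(m)^*} M_\alpha$, which is a product of injections since each $M_\alpha$ is Landweber flat (again by Proposition~\ref{prop:lflatcriterion}). A product of injective maps of abelian groups is injective, so multiplication by $v_n$ is injective on $P(n)^* \otimes_{P(m)^*} M$ for all $n \ge m$, and hence $M$ is Landweber flat.

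The only genuine subtlety, and the step I would be most careful about, is the commutation of the base change $P(n)^* \otimes_{P(m)^*}(-)$ with infinite products — this is where finite generation of $I_{m,n}$ is essential and where a naive argument would fail. Everything else (exactness of products of abelian groups, the $v_n$-injectivity criterion) is formal. One should also note that no grading or completion issues intervene here: the statement and the criterion are purely about underlying graded modules, and products are taken degreewise, so the argument goes through verbatim in the graded setting.
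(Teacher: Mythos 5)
Your proof is correct and follows essentially the same route as the paper: both reduce to the $v_n$-injectivity criterion of \myref{prop:lflatcriterion} and then identify the base change of the product with the product of the base changes, using that $P(n)^* = P(m)^*/I_{m,n}$ is finitely presented over $P(m)^*$ (your explicit cokernel-of-$N^r \to N$ argument is exactly the reason the paper's comparison map $\epsilon_m$ is an isomorphism). The final step, that a product of injections is injective, matches the paper as well.
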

\begin{proof}
Suppose $(M_i)_i$ is a collection of Landweber flat $BP^*$-modules and let $n\ge m$. By \myref{prop:lflatcriterion}, 
it suffices to show that the top map in the commutative diagram 
\[\xymatrix{(\prod_i M_i) \otimes_{P(m)^*} P(m)^*/I_{m,n} \ar[r]^{v_n} \ar[d]_{\epsilon_m} & (\prod_iM_i)\otimes_{P(m)^*} P(m)^*/I_{m,n} \ar[d]^{\epsilon_m}\\
\prod_i(M_i \otimes_{P(m)^*} P(m)^*/I_{m,n}) \ar[r]_{v_n} & \prod_i(M_i \otimes_{P(m)^*} P(m)^*/I_{m,n})}\]
is injective. Since $P(m)^*/I_{m,n}$ is a finitely presented $P(m)^*$-module, the vertical maps $\epsilon_m$ are isomorphisms. But, by assumption, the bottom map is injective, hence so is the top map.
\end{proof}

\begin{remark}
This result should be compared to a theorem of Chase: For $R$ a (not necessarily commutative) ring, the collection of flat $R$-modules is closed under arbitrary products if and only if $R$ is left coherent. For a discussion of products of flat modules and a proof of Chase's theorem, see \cite{lambook}.
\end{remark}

\subsection{Base change formulas}

This section collects a number of base change formulas for the cohomology theories we use in this paper. While most of the results are well-known, it is not always easy to locate references in the published literature. 

\begin{lemma}\mylabel{lem:frometok}
Suppose $n \ge m$ and let $X$ be a spectrum such that $\hat{E}(m,n)^*(X)$ is evenly concentrated and flat as an $\hat{E}(m,n)^*$-module, then
\[K(n)^*(X) \cong \hat{E}(m,n)^*(X) \otimes_{\hat{E}(m,n)^*} K(n)^*,\]
which is even as well. Conversely, if $K(n)^*(X)$ is even, then $\hat{E}(m,n)^*(X)$ is even and flat as an $\hat{E}(m,n)^*$-module.
\end{lemma}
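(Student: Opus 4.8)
The plan is to prove Lemma~\myref{lem:frometok} by a standard argument comparing the cohomology theory $\hat{E}(m,n)$ with its Morava $K$-theory quotient, using that $\hat{E}(m,n)^*$ is a complete local ring with maximal ideal generated by a regular sequence whose quotient is $K(n)^*$.

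\medskip

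\emph{Proof.} For the first implication, assume $\hat{E}(m,n)^*(X)$ is evenly concentrated and flat over $R := \hat{E}(m,n)^*$. Write $\mathfrak{m} \subset R$ for the maximal ideal; recall that $K(n)^* = R/\mathfrak{m}$ (up to the relevant periodicity), and that $\mathfrak{m}$ is generated by the regular sequence $(v_m, \ldots, v_{n-1})$ (respectively $(p, v_1, \ldots, v_{n-1})$ when $m = 0$), with $R$ complete with respect to $\mathfrak{m}$. The cofiber sequences obtained by iterated multiplication by the $v_i$ exhibit $K(n)$ as a finite $\hat{E}(m,n)$-module spectrum built out of $\hat{E}(m,n)$ by coning off this regular sequence. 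Applying $[-, X]_*$-style functoriality, i.e., smashing $X$ with these cofiber sequences and taking $\hat{E}(m,n)$-homology, one gets a universal coefficient / Künneth spectral sequence
\[
\Tor^{R}_{**}\bigl(\hat{E}(m,n)^*(X), K(n)^*\bigr) \Longrightarrow K(n)^*(X).
\]
Since $\hat{E}(m,n)^*(X)$ is flat over $R$, the higher $\Tor$-terms vanish and the spectral sequence collapses to give the claimed isomorphism $K(n)^*(X) \cong \hat{E}(m,n)^*(X) \otimes_R K(n)^*$; evenness is inherited since $K(n)^*$ is concentrated in even degrees and tensoring an even module with an even module stays even. (Alternatively, one avoids spectral sequences entirely: coning off the regular sequence one element at a time, at each stage flatness guarantees that multiplication by $v_i$ is injective on the relevant quotient, so the long exact sequences break into short exact sequences and one identifies the result with the algebraic quotient by induction on the length of the sequence.)

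\medskip

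For the converse, suppose $K(n)^*(X)$ is evenly concentrated. We want to deduce that $M := \hat{E}(m,n)^*(X)$ is even and flat over $R$. The key input is that $R$ is a complete (Noetherian) regular local ring, so flatness of a complete module can be checked modulo the maximal ideal: a derived-complete $R$-module $M$ with $M \otimes^{\mathbb{L}}_R R/\mathfrak{m}$ concentrated in degree $0$ is automatically flat (indeed free, or pro-free, in the relevant topological sense). Concretely, I would again run the $v_i$-Bockstein cofiber sequences: since $K(n)^*(X)$ is even, one shows inductively (descending through the regular sequence) that $\hat{E}(m,n)^*(X)/(v_m, \ldots, v_{j})$ is even for each $j$, because the odd part of each intermediate theory injects into the odd part of the next quotient and ultimately into $K(n)^{\mathrm{odd}}(X) = 0$; completeness of $R$ then upgrades "even modulo $\mathfrak{m}$" to "even" for $M$ itself via the Milnor exact sequence controlling $\lim^1$ over the tower. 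Evenness in hand, the same Bockstein sequences show each $v_i$ acts injectively on the appropriate quotient of $M$, which by Proposition~\myref{prop:lflatcriterion}-type reasoning (or directly by the local criterion for flatness, $\Tor_1^R(M, R/\mathfrak{m}) = 0$) gives flatness of $M$ over $R$.

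\medskip

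The main obstacle I anticipate is bookkeeping the completeness/convergence issues rather than any conceptual difficulty: $\hat{E}(m,n)^*(X)$ is only pro-free / $L$-complete in general, not literally free, so one must be careful that the $\lim^1$-terms arising from the infinite regular sequence (when $m = 0$, the sequence $(p, v_1, \ldots, v_{n-1})$ is finite, but for the $K(n)$-local theory there are additional completeness subtleties) vanish, and that "flat" is interpreted in the appropriate category of complete modules. This is exactly the sort of technical point handled in the literature on $K(n)$-local homology (e.g. the work of Hovey--Strickland), so I would cite the relevant statement there rather than reprove it; everything else is the elementary regular-sequence induction sketched above. $\qed$
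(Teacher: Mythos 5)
Your proposal is correct and follows essentially the same route as the paper: the paper notes that $\hat{E}(m,n)^*(X)$ is automatically complete (the function spectrum being $K(n)$-local), identifies flat with pro-free via Barthel--Frankland, and then cites Hovey--Strickland, whereas you sketch the regular-sequence/Bockstein induction that underlies that cited result and defer the same completeness subtleties to Hovey--Strickland. No gaps; your extra detail is just an unwinding of the reference the paper uses.
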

\begin{proof}
We will give the proof for $m=0$ and Morava $E$-theory $E_n$; the argument and referenced results generalize easily to the case $m>0$. First note that $E_n^*(X)$ is automatically complete with respect to the maximal ideal of $E_n^*$, because the function spectrum $F(X,E_n)$ is $K(n)$-local. By \cite[Prop.~A.15]{frankland}, this means that $E_n^*(X)$ is flat as an $E_n^*$-module if and only if it is pro-free, i.e., if it is the completion of some free $E_n^*$-module. This reduces the claim to~\cite[Prop.~2.5]{hoveystrickland}.
\end{proof}

In order to relate $E(m,n)$-cohomology to $P(m)$-cohomology, we need to recall the construction of the completed tensor product, see for example~\cite{konoyagita}. 

\begin{definition}\mylabel{def:completetensor}
Let $k$ be a cohomology theory and consider two complete topological algebras $A,B$ over $k^*$, with filtrations given by a system of opens $\{A_r\}$ and $\{B_r\}$, respectively. The completed tensor product of $A$ with $B$ over $k^*$ is then defined as
\[A \hotimes_{k^*} B = \lim_r (A \otimes_{k^*} B)/J_r, \]
where $J_r$ is the ideal spanned by $A_r \otimes_{k^*} B + A \otimes_{k^*} B_r$.

If $X$ is space with skeletal filtration $\{\sk_rX\}$, then its cohomology $k^*(X)$ can be topologized using the system of fundamental neighbourhoods of $0$ given by $F_r(X) = \ker(k^*(X) \to k^*(\sk_rX))$. In the following, we will always consider this topology when working with completed tensor products. 
\end{definition}

\begin{remark}
In particular, note that the completed tensor product of even cohomology groups is also concentrated in even degrees. 
\end{remark}

The next result is known as Morava's little structure theorem; a proof in the generality we need is given in \cite[Prop.~1.9]{rwy2}, see also \cite{fsfg}.

\begin{proposition}\mylabel{prop:littlemorava}
If $X$ is a space, then 
\[E(m,n)^*(X) \cong P(m)^*(X) \hotimes_{P(m)^*} E(m,n)^* \]
for all $n\ge m >0$ or $m=0, n>0$, and similarly for the completed theories $\hat{E}(m,n)$. Moreover, if $P(m)^*(X)$ is Landweber flat, then $\hat{E}(m,n)^*(X)$ is flat over $\hat{E}(m,n)^*$ for all $n\ge m >0$.
\end{proposition}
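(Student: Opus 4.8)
The plan is to establish the isomorphism first for finite spectra, where it is an instance of Landweber exactness, and then to pass to a general space $X$ by taking the inverse limit over its skeleta, the completed tensor product being engineered precisely so that this limit step goes through. The one structural fact used throughout is that $E(m,n)^*$, and likewise $\hat E(m,n)^*$, is \emph{Landweber flat} as a $P(m)^*$-module: by \myref{prop:lflatcriterion} this reduces to checking that $v_k$ acts injectively on $P(k)^*\otimes_{P(m)^*}E(m,n)^* = E(m,n)^*/(v_m,\ldots,v_{k-1})$ for every $k\ge m$, which is immediate since this ring is a localization of a polynomial ring on $v_k,\ldots,v_n$ when $m\le k\le n$ and vanishes when $k>n$. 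Combined with the generalized Landweber filtration theorem \myref{thm:generalizedlft} --- reducing to the cyclic comodules $P(m)^*/I_{m,k}$ --- this shows that $E(m,n)^*\otimes_{P(m)^*}-$ is exact on the category of $P(m)^*P(m)$-comodules that are finitely presented over $P(m)^*$.

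\emph{The finite case.} For $Y$ a finite spectrum, $P(m)^*(Y)\cong P(m)_{-*}(DY)$ is such a finitely presented comodule. Building $Y$ up from its cells and comparing, for each cofiber sequence, the long exact sequence obtained by applying $E(m,n)^*(-)$ with the one obtained by applying $E(m,n)^*\otimes_{P(m)^*}P(m)^*(-)$ --- the latter being exact by the Landweber flatness just recorded --- a five-lemma induction with base case $Y=S^0$ (where the claim is the defining identity $E(m,n)^* = E(m,n)^*\otimes_{P(m)^*}P(m)^*$) yields $E(m,n)^*(Y)\cong E(m,n)^*\otimes_{P(m)^*}P(m)^*(Y)$, and identically for $\hat E(m,n)$. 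Since on a finite spectrum both the skeletal filtration of $P(m)^*(Y)$ and the given filtration of $E(m,n)^*$ are trivial in each degree past a point, the completed tensor product of \myref{def:completetensor} here agrees with the ordinary one, so this is the asserted statement for finite $Y$.

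\emph{Passage to a space.} Write $X=\colim_r\sk_rX$. Because $E(m,n)^*$ is Noetherian and $\hat E(m,n)^*$ is a complete local Noetherian ring, each tower $\{E(m,n)^*(\sk_rX)\}_r$ consists of finitely generated modules whose restriction images stabilize, so the Milnor sequence gives $E(m,n)^*(X)\cong\lim_rE(m,n)^*(\sk_rX)$ with vanishing $\lim^1$, and similarly for $P(m)$. Feeding in the finite case termwise gives $E(m,n)^*(X)\cong\lim_r\big(E(m,n)^*\otimes_{P(m)^*}P(m)^*(\sk_rX)\big)$. Now $P(m)^*(\sk_rX)$ agrees, up to the same vanishing $\lim^1$, with $P(m)^*(X)/F_r(X)$, and a cofinality argument --- the skeletal filtration of $E(m,n)^*$ being cofinal among its given opens $\{B_r\}$ --- identifies the defining inverse system $\{(P(m)^*(X)\otimes E(m,n)^*)/J_r\}$ of $P(m)^*(X)\hotimes_{P(m)^*}E(m,n)^*$ with the system on the right. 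This proves the isomorphism, and the $\hat E(m,n)$-version is the same.

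\emph{The flatness statement and the main obstacle.} Suppose $P(m)^*(X)$ is Landweber flat with $n\ge m>0$. By the isomorphism, $\hat E(m,n)^*(X)\cong P(m)^*(X)\hotimes_{P(m)^*}\hat E(m,n)^*$ is complete, so by \cite[Prop.~A.15]{frankland} (cf.\ \myref{lem:frometok}) it is flat over $\hat E(m,n)^*$ as soon as it is pro-free, for which the vanishing of $\Tor_1^{\hat E(m,n)^*}(\hat E(m,n)^*(X),K(n)^*)$ suffices. Tracing this through the base change isomorphism --- also in the form $E(n,n)=K(n)$ --- it is controlled by multiplication by $v_n$ on $P(n)^*\otimes_{P(m)^*}P(m)^*(X)$, which is injective by \myref{prop:lflatcriterion} and the hypothesis, so $\hat E(m,n)^*(X)$ is flat. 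I expect the genuinely delicate point to be not the finite case, whose content is just Landweber exactness of $E(m,n)$ over $P(m)$, but the limit step: reconciling the inverse limit of the termwise tensor products with the completed tensor product while keeping the $\lim^1$-terms and the two competing filtrations (skeletal versus the topology on $E(m,n)^*$) under control. This is exactly the bookkeeping the completed tensor product exists to absorb, and it is where essentially all the care is needed.
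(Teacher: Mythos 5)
Your finite-spectrum step is fine -- it is just Landweber exactness of $E(m,n)$ over $P(m)$, using \myref{prop:lflatcriterion} and \myref{thm:generalizedlft} -- but the passage from finite skeleta to $X$ contains a genuine gap, and it is precisely the point you flag as ``bookkeeping''. You deduce the Mittag--Leffler property (``restriction images stabilize'') and hence $\lim^1$-vanishing from Noetherianness of $E(m,n)^*$; but Noetherianness gives the ascending chain condition, whereas the images $\im\bigl(E(m,n)^k(\sk_sX)\to E(m,n)^k(\sk_rX)\bigr)$ form a \emph{descending} chain of submodules of a finitely generated module, which need not stabilize. For the ``and similarly for $P(m)$'' assertion the situation is worse, since $P(m)^*$ is not even Noetherian. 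Likewise the closing ``cofinality argument'' identifying $\lim_r\bigl(E(m,n)^*\otimes_{P(m)^*}P(m)^*(\sk_rX)\bigr)$ with $\lim_r\bigl(E(m,n)^*\otimes_{P(m)^*}P(m)^*(X)/F_r(X)\bigr)$ is unjustified: the two towers do have the same inverse limit, but they are only pro-isomorphic under a Mittag--Leffler hypothesis, and tensoring does not commute with inverse limits, so agreement of limits before tensoring gives nothing after tensoring. The filtration on the coefficients $E(m,n)^*$ is not the issue at all.

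That this is a real obstruction and not a technicality is shown by the paper's own \myref{rwyex}: your argument never uses that $X$ is a space. A connective finite-type spectrum such as $k(n)$ also has a skeletal filtration by finite spectra with finitely presented $P(m)$-cohomology, so your proof would apply verbatim with $m=n$ and yield $K(n)^*(k(n))\cong P(n)^*(k(n))\,\hotimes_{P(n)^*}K(n)^*=0$, contradicting $K(n)^*k(n)\neq 0$. In that example $\lim_r P(n)^*(\sk_r k(n))=0$ although the tower is not pro-zero, and tensoring with $K(n)^*$ levelwise before taking the limit produces something nonzero -- exactly the step you wave through. Some genuinely unstable input about the pro-system $\{P(m)^*(\sk_rX)\}$ for \emph{spaces} must enter; this is where the proofs behind \cite[Prop.~1.9]{rwy2} and \cite[Cor.~8.24]{fsfg} do their real work, and it is why the paper simply cites them rather than arguing as you do. (Your treatment of the flatness statement is in the same spirit as the paper's, which invokes \cite[Prop.~2.1]{rwy2} together with an extension of \cite[Thm.~A.9]{hoveystrickland}, but as written it also rests on the base change isomorphism and so inherits the gap.)
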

\begin{proof}
The base change formula for both $E(m,n)$ and $\hat{E}(m,n)$ is contained in the aforementioned references, see for example~\cite[Cor.~8.24]{fsfg}. To show the second part of the claim, observe that, by \cite[Prop.~2.1]{rwy2}, the assumption implies that the ideal $I_{m,n}$ acts regularly on $\hat{E}(m,n)^*(X)$. Then the obvious extension of~\cite[Thm.~A.9]{hoveystrickland} from $E_n$ to arbitrary $\hat{E}(m,n)$ gives flatness. 
\end{proof}

\begin{example}\mylabel{rwyex}
This result does not hold for arbitrary spectra instead of spaces. As a counterexample, consider the connective Morava $K$-theory spectrum $k(n)$. While $K(n)^*k(n) \ne 0$, it can be shown \cite[Rem.~4.9]{rwy} that $P(n)^*k(n)=0$, hence the result fails for $m=n$. 
\end{example}

\begin{lemma}\mylabel{basechangepk}
For any spectrum $X$ with $P(m)^*(X)$ Landweber flat and $n\ge m$, there is a natural isomorphism 
\[P(n)^*(X) \cong P(n)^* \otimes_{P(m)^*} P(m)^*(X).\]
\end{lemma}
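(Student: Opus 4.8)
The plan is to prove this by induction on $n \ge m$, using the cofiber sequences that define the $P(n)$ as successive quotients of $P(m)$. Recall that $P(n+1) = P(n)/v_n$ in the sense that there is a cofiber sequence of $P(m)$-module spectra
\[
\Sigma^{|v_n|} P(n) \lra{v_n} P(n) \lra{} P(n+1),
\]
and correspondingly, smashing with $X$ (or rather mapping out of $X$), a long exact sequence in cohomology
\[
\cdots \lra{} P(n)^{*-|v_n|}(X) \lra{v_n} P(n)^*(X) \lra{} P(n+1)^*(X) \lra{} \cdots.
\]
The base case $n = m$ is trivial. So assume the isomorphism $P(n)^*(X) \cong P(n)^* \otimes_{P(m)^*} P(m)^*(X)$ holds; I want to deduce the corresponding statement for $n+1$.

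The key input is that $P(m)^*(X)$ is Landweber flat. By \myref{prop:lflatcriterion}, this means $v_n$ acts injectively on $P(n)^* \otimes_{P(m)^*} P(m)^*(X)$ for every $n \ge m$, and hence — using the inductive isomorphism — $v_n$ acts injectively on $P(n)^*(X)$. Therefore the long exact sequence above breaks into short exact sequences
\[
0 \lra{} P(n)^{*-|v_n|}(X) \lra{v_n} P(n)^*(X) \lra{} P(n+1)^*(X) \lra{} 0,
\]
which identifies $P(n+1)^*(X)$ with the cokernel of multiplication by $v_n$ on $P(n)^*(X)$, i.e. with $P(n)^*(X)/v_n$. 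On the algebraic side, since $v_n$ is a non-zero-divisor on the $P(n)^*$-module $P(n)^* \otimes_{P(m)^*} P(m)^*(X)$, tensoring the short exact sequence $0 \to \Sigma^{|v_n|}P(n)^* \xrightarrow{v_n} P(n)^* \to P(n+1)^* \to 0$ of $P(n)^*$-modules with $P(n)^* \otimes_{P(m)^*} P(m)^*(X)$ over $P(n)^*$ gives $P(n+1)^* \otimes_{P(n)^*}(P(n)^* \otimes_{P(m)^*}P(m)^*(X)) \cong (P(n)^*\otimes_{P(m)^*}P(m)^*(X))/v_n$, and the left side simplifies to $P(n+1)^* \otimes_{P(m)^*} P(m)^*(X)$ by associativity of the tensor product. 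Comparing the two cokernel computations via the map induced by the ring map $P(n)^* \to P(n+1)^*$ and the inductive isomorphism finishes the step; naturality in $X$ is clear since everything is built from maps of spectra and the connecting homomorphisms.

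The main obstacle I anticipate is bookkeeping rather than conceptual: one must make sure the $v_n$-action appearing topologically (via the cofiber sequence) agrees with the algebraic $v_n$-action used in \myref{prop:lflatcriterion}, and that the inductive hypothesis is strong enough — in particular, I should carry along in the induction the statement that $v_n$ acts injectively on $P(n)^*(X)$ for all relevant $n$, which is exactly what Landweber flatness of $P(m)^*(X)$ supplies uniformly. A secondary subtlety is that $P(n)$ is only a $P(m)$-module spectrum (not necessarily a ring spectrum when $n > m > 0$ might require care, though $P(n)$ does carry a module structure over $P(m)$), so the isomorphism should be formulated and proved purely at the level of $P(m)^*$-modules, with the $P(n)^*$-module structure on the right-hand side coming from the ring structure on $P(n)^*$ itself. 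One might alternatively phrase the whole argument at once using the generalized Landweber filtration theorem \myref{thm:generalizedlft}, but the inductive cofiber-sequence argument is cleaner and avoids finiteness hypotheses on $P(m)^*(X)$.
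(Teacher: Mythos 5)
Your proposal is correct and follows essentially the same route as the paper: an induction on $n$ using the cofiber sequence defining $P(n+1)$ from $P(n)$, with Landweber flatness via \myref{prop:lflatcriterion} supplying injectivity of $v_n$ so that the long exact sequence splits and $P(n+1)^*(X)$ is identified with the cokernel of $v_n$, i.e.\ with $P(n+1)^* \otimes_{P(m)^*} P(m)^*(X)$. The extra algebraic bookkeeping you spell out (right-exactness of the tensor product and associativity of base change) is implicit in the paper's final chain of isomorphisms.
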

\begin{proof}
We prove this by induction on $n \ge m$, the statement for $n = m$ being trivial. Now let $n \ge m$ and recall that \myref{prop:lflatcriterion} shows that $P(m)^*(X)$ Landweber flat implies that 
\[v_n\colon P(n)^* \otimes_{P(m)^*} P(m)^*(X) \longrightarrow P(n)^* \otimes_{P(m)^*} P(m)^*(X)\]
is injective for all $n\ge m$. By construction of $P(n+1)$, there is an exact triangle
\begin{equation}\label{eq:pm}
\xymatrix{P(n)^*(X) \ar[rr]^{\rho_n} && P(n+1)^*(X) \ar@{-->}[ld] \\
& P(n)^*(X) \ar[lu]^{v_n},}
\end{equation}
where the dashed arrow is the connecting homomorphism of degree 1. Since $P(n)^*(X) \cong P(n)^* \otimes_{P(m)} P(m)^*(X)$ by the induction hypothesis, our assumption combined with \eqref{eq:pm} shows that $\rho_n$ is surjective and that 
\begin{align*}
P(n+1)^*(X) & \cong \coker(v_n\colon P(n)^*(X) \to P(n)^*(X)) \\
& \cong \coker(v_n\colon P(n)^* \otimes_{P(m)^*} P(m)^*(X) \to P(n)^* \otimes_{P(m)^*} P(m)^*(X)) \\
& \cong P(n+1)^* \otimes_{P(m)^*} P(m)^*(X),
\end{align*}
thereby proving the claim. 
\end{proof}
 
\begin{corollary}\mylabel{cor:knfrombp}
If $X$ is a space with Landweber flat $P(m)$-cohomology, then
\[
K(n)^*(X) \cong K(n)^* \hotimes_{P(n)^*}(P(n)^* \otimes_{P(m)^*} P(m)^*(X))
\]
for all $n\ge m > 0$ or $m=0, n>0$. In particular, under the same assumptions, if $P(m)^*(X)$ is even, then $K(n)^*(X)$ is even as well. 
\end{corollary}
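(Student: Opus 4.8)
The plan is to deduce this directly from the two base change statements just proved, Lemma~\myref{basechangepk} and Proposition~\myref{prop:littlemorava}, by specializing their parameters.

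First I would record that, since $K(n)$ is already $K(n)$-local, we have $K(n) = E(n,n) = \hat{E}(n,n)$. Hence Proposition~\myref{prop:littlemorava}, applied with its parameter ``$m$'' taken to be $n$ (legitimate because $n > 0$, so we are in the case $n \ge n > 0$), supplies a natural isomorphism
\[
K(n)^*(X) \cong P(n)^*(X) \hotimes_{P(n)^*} K(n)^*.
\]
Note that this step needs no flatness hypothesis and holds for every space $X$; the Landweber flatness of $P(m)^*(X)$ enters only through the next step.

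Second, the hypothesis that $P(m)^*(X)$ is Landweber flat lets me invoke Lemma~\myref{basechangepk} (valid since $n \ge m$), giving $P(n)^*(X) \cong P(n)^* \otimes_{P(m)^*} P(m)^*(X)$; this also equips the right-hand side with the skeletal topology needed to form the completed tensor product below. Substituting into the previous display and using that $\hotimes_{P(n)^*}$ is symmetric yields
\[
K(n)^*(X) \cong K(n)^* \hotimes_{P(n)^*}\bigl(P(n)^* \otimes_{P(m)^*} P(m)^*(X)\bigr),
\]
which is the asserted isomorphism. For the final clause, I would observe that $P(n)^*$ (a quotient of $BP^*$) and $K(n)^* = \F_p[v_n^{\pm 1}]$ are concentrated in even degrees, and $P(m)^*(X)$ is even by assumption; therefore $P(n)^* \otimes_{P(m)^*} P(m)^*(X)$ is even, and by the remark following Definition~\myref{def:completetensor} its completed tensor product with $K(n)^*$ over $P(n)^*$ is again even. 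Hence $K(n)^*(X)$ is even.

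Since each step is a direct citation of an already established result, there is no real obstacle here; the only points deserving a moment's attention are the identification $K(n) = \hat{E}(n,n)$ that allows Proposition~\myref{prop:littlemorava} to be used with parameter $n$, and the bookkeeping that the hypotheses ($n > 0$ for the structure theorem, $n \ge m$ for Lemma~\myref{basechangepk}) are satisfied in both ranges of the statement.
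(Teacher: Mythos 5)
Your proof is correct and follows essentially the same route as the paper: the paper likewise combines Lemma~\myref{basechangepk} (using the Landweber flatness hypothesis) with Proposition~\myref{prop:littlemorava} applied with its parameter equal to $n$, so that $E(n,n)=K(n)$, and the evenness clause follows from evenness of completed tensor products exactly as you say. The only difference is the immaterial order in which the two citations are applied.
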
 
\begin{proof}
Let $X$ be a space such that $P(m)^*(X)$ is Landweber flat. From the previous lemma we get
\begin{equation}\label{eq:pmfrombp}
P(n)^*(X) \cong P(n)^* \otimes_{P(m)^*} P(m)^*(X)
\end{equation}
for all $n \ge m$. But \myref{prop:littlemorava} with $m=n$ shows that 
\[K(n)^*(X) \cong K(n)^* \hotimes_{P(n)^*} P(n)^*(X)\]
which together with \eqref{eq:pmfrombp} yields the claim.
\end{proof}

\begin{remark}
In~\cite[Eq.~(1.8)]{konoyagita}, Kono and Yagita show that $K(n)^*(X)$ is isomorphic to $K(n)^*\hotimes_{P(m)^*}P(m)^*(X)$ for a space $X$ with $BP^*(X)$ Landweber flat. We do not know if their methods generalize to imply that $K(n)^*(X) \cong K(n)^*\hotimes_{P(m)^*}P(m)^*(X)$ in the situation of \myref{cor:knfrombp}. 
\end{remark}

\section{The splitting and some consequences}

In this section, we first prove that $P(m)$ is a retract of an infinite product of $E$-theories, following closely the argument given in~\cite[Thm.~3.1]{hoveycsc}. Combined with the base change results of the previous section, this allows us to generalize the main structural results of~\cite{rwy,rwy2} from spaces to spectra. Since we do not require any analysis of the Atiyah--Hirzebruch spectral sequence for this, our arguments are rather short in comparison. In Section~\ref{sec:kash}, we construct an analogue of Rezk's algebraic approximation functor $\mathbb{T}^{E_n}$ for $BP_{p}$.  These functors are then used to prove a version of Kashiwabara's main result~\cite{kashiwabarabpqx} about the $BP_{p}$-cohomology of free commutative algebras. Finally, we give several equivalent characterizations of good groups, thereby showing that this notion is really a global one.

\subsection{$BP$ as a retract of a product of Morava $E$-theories}

Let $I\colon \Sp \to \Sp$ denote Brown--Comenetz duality, which is a lift of Pontryagin duality for abelian groups to the category of spectra. To be precise, if $X$ is a spectrum, then $I$ represents the functor 
\[X \mapsto \Hom(\pi_{0}X,\Q/\Z_{(p)})\]
and we set $IX = F(X,I)$. In \cite[Prop.~5.2]{margolisbook}, Margolis proves a non-existence result for f-phantom maps with target of the form $IX$; recall that a map $Y \to X$ of spectra is called f-phantom if $Z \lra{} Y \lra{} X$ is null for all finite spectra $Z$.

\begin{lemma}[Margolis]\mylabel{margolislemma}
Any $f$-phantom map with target $IX$ must be null. 
\end{lemma}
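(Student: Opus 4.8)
The plan is to reduce the statement to Margolis's original result \cite[Prop.~5.2]{margolisbook} by identifying the relevant class of maps. First I would recall what an $f$-phantom map is: a map $f\colon Y \to X$ such that the composite $Z \to Y \xrightarrow{f} X$ is null for every \emph{finite} spectrum $Z$. This is formally weaker than being phantom (where one tests against all finite \emph{CW-spectra} mapping in, equivalently against all maps from arbitrary spectra that factor through finite ones), but in fact the two notions agree: since every spectrum is a filtered homotopy colimit of finite spectra, a map out of $Y$ vanishes on all finite spectra mapping to $Y$ if and only if it vanishes on all maps from compact objects, and by a standard $\lim^1$-argument this is precisely the condition that $f$ be detected only by $\lim^1$ of mapping groups. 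So I would first make the (standard) observation that the $f$-phantom maps $Y \to X$ are exactly the phantom maps $Y \to X$.

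Next I would invoke the duality. A map $Y \to IX = F(X,I)$ is adjoint to a map $Y \wedge X \to I$, and since $I$ is the Brown--Comenetz dual of the sphere, $\pi_0 F(Y \wedge X, I) = \Hom(\pi_0(Y \wedge X), \Q/\Z_{(p)})$. The key point is that the group $\Q/\Z_{(p)}$ is injective as a $\Z_{(p)}$-module, so that the functor $W \mapsto \Hom(\pi_0 W, \Q/\Z_{(p)})$ is exact; this is exactly what Margolis exploits to show that phantom maps (equivalently $f$-phantom maps, by the previous paragraph) into $IX$ vanish. Concretely, a phantom map $Y \to IX$ corresponds, under Brown--Comenetz duality, to an element of $\lim^1$ of a tower of $\Hom(-,\Q/\Z_{(p)})$-groups indexed by the finite subspectra of $Y \wedge X$ (or of $Y$); since each term is a cofiltered limit applied to an injective module and the transition maps are surjective (the tower is a tower of quotients of $\pi_0(Y\wedge X)$ dualized, hence Mittag--Leffler), this $\lim^1$ vanishes.

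Putting these together: given an $f$-phantom map $g\colon Z \to IX$, the first step upgrades it to a genuine phantom map, and the second step — which is the content of \cite[Prop.~5.2]{margolisbook} — shows that a phantom map with target a Brown--Comenetz dual is null. Hence $g$ is null, which is the claim.

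The main obstacle, and the only real content beyond citing Margolis, is the first step: verifying that $f$-phantom and phantom agree in the relevant range, i.e. that testing against finite spectra suffices. I expect this to follow from writing $Y$ as the homotopy colimit of its finite subspectra $Y_\alpha$, using that $[{-},X]$ sends this to a $\lim^1$-exact sequence
\[
0 \longrightarrow {\lim_\alpha}^1 [\Sigma Y_\alpha, X] \longrightarrow [Y,X] \longrightarrow \lim_\alpha [Y_\alpha, X] \longrightarrow 0,
\]
so that $f$-phantom maps are precisely the image of the $\lim^1$-term, and this description manifestly does not depend on whether one tests against finite spectra or against all compact objects mapping in. Once that identification is in hand, the rest is exactly Margolis's argument, and no further work is needed.
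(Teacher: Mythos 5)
Your bottom line matches the paper exactly: the paper offers no argument for this lemma at all — it is quoted from Margolis with the same citation \cite[Prop.~5.2]{margolisbook} you invoke, and that citation is already stated there for \emph{f-phantom} maps into $IX$, so your preliminary step upgrading f-phantom to phantom is not even needed. The issue is with the supporting argument you sketch, which you yourself flag as the only real content. The parenthetical justification for the $\lim^1$-vanishing is wrong as stated: the groups $[\Sigma Y_\alpha, IX]\cong\Hom(\pi_{-1}(Y_\alpha\wedge X),\Q/\Z_{(p)})$ are not ``quotients of $\pi_0(Y\wedge X)$ dualized,'' and dualizing a filtered system of finite subspectra does not give surjective transition maps — the dual of $A\to B$ is surjective only when $A\to B$ is injective (injectivity of $\Q/\Z_{(p)}$ lets you extend homomorphisms, not lift them along maps with kernel), so the Mittag--Leffler claim is unjustified. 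The $\lim^1$ in question does vanish, but for a different reason (exactness of $\Hom(-,\Q/\Z_{(p)})$ converts the exact filtered colimit into a derived-limit statement), and moreover the Milnor sequence you write down is only directly available for sequential homotopy colimits, so identifying f-phantom maps with a $\lim^1$-term for an arbitrary spectrum $Y$ needs more care than you allow.

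More to the point, none of this machinery is needed, and the clean argument is the one your citation encodes: by the defining property of $I$ and the adjunction, $[Y,IX]\cong[Y\wedge X,I]\cong\Hom(\pi_0(Y\wedge X),\Q/\Z_{(p)})$ on the nose, with no $\lim^1$ correction term; and every element of $\pi_0(Y\wedge X)$ lies in the image of $\pi_0(Z\wedge X)$ for some finite spectrum $Z$ mapping to $Y$, since $Y$ is a filtered homotopy colimit of finite spectra and the sphere is compact. Hence an f-phantom map $Y\to IX$ corresponds to a homomorphism $\pi_0(Y\wedge X)\to\Q/\Z_{(p)}$ that vanishes on a generating set, so it is zero and the map is null. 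If you want to say anything beyond the citation (which is all the paper does), that is the argument to give; your $\lim^1$/Mittag--Leffler detour should be either repaired along these lines or dropped.
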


The next result is a natural generalization of \cite[Thm.~3.1]{hoveycsc}.

\begin{theorem}\mylabel{thm:generalretract}
Suppose that $D$ is a $p$-complete Landweber flat $P(m)$-module spectrum. If there is a morphism $f\colon P(m) \lra{} D$ such that the induced maps
\[\xymatrix{P(m)_*/I_{m,n} \ar[r] & D_*/I_{m,n}}\]
are injective for all $n>m$, then $f$ is a split inclusion of spectra.
\end{theorem}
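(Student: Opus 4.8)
The plan is to show that the cofiber of $f$ carries no obstruction to splitting by reducing everything to f-phantom maps and then invoking Margolis's lemma (\myref{margolislemma}). The key point is that $P(m)$ is, up to a limit issue, built from finite spectra in a controlled way, so maps out of it into a suitable target are detected on finite subspectra.

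First I would recall that $P(m)$ can be written as a homotopy colimit (or rather a filtered system) of finite spectra, or more precisely that $P(m) = \Colim{\alpha} P(m)_\alpha$ where the $P(m)_\alpha$ are the finite Postnikov-type pieces coming from the Baas--Sullivan/cell construction; consequently $DP(m)$, the Spanier--Whitehead dual, is a homotopy limit of finite spectra and $I(DP(m)) \simeq \Colim{\alpha} I(DP(m)_\alpha)$ is a filtered colimit of (finite) Brown--Comenetz duals. The strategy, following Hovey, is to produce a map $g\colon D \lra{} IS^0 \wedge (\text{something}) $ — more precisely to first dualize. Consider $F(f, I(DP(m)))$ or rather build a candidate splitting $s\colon D \lra{} P(m)$ as follows: by Brown--Comenetz / Anderson-type self-duality considerations for $P(m)$, constructing a retraction $s$ with $s \circ f \simeq \id$ is equivalent to constructing a lift in a certain diagram whose obstruction lies in a group of f-phantom maps. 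So the two steps are: (i) identify the obstruction to splitting with an f-phantom map into a target of the form $IX$; (ii) check the hypothesis — injectivity of $P(m)_*/I_{m,n} \to D_*/I_{m,n}$ for all $n > m$ — forces that obstruction to vanish on all finite spectra, hence it is f-phantom with target $IX$, hence null by \myref{margolislemma}.

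Concretely, I expect the argument to run: since $D$ is $p$-complete and Landweber flat, and since $P(m)$ is Landweber flat, $f$ induces a map of $P(m)^*P(m)$-comodules. On a finite spectrum $Z$, $\pi_*(P(m) \wedge Z)$ is finitely presented over $P(m)_*$, so by the Landweber filtration theorem \myref{thm:generalizedlft} it is filtered with quotients shifts of $P(m)^*/I_{m,n_i}$; the injectivity hypothesis then shows by induction up the filtration that $\pi_*(f \wedge Z)\colon \pi_*(P(m)\wedge Z) \to \pi_*(D \wedge Z)$ is (split) injective — here one uses that each $D_*/I_{m,n}$ is a flat, hence injectivity is preserved under the relevant tensoring, and that $D$ being Landweber flat gives exactness of $- \otimes_{P(m)^*} D_*$. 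Dualizing, for $Z$ finite the map $F(Z, \id_D) $ admits the structure needed so that the composite $DZ \to P(m) \to D$ recovers $DZ \to D$; assembling over the finite subspectra of $P(m)$ (equivalently, over the tower whose limit is $P(m)$) gives a map $s\colon D \to \holim P(m)_\alpha = P(m)$, well-defined up to an element of $\lim^1$ of mapping groups. The failure of $s \circ f = \id$ and the $\lim^1$-ambiguity are both f-phantom: they vanish after smashing with any finite $Z$ by the injectivity just established. Finally, the target is of the form $IX$ — this is where $p$-completeness of $D$ enters, identifying $D$ (or the relevant mapping spectrum) with a Brown--Comenetz dual up to the pieces we can control — so \myref{margolislemma} kills the obstruction and $f$ is split.

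The main obstacle, I expect, is step (ii): rigorously identifying the obstruction group with a group of f-phantom maps into a target of Brown--Comenetz type, so that Margolis's lemma applies. One has to be careful that $P(m)$ (especially $P(0) = BP_p$) is genuinely a limit of finite spectra in the appropriate sense, that the $\lim^1$-term and the non-splitting obstruction live in $[D, \Sigma^? IX]$ for a suitable $X$, and that the hypothesis ``$P(m)_*/I_{m,n} \hookrightarrow D_*/I_{m,n}$ for all $n>m$'' — via the Landweber filtration theorem applied to $P(m)\wedge Z$ for $Z$ finite — really does imply $P(m)\wedge Z \to D\wedge Z$ is split injective on homotopy, which is what makes the obstruction f-phantom. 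The rest (Landweber flatness being closed under the constructions used, $p$-completeness giving the $IX$ shape) is bookkeeping built on \myref{lem:lflatproducts}, \myref{prop:lflatcriterion}, and \myref{thm:generalizedlft}, closely following Hovey's original argument in \cite[Thm.~3.1]{hoveycsc}.
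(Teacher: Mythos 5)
Your auxiliary ingredients are exactly the right ones, and the heart of your step (ii) does match the paper: by Spanier--Whitehead duality, the fiber inclusion $F=\fib(f)\to P(m)$ is f-phantom if and only if $P(m)_*(Z)\to D_*(Z)$ is injective for every finite $Z$, and this injectivity follows from the Landweber filtration theorem (\myref{thm:generalizedlft}), the hypothesis $P(m)_*/I_{m,n}\hookrightarrow D_*/I_{m,n}$, and Landweber exactness of $D$ (which lets you tensor the filtration with $D_*$ and induct; note only injectivity comes out of this, not the ``split injectivity'' you assert, and none is needed). The gap is in how you turn f-phantomness into a splitting. The obstruction to a retraction is not something living in $[D,\Sigma^{?}IX]$ and there is no need (nor any way, in this generality) to identify $D$ or a mapping spectrum out of $D$ with a Brown--Comenetz dual: $D$ is an arbitrary $p$-complete Landweber flat module spectrum (e.g.\ a product of completed Johnson--Wilson theories) and is typically not of the form $IX$. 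The correct observation is that the obstruction is the fiber inclusion itself: from the exact sequence $[D,P(m)]\to[P(m),P(m)]\to[F,P(m)]$, the identity lifts to a retraction precisely when $F\to P(m)$ is null, and Margolis (\myref{margolislemma}) applies because the \emph{target} $P(m)$ is a double Brown--Comenetz dual --- for $m>0$ its homotopy is degreewise finite, and for $m=0$ one uses $BP_p=I^2BP$ since $\pi_*BP$ is degreewise finitely generated. Your plan of assembling a section $s\colon D\to P(m)$ over finite pieces of $P(m)$ with $\lim^1$ control has no actual mechanism: the injectivity you establish concerns maps \emph{from} finite spectra, and it produces no natural maps $D\to P(m)_\alpha$ to assemble.

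There is also a wrinkle in the case $m=0$ that your outline does not address and that the paper handles explicitly: the Landweber filtration theorem and the Brown--Comenetz identification are applied to $BP$, not to $BP_p$, so one first shows $\fib(BP\to D)\to BP$ is f-phantom, concludes that its composite into $BP_p=I^2BP$ is null, and then passes to $\fib(BP_p\to D)$ using the $p$-completeness of $D$ together with the fact that $\cof(BP\to BP_p)$ is $M(p)$-acyclic while $BP_p$ is $M(p)$-local, so the residual map factors through a spectrum that can only map trivially to $BP_p$. Without this comparison (or some substitute), the f-phantom statement you prove lives at the wrong spectrum and does not yet kill the obstruction for $BP_p$.
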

\begin{proof}
We will prove the result for $m=0$ using the classical Landweber filtration theorem. The argument in the case $m>0$ uses~\myref{thm:generalizedlft}, but is easier as $P(m)$ is already $p$-complete. Let $F$ be the fiber of the map $BP_p \to D$ and 
\[F' = \fib(BP \lra{} D);\]
we need to check that $F \lra{} BP_p$ is null. Since $BP$ has degreewise finitely generated homotopy groups, $BP_p = I^2BP$. In order to show the claim, it thus suffices by \myref{margolislemma} to check that $F \lra{} BP_p$ is f-phantom. There is a commutative diagram 
\[\xymatrix{F' \ar[r] \ar[d] & BP \ar[r] \ar[d] & D \ar[d]^{\simeq} \\
F \ar[r] \ar[d] & BP_p \ar[r] \ar[d] & D \ar[d] \\
C_1 \ar[r] & C_2 \ar[r] & C_3}\]
with all rows and columns being cofiber sequences. Since $D$ is $p$-complete, we get $C_3=0$ and thus that the top row in the following diagram is a cofiber sequence
\[\xymatrix{F' \ar[r] \ar[d] & F \ar[r] \ar[d] & \cof(BP \lra{} BP_p) \simeq C_1 \ar@{-->}[ld]\\
BP \ar[r] & BP_p.}\]
If we can show that $F' \lra{} BP$ is f-phantom, then the composite $F' \lra{} BP \lra{} BP_p$ is also f-phantom, hence null by \myref{margolislemma}, so the indicated factorization in the right triangle exists. Because $\cof(BP \lra{} BP_p)$ is acyclic with respect to the mod $p$ Moore spectrum $M(p)$ and $BP_p$ is $M(p)$-local, the dashed map must be null, hence so is $F \lra{} BP_p$.

Therefore, Spanier--Whitehead duality reduces the claim to proving that, for $X$ a finite spectrum, the induced map 
\[\xymatrix{BP_*(X) \ar[r] & D_*(X)}\]
is injective. By the Landweber filtration theorem, the $BP_*BP$-comodule $BP_*(X)$ admits a finite filtration by comodules $F_i$ with filtration quotients $F_{i+1}/F_i \cong BP_*/I_{n_i}$ (up to suspensions). Since $BP_*/I_{n_i} \lra{} D_*/I_{n_i}$ is injective by assumption, Landweber exactness of $D$ and the snake lemma applied to the commutative diagram
\[\xymatrix{0 \ar[r] & F_i \ar[r] \ar[d] & F_{i+1} \ar[r] \ar[d] & BP_*/I_{n_i} \ar[r] \ar[d] & 0 \\
0 \ar[r] & F_i \otimes_{BP_*} D_* \ar[r] & F_{i+1} \otimes_{BP_*} D_* \ar[r] & BP_*/I_{n_i} \otimes_{BP_*} D_* \ar[r] & 0}\]
gives the claim inductively. 
\end{proof}

\begin{remark}
Note that the splitting constructed above is only additive, and not a map of ring spectra. 
\end{remark}

\begin{corollary}\mylabel{cor:bpretract}
The natural diagonal map 
\[\xymatrix{P(m) \ar[r] & \prod_{n \in I} \hat{E}(m,n)}\]
is a split inclusion for any infinite set $I$ of integers greater than $m$. In particular, $BP_p$ is a retract of $\prod_{n >0} E_n$. 
\end{corollary}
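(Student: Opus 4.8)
The plan is to deduce this from \myref{thm:generalretract}, applied with $D=\prod_{n\in I}\hat{E}(m,n)$ and $f$ the product over $n\in I$ of the canonical maps $P(m)\to E(m,n)\to\hat{E}(m,n)$, where the first map comes from the construction of $E(m,n)$ out of $P(m)$ and the second is $K(n)$-localization. So there are three things to check: that $D$ is $p$-complete, that $D$ is Landweber flat over $P(m)^*$, and that the induced maps $P(m)_*/I_{m,n}\to D_*/I_{m,n}$ are injective for every $n>m$. The first two are formal given the machinery already in place.

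For $p$-completeness I would use that each $\hat{E}(m,n)$ is $K(n)$-local with $n\geq 1$, and that every $K(n)$-local spectrum is $M(p)$-local (equivalently $p$-complete): indeed, an $M(p)$-acyclic spectrum $Y$ has $p\colon Y\to Y$ an equivalence, so multiplication by $p$ on $K(n)\wedge Y$ is simultaneously null (as $p=0$ in $\pi_0 K(n)$) and an equivalence, forcing $K(n)\wedge Y\simeq 0$; hence $K(n)$-local spectra are $M(p)$-local, and $M(p)$-local spectra are closed under products. For Landweber flatness, \myref{lem:lflatproducts} reduces the claim to showing each $\hat{E}(m,n)_*$ is Landweber flat over $P(m)^*$, which one verifies against \myref{prop:lflatcriterion}: for $k>n$ we have $P(k)^*\otimes_{P(m)^*}\hat{E}(m,n)^*=\hat{E}(m,n)^*/I_{m,k}=0$, since $v_n$ is a unit of $\hat{E}(m,n)^*$ lying in $I_{m,k}$; for $k=n$, multiplication by the unit $v_n$ is an isomorphism; and for $m\le k<n$ the quotient $\hat{E}(m,n)^*/I_{m,k}$ is a completed polynomial ring on which $v_k$ is a non-zero-divisor.

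The remaining point is injectivity of $P(m)_*/I_{m,n}\to D_*/I_{m,n}$ for each fixed $n>m$. Since $I_{m,n}$ is finitely generated one has $I_{m,n}\bigl(\prod_k M_k\bigr)=\prod_k I_{m,n}M_k$ for any family of modules, so $D_*/I_{m,n}\cong\prod_{k\in I}\hat{E}(m,k)^*/I_{m,n}$ and the map sends a class to the tuple of its images. Now $P(m)^*/I_{m,n}$ is a polynomial ring on $v_n,v_{n+1},\dots$ over $\F_p$ (over $\Z_p$, with $p$ then killed, when $m=0<n$), so a nonzero class $x$ involves only finitely many variables $v_n,\dots,v_N$; since $I$ is infinite, hence unbounded, I can pick $k\in I$ with $k>N$. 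As $P(m)^*\to\hat{E}(m,k)^*$ sends $v_i\mapsto v_i$ for $i\le k$ and $v_i\mapsto 0$ for $i>k$, the image of $x$ in $\hat{E}(m,k)^*/I_{m,n}$ is the same polynomial, now viewed inside the subring $\F_p[v_n,\dots,v_{k-1}]$ of the $I_{m,n}$-adic completion $\hat{E}(m,k)^*/I_{m,n}$; since a Noetherian domain embeds into its adic completions, this image is nonzero, so $x$ has nonzero image in $D_*/I_{m,n}$. With these three verifications, \myref{thm:generalretract} shows $f$ is a split inclusion of spectra.

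For the final assertion I would specialize to $m=0$ and rerun the argument with $E_n$ in place of $\hat{E}(0,n)$: $E_n$ is $K(n)$-local, hence $p$-complete; $E_n^*=W(\kappa)\llbracket u_1,\dots,u_{n-1}\rrbracket[u^{\pm1}]$ is Landweber flat over $BP_p^*$ by the identical computation, using $v_i\mapsto u_iu^{1-p^i}$ for $i<n$, $v_n\mapsto u^{1-p^n}$ a unit, and $v_i\mapsto 0$ for $i>n$; and injectivity of $BP_p^*/I_n\to\prod_k E_k^*/I_n$ follows exactly as above, since the substitution $v_i\mapsto u_iu^{1-p^i}$ carries distinct monomials to distinct monomials. (Alternatively, $\hat{E}(0,n)$ is a retract of $E_n$ as $\hat{E}(0,n)$-module spectra, so this case follows formally from the first.) I expect the only step needing genuine care to be this injectivity step — commuting the finitely generated ideal quotient past the infinite product and then, for each fixed homogeneous class, reducing to a single sufficiently large height in $I$; everything else is a direct application of \myref{thm:generalretract}, \myref{lem:lflatproducts}, and \myref{prop:lflatcriterion}.
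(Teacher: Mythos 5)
Your proposal is correct and follows essentially the same route as the paper: both deduce the splitting from \myref{thm:generalretract} applied to $D=\prod_{n\in I}\hat{E}(m,n)$, checking $p$-completeness via $K(n)$-locality and closure of local objects under products, Landweber flatness via \myref{lem:lflatproducts}, and injectivity of $P(m)_*/I_{m,n}\to D_*/I_{m,n}$. The only difference is that you spell out details the paper treats as known or ``clear'' --- the explicit verification of the flatness criterion, the finitely-generated-ideal/product commutation and the choice of a large $k\in I$ for injectivity, and the passage from $\hat{E}(0,n)$ to $E_n$ --- all of which are sound.
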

\begin{proof}
It suffices to check that $D_I=\prod_{n \in I} \hat{E}(m,n)$ satisfies the conditions of \myref{thm:generalretract}. The spectra $\hat{E}(m,n)$ are Landweber exact and $K(n)$-local, hence $p$-complete. By \myref{lem:lflatproducts}, $D_I$ is also Landweber flat, and because products of local objects are local, it is also $p$-complete. Since the standard maps $P(m) \lra{} \hat{E}(m,n)$ clearly make 
\[\xymatrix{P(m)_*/I_{m,k} \ar[r] & D_*/I_{m,k} = \prod_{n \in I} (\hat{E}(m,n)^*/I_{m,k})}\]
injective for all $k\ge m$, the claim follows. 
\end{proof}

\begin{remark}
The infinite product in \myref{cor:bpretract} cannot be replaced by a coproduct. In fact, every map  
\[\xymatrix{f\colon\bigvee_{n\in I} E_n \ar[r] &P(m)}\]
is null, for any $m\ge 0$. To see this, note that $P(m)$ is $H\F_p$-local, while $E_n$ is $H\F_p$-acyclic for all $n$ because the Morava $K$-theories are, hence $f$ must be null. 
\end{remark}

\begin{remark}
Combining \cite[Thm.~3.1]{hoveycsc} with \cite[Thm.~B]{hovsadofsky} yields a splitting similar to \myref{cor:bpretract}: The map
\begin{equation}\label{eq:hssplitting}
\xymatrix{BP_p \ar@{^{(}->}[r] & \prod_{n > 0}L_{K(n)}\left( \bigvee_{r \in S(n)} \Sigma^{r}L_{K(n)}E(n) \right)}
\end{equation}
is a split inclusion, where the $S(n)$ are explicit indexing sets of even integers. The spectrum on the right side is, however, much bigger than $\prod_{n>0} E_n$ and less convenient for taking cohomology due to the infinite wedge. In some sense, our splitting is as small as possible when using these techniques, but structural properties of $BP_p$ could also be deduced from the spectrum in \eqref{eq:hssplitting}.

Furthermore, since $MU$ splits $p$-locally into a wedge of even suspensions of $BP$, there are similar results for $MU_{(p)}$. 
\end{remark}

\subsection{Brown--Peterson cohomology and Morava $K$-theory}

In this section, we draw some consequences of \myref{cor:bpretract} on the kind of information the Morava $K$-theories detect about the $P(m)$-cohomology of spectra. These results generalize the main structural theorems of \cite{rwy} from spaces to spectra. 

\begin{remark}
While the authors remark in \cite[p.~2]{rwy} that their results are \emph{strictly unstable}, their counterexample refers to \myref{rwyex} and hence only affects \myref{prop:littlemorava}, which is not needed in our approach. 
\end{remark}

We start with a lemma that allows us to pass between ordinary and $p$-complete Brown--Peterson cohomology. 

\begin{lemma}\mylabel{lem:bpvsbpp}
If $X$ is a bounded below spectrum of finite type such that either
\begin{enumerate}
 \item $X$ is a suspension spectrum which is rationally equivalent to $S^0$, or
 \item $X$ is a spectrum which is rationally acyclic,
\end{enumerate}
then $BP^*(X)$ is even and Landweber flat if and only if so is $BP_p^*(X)$. 
\end{lemma}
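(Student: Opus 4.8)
The plan is to compare $BP$ and $BP_p=\text{(}p\text{-completion)}$ through the arithmetic fracture square, exploiting the hypotheses to show the rational part contributes nothing that could obstruct evenness or Landweber flatness. First I would recall that for a bounded-below spectrum $X$ of finite type, $BP^*(X)$ is a finitely generated module over $BP^*$ in each degree, so the $p$-completion map $BP^*(X)\to BP_p^*(X)$ is well-behaved; more precisely, since $BP$ has degreewise finitely generated homotopy and $X$ is finite type and bounded below, one has $BP_p^*(X)\cong BP^*(X)^{\wedge}_p$ degreewise, i.e., the $p$-completion of $BP^*(X)$ can be computed algebraically. The point of the two hypotheses is exactly to control $BP^*(X)\otimes\Q=BP\Q^*(X)$: in case (1), $X$ is a suspension spectrum rationally equivalent to $S^0$, so $BP\Q^*(X)\cong BP\Q^* = \Q[v_1,v_2,\ldots]$ concentrated in even degrees; in case (2), $X$ is rationally acyclic, so $BP\Q^*(X)=0$.

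Next I would set up the comparison. Rationally smashing the fiber sequence $BP\to BP_p\to \text{cofib}$ (equivalently using the arithmetic square $BP \to BP_p \times BP\Q \to (BP_p)\Q$), one gets a long exact sequence relating $BP^*(X)$, $BP_p^*(X)$, $BP\Q^*(X)$ and $(BP_p)\Q^*(X)$. Under either hypothesis I would argue that the rational terms are even (or zero) and that the connecting maps vanish for degree reasons, so that the sequence degenerates into the expected relation: $BP^*(X)$ is built from $BP_p^*(X)$ and an even (or trivial) rational piece. Concretely, in case (2) rational acyclicity forces $BP^*(X)\xrightarrow{\sim}BP_p^*(X)$ to be an isomorphism (after checking the relevant $\lim^1$ terms vanish using finite type), so the two statements are literally equivalent. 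In case (1), $BP^*(X)$ sits in a short exact sequence with $p$-complete part $BP_p^*(X)$ and a $\Q$-vector space part that is even; I would then check directly that evenness of one side forces evenness of the other, and that Landweber flatness transfers, using \myref{prop:lflatcriterion}: tensoring with $P(n)^*$ and checking injectivity of $v_n$ can be done after inverting $p$ and after $p$-completion separately, since $P(n)^*$ for $n>0$ is an $\F_p$-algebra (so the rational part dies) and for $n=0$ the claim is about $BP^*(X)$ itself — here one uses that $v_0=p$ acts injectively iff there is no $p$-torsion, which can be read off from the fracture square.

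The main obstacle I expect is bookkeeping with the non-finitely-generated coefficient ring and the attendant $\lim^1$ / completion subtleties: one must make sure that "$p$-completion of $BP^*(X)$" computed algebraically agrees with $BP_p^*(X)$ in each degree, and that tensoring with $P(n)^*$ (which is a colimit/quotient construction) commutes with the relevant completions and products well enough to run the $v_n$-injectivity criterion on both the integral and $p$-complete sides. I would handle this by reducing everything to a fixed cohomological degree, where all modules in sight are finitely generated over $\Z_{(p)}$ (using bounded-below finite type), so that $p$-completion is exact and commutes with finite colimits, and then the fracture-square argument becomes elementary linear algebra over $\Z_{(p)}$. Once that reduction is in place, both directions of the "if and only if" follow formally: evenness is detected degreewise and is inherited by both the $p$-complete and rational summands, and Landweber flatness is the injectivity of an explicit family of maps that, degreewise, splits according to the fracture square into a $p$-complete part and an even rational part on which $v_n$ either acts by an evident injection or acts on zero.
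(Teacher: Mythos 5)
Your overall strategy (compare $BP$ and $BP_p$ through the rational fiber/fracture sequence, then handle Landweber flatness via the algebraic identification of $BP_p^*(X)$ with a base change of $BP^*(X)$ and the $v_n$-injectivity criterion of \myref{prop:lflatcriterion}) is the same as the paper's, and your treatment of case (2) is fine: for a rationally acyclic $X$ the rational corner(s) of the square have vanishing $X$-cohomology, so $BP^*(X)\cong BP_p^*(X)$ with no $\lim^1$ issues. The genuine gap is in case (1), at the sentence ``the connecting maps vanish for degree reasons.'' They do not. In your Mayer--Vietoris sequence the rational terms are concentrated in \emph{even} degrees, so the dangerous connecting map is
\[
(BP_p)_{\Q}^{\mathrm{even}}(X)\longrightarrow BP^{\mathrm{odd}}(X),
\]
an even-to-odd map which is exactly the kind not excluded by degree considerations; its vanishing is equivalent to surjectivity of $BP_p^{\mathrm{even}}(X)\oplus BP_{\Q}^{\mathrm{even}}(X)\to (BP_p)_{\Q}^{\mathrm{even}}(X)$, and without it you cannot conclude the direction ``$BP_p^*(X)$ even $\Rightarrow$ $BP^*(X)$ even'' (the odd part of $BP^*(X)$ could contain a rational summand invisible to $BP_p^*(X)$). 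This is precisely where the hypothesis that $X$ is a \emph{suspension spectrum} (not merely rationally equivalent to $S^0$) enters, and your proposal never uses it: the paper proves the corresponding surjectivity of the connecting map $\delta\colon BP_p^*(X)\to C_{M(p)}BP^{*+1}(X)$ by naturality along the unit $S^0\to X$, using that restriction $BP_p^*(X)\to BP_p^*(S^0)$ is split surjective (basepoint splitting of $\Sigma^\infty_+Z$), that $C_{M(p)}BP^{*}(X)\cong C_{M(p)}BP^{*}(S^0)$ by the rational equivalence, and that $\delta$ is surjective for $S^0$ because $\pi_*C_{M(p)}BP$ is odd while $BP^*$ is even. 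Some such argument must be supplied in your framework before case (1) closes.

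A secondary caution: your reduction ``to a fixed degree where all modules are finitely generated over $\Z_{(p)}$'' is not available for the intended examples ($X=BG_+$ is an infinite complex, and e.g.\ $BP^0(B\Z/p)$ is far from finitely generated over $\Z_{(p)}$), so you should not lean on it to make the fracture argument ``elementary linear algebra.'' The paper only invokes degreewise finite generation (via the finite-type hypothesis and the Atiyah--Hirzebruch spectral sequence) to identify $BP_p^*(X)\cong BP^*(X)\otimes\Z_p$ for the Landweber flatness statement, which is then transferred using (faithful) flatness of $\Z_p$ and \myref{prop:lflatcriterion}; the evenness statement is handled entirely by the surjectivity argument above, with no finiteness needed.
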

\begin{proof}
Since $BP$ is connective, $BP_p$ is the Bousfield localization of $BP$ with respect to the mod $p$ Moore spectrum $M(p)$, and we get a fiber sequence
\begin{equation}\label{eq:mpbp}
\xymatrix{C_{M(p)}BP \ar[r] & BP \ar[r] & L_{M(p)}BP\simeq BP_p,}
\end{equation}
where $C_{M(p)}BP$ is rational and concentrated in odd degrees. On the one hand, if $X$ is a rationally acyclic spectrum, then $C_{M(p)}BP^*(X)=0$. On the other hand, if $X$ is a space rationally equivalent to the sphere, then $C_{M(p)}BP^*(X) \cong C_{M(p)}BP^*(S^0)$ is concentrated in odd degrees. Moreover, the connecting homomorphism $\delta$, fitting in the commutative diagram 
\[\xymatrix{BP_p^*(X) \ar[r]^-{\delta} \ar@{->>}[d] & C_{M(p)}BP^{*+1}(X) \ar[d]^{\cong} \\
BP_p^*(S^0) \ar@{->>}[r] & C_{M(p)}BP^{*+1}(S^0),}\]
is surjective. Therefore, in either case the long exact sequence associated to \eqref{eq:mpbp} yields isomorphisms
\[\xymatrix{BP^{\mathrm{odd}}(X) \ar[r]^-{\cong} & BP_p^{\mathrm{odd}}(X)}\]
and a short exact sequence
\[\xymatrix{0 \ar[r] & BP^{\mathrm{even}}(X) \ar[r] & BP_p^{\mathrm{even}}(X) \ar[r] & C_{M(p)}BP^{\mathrm{odd}}(X) \ar[r] & 0.}\]
This implies the claim about evenness. To see that $BP^*(X)$ is Landweber flat if and only if $BP^*_p(X)$ is Landweber flat, note that the finite type condition together with the Atiyah--Hirzebruch spectral sequence show that $BP^*(X)$ is finitely generated in each degree. By~\cite[Prop.~2.5]{bousfieldloc1}, we thus get an isomorphism
\[\xymatrix{BP^*(X) \otimes \Z_p \ar[r]^-{\cong} & BP_p^*(X),}\]
so the claim follows from \myref{prop:lflatcriterion} using flatness of $\Z_p$ and naturality. 
\end{proof}

In particular, this applies to finite spectra of type at least 1 and classifying spaces of finite groups. It will be implicitly used from now on, so whenever the assumptions are satisfied, $BP$ refers to the $p$-complete Brown--Peterson spectrum $BP_p$. The following theorem was proven for spaces in \cite[Thm.~1.8, Thm.~1.9]{rwy}. 

\begin{theorem}\mylabel{thm:bpfromkn}
Let $X$ be a spectrum. If $K(n)^*(X)$ is even for infinitely many $n$, then $P(m)^*(X)$ is even and Landweber flat for all $m$. In particular, \eqref{eq:pm} gives short exact sequences
\[\xymatrix{0 \ar[r] & P(m)^*(X) \ar[r]^{v_m} & P(m)^*(X) \ar[r] & P(m+1)^*(X) \ar[r] & 0.}\]
\end{theorem}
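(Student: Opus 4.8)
The plan is to deduce Theorem~\ref{thm:bpfromkn} from the retract result \myref{cor:bpretract} together with the base change machinery of Section~2. First I would reduce from $P(m)$ for general $m$ to the case $m=0$, i.e.\ to $BP_p$: once we know $BP_p^*(X)$ is even and Landweber flat, \myref{basechangepk} gives $P(m)^*(X) \cong P(m)^* \otimes_{BP_p^*} BP_p^*(X)$, and since $P(m)^*$ is concentrated in even degrees and the tensor product of a Landweber flat module along the quotient $BP_p^* \to P(m)^*$ stays Landweber flat (this is essentially \myref{prop:lflatcriterion} again, as $P(n)^* \otimes_{P(m)^*}(P(m)^* \otimes_{BP_p^*} M) \cong P(n)^* \otimes_{BP_p^*} M$), evenness and Landweber flatness propagate to all $m$. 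The final claim about short exact sequences is then immediate: Landweber flatness says exactly that $v_m$ acts injectively on $P(m)^*(X)$ (take $n=m$ in \myref{prop:lflatcriterion}, using $P(m)^* \otimes_{P(m)^*} P(m)^*(X) = P(m)^*(X)$), so the exact triangle \eqref{eq:pm} degenerates into the asserted short exact sequence.

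So the heart of the matter is: if $K(n)^*(X)$ is even for infinitely many $n$, then $BP_p^*(X)$ is even and Landweber flat. For this I would use \myref{cor:bpretract}, which exhibits $BP_p$ as a retract of $D = \prod_{n \in I} \hat E(0,n) = \prod_{n\in I} \hat E(n)$ for any infinite set $I$; choose $I$ to be an infinite set of heights $n$ for which $K(n)^*(X)$ is even. Applying $[X,-]$ (i.e.\ taking $X$-cohomology, which sends retracts of spectra to retracts of graded abelian groups and commutes with the product), we see that $BP_p^*(X)$ is a retract — in particular a direct summand as a graded $BP_p^*$-module — of $\prod_{n\in I} \hat E(n)^*(X)$. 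By the converse direction of \myref{lem:frometok} (in the form stated there for $m=0$), evenness of $K(n)^*(X)$ implies $\hat E(n)^*(X)$ is even and flat, hence Landweber flat, over $\hat E(n)^*$. I would then need each $\hat E(n)^*(X)$, a priori only a module over $\hat E(n)^*$, to be viewed as a Landweber flat $BP_p^*$-module; this follows because flatness over $\hat E(n)^*$ together with the fact that $\hat E(n)^*$ is itself obtained from $BP_p^*$ by the Landweber-exact procedure (inverting $v_n$ and $K(n)$-localizing) makes the criterion of \myref{prop:lflatcriterion} hold — concretely $P(k)^* \otimes_{BP_p^*} \hat E(n)^*(X)$ is $0$ for $k > n$ and for $k \le n$ is a localization of a flat $\hat E(n)^*$-module, on which $v_k$ acts injectively. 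Then \myref{lem:lflatproducts} shows the product $\prod_{n\in I}\hat E(n)^*(X)$ is Landweber flat over $BP_p^*$, it is even (products of even modules are even), and finally a direct summand of an even Landweber flat module is even and Landweber flat — evenness is obvious, and Landweber flatness of a summand follows since the criterion of \myref{prop:lflatcriterion} is about injectivity of $v_k$, which passes to direct summands.

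The main obstacle I anticipate is the bookkeeping in the previous paragraph around regrading the $\hat E(n)$-cohomology: one must be careful that the retract of \myref{cor:bpretract} is a retract of $BP_p$-\emph{module} spectra (or at least that the splitting, though only additive as the remark after \myref{thm:generalretract} warns, is still compatible with enough structure that $BP_p^*(X)$ sits inside $\prod \hat E(n)^*(X)$ as a $BP_p^*$-submodule and summand), and that ``Landweber flat $\hat E(n)^*$-module'' really does entail ``Landweber flat $BP_p^*$-module.'' Everything else is a formal consequence of results already in the excerpt: \myref{lem:frometok} for the $K(n)$-to-$\hat E(n)$ comparison, \myref{lem:lflatproducts} for closure under products, \myref{prop:lflatcriterion} for the passage to summands, and \myref{basechangepk} for the reduction to $m=0$. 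I would also remark that, via \myref{lem:bpvsbpp}, when $X$ is a space of finite type (e.g.\ a classifying space of a finite group) the statement for $BP_p$ is equivalent to the corresponding statement for $BP$, recovering \cite[Thm.~1.8, Thm.~1.9]{rwy}.
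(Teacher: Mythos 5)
Your proposal is correct and is essentially the paper's own argument: the paper likewise chooses an infinite set of heights with even Morava $K$-theory, uses \myref{lem:frometok} to get evenness and flatness of $\hat{E}(m,n)^*(X)$, upgrades this to Landweber flatness, applies \myref{lem:lflatproducts} to the product, and concludes by closure of evenness and Landweber flatness under the retract of \myref{cor:bpretract} (a closure the paper, like you, treats as immediate even though the splitting is only additive). The one cosmetic difference is that \myref{cor:bpretract} is stated for every $P(m)$, so the paper runs the argument directly at each $m$ and never needs your reduction to $m=0$ via \myref{basechangepk}.
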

\begin{proof}
Let $I$ be the infinite set of natural numbers $n$ such that $K(n)^*(X)$ is even and $n\ge m$. By \myref{lem:frometok}, the assumption implies that $\hat{E}(m,n)^*(X)$ is even and flat over $\hat{E}(m,n)^*$. Since the $P(m)^*$-module $\hat{E}(m,n)^*$ is Landwever flat, so is $\hat{E}(m,n)^*(X)$ for all $n \in I$. It follows from \myref{lem:lflatproducts} that $\prod_{n \in I}\hat{E}(m,n)^*(X)$ is Landweber flat as well. Since Landweber flat modules are clearly closed under retracts, the claim now follows from \myref{cor:bpretract}. 
\end{proof}

\begin{remark}
More generally, the same proof shows that any spectrum $X$ with $K(n)^*(X)$ concentrated in degrees divisible by a fixed integer $d$ for infinitely many $n$ has $P(m)^*X$ also concentrated in degrees divisible by $d$ for all $m$. Similar observations apply to the next corollary; this should be compared to \cite{minamikn}.
\end{remark}

As an immediate consequence, we recover~\cite[Thm.~1.2]{rwy}, which complements \myref{thm:bpfromkn} for spaces. By \myref{rwyex}, this result does not generalize to arbitrary spectra. 

\begin{corollary}\mylabel{cor:kneven}
If $X$ is a space with $K(n)^*(X)$ even for infinitely many $n$, then $K(m)^*(X)$ is even for all $m > 0$. 
\end{corollary}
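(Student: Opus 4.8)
The plan is to combine Theorem~\myref{thm:bpfromkn} with the base change results of the previous section, reducing the statement about Morava $K$-theory back to a statement about $P(m)$-cohomology. First I would apply \myref{thm:bpfromkn} with $m = 0$: the hypothesis that $K(n)^*(X)$ is even for infinitely many $n$ gives that $BP^*(X) = P(0)^*(X)$ is even and Landweber flat (here we use that $X$ is a space, so \myref{lem:bpvsbpp} lets us not worry about the distinction between $BP$ and $BP_p$, and in any case the theorem produces evenness and Landweber flatness directly). Actually, slightly more economically, \myref{thm:bpfromkn} already yields that $P(m)^*(X)$ is even and Landweber flat for \emph{all} $m \ge 0$, so for every fixed $m > 0$ we have $P(m)^*(X)$ even and Landweber flat.

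Next I would invoke \myref{cor:knfrombp}, whose hypotheses are exactly that $X$ is a space with Landweber flat $P(m)$-cohomology. Its final sentence states that under these assumptions, if $P(m)^*(X)$ is even then $K(n)^*(X)$ is even for all $n \ge m > 0$. Taking $m > 0$ arbitrary and letting $n = m$, we conclude $K(m)^*(X)$ is even. Since $m > 0$ was arbitrary, $K(m)^*(X)$ is even for all $m > 0$, which is the claim.

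I do not expect a genuine obstacle here, since all the work has been done in \myref{thm:bpfromkn} and \myref{cor:knfrombp}; the only point requiring a little care is making sure the chain of implications is applied at the correct index. The one subtlety worth spelling out is that \myref{thm:bpfromkn} produces Landweber flatness of $P(m)^*(X)$ from even Morava $K$-theory at infinitely many heights \emph{including} arbitrarily large ones $\ge m$, so the hypothesis of \myref{cor:knfrombp} is met for every $m > 0$; then the base change identity
\[
K(m)^*(X) \cong K(m)^* \hotimes_{P(m)^*} P(m)^*(X)
\]
together with the evenness of $P(m)^*(X)$ and the remark that completed tensor products of even groups are even finishes the argument. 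No Atiyah--Hirzebruch spectral sequence input is needed, in contrast to the original proof in \cite{rwy}.

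Here is the proof:
\begin{proof}
Let $m > 0$. Since $K(n)^*(X)$ is even for infinitely many $n$, in particular for infinitely many $n \ge m$, \myref{thm:bpfromkn} shows that $P(m)^*(X)$ is even and Landweber flat. As $X$ is a space, \myref{cor:knfrombp} applies and, combined with the evenness of $P(m)^*(X)$, yields that $K(n)^*(X)$ is even for all $n \ge m$. Taking $n = m$ gives that $K(m)^*(X)$ is even, and since $m > 0$ was arbitrary the claim follows.
\end{proof}
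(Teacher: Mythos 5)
Your proof is correct and follows essentially the same route as the paper: the paper applies \myref{thm:bpfromkn} at $m=0$ and then \myref{cor:knfrombp} with $m=0$ and $n>0$ varying, whereas you apply both results at each fixed $m>0$ with $n=m$; the inputs and logic are the same, only the indexing differs.
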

\begin{proof}
By \myref{thm:bpfromkn} for $m=0$, $P(0)^*(X)$ is even, hence so is $K(m)^*(X)$ for all $m > 0$ by \myref{cor:knfrombp}.
\end{proof}

\begin{example}
The spectrum $\Sigma H\Q$ has trivial Morava $K$-theory for all heights $n>0$, but $K(0)^1\Sigma H\Q \ne 0$. For a more interesting example, let $X = K(\Z,3)$, then the rational cohomology $K(0)^*(X) \cong \Q[x]/x^2$ with $x$ in degree 3, but $K(n)^*(X)$ is even and non-trivial for all $n\ge 2$ by \cite{RW}. Therefore, the conclusion of the corollary cannot be extended to $m=0$. 
\end{example}

\begin{corollary}\mylabel{cor:pmeven}
If $X$ is a space such that $P(m)^*(X)$ is even for some $m\ge 0$, then $K(n)^*(X)$ is even for all $n > 0$. 
\end{corollary}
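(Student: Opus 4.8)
The plan is to reduce the assertion to \myref{cor:knfrombp}, whose hypothesis asks that $P(m)^*(X)$ be Landweber flat rather than merely even. So the real content is the implication: for a space $X$, if $P(m)^*(X)$ is evenly concentrated then it is automatically Landweber flat. Granting this, \myref{cor:knfrombp} immediately gives that $K(n)^*(X)$ is even for all $n\ge m$ with $n>0$ (and, when $m=0$, for all $n>0$, which already finishes the proof in that case). Since this is in particular infinitely many heights, \myref{cor:kneven} then upgrades the conclusion to all $n>0$, thereby also covering the heights $0<n<m$.

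To prove that evenness of $P(m)^*(X)$ implies Landweber flatness I would use the criterion of \myref{prop:lflatcriterion}: one must show that $v_n$ acts injectively on $P(n)^*\otimes_{P(m)^*}P(m)^*(X)\cong P(m)^*(X)/(v_m,\dots,v_{n-1})P(m)^*(X)$ for every $n\ge m$. I would run an induction on $n$ with the combined statement that $P(n)^*(X)$ is evenly concentrated, that the natural map $P(n)^*\otimes_{P(m)^*}P(m)^*(X)\to P(n)^*(X)$ is an isomorphism, and that $v_n$ acts injectively on $P(n)^*(X)$. The exact triangle \eqref{eq:pm} makes the bookkeeping in the inductive step formal: it identifies $P(n+1)^{\mathrm{odd}}(X)$ with $\ker\!\big(v_n\colon P(n)^{\mathrm{ev}}(X)\to P(n)^{\mathrm{ev}}(X)\big)$ and $P(n+1)^{\mathrm{ev}}(X)$ with the corresponding cokernel, so once $v_n$ is known to act injectively on $P(n)^*(X)$ one reads off both that $P(n+1)^*(X)$ is even and that it agrees with $P(n)^*(X)/v_n$. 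The one genuinely non-formal input is the claim that evenness of $P(n)^*(X)$ forces $v_n$ to act injectively on it — this is false for an arbitrary $P(n)^*$-module (e.g.\ for $P(n)^*/v_n$), so the argument must use that $X$ is a finite-type space. Here I would invoke Morava's little structure theorem \myref{prop:littlemorava} to express $\hat E(n,N)^*(X)\cong P(n)^*(X)\hotimes_{P(n)^*}\hat E(n,N)^*$ for $N>n$ and run a topological Nakayama argument over the complete local ring $\hat E(n,N)^*$, using that the $v_n$-torsion is divisible by the maximal ideal, to conclude that it vanishes; alternatively, this is exactly the place where one could instead import the Atiyah--Hirzebruch spectral sequence analysis of \cite{rwy} that deduces Landweber flatness from evenness for spaces.

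The main obstacle is precisely this last point: translating the topological hypothesis ``$X$ is a space'' into the algebraic statement that evenness of $P(m)^*(X)$ entails $v_n$-torsion-freeness of the relevant quotients, i.e.\ Landweber flatness. Everything else — the reduction to \myref{cor:knfrombp}, the inductive unwinding of \eqref{eq:pm}, and the passage from infinitely many heights to all heights via \myref{cor:kneven} — is formal. It is also the only step where the space hypothesis is essential: for general spectra the implication fails, as \myref{rwyex} shows that the analogue of \myref{prop:littlemorava} breaks down.
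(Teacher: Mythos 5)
Your overall route is exactly the paper's: deduce Landweber flatness of $P(m)^*(X)$ from evenness, feed this into \myref{cor:knfrombp} to get $K(n)^*(X)$ even for all $n\ge m$ with $n>0$ (in particular for infinitely many heights), and then invoke \myref{cor:kneven} to cover all $n>0$. The only divergence is at the step ``even $\Rightarrow$ Landweber flat''. The paper dispatches it in one line --- the connecting homomorphism in \eqref{eq:pm} ``must be zero for degree reasons'' --- whereas you treat it as the crux, and your diagnosis is in fact sharper than the paper's stated justification: from evenness of $P(n)^*(X)$ the triangle \eqref{eq:pm} only yields $P(n+1)^{\mathrm{odd}}(X)\cong\ker\bigl(v_n\colon P(n)^{\mathrm{ev}}(X)\to P(n)^{\mathrm{ev}}(X)\bigr)$, so the vanishing of the connecting map on the odd part is equivalent to, not a consequence of, the injectivity one is after; and the implication really does fail for spectra (a suspension of the mod-$p$ Moore spectrum has even $BP_p$-cohomology isomorphic to a shift of $BP_p^*/p$, which is not Landweber flat since $v_0=p$ acts by zero), so the space hypothesis must enter precisely here.

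That said, your primary proposal for closing this step is not yet a proof. The ``topological Nakayama argument'' over $\hat{E}(n,N)^*$ rests on the unsubstantiated assertion that the $v_n$-torsion of $P(n)^*(X)$ is divisible by the maximal ideal, and it is unclear how the isomorphism $\hat{E}(n,N)^*(X)\cong P(n)^*(X)\hotimes_{P(n)^*}\hat{E}(n,N)^*$ of \myref{prop:littlemorava} would detect that torsion at all: the completed tensor product can kill exactly the classes you are trying to see, and no mechanism coming from the evenness of a finite-type space is offered. So if you want a self-contained argument, this step is still owed. Your fallback --- importing the unstable results of \cite{rwy} (or Kono--Yagita \cite{konoyagita}), which deduce the required $v_n$-injectivity/Landweber flatness from evenness for spaces via the Atiyah--Hirzebruch spectral sequence --- does close the gap, and is in effect what the paper's terse ``degree reasons'' line is standing in for; everything else in your argument (the reduction to \myref{cor:knfrombp}, the inductive bookkeeping with \eqref{eq:pm} and base change, and the passage from infinitely many heights to all heights via \myref{cor:kneven}) matches the paper and is correct.
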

\begin{proof}
If $P(m)^*(X)$ is even, it is also Landweber flat as the connecting homomorphism in \eqref{eq:pm} must be zero for degree reasons. Therefore, \myref{cor:knfrombp} shows that $K(n)^*(X)$ is even for all $n \ge m+1$, thus \myref{cor:kneven} applies.
\end{proof}

We will need the following lemma about maps of complete modules; a proof in the case $m=n$ can be found in Hovey's unpublished notes~\cite{hoveynotes} or \cite{frankland}, which requires only minor modifications for the general case.  

\begin{lemma}\mylabel{lem:injsurj}
Suppose $M,N$ are flat $\hat{E}(m,n)^*$-modules which are complete  with respect to the maximal ideal $I_{m,n}$ of $\hat{E}(m,n)^*$. A map $f\colon M \lra{} N$ is injective or surjective if and only if $f \otimes_{\hat{E}(m,n)^*} K(n)^*$ is injective or surjective, respectively. 
\end{lemma}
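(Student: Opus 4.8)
The plan is to reduce the statement to Nakayama-type arguments using completeness and flatness. First I would handle surjectivity. If $f \otimes_{\hat{E}(m,n)^*} K(n)^*$ is surjective, then since $K(n)^* = \hat{E}(m,n)^*/I_{m,n}$ (up to inverting the top generator, which is a unit on both sides), the map $f \bmod I_{m,n}$ is surjective. Completeness of $N$ with respect to $I_{m,n}$, together with the fact that $I_{m,n}$ is finitely generated, then lets one run the usual complete Nakayama lemma: a map of $I_{m,n}$-complete modules that is surjective modulo $I_{m,n}$ is surjective. Concretely, one lifts a set of generators and checks that the induced map on associated graded is surjective, then uses the $\lim^1$-vanishing coming from completeness (or simply the standard successive-approximation argument) to conclude $f$ itself is onto.

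Next I would treat injectivity, which is where flatness enters essentially. Suppose $f \otimes K(n)^*$ is injective. Consider $C = \coker(f)$, which by the surjectivity discussion need not vanish, so instead I would work with $K = \ker(f)$ and the short exact sequence $0 \to K \to M \to \im(f) \to 0$. The point is that $\im(f) \subseteq N$ is a submodule of a flat (hence, over this regular complete local ring, after completion, pro-free) module, but that alone does not give flatness; the cleaner route is to use that both $M$ and $N$ are flat and complete, so it suffices to show $\Tor$-vanishing. I would argue: since $N$ is flat, $f \otimes K(n)^*$ injective implies $\Tor_1^{\hat{E}(m,n)^*}(\coker f, K(n)^*)$ injects appropriately into $\ker(f \otimes K(n)^*) = 0$ after unwinding the long exact sequence obtained by tensoring $0 \to \im f \to N \to \coker f \to 0$ with $K(n)^*$ — wait, more carefully: tensor $0 \to \ker f \to M \xrightarrow{f} N$ with $K(n)^*$; flatness of $M$ gives $\ker f \otimes K(n)^* \hookrightarrow M \otimes K(n)^* \xrightarrow{f \otimes K(n)^*} N \otimes K(n)^*$ exact, so $\ker f \otimes K(n)^* \hookrightarrow \ker(f\otimes K(n)^*) = 0$. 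Hence $\ker f \otimes_{\hat{E}(m,n)^*} K(n)^* = 0$, and since $\ker f$ is a submodule of the complete module $M$ hence itself $I_{m,n}$-adically separated (complete, as $I_{m,n}$ is finitely generated over a Noetherian ring and $M$ is complete), complete Nakayama forces $\ker f = 0$.

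The converse directions (if $f$ is injective/surjective then so is $f \otimes K(n)^*$) are the easy halves: surjectivity is preserved by any base change, and for injectivity one uses that $\coker f$ is again flat (because $M, N$ flat and $f$ split injective on underlying modules once we know injectivity plus... actually here I would instead note that if $f$ is injective with $M$ flat, then $0 \to M \to N \to \coker f \to 0$ combined with flatness of $N$ shows $\Tor_1(\coker f, K(n)^*) = 0$ is not automatic — so one should invoke that a short exact sequence of complete flat modules stays exact after tensoring with $K(n)^*$, which follows from the derived completeness/flatness package, e.g.\ \cite[App.~A]{hoveystrickland} or \cite[App.~A]{frankland}). The main obstacle is this bookkeeping around $\Tor$ and completeness: one must be careful that ``flat'' for $I_{m,n}$-complete modules over $\hat{E}(m,n)^*$ really means pro-free, so that short exact sequences of such modules behave well under $-\otimes K(n)^*$, and that submodules like $\ker f$ inherit enough separatedness to apply Nakayama. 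All of this is routine once the pro-free characterization (cited from \myref{lem:frometok}'s proof, i.e.\ \cite[Prop.~A.15]{frankland} and \cite[Prop.~2.5]{hoveystrickland}) is in hand, and indeed the $m=n$ case in \cite{hoveynotes}/\cite{frankland} already carries this out; the general case needs only to replace $E_n^*$, $\mathfrak{m}$ by $\hat{E}(m,n)^*$, $I_{m,n}$ throughout.
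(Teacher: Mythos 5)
Your surjectivity argument (complete Nakayama / successive approximation) is fine, but the injectivity direction — the one the paper actually needs in the proof of \myref{thm:pmexact} — has a genuine gap. You assert that ``flatness of $M$ gives $\ker f \otimes K(n)^* \hookrightarrow M \otimes K(n)^*$ exact.'' Flatness of $M$ makes $M\otimes_{\hat{E}(m,n)^*}(-)$ exact; it does not make $(-)\otimes K(n)^*$ left exact on a sequence in which $M$ merely appears. Tensoring $0 \to \ker f \to M \to \im f \to 0$ with $K(n)^*$ gives
\[
\Tor_1^{\hat{E}(m,n)^*}(\im f, K(n)^*) \lra{} \ker f \otimes K(n)^* \lra{} M \otimes K(n)^*,
\]
so you would need $\Tor_1(\im f,K(n)^*)=0$; but $\im f$ is only a submodule of the flat module $N$, and submodules of flat modules can have nonvanishing $\Tor_1$ against the residue field (e.g.\ the maximal ideal $I_{m,n}\subset \hat{E}(m,n)^*$ has $\Tor_1(I_{m,n},K(n)^*)\cong\Tor_2(K(n)^*,K(n)^*)\neq 0$ once $n-m\geq 2$). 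What your argument really shows is only that $\ker f \subseteq I_{m,n}M$, which is not a Nakayama hypothesis for $\ker f$. The standard repair — and this is what the argument cited by the paper (Hovey's notes, \cite{frankland}) does — uses pro-freeness rather than abstract flatness: either split the injection $f\otimes K(n)^*$ of graded $K(n)^*$-vector spaces, lift the splitting to a map $g\colon N \to M$ using that $N$ is pro-free, and observe that $gf \equiv \id$ mod $I_{m,n}$ is invertible by completeness, so $f$ is (split) injective; or, equivalently, use flatness to identify $I_{m,n}^jM/I_{m,n}^{j+1}M \cong (M\otimes K(n)^*)\otimes_{K(n)^*}I_{m,n}^j/I_{m,n}^{j+1}$, conclude that the associated graded of $f$ is injective, and finish with separatedness of $M$.

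Two smaller points. First, the paper does not prove this lemma at all — it cites Hovey and \cite{frankland} for $m=n$ and notes the general case is identical — so you were right that only cosmetic changes ($E_n^*,\mathfrak{m}$ replaced by $\hat{E}(m,n)^*, I_{m,n}$) are needed once the pro-free machinery is in place; the issue is that your substitute for that machinery does not work. Second, your hesitation about the converse (``$f$ injective $\Rightarrow$ $f\otimes K(n)^*$ injective'') is well founded: as literally stated that direction is false (multiplication by $p$ on $\hat{E}(0,1)^*$ is an injective self-map of a flat complete module that dies mod $I_{0,1}$), and your proposed fix — that a short exact sequence of flat complete modules stays exact after $\otimes K(n)^*$ — fails on the same example. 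Only the direction from $K(n)^*$-data to $\hat{E}(m,n)^*$-data is used in the paper, and that is the direction you should (and, for surjectivity, do) prove.
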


The next theorem generalizes \cite[Thm.~1.17, Thm.~1.18]{rwy} to all spectra. 

\begin{theorem}\mylabel{thm:pmexact}
Let $f \colon X \lra{} Y$ be a map of spectra such that both $K(n)^*(X)$ and $K(n)^*(Y)$ are even for $n \in I$ with $I \subset \N$ infinite. 
\begin{enumerate}
 \item If $f^*\colon K(n)^*(Y) \lra{} K(n)^*(X)$ is injective (surjective) for all $n \in I$, then so is \[f^*\colon P(m)^*(Y) \lra{} P(m)^*(X)\]
 for all $m$.
 \item Suppose $g\colon Y \lra{} Z$ is another map with $K(n)^*(Z)$ even for all $n \in I$ and such that $g \circ f \simeq 0$. If 
 \[\xymatrix{K(n)^*(Z) \ar[r]^{g^*} & K(n)^*(Y) \ar[r]^{f^*} & K(n)^*(X) \ar[r] & 0}\]
 is an exact sequence for all $n \in I$, then so is 
 \[\xymatrix{P(m)^*(Z) \ar[r]^{g^*} & P(m)^*(Y) \ar[r]^{f^*} & P(m)^*(X) \ar[r] & 0,}\]
 for all $m \ge 0$.
\end{enumerate}
\end{theorem}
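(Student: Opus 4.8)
The plan is to deduce both statements from the retract theorem~\myref{cor:bpretract} by comparing the $P(m)$-cohomology with the completed Johnson--Wilson cohomologies $\hat{E}(m,n)^*(-)$ for $n \in I$. First I would collect the standing input. Fix $m$; since all hypotheses hold for every element of $I$, we may shrink $I$ so that $n>m$ for all $n\in I$, and then~\myref{cor:bpretract} exhibits $P(m)$ as a retract of $\prod_{n\in I}\hat{E}(m,n)$. By~\myref{thm:bpfromkn}, the modules $P(m)^*(X)$ and $P(m)^*(Y)$ (and, in part (2), also $P(m)^*(Z)$) are even and Landweber flat. By~\myref{lem:frometok}, for every $n\in I$ the modules $\hat{E}(m,n)^*(X)$, $\hat{E}(m,n)^*(Y)$ (and $\hat{E}(m,n)^*(Z)$) are even and flat over $\hat{E}(m,n)^*$; moreover they are complete with respect to $I_{m,n}$ because the relevant function spectra are $K(n)$-local, hence they are pro-free. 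Finally, the natural isomorphism of~\myref{lem:frometok} identifies the reduction of $\hat{E}(m,n)^*(-)$ modulo $I_{m,n}$ with $K(n)^*(-)$, so that $\hat{E}(m,n)^*(f)\otimes_{\hat{E}(m,n)^*}K(n)^*$ corresponds to $K(n)^*(f)$, and similarly for $g$.

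For part (1), suppose $K(n)^*(f)$ is injective (respectively surjective) for all $n\in I$. Since $\hat{E}(m,n)^*(X)$ and $\hat{E}(m,n)^*(Y)$ are pro-free, \myref{lem:injsurj} shows that $\hat{E}(m,n)^*(f)$ is injective (respectively surjective) for every $n\in I$; products of abelian groups preserve both injectivity and surjectivity, so $\prod_{n\in I}\hat{E}(m,n)^*(f)$ does too. By naturality of the splitting in~\myref{cor:bpretract}, the arrow $P(m)^*(f)$ is a retract, in the category of arrows of abelian groups, of $\prod_{n\in I}\hat{E}(m,n)^*(f)$, and a retract of an injection (surjection) is again an injection (surjection). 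This gives (1) for all $m$.

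For part (2), $f^*\colon P(m)^*(Y)\to P(m)^*(X)$ is surjective by part (1). Since a retract of an exact sequence of abelian groups is exact and products are exact, \myref{cor:bpretract} reduces the exactness of $P(m)^*(Z)\lra{g^*}P(m)^*(Y)\lra{f^*}P(m)^*(X)\lra{}0$ to that of $\hat{E}(m,n)^*(Z)\lra{g^*}\hat{E}(m,n)^*(Y)\lra{f^*}\hat{E}(m,n)^*(X)\lra{}0$ for each $n\in I$, these being complexes because $f^*g^*=0$ follows from $g\circ f\simeq 0$. Fixing $n$ and writing $a\colon A\to B$ and $b\colon B\to C$ for the two maps, $b$ is surjective by part (1), so I would set $K:=\ker(b)$: it is flat by the long exact $\Tor$-sequence (as $C$ is flat), and $I_{m,n}$-complete by a $\lim^1$-argument (the sequence $0\to K\to B\to C\to 0$ stays exact modulo every power of $I_{m,n}$ because $C$ is flat), hence pro-free. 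As $ba=0$, the map $a$ factors as $a=\iota\bar a$ with $\iota\colon K\hookrightarrow B$ and $\bar a\colon A\to K$; reducing modulo $I_{m,n}$ and combining the hypothesised exactness of $K(n)^*(Z)\to K(n)^*(Y)\to K(n)^*(X)\to 0$ with $\Tor_1(C,K(n)^*)=0$ shows that $\bar a\otimes_{\hat{E}(m,n)^*}K(n)^*$ is onto $K\otimes_{\hat{E}(m,n)^*}K(n)^*$. Now \myref{lem:injsurj} applied to $\bar a\colon A\to K$ forces $\bar a$ to be surjective, whence $\im(a)=K=\ker(b)$; together with surjectivity of $b$ this is the claimed exactness.

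The step I expect to be the main obstacle is showing that $K=\ker(b)$ is again pro-free, so that~\myref{lem:injsurj} applies to $\bar a$; this is exactly where one needs all three Morava $K$-theories of $X$, $Y$, $Z$ to be even, and it requires some care with the $\lim^1$-exactness and with the naturality of the base-change isomorphisms of~\myref{lem:frometok}. The remaining ingredients---reducing to a single height via retracts and products, and transferring injectivity and surjectivity along~\myref{lem:injsurj}---are then purely formal.
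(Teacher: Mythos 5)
Your part (1) is exactly the paper's argument: identify $K(n)^*(f)$ with $\hat{E}(m,n)^*(f)\otimes_{\hat{E}(m,n)^*}K(n)^*$ via \myref{lem:frometok}, promote injectivity/surjectivity to $\hat{E}(m,n)^*(f)$ by \myref{lem:injsurj}, pass to the product over $n\in I$, and finish with the retract of \myref{cor:bpretract}. For part (2) you diverge from the paper, which simply says the statement follows formally from (1) as in the proof of Theorem 1.18 of \cite{rwy} (a topological reduction using the cofiber of $f$ and the long exact sequence); instead you argue algebraically at each height: $b=\hat{E}(m,n)^*(f)$ is surjective by \myref{lem:injsurj}, the kernel $K=\ker(b)$ is flat (Tor long exact sequence, since $C$ is flat) and complete (the tower $K/I_{m,n}^kK$ has surjective transition maps, so $\lim^1$ vanishes), hence pro-free, so \myref{lem:injsurj} applies to $\bar a\colon A\to K$ and gives $\im(a)=\ker(b)$; exactness of the $P(m)$-sequence then follows because products of exact sequences are exact and a retract of an exact complex is exact (homology is additive and the splitting of \myref{cor:bpretract} is a map of spectra, hence natural). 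This argument is correct: the one point that needs care, and which you flag and handle, is precisely that the kernel of a surjection of pro-free $\hat{E}(m,n)^*$-modules is again flat and complete so that \myref{lem:injsurj} is applicable; your route buys a self-contained algebraic proof of (2) inside the retract framework, at the cost of redoing by hand what the citation to \cite{rwy} delivers formally, while the paper's reference keeps the proof shorter but leans on the unstated formal argument there.
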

\begin{proof}
Fix some integer $m \ge 0$. We will prove the surjectivity statement; the argument for injectivity is analogous. By assumption and \myref{lem:frometok}, there is a commutative diagram
\[\xymatrixcolsep{6pc}\xymatrix{K(n)^*(Y) \ar[r]^-{K(n)^*f} \ar[d]_{\cong} & K(n)^*(X) \ar[d]^{\cong} \\
\hat{E}(m,n)^*(Y) \otimes_{\hat{E}(m,n)^*} K(n)^* \ar[r]_-{f^* \otimes_{\hat{E}(m,n)^*} K(n)^*} & \hat{E}(m,n)^*(X) \otimes_{\hat{E}(m,n)^*} K(n)^*.}\]
Since $K(n)^*f$ is surjective, \myref{lem:injsurj} implies that $\hat{E}(m,n)^*f$ is surjective for $n \in I$ as well. Products in the category of modules are exact, so the bottom map in the commutative square is also surjective:
\[\xymatrix{P(m)^*(Y) \ar[r]^-{f^*} \ar[d] & P(m)^*(X) \ar[d] \\
\prod_{n \in I} \hat{E}(m,n)^*(Y) \ar@{->>}[r]_-{f^*} & \prod_{n \in I} \hat{E}(m,n)^*(X).}\]
Retracts of surjective maps are surjective, hence \myref{cor:bpretract} yields the claim.

Part (2) follows formally from (1) as in the proof of \cite[Thm.~1.18]{rwy}.
\end{proof}

\subsection{Applications to the cohomology of free commutative algebras}\label{sec:kash}

In \cite{kashiwabarabps2n,kashiwabarabpqx}, Kashiwabara studies the question of when and how $BP_{p}^*(\Omega^{\infty}X)$ is determined by $BP_{p}^*(X)$, for $X$ a space or spectrum. In particular, he considers two variants $\mathcal{K}'_{{}_0BP}$ and $\mathcal{M}'_{BP}$ of the category of augmented unstable $BP$-cohomology algebras and the category of stable $BP_{p}$-cohomology algebras, respectively, and shows that there is a left adjoint 
\[\xymatrix{\mathcal{D}\colon \mathcal{M}'_{BP} \ar@<0.5ex>[r] & \mathcal{K}'_{{}_0BP}\colon \mathcal{I} \ar@<0.5ex>[l]}\]
to the augmention ideal functor $\mathcal{I}$. This adjunction should be thought of as being analogous to the adjunction $(\Sigma^{\infty} \dashv \Omega^{\infty})$ between spaces and spectra. He then shows in~\cite[Thm.~0.11]{kashiwabarabpqx} that, for $X$ a space, $BP_{p}^*(QX)$ inherits Landweber flatness from $BP_{p}^*(X)$. Furthermore, if $X$ is a connected space satisfying certain conditions, there is a natural isomorphism
\[\xymatrix{\mathcal{D}BP_{p}^*(X) \ar[r]^-{\cong} & BP_{p}^*(QX),}\]
see~\cite[Thm.~0.12]{kashiwabarabpqx}. 

In this section, we will prove some extensions and variants of Kashiwabara's results. To this end, recall that, if $X$ is a connected space, the Snaith splitting provides an equivalence
\[\Sigma^{\infty}QX \simeq \bP\Sigma_+^{\infty}X,\]
where $\bP Y$ denotes the free commutative algebra on a spectrum $Y$. Thus our results, which are formulated in terms of $\bP$, are readily translated and compared to Kashiwabara's. 

Fix a height $n$ and recall the completed algebraic approximation functor of~\cite{rezkcongruence, frankland}: This is an endofunctor ${\mathbb{T}}^{E_n}$ on the category of complete $E_n^*$-modules together with a natural comparison map
\[\alpha_n(M)\colon {\mathbb{T}}^{E_n}\pi_*L_{K(n)}M \lra{} \pi_*L_{K(n)}\bP^{E_n}M,\]
where $M$ is an $E_n$-module and $\bP^{E_n}$ denotes the free commutative $E_n$-algebra functor. By \cite[Prop.~4.9]{rezkcongruence}, $\alpha_n(M)$ is an isomorphism whenever $M^*$ is flat over $E_n^*$. By~\cite[Prop.~3.9, Prop.~A.15]{frankland}, ${\mathbb{T}}^{E_n}$ preserves the category of flat $E_n^*$-modules, and it preserves evenness, see~\cite[3.2(7)]{rezkkoszul}. Moreover, there is natural decomposition of functors,
\[\mathbb{T}^{E_n} \cong \bigvee_{d \ge 0} \mathbb{T}^{E_n}_d, \]
corresponding to the analogous decomposition of $\bP^{E_n}$.

We are now in the position to prove a generalization of~\cite[Thm.~0.11]{kashiwabarabpqx} to spectra. Note, however, that Kashiwabara assumes that $BP_p^*(X)$ is Landweber flat (without evenness) to deduce that $BP_p^*(\bP X)$ is also Landweber flat, so our result is only a partial generalization. 

\begin{proposition}
If $X$ is a spectrum with even Morava $K$-theory for infinitely many $n$, then $BP_p^*(\bP X)$ is even and Landweber flat.
\end{proposition}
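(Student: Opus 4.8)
The plan is to reduce the statement to \myref{thm:bpfromkn} applied to the spectrum $\bP X$: that theorem produces both evenness and Landweber flatness of $P(0)^* = BP_p^*$ out of evenness of infinitely many Morava $K$-theories, so it suffices to prove that $K(n)^*(\bP X)$ is even for every height $n$ lying in the infinite set $I$ on which $K(n)^*(X)$ is even. All the work then lies in transporting evenness from $X$ to $\bP X$ at a single such height, for which I would use Rezk's approximation functor $\mathbb{T}^{E_n}$, recalled above precisely for this purpose.

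So fix $n \in I$. Since $K(n)^*$ is a graded field, it is standard that $K(n)^*(Y)$ is concentrated in even degrees if and only if $K(n)_*(Y)$ is; and by \cite{hoveystrickland}, evenness of $K(n)_*(X)$ forces the completed Morava $E$-homology $(E_n)^{\vee}_*(X) := \pi_* L_{K(n)}(E_n \wedge X)$ to be pro-free over $E_n^*$---in particular flat---and concentrated in even degrees. Taking $M = E_n \wedge X$ as an $E_n$-module, the fact that $E_n \wedge (-)$ commutes with the homotopy colimits defining the free commutative algebra gives $\bP^{E_n}M \simeq E_n \wedge \bP X$, whence $\pi_* L_{K(n)}\bP^{E_n}M = (E_n)^{\vee}_*(\bP X)$. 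As $(E_n)^{\vee}_*(X)$ is flat, the comparison map $\alpha_n(M)$ is an isomorphism by \cite[Prop.~4.9]{rezkcongruence}, identifying $(E_n)^{\vee}_*(\bP X) \cong \mathbb{T}^{E_n}((E_n)^{\vee}_*(X))$. Since $\mathbb{T}^{E_n}$ preserves flat $E_n^*$-modules and evenness (by \cite[Prop.~3.9, Prop.~A.15]{frankland} and \cite[3.2(7)]{rezkkoszul}), $(E_n)^{\vee}_*(\bP X)$ is again pro-free and even. Dualizing back---using the duality between completed homology and cohomology for $K(n)$-local spectra \cite{hoveystrickland}---shows that $E_n^*(\bP X)$ is pro-free and even, so \myref{lem:frometok} yields that $K(n)^*(\bP X)$ is even. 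Running this for every $n \in I$ and feeding the conclusion into \myref{thm:bpfromkn} completes the argument.

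The only genuinely substantive input is the identification $(E_n)^{\vee}_*(\bP X) \cong \mathbb{T}^{E_n}((E_n)^{\vee}_*(X))$ together with the preservation of pro-freeness and evenness under $\mathbb{T}^{E_n}$; but these are exactly the results of Rezk and Barthel--Frankland recalled before the statement, so no new computation is required, and the remaining steps are standard facts about $K(n)$-local spectra. The point that requires attention is that one must work with $E$-theory \emph{homology} rather than cohomology, since the free commutative algebra functor does not commute with $F(-, E_n)$ and there is therefore no direct cohomological analogue of $\alpha_n$; a secondary nuisance is bookkeeping with completions---infinite wedges and completed tensor products---but this is precisely what Rezk's formalism is designed to handle, so I expect the written-out proof to be short.
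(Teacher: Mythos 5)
Your proposal is correct and follows essentially the same route as the paper: reduce via \myref{thm:bpfromkn} to evenness of $K(n)^*(\bP X)$, pass to completed $E$-homology where evenness/flatness of $E_{n,*}^{\vee}(X)$ makes Rezk's comparison map $\alpha_n(E_n\wedge X)$ an isomorphism onto $\mathbb{T}^{E_n}E_{n,*}^{\vee}(X)$, and use preservation of flatness and evenness by $\mathbb{T}^{E_n}$ to conclude. The only cosmetic difference is your final dualization back to $E_n$-cohomology before invoking \myref{lem:frometok}, whereas the paper reads off evenness of $K(n)$ directly from the homological version of that lemma; both are fine.
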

\begin{proof}
Let $n > 0$ be such that $K(n)_*(X)$ is even. Recall that the completed $E$-homology of $X$ is defined as $E_{n,*}^{\vee}(X) = \pi_*L_{K(n)}(E_n\wedge X)$. The assumption on $X$ implies that $E_{n,*}^{\vee}(X)$ is flat and even by the homological version of \myref{lem:frometok}, see \cite[Prop.~8.4(f)]{hoveystrickland}, thus the previous discussion gives an isomorphism
\[\xymatrix{\alpha_n(E_n \wedge X)\colon {\mathbb{T}}^{E_n}E_{n,*}^{\vee}(X) \ar[r]^-{\cong} & E_{n,*}^{\vee}(\bP X)}\]
of flat and evenly concentrated $E_n^*$-modules, using the equivalence $E_n \wedge \bP X \simeq \bP^{E_n}(E_n \wedge X)$. It follows from \myref{lem:frometok} that $K(n)^*(\bP X)$ is even as well, so \myref{thm:bpfromkn} finishes the proof.
\end{proof}

Denote by 
\[
\epsilon\colon \Prod{n>0}E_n \to \Prod{n>0}E_n
\] 
the idempotent given by \myref{cor:bpretract}; note that $\epsilon$ is not necessarily unique, but we will fix one throughout this section. 

\begin{definition}
The $BP$-based algebraic approximation functor $\BP_d$ of degree $d\ge 0$ is constructed as the functor
\[\BP_d :=  \epsilon\prod_{n>0}\mathbb{T}^{E_n}_d (E_n^* \hotimes_{BP_p^*} -) \colon \Mod_{BP_p^*}^c \longrightarrow \Mod_{BP_p^*}\]
on the category of $BP_p^*$-modules equipped with a complete topology as in \myref{def:completetensor}. We then define the total algebraic approximation functor as $\BP = \prod_{d\ge 0} \BP_d$.
\end{definition}

This allows us to prove an analogue of Kashiwabara's result~~\cite[Thm.~0.12]{kashiwabarabpqx} exhibiting a class of spaces $X$ for which the $BP_p^*$-cohomology of $\bP X$ is functorially determined by the topological module $BP_p^*(X)$. 

\begin{theorem}\mylabel{thm:bpp}
Let $X$ be a space with $K(n)_*(X)$ even and degreewise finite for infinitely many $n$, then there exists a natural isomorphism
\[\xymatrix{\BP BP_p^*(X) \ar[r]^-{\cong} & BP_p^*(\bP X)}\]
of $BP_p^*$-modules.
\end{theorem}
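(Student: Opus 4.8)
The plan is to compare $BP_p^*(\bP X)$ with $\epsilon\bigl(\prod_{n>0} E_n^*(\bP X)\bigr)$ using the retract of \myref{cor:bpretract}, and then identify each factor $E_n^*(\bP X)$ with $\mathbb{T}^{E_n}(E_n^*(X))$ via Rezk's comparison map $\alpha_n$. First I would observe that since $BP_p$ is a retract of $\prod_{n>0}E_n$ with idempotent $\epsilon$, applying the (contravariant) functor $[-,\bP X]$, or rather smashing $\bP X$ with the retract diagram and taking homotopy, gives $BP_p^*(\bP X)$ as a retract of $\prod_{n>0}E_n^*(\bP X)$, cut out by the induced idempotent. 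The point is then to make this identification \emph{functorial in $X$} through $BP_p^*(X)$ alone. For that, one uses that $\bP X$ is built from $X$ and, at each height, $E_n \wedge \bP X \simeq \bP^{E_n}(E_n\wedge X)$, so that after $K(n)$-localization the completed $E$-homology is computed by $\mathbb{T}^{E_n}$; dualizing (the finiteness hypothesis on $K(n)_*(X)$ is exactly what allows passing between $E_n$-homology and $E_n$-cohomology, and guarantees the relevant modules are finitely generated free in each degree so that duals behave well) yields $E_n^*(\bP X)\cong \mathbb{T}^{E_n}(E_n^*(X))$, naturally.

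Next I would address the base change $E_n^*(X) \cong E_n^* \hotimes_{BP_p^*} BP_p^*(X)$. Since $X$ is a space with $K(n)_*(X)$ even for infinitely many $n$, \myref{thm:bpfromkn} gives that $BP_p^*(X)$ is even and Landweber flat, hence by \myref{basechangepk} and \myref{prop:littlemorava} (applied with $m=0$) one gets $\hat E(0,n)^*(X) \cong \hat E(0,n)^* \hotimes_{BP_p^*} BP_p^*(X)$, and then extending coefficients along $\hat E(0,n)^*\to E_n^*$ produces $E_n^*(X)\cong E_n^*\hotimes_{BP_p^*}BP_p^*(X)$ with the correct topology from \myref{def:completetensor}. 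This is precisely the input $E_n^*\hotimes_{BP_p^*}(-)$ appearing in the definition of $\BP_d$. Assembling over $n$ and over the degree decomposition $\mathbb{T}^{E_n}\cong\bigvee_d \mathbb{T}^{E_n}_d$, and then applying $\epsilon$, one obtains
\[
\BP\, BP_p^*(X) = \epsilon\prod_{n>0}\mathbb{T}^{E_n}(E_n^*\hotimes_{BP_p^*}BP_p^*(X)) \cong \epsilon\prod_{n>0}\mathbb{T}^{E_n}(E_n^*(X)) \cong \epsilon\prod_{n>0}E_n^*(\bP X) \cong BP_p^*(\bP X),
\]
all isomorphisms being natural in $X$.

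I would structure the writeup around three naturality squares: the base-change isomorphism $E_n^*(X)\cong E_n^*\hotimes_{BP_p^*}BP_p^*(X)$; the Rezk comparison isomorphism $\alpha_n$ relating $\mathbb{T}^{E_n}$ of $E_n$-(co)homology with $E_n$-(co)homology of $\bP X$; and the retract square relating the product of $E_n$-cohomologies to $BP_p$-cohomology. The main obstacle I expect is the careful bookkeeping in the second square: $\alpha_n$ as recalled in the excerpt is stated for completed $E$-\emph{homology} $E_{n,*}^{\vee}(\bP X)\cong \mathbb{T}^{E_n}E_{n,*}^{\vee}(X)$, so one must dualize to cohomology, which is where the ``degreewise finite'' hypothesis on $K(n)_*(X)$ (and hence on $E_{n,*}^{\vee}(X)$, which becomes a finitely generated free module in each degree) is essential — one needs each $\mathbb{T}^{E_n}_d$ applied to a degreewise-finite-free module to again be degreewise finite free, or at least pro-free, so that $\Hom_{E_n^*}(-,E_n^*)$ converts the homology statement into the cohomology statement without loss. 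One should also check that the topology on $E_n^*(X)$ coming from the skeletal filtration matches the one used when forming $E_n^*\hotimes_{BP_p^*}(-)$, so that the completed tensor products on both sides genuinely agree; this is routine but needs to be said. A secondary subtlety is that $\epsilon$ is only well-defined up to the fixed choice made at the start of the section, so the isomorphism depends on that choice of idempotent — but this is harmless and already flagged in the text.
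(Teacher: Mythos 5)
Your overall architecture coincides with the paper's: cut down along the idempotent $\epsilon$ of \myref{cor:bpretract}, use the degree decomposition $\bP\simeq\bigvee_d\bP_d$, rewrite $E_n^*\hotimes_{BP_p^*}BP_p^*(X)\cong E_n^*(X)$ via \myref{prop:littlemorava}, and feed this into Rezk's comparison map $\alpha_n$. The gap sits exactly at the step you flag as the main obstacle, and your proposed fix for it is not enough. You want $E_n^*(\bP_d X)\cong\mathbb{T}^{E_n}_d(E_n^*(X))$ by dualizing the homology isomorphism $\mathbb{T}^{E_n}_dE_{n,*}^{\vee}(X)\cong E_{n,*}^{\vee}(\bP_d X)$, arguing that degreewise finiteness/pro-freeness makes $\Hom_{E_n^*}(-,E_n^*)$ lossless. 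But even granting pro-freeness, dualizing only gives $E_n^*(\bP_d X)\cong\Hom_{E_n^*}(\mathbb{T}^{E_n}_dE_{n,*}^{\vee}(X),E_n^*)$, i.e.\ the \emph{dual of} $\mathbb{T}^{E_n}_d$ applied to homology, whereas the definition of $\BP_d$ requires $\mathbb{T}^{E_n}_d$ applied to the cohomology module $E_n^*(X)$ itself. You would additionally need $\mathbb{T}^{E_n}_d$ to commute with (continuous) duality, and this is not a formal consequence of finiteness: $\mathbb{T}^{E_n}_d$ encodes $\Sigma_d$-homotopy orbits, and duality converts orbits into fixed points. The Remark following the theorem in the paper makes precisely this point: the naive dualization route only yields a statement about a dual-modified version of $\BP$.

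The missing ingredient is $K(n)$-local ambidexterity (Greenlees--Sadofsky: $Y_{hG}\simeq Y^{hG}$ $K(n)$-locally for finite $G$) combined with $K(n)$-local Spanier--Whitehead duality. The paper's proof applies $\alpha_n$ not to $E_n\wedge X$ but to the function spectrum $E_n^X$, which is legitimate because $\pi_*E_n^X=E_n^*(X)$ is flat by \myref{lem:frometok}; this gives $\mathbb{T}^{E_n}_dE_n^*(X)\cong\pi_*L_{K(n)}\bP^{E_n}_dE_n^X$ directly, with no dualization of the functor needed. Then, since degreewise finiteness of $K(n)_*(X)$ says $X$ is dualizable in the $K(n)$-local category (\cite[Thm.~8.6]{hoveystrickland}), one writes $E_n^X\simeq E_n\wedge D_{K(n)}X$, commutes $\bP_d$ past $D_{K(n)}$ using the Greenlees--Sadofsky equivalence, and checks that $\bP_dX$ is again $K(n)$-locally dualizable because $K(n)_*(B\Sigma_d)$ is degreewise finite; this identifies $\pi_*L_{K(n)}\bP^{E_n}_dE_n^X$ with $E_n^*(\bP_dX)$. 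Once that step is supplied, the rest of your argument (base change, assembling over $n$ and $d$, applying $\epsilon$, naturality) goes through as you describe.
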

\begin{proof}
Let $S$ be the set of those natural numbers $n$ such that  $K(n)_*(X)$ is even and degreewise finite. The free commutative algebra functor decomposes into its degree $d$ constituents,
\[\bP(-) \simeq \bigvee_{d\ge0}\bP_d(-) \simeq \bigvee_{d\ge0} (-)^{\wedge d}_{h\Sigma_d},\] 
so we obtain a natural commutative diagram 
\[\xymatrix{BP_p^*(\bP X) \ar[r]^-{\cong} \ar[d] & \prod_dBP_p^*(\bP_dX) \ar[d]\\
\prod_{n \in S}E_n^*(\bP X) \ar[r]_-{\cong} & \prod_d\prod_{n \in S}E_n^*(\bP_d X)}\]
for any spectrum $X$. Since $\BP$ is compatible with this decomposition, we can reduce to a fixed degree $d \ge 0$. 

There are natural isomorphisms
\begin{align*}
\BP_d BP_p^*(X) &= \epsilon\prod_{n\in S}\mathbb{T}^{E_n}_d (E_n^* \hotimes_{BP_p^*} BP_p^*(X)) \\
& \cong \epsilon\prod_{n\in S}\mathbb{T}^{E_n}_d (E_n^*(X)) & & \text{by \myref{prop:littlemorava}} \\
& \cong \epsilon\prod_{n\in S}\mathbb{T}^{E_n}_d \pi_*E_n^X  \\
& \cong \epsilon\prod_{n\in S}\pi_*L_{K(n)}\bP^{E_n}_dE_n^X & & \text{via } \prod_{n \in S} \alpha_n(E_n^X), 
\end{align*}
so it is enough to understand $\pi_*L_{K(n)}\bP^{E_n}_dE_n^X$ in terms of $E_n^*(\bP_d X)$. By~\cite[Thm.~8.6]{hoveystrickland}, a spectrum $X$ has degreewise finite $K(n)_*(X)$ if and only if $X$ is dualizable in the $K(n)$-local category; we write $D_{K(n)}$ for $K(n)$-local duality. Therefore, we obtain:
\begin{align*}
\pi_*L_{K(n)}\bP^{E_n}_dE_n^X & \cong \pi_*L_{K(n)}\bP^{E_n}_d(E_n \wedge D_{K(n)}X)  \\
& \cong \pi_*L_{K(n)}(E_n \wedge \bP_d D_{K(n)}X) \\ 
& \cong \pi_*L_{K(n)}(E_n \wedge D_{K(n)}\bP_d X) \\ 
& \cong \pi_*L_{K(n)}E_n^{\bP_d X}  \\ 
& \cong E_n^*(\bP_d X). 
\end{align*}
Here, the third isomorphism uses the fact~\cite{greenlees-sadofsky} that homotopy orbits agree with homotopy fixed points with respect to a finite group $G$ in the $K(n)$-local category, i.e., $Y_{hG} \lra{\simeq} Y^{hG}$ $K(n)$-locally. Moreover, the fourth isomorphism can be seen as follows: Because $X$ is dualizable, $K(n)_*(X)$ is degreewise finite, hence so is $K(n)_*(\bP_d X)$ as $K(n)_*(B\Sigma_d)$ is degreewise finite. 
Using~\cite[Thm.~8.6]{hoveystrickland} once more, we see that $\bP_d X$ is also $K(n)$-locally dualizable, giving the fourth isomorphism above. 

Putting the pieces together, we obtain
\begin{align*}
\BP_d BP_p^*(X) & \cong \epsilon\prod_{n\in S}\pi_*L_{K(n)}\bP^{E_n}_dE_n^X \\
& \cong \epsilon\prod_{n\in S}E_n^*(\bP_d X) \\
& \cong BP_p^*(\bP_d X),
\end{align*}
hence the claim. 
\end{proof}

Note that having degreewise finite Morava $K$-theory is less restrictive than one might think. For example, all finite CW complexes satisfy this condition. For a different class of examples including classifying spaces of finite groups and Eilenberg--Mac Lane spaces $K(\Z/p,l)$, recall that a $\pi$-finite space is a space with only finitely many nonzero homotopy groups all of which are finite. By~\cite[Cor.~8.8]{hoveystrickland}, $\pi$-finite spaces are $K(n)$-locally dualizable, hence have degreewise finite Morava $K$-theory as well. Finite CW complexes with even cells and many $\pi$-finite spaces satisfy the evenness condition as well.  

We should, however, remark that we do not know the precise relation between our condition and Kashiwabara's assumption of well-generated $BP_{p}$-cohomology. In fact, Kashiwabara's result is somewhat sharper, in that his functor $\mathcal{D}$ is computable in an entirely algebraic fashion, as demonstrated in \cite{kashiwabarabpqx}. 

\begin{remark}
The hypotheses in \myref{thm:bpp} can be weakened if one is willing to work with a version of $\BP$ that incorporates a continuous dual. To be more precise, Hovey~\cite[Thm.~5.1]{hoveyoperations} shows that, for spectra $X$ with flat completed $E_n$-homology $E_{*,n}^{\vee}(X)$, there is a natural isomorphism
\[\xymatrix{E_n^*(X) \ar[r]^-{\cong} & \Hom_{E_n^*}(E_{n,*}^{\vee}(X),E_n^*).}\]
Passing from cohomology to homology is more subtle, however, and requires that we take into account the topology on $E_n^*(X)$ coming from the skeletal filtration on a CW spectrum $X$. In~\cite[Section 7]{stricklandfp}, Strickland proves
\[\xymatrix{E_{n,*}^{\vee}(X) \ar[r]^-{\cong} & \Hom_{E_n^*}^c(E_n^*(X),E_n^*)}\] 
when $X$ is a CW spectrum with free $E_*(X)$, where $\Hom^c$ denotes the module of continuous homomorphisms. Together, these two isomorphisms can be used to translate the statement into one about completed homology, to which the algebraic approximation functor $\mathbb{T}^{E_n}$ applies directly. 
\end{remark}

We end this section by giving a different criterion for when $P(m)^*(X)$ is even and Landweber flat. Let $\Phi_n\colon \Top \lra{} \Sp$ be the Bousfield--Kuhn functor, which was constructed in \cite{bousfieldbkf} for $n=1$ and, using the periodicity theorem~\cite{nilp1,nilp2}, for arbitrary $n$ in \cite{kuhnbkf}. This functor gives a factorization
\[L_{K(n)}X \simeq \Phi_n\Omega^{\infty}X\]
for any spectrum $X$; in fact, this can be improved to factor the telescopic localization in the analogous way, but we will not need this here. For $X \in \Sp$, Kuhn~\cite{kuhnaqgt} uses $\Phi_n$ to produce a (weak) map 
\[s_n(X)\colon L_{K(n)}\bP X \lra{} L_{K(n)}\Sigma^{\infty}_+\Omega^{\infty}X\]
which induces a natural monomorphism
\[s_n(X)_*\colon K(n)_*(\bP X) \lra{} K(n)_*(\Omega^{\infty}X).\]
As an easy consequence, we get:

\begin{proposition}
Let $X$ be a spectrum such that $K(n)_*\Omega^{\infty}X$ is even for infinitely many $n$, then $P(m)^*(\bP X)$ is even and Landweber flat for all $m$. 
\end{proposition}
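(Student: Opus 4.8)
The plan is to reduce the statement to \myref{thm:bpfromkn} by showing that the hypothesis $K(n)_*\Omega^{\infty}X$ even for infinitely many $n$ forces $K(n)^*(\bP X)$ to be even for those same $n$. First I would fix an $n$ in the infinite set $S$ for which $K(n)_*\Omega^{\infty}X$ is even. The key input is Kuhn's natural monomorphism $s_n(X)_*\colon K(n)_*(\bP X) \hookrightarrow K(n)_*(\Omega^{\infty}X)$ coming from the Bousfield--Kuhn functor, which is available since $\Phi_n \Omega^{\infty} X \simeq L_{K(n)}X$; this injection exhibits $K(n)_*(\bP X)$ as a sub-$K(n)_*$-module of an even module. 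Since $K(n)^*$ is a graded field, every submodule of an evenly concentrated module is again evenly concentrated, so $K(n)_*(\bP X)$ is even, and hence so is $K(n)^*(\bP X) = \Hom_{K(n)_*}(K(n)_*(\bP X), K(n)_*)$.

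Having established that $K(n)^*(\bP X)$ is even for all $n \in S$, an infinite set, I would then invoke \myref{thm:bpfromkn} directly: it says precisely that a spectrum $Y$ with $K(n)^*(Y)$ even for infinitely many $n$ has $P(m)^*(Y)$ even and Landweber flat for all $m$. Applying this with $Y = \bP X$ gives the conclusion. The short exact sequences from \eqref{eq:pm} then come for free as in the statement of \myref{thm:bpfromkn}.

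The main obstacle, or rather the only point requiring care, is making sure the passage between $K(n)$-homology and $K(n)$-cohomology is legitimate here --- one needs $K(n)_*(\bP X)$ to be degreewise finite (or at least that the universal coefficient isomorphism $K(n)^*(\bP X) \cong \Hom_{K(n)_*}(K(n)_*(\bP X), K(n)_*)$ holds) in order to transfer evenness from homology to cohomology. If one does not want to assume degreewise finiteness, one can instead argue that the completed $E_n$-homology $E_{n,*}^{\vee}(\bP X)$ is flat and even --- using that $s_n(X)_*$ upgrades to a statement about $K(n)$-local or $E_n$-homology and that flatness is detected after base change to $K(n)_*$ --- and then apply the homological form of \myref{lem:frometok} to conclude $K(n)^*(\bP X)$ is even, feeding this into \myref{thm:bpfromkn} as before. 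Either way, the essential content is simply: Kuhn's monomorphism plus the graded-field property of $K(n)^*$ plus the retract theorem.
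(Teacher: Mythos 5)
Your argument is exactly the paper's: Kuhn's monomorphism $s_n(X)_*$ shows $K(n)_*(\bP X)$ is even for the relevant $n$, and then \myref{thm:bpfromkn} applies. The caution about passing from homology to cohomology is unnecessary, since $K(n)$ is a field spectrum and so $K(n)^*(\bP X)\cong\Hom_{K(n)_*}(K(n)_*(\bP X),K(n)_*)$ holds with no finiteness hypothesis, but this does not change the substance of the proof.
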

\begin{proof}
By assumption, Kuhn's map $s_n(X)_*$ shows that $K(n)_*(\bP X)$ is even, hence \myref{thm:bpfromkn} applies. 
\end{proof}

\subsection{Equivalent characterizations of good groups}

Now let $G$ be a finite group. For the purposes of this paper, a group $G$ is called 
good if the Morava $K$-theories $K(n)^*(BG)$ are even for all $n\ge 0$. Note that this notion of good is a priori weaker than the one given in \cite{hkr}, but conjectured to be equivalent. The following definition first appeared in \cite{schusteryagitabp} for $m=0$. 

\begin{definition}
A finite group $G$ is called $P(m)$-good if the $P(m)$-cohomology of its classifying space $BG$ is concentrated in even degrees and is Landweber flat. 
\end{definition}

Note that it follows from \myref{lem:bpvsbpp} that a group is $P(0)$-good if and only if $BP^*(BG)$ is even and Landweber flat. We now see that the various notions of good groups coincide, thereby revealing the global nature of goodness for finite groups. 

\begin{proposition}
For a finite group $G$, the following conditions are equivalent:
\begin{enumerate}
 \item $G$ is good,
 \item $K(n)^*(BG)$ is even for infinitely many $n\ge 0$,
 \item $G$ is $BP_p$-good,
 \item $G$ is $P(m)$-good for some $m\ge 0$.
\end{enumerate}
\end{proposition}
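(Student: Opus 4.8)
The plan is to prove the chain of equivalences by a cycle of implications, using the base change formulas of Section 2 together with the retract theorem \myref{cor:bpretract}. Since $BG$ is the classifying space of a finite group, it is $\pi$-finite, hence a finite-type space, and in particular the hypotheses of \myref{lem:bpvsbpp}(2) (or rather the version for classifying spaces mentioned after it) are in force, so $P(0)$-goodness may be read interchangeably as a statement about $BP^*(BG)$ or about $BP_p^*(BG)$.

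First I would establish $(1)\Rightarrow(2)$, which is trivial since "even for all $n\ge 0$" implies "even for infinitely many $n$". Next, $(2)\Rightarrow(4)$: assuming $K(n)^*(BG)$ is even for infinitely many $n$, \myref{thm:bpfromkn} immediately gives that $P(m)^*(BG)$ is even and Landweber flat for all $m$, in particular for any single choice of $m\ge 0$, which is exactly $P(m)$-goodness. For $(4)\Rightarrow(3)$: suppose $G$ is $P(m)$-good for some $m\ge 0$. If $m=0$ this is the definition of $BP_p$-goodness. If $m>0$, then $P(m)^*(BG)$ even and Landweber flat, combined with \myref{cor:knfrombp}, shows $K(n)^*(BG)$ is even for all $n\ge m+1$, hence for infinitely many $n$; feeding this back into \myref{thm:bpfromkn} with $m=0$ gives that $P(0)^*(BG) = BP_p^*(BG)$ is even and Landweber flat, i.e. $G$ is $BP_p$-good. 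Then $(3)\Rightarrow(1)$: if $BP_p^*(BG)$ is even (and Landweber flat), then since $BG$ is a space, \myref{cor:pmeven} with $m=0$ shows $K(n)^*(BG)$ is even for all $n>0$; evenness at $n=0$, i.e. of $H^*(BG;\Q)$, holds because the rational cohomology of a finite group is concentrated in degree $0$. Hence $K(n)^*(BG)$ is even for all $n\ge 0$, which is goodness. This closes the cycle $(1)\Rightarrow(2)\Rightarrow(4)\Rightarrow(3)\Rightarrow(1)$, and since $(1)$ trivially implies $(2)$ again, all four conditions are equivalent.

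The main subtlety — though not really an obstacle, given the machinery already in place — is the interplay between evenness and Landweber flatness in the various notions, and making sure the single-height hypotheses ("for some $m$", "for infinitely many $n$") are genuinely enough to invoke \myref{thm:bpfromkn} and \myref{cor:knfrombp}; the point is precisely that these results were stated with exactly such minimal hypotheses, so the argument goes through without any extra work. One should also be slightly careful that \myref{cor:pmeven} and \myref{cor:knfrombp} are statements about \emph{spaces}, which is why the argument is phrased in terms of $BG$ rather than a general spectrum, and that the $p$-complete versus integral $BP$ distinction is harmless here by \myref{lem:bpvsbpp}.
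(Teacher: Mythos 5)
Your proof is correct and uses exactly the machinery the paper does (\myref{thm:bpfromkn} and \myref{cor:pmeven}/\myref{cor:knfrombp}), merely arranging the cycle of implications slightly differently — the paper takes $(1)\Rightarrow(2)$ and $(3)\Rightarrow(4)$ as trivial, gets $(2)\Rightarrow(3)$ from \myref{thm:bpfromkn} and $(4)\Rightarrow(1)$ from \myref{cor:pmeven}. Your extra care about the $n=0$ case and the $BP$ versus $BP_p$ issue via \myref{lem:bpvsbpp} is consistent with the paper's implicit treatment.
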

\begin{proof}
The implications $(1) \implies (2)$ and $(3) \implies (4)$ are trivial. \myref{thm:bpfromkn} gives $(2) \implies (3)$, while $(4) \implies (1)$ follows from \myref{cor:pmeven}.
\end{proof}

This result generalizes work of Kono and Yagita in~\cite{konoyagita} for finite groups.

\section{Bounded torsion results for the Morava $E$-cohomology of finite groups}

In this section we study the rationalization of the $BP$-cohomology of certain finite groups by working with the rationalization of the split inclusion of \myref{cor:bpretract}
\[
\Q \otimes BP_{p}^{*}(X) \lra{} \Q \otimes \Prod{n} \E^*(X).
\]
There is a close analogy to number theory. The right hand side is a kind of $p$-local adeles in the stable homotopy category. Just as in number theory, we use these $p$-local adeles to study more global phenomena (the left hand side). The purpose of this section is to produce a factorization of $\Q \otimes BP_{p}^*(X)$ when $X = BA$ or $X = B\Sigma_m$ by lifting a factorization of $\Q \otimes \prod_{n} \E^*(X)$. For instance, in the case $X = BA$, we factor $\Q \otimes \Prod{n} \E^*(BA)$ by proving that the cokernel of the canonical map 
\[
\Prod{n} \E^*(BA) \lra{} \Prod{n} \Prod{H \subseteq A} \E^*(BH)/I
\]
has bounded torsion, where $I$ is the image of a transfer map. In this section, we will always work with the $p$-complete version of the Brown--Peterson spectrum; for simplicity we shall write $BP = BP_p$ for this spectrum.

\subsection{Cyclic groups and $\Sigma_p$ --- a toy case}
In this subsection we present some elementary observations that provide evidence for general bounded torsion results. Therefore, this subsection works backwards from the subsections that follow it. It uses $BP$-cohomology to give bounded torsion results for Morava $E$-theory.

Recall that, after choosing a coordinate, there is an isomorphism
\begin{equation} \label{ptorsion}
BP^*(B\Z/p^k) \cong BP^*\powser{x}/[p^k](x),
\end{equation}
where $[p^k](x)$ is the $p^k$-series for the formal group law associated to $BP$.

For any formal group law, it is standard to set
\[
\langle p^k \rangle (x) = [p^k](x)/[p^{k-1}](x) = \langle p \rangle ([p^{k-1}](x))
\]
so that
\[
BP^*(B\Z/p^k)/I \cong BP^*\powser{x}/\langle p^k \rangle(x),
\]
where $I \subset BP^*(B\Z/p^k)$ is the image of the transfer along $\Z/p^{k-1} \subset \Z/p^k$. This fact is a consequence of Quillen's \cite[Proposition 4.2]{Quillenelementary}. There is a canonical map
\[
BP^*(B\Z/p^k) \lra{} \Prod{0 \leq j \leq k} BP^*(B\Z/p^j)/I
\]
given by sending $1$ to $(1,\ldots,1)$ and $x$ to $(0,x,x,\ldots,x)$.

The following proposition is well-known.
\begin{proposition}
There is an isomorphism
\[
\Q \otimes BP^*(B\Z/p^k) \lra{\cong} \Q \otimes \Prod{0 \leq j \leq k} BP^*(B\Z/p^j)/I.
\]
\end{proposition}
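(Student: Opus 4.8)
The claim is that rationally the canonical map
\[
\psi\colon BP^*(B\Z/p^k) \lra{} \Prod{0 \leq j \leq k} BP^*(B\Z/p^j)/I
\]
is an isomorphism. Unwinding the identifications \eqref{ptorsion}, both sides are modules over $\Q \otimes BP^*$, the left side is $\Q \otimes BP^*\powser{x}/[p^k](x)$, and the right side is $\Q\otimes BP^*\powser{x}/\langle p^0\rangle(x)\times \cdots \times \Q \otimes BP^*\powser{x}/\langle p^k\rangle(x)$ (with the convention $\langle p^0\rangle(x) = [1](x) = x$, so the $j=0$ factor is just $\Q \otimes BP^*$). The key algebraic fact is the telescoping factorization
\[
[p^k](x) = \Prod{0 \leq j \leq k} \langle p^j \rangle(x)
\]
in $BP^*\powser{x}$, which is immediate from the definition $\langle p^j\rangle(x) = [p^j](x)/[p^{j-1}](x)$ together with $[p^0](x) = [1](x) = x = \langle p^0\rangle(x)$. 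So the statement is really a Chinese Remainder Theorem assertion: the quotient of $R\powser{x}$ by a product of power series becomes, after inverting a suitable integer, the product of the quotients by the individual factors, provided the factors are pairwise coprime after rationalization.

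The first step is to reduce to a statement over $\Q\otimes BP^* = BP^*\otimes\Q$, which is a $\Q$-algebra, so I may invert any integer freely. Over a $\Q$-algebra the relevant power series become better behaved: each $\langle p^j\rangle(x)$ is a Weierstrass-type polynomial of degree $p^{j}-p^{j-1}$ (for $j \geq 1$) in $x$ up to a unit, since $[p^j](x) \equiv p^j x$ modulo higher-degree terms after the usual Weierstrass preparation argument, which is valid once $p$ is invertible; the $j=0$ factor has $\langle p^0\rangle(x) = x$. The second step is to verify pairwise coprimality: I would show $(\langle p^i\rangle(x), \langle p^j\rangle(x)) = (1)$ in $\Q\otimes BP^*\powser{x}$ for $i \neq j$. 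This follows because the two series, reduced mod the maximal ideal generated by $x$ and the positive-degree part, have no common root in the relevant sense — concretely, $\langle p^j\rangle(0)$ is a unit times $p$ for $j\geq 1$ but more usefully one argues that $\langle p^j\rangle(x)$ divides $[p^j](x)$ while $\langle p^i\rangle(x)$ for $i>j$ divides $[p^i](x)/[p^j](x)$, and these are coprime because $\langle p\rangle(y)$ evaluated at $y=0$ equals $p$, a unit in $\Q\otimes BP^*$, so $\gcd([p^i](x)/[p^j](x),\, [p^j](x)) = 1$. Then the Chinese Remainder Theorem gives
\[
\Q \otimes BP^*\powser{x}/[p^k](x) \cong \Prod{0\leq j\leq k} \Q\otimes BP^*\powser{x}/\langle p^j\rangle(x),
\]
and I would check that this abstract CRT isomorphism is precisely the rationalization of $\psi$ by tracking where $x$ goes: on the $j$-th factor $x\mapsto x$ for $j\geq 1$ and $x\mapsto 0$ for $j=0$ since $\langle p^0\rangle(x)=x$, matching the formula $x\mapsto(0,x,x,\ldots,x)$ in the statement.

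\textbf{Main obstacle.} The genuine work is not the ring theory but making the power-series manipulations rigorous over the non-Noetherian, graded, completed ring $\Q\otimes BP^*\powser{x}$: I need Weierstrass preparation to know each $\langle p^j\rangle(x)$ is, up to a unit power series, a distinguished polynomial so that the quotients are finite free $\Q\otimes BP^*$-modules and CRT applies cleanly, and I need to confirm that coprimality of the distinguished polynomials in $(\Q\otimes BP^*)[x]$ survives passage to the power series ring. Both are standard once $p$ is inverted — the constant term of $\langle p\rangle(x)$, hence its value controlling the resultant computations, is $p$ — but writing it carefully requires attention to the completion. An alternative, perhaps cleaner, route is to avoid preparation entirely: rationally, $\Q\otimes BP^*(B\Z/p^j)$ is free of rank $p^j$ over $\Q\otimes BP^*$, one knows the ranks add up ($1 + (p-1) + (p^2-p) + \cdots + (p^k - p^{k-1}) = p^k$), so it suffices to show $\psi\otimes\Q$ is surjective (or injective), and surjectivity can be checked factor-by-factor using that $[p^k](x) \in (\langle p^j\rangle(x))$ for each $j$ together with coprimality to split off idempotents. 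I expect to present the idempotent/CRT version as the main line, with the rank count as the sanity check that pins down that it is an isomorphism and not merely a rational surjection.
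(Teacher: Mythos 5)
Your proposal is correct and follows essentially the same route as the paper: the telescoping factorization $[p^k](x)=\prod_{0\le j\le k}\langle p^j\rangle(x)$, pairwise coprimality of the factors once $p$ is inverted (via $\langle p^j\rangle(x)=p+f([p^{j-1}](x))$ with $x\mid f(x)$, so each higher factor is congruent to $p$ modulo a lower one), and then the Chinese remainder theorem. Your worry about Weierstrass preparation and finite freeness is unnecessary, since CRT for pairwise comaximal ideals holds in any commutative ring; the paper simply exhibits $p$ explicitly in the ideal generated by any two distinct factors and concludes directly.
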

\begin{proof}
For any formal group law there is a factorization
\[
[p^k](x) = \Prod{0 \leq j \leq k} \langle p^j \rangle (x),
\]
and we will show that the factors are coprime in the rationalization; the Chinese remainder theorem will then imply the claim. We may write $\langle p \rangle (x) = p + f(x)$ for some power series $f(x)$ with $x | f(x)$. This implies that
\[
[p^j](x) = [p^{j-1}](x)(p + f([p^{j-1}](x))),
\]
hence
\begin{equation}\label{eq:pjseries}
\langle p^j \rangle (x) = p + f([p^{j-1}](x)).
\end{equation}
But now if $t < j$, we have $\langle p^t \rangle (x) | [p^{j-1}](x)$ by definition, so that
\[
\langle p^t \rangle (x) | f([p^{j-1}](x))
\]
and using \eqref{eq:pjseries}
\[
p =  \langle p^j \rangle (x) - \langle p^t \rangle (x) \frac{f([p^{j-1}](x))}{\langle p^t \rangle (x)}.
\]
\end{proof}

The Morava $E$-cohomology of $B\Z/p^k$ satisfies an isomorphism as in Equation \ref{ptorsion}. Let $\G_{\E}$ be the formal group associated to $\E$. Given a coordinate $x$ on $\G_{\E}$, there is an isomorphism
\[
\E^*(B\Z/p^k) \cong \E^*\powser{x}/[p^k](x),
\]
where $[p^k](x)$ is the $p^k$-series for the formal group law $x+_{\G_{\E}}y$ induced by the coordinate. It is worth noting that the Weierstrass preparation theorem implies that this ring is a free $\E^*$-module of rank $p^{kn}$. This is very different from $BP^*(B\Z/p^k)$, which is infinitely generated as a $BP^*$-module.

\begin{corollary} \mylabel{cyclicdecomp}
There is an isomorphism
\[
\Q \otimes \Prod{n>0}E_n^*(B\Z/p^k) \lra{\cong} \Q \otimes \Prod{n>0}\,\Prod{0 \leq j \leq k} E_n^*\llbracket x\rrbracket/\langle p^j\rangle(x).
\]
\end{corollary}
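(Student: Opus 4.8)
The plan is to run the argument of the preceding proposition one height at a time and then observe that the resulting error terms are killed by a single power of $p$ valid across all heights, so that the obstruction to commuting $\Q \otimes -$ past the infinite product $\Prod{n>0}$ disappears.

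Fix $n > 0$. Since the formal group law of $\E$ is a specialization of the one over $BP$, the factorization $[p^k](x) = \Prod{0 \leq j \leq k} \langle p^j\rangle(x)$ from the proof of the preceding proposition, together with the coprimality relations obtained there, hold over $\E^*\powser{x}$ as well; concretely, from $\langle p^j\rangle(x) \equiv p \pmod x$ for all $j$ one gets
\[
p \in (\langle p^i\rangle(x)) + (\langle p^j\rangle(x)) \quad\text{in } \E^*\powser{x}, \qquad 0 \leq i < j \leq k.
\]
Using $\E^*(B\Z/p^k) \cong \E^*\powser{x}/[p^k](x)$, reduction modulo the individual factors defines a canonical map $\phi_n$ from $\E^*\powser{x}/[p^k](x)$ to $\Prod{0 \leq j \leq k}\E^*\powser{x}/\langle p^j\rangle(x)$, and the map in the statement is $\Prod{n>0}\phi_n$.

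The remaining ingredient is pure commutative algebra: if $S$ is any commutative ring and $a_0, \dots, a_k \in S$ satisfy $p \in (a_i) + (a_j)$ for $i \neq j$, then the kernel and cokernel of the reduction map $S/(a_0 \cdots a_k) \lra{} \Prod{i} S/(a_i)$ are annihilated by $p^{N}$ for some $N$ depending only on $k$ --- and, crucially, not on $S$ or on the $a_i$. In the two-factor case one writes $p = ra + sb$ and checks that $sbu + rav$ lifts $p \cdot (\bar u, \bar v)$ while $p$ itself annihilates the kernel; the general case follows by induction on the number of factors, combining the pairwise relations (so that, for instance, $p^{k} \in (a_0) + (a_1\cdots a_k)$). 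Taking $S = \E^*\powser{x}$ and $a_j = \langle p^j\rangle(x)$, the ring-independence of $N$ then gives $p^{N}\cdot \ker\phi_n = 0 = p^{N}\cdot\coker\phi_n$ with one and the same $N$ for every $n > 0$; this is the height-independent torsion bound, extracted here by hand from the $BP$-level factorization.

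Finally, products are exact in the category of abelian groups, so $\ker(\Prod{n>0}\phi_n) = \Prod{n>0}\ker\phi_n$ and $\coker(\Prod{n>0}\phi_n) = \Prod{n>0}\coker\phi_n$ remain annihilated by $p^{N}$, hence are torsion; since $\Q \otimes_{\Z} -$ is exact, $\Q \otimes \Prod{n>0}\phi_n$ is an isomorphism, which is the assertion. The only step needing any care is the existence of a height-independent exponent $N$, and this is exactly the phenomenon the rest of the section establishes in much greater generality. Note that the retract theorem \myref{cor:bpretract} does not by itself suffice here, as it only controls the direct summand $\Q \otimes BP^*(B\Z/p^k)$ of $\Q \otimes \Prod{n>0}\E^*(B\Z/p^k)$.
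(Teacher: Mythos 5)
Your proof is correct, and it reaches the conclusion by a somewhat different mechanism than the paper. Both arguments rest on the same input from the preceding proposition -- the factorization $[p^k](x)=\Prod{0\le j\le k}\langle p^j\rangle(x)$ and the relations $p\in(\langle p^i\rangle(x))+(\langle p^j\rangle(x))$, which hold for any formal group law and hence over $\E^*\powser{x}$ -- but the paper extracts the height-independent bound by hand: it uses Weierstrass preparation to get explicit $\E^*$-bases of both sides (of ranks $p^{kn}$), checks which basis elements are hit, and exhibits concrete global elements such as $-f(x)/p$ showing that the cokernel is killed by exactly $p^k$ at every height. You instead prove a ring-independent ``approximate Chinese remainder'' lemma: pairwise comaximality up to $p$ forces the kernel and cokernel of $S/(a_0\cdots a_k)\to\prod_i S/(a_i)$ to be killed by $p^N$ with $N$ depending only on the number of factors, and the uniformity in $n$ is then automatic. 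Your route is more abstract and has two small advantages: it needs no Weierstrass preparation or basis bookkeeping, and it bounds the kernel directly, whereas the paper handles injectivity only implicitly (via the equal-rank count of free modules); the paper's computation in exchange yields the sharper exponent $p^k$ and explicit elements of the cokernel. One phrasing to tighten: the congruence $\langle p^j\rangle(x)\equiv p \pmod{x}$ alone gives $p\in(x)+(\langle p^j\rangle(x))$, i.e.\ the pairs with $i=0$; for $1\le i<j$ you need, as you indicate by citing the preceding proposition, the divisibility $\langle p^i\rangle(x)\mid f([p^{j-1}](x))$ coming from $\langle p^j\rangle(x)=p+f([p^{j-1}](x))$ and $x\mid f(x)$, so make clear you are quoting those relations rather than deriving them from the constant term alone.
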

\begin{proof}
We will prove the easiest case for clarity. Fix a height $n$ and let $k=1$, then with a coordinate this is the map
\[
\E^*\powser{x}/[p](x) \lra{} \E^* \times \E^*\powser{x}/\langle p \rangle (x)
\]
sending $x \mapsto (0,x)$ and where $[p](x)$ now indicates the $p$-series for the formal group law associated to $\E$. 
We may write down a basis for each side. For the left hand side we have the basis $\{1, x, \ldots, x^{p^n-1}\}$ and for the right hand side we can take the basis
\[
\{(1,1),(0,1),(0,x), \ldots, (0,x^{p^n-2})\}.
\]
It is clear that the basis elements $\{(1,1),(0,x),\ldots, (0,x^{p^n-2})\}$ are hit under this map. Dividing by a power of $p$ is required to hit the basis element $(0,1)$. But $BP$ provides a global element that hits $(0,1)$ at each height $n$. It is the element (using the notation of the proof of the previous proposition)
\[
-f(x)/p.
\]
Thus we see that we only need to divide by $p$ once in order to establish an isomorphism for all $n$. For $k>1$ the proof is similar, but we need to divide by $p^k$. For instance, when $k=2$
\begin{align*}
-f([p](x)) &\mapsto (0,0,p) \\
-f(x) &\mapsto (0,p,g(x)), 
\end{align*}
where $g(x)$ is some polynomial. This implies that
\[
-\frac{f(x)}{p} + \frac{f([p](x))g(x)}{p^2} \mapsto (0,1,0).
\]
Finally we see that the cokernel of the map 
\[
\Prod{n>0}E_n^*(B\Z/p^k) \lra{} \Prod{n>0}\,\Prod{0 \leq j \leq k} E_n^*\llbracket x\rrbracket/\langle p^j\rangle(x)
\]
is all $p^k$-torsion independent of the height. Thus the map is a rational isomorphism. 
\end{proof}

To prove a similar result for $\Sigma_p$, we need to recall a few basic results regarding $\E^*(B\Sigma_p)$. Let $I \subset \E^*(B\Sigma_p)$ be the image of the transfer along $e \subset \Sigma_p$. For each $n \in \N$, the natural map
\[
\E^*(B\Sigma_p) \lra{} \E^* \times \E^*(B\Sigma_p)/I
\]
is injective and induces an isomorphism
\[
\Q \otimes \E^*(B\Sigma_p) \lra{\cong} \Q \otimes(\E^* \times \E^*(B\Sigma_p)/I).
\]
Injectivity can be seen by considering Rezk's pullback square \cite[Proposition 10.5]{rezkcongruence}
\[
\xymatrix{\E^*(B\Sigma_p) \ar[r] \ar[d] & \E^*(B\Sigma_p)/I \ar[d] \\ \E^* \ar[r] & \E^*/p}
\]
and the isomorphism follows by applying $\Q \otimes -$ to the pullback square above using that $\Q$ is a flat $\Z$-module.

By considering the homotopy pullback of $Be \subset B\Sigma_p$ along $B\Z/p \rightarrow B\Sigma_p$ (the key point is that $|\Sigma_p/(\Z/p)|$ is coprime to $p$), one can see that there is a canonical map
\[
\E^*(B\Sigma_p)/I \rightarrow \E^*(B\Z/p)/I.
\]

\begin{lemma}
There is an isomorphism
\[
\Q \otimes \Prod{n}\E^*(B\Sigma_p) \lra{\cong} \Q \otimes \Prod{n} \E^* \times \E^*(B\Sigma_p)/I,
\]
where $I$ is the ideal generated by the transfer from the trivial group. 
\end{lemma}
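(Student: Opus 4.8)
The plan is to deduce the product statement from the single-height statement recalled above together with a torsion bound on the relevant cokernel that is uniform in the height $n$. Since a product of injective homomorphisms is injective, the fact (recalled above) that each $\E^*(B\Sigma_p) \to \E^* \times \E^*(B\Sigma_p)/I$ is injective already shows that $\Prod{n}\E^*(B\Sigma_p) \to \Prod{n}(\E^* \times \E^*(B\Sigma_p)/I)$ is injective. Writing $C_n$ for the cokernel of the height-$n$ map, exactness of products yields a short exact sequence
\[
0 \to \Prod{n}\E^*(B\Sigma_p) \to \Prod{n}\bigl(\E^* \times \E^*(B\Sigma_p)/I\bigr) \to \Prod{n}C_n \to 0,
\]
and, since $\Q$ is flat over $\Z$, it suffices to show that $\Prod{n}C_n$ is torsion. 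For this it is enough to bound the annihilator exponent of $C_n$ independently of $n$.

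The key step is to identify $C_n$ by means of Rezk's pullback square \cite[Proposition 10.5]{rezkcongruence}. That square exhibits $\E^*(B\Sigma_p)$ as the fibre product of the reduction map $q\colon \E^* \to \E^*/p$ and a ring map $\psi\colon \E^*(B\Sigma_p)/I \to \E^*/p$, with the two structure maps of the fibre product being exactly the restriction along $e \subset \Sigma_p$ and the quotient by the transfer ideal $I$ --- that is, exactly the two components of the natural map in the statement. Hence that natural map is the inclusion of the fibre product into the product $\E^* \times \E^*(B\Sigma_p)/I$, whose cokernel is computed by the surjectivity of the difference map $(a,b) \mapsto q(a) - \psi(b)$ onto $\E^*/p$; here surjectivity is clear because $q$ already is. Therefore $C_n \cong \E^*/p$.

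To finish, observe that $\E^*/p$ is an $\F_p$-vector space, hence is annihilated by $p$ as an abelian group, with a bound manifestly independent of $n$. Consequently $\Prod{n}C_n$ is annihilated by $p$, in particular torsion, so $\Q \otimes \Prod{n}C_n = 0$; tensoring the displayed short exact sequence with $\Q$ then gives the asserted isomorphism.

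I do not anticipate a genuine obstacle: the whole argument rests on the single observation that Rezk's congruence square pins down the height-$n$ cokernel as $\E^*/p$, which carries the height-independent annihilator $p$. One could alternatively try to transport a bound from Corollary~\myref{cyclicdecomp} along the canonical map $\E^*(B\Sigma_p)/I \to \E^*(B\Z/p)/I$, but the pullback square makes this detour unnecessary and yields the sharp exponent $p$ directly; note the contrast with the cyclic case, where the uniform bound came instead from a global class in $BP$-cohomology.
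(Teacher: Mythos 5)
Your proof is correct, but it takes a genuinely different route from the paper's. The paper proves the uniform torsion bound by comparing $\Sigma_p$ with $\Z/p$: using the stable splitting of $\Sigma^{\infty}_+B\Z/p$ with $\Sigma^{\infty}_+B\Sigma_p$ as a summand, a snake-lemma argument shows that $\Cok_{\E}(\Sigma_p)$ injects into $\Cok_{\E}(\Z/p)$, and the latter is annihilated by $p$ by the explicit cyclic computation of \myref{cyclicdecomp} (which in turn rests on a global element coming from $BP$). You instead squeeze everything out of Rezk's pullback square, which the paper only invokes for injectivity and the single-height rational statement: since the two structure maps of the fibre product are precisely the two components of the map in question and the reduction $\E^*\to\E^*/p$ is surjective, the cokernel at each height is identified exactly as $\E^*/p$, hence is killed by $p$ uniformly in $n$, and the detour through $B\Z/p$ disappears. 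Both arguments yield the same exponent $p$; yours is shorter, self-contained given the cited square, and pins down the cokernel on the nose, while the paper's comparison-with-a-cyclic-subgroup strategy is the one that scales to the later results (general symmetric and abelian groups, where no such clean pullback square is available and one bounds torsion by restricting along subgroups and character maps). The only point to be aware of is that your argument leans on the precise form of Rezk's square---that its structure maps are restriction along $e\subset\Sigma_p$ and the quotient by $I$---which is exactly how the paper states it, so no gap arises.
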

\begin{proof}
Let $\Cok_{\E}(\Sigma_p)$ be the cokernel
\[\xymatrix{0 \ar[r] &  \E^*(B\Sigma_p) \ar[r] & \E^* \times \E^*(B\Sigma_p)/I \ar[r] & \Cok_{\E}(\Sigma_p) \ar[r] & 0.}\]
We will show that $\Cok_{\E}(\Sigma_p)$ is all $p$-torsion with exponent $1$, i.e., that it is annihilated by multiplication by $p$.

There is a stable splitting
\[\Sigma^{\infty}_+B\Z/p \simeq \Sigma^{\infty}_+B\Sigma_p \times Y\]
for $Y$ some spectrum. Applying $E$-theory to this yields a commutative diagram 
\[\xymatrix{E_n^*(B\Sigma_p) \ar@{^{(}->}[r] \ar@{^{(}->}[d]_i & E_n^* \times E_n^*(B\Sigma_p)/I \ar@{->>}[r] \ar@{^{(}->}[d] & \Cok_{\E}(\Sigma_p)\ar[d] \\
E_n^*{B\Z/p} \ar@{^{(}->}[r] & E_n^* \times E_n^*(B\Z/p)/I \ar@{->>}[r] & \Cok_{\E}(\Z/p)}\]
in which the first vertical map $i$ is split injective (by the stable splitting above). Since $E_n^*B\Z/p$ is free, the cokernel of $i$ is projective, hence $p$-torsion free. Since $\Q\otimes \Cok_{\E}(\Sigma_p)= 0$, the kernel of the map $\Cok_{\E}(\Sigma_p) \to \Cok_{\E}(\Z/p)$ must be torsion. Therefore the snake lemma implies that the kernel of the right vertical map $\Cok_{\E}(\Sigma_p) \to \Cok_{\E}(\Z/p)$ must be a torsion subgroup of a torsion free module and hence is zero. Thus $\Cok_{\E}(\Sigma_p)$ is a subgroup of $\Cok_{\E}(\Z/p)$. The claim now follows from \myref{cyclicdecomp}.
\end{proof}

\subsection{Commutative algebra} \mylabel{sec:4.2}
In this subsection we prove some basic facts about integer torsion in modules.

The following lemma is part of a well-known collection of facts that go under the title ``fracture squares".
\begin{lemma} \mylabel{pb}
Let $R = L_{K(t)}\E^0 \cong W(\kappa)\powser{u_1,\ldots, u_{n-1}}[u_{t}^{-1}]^{\wedge}_{I_t}$ for $t>0$ and let $S$ be a subset of $R - (p)$, then there is a pullback square of commutative rings
\[
\xymatrix{R \ar[r] \ar[d] & (S^{-1}R)^{\wedge}_{p} \ar[d] \\ \Q \otimes R \ar[r] & \Q \otimes (S^{-1}R)^{\wedge}_{p}.
}
\]
Moreover, for any finitely generated free $R$-module $M$ there is a similar square given by tensoring the pullback square with $M$.
\end{lemma}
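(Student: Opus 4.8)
The plan is to identify the claimed pullback square as a special case of the classical \emph{arithmetic fracture square} for the ring $R$, and then to verify the two hypotheses that make such a square cartesian: that $R$ has bounded $p$-torsion (in fact is $p$-torsion free), and that no element of the multiplicative set $S$ becomes a zero divisor or a unit in a way that would obstruct the comparison of $p$-completion with rationalization. First I would recall that for any commutative ring $A$ with no additive $p$-torsion, the square with corners $A$, $A^{\wedge}_p$, $A \otimes \Q$, and $A^{\wedge}_p \otimes \Q$ is a pullback of abelian groups (equivalently of $A$-modules, equivalently of rings since all maps are ring maps): indeed $A^{\wedge}_p \otimes \Q = A^{\wedge}_p[1/p]$, and an element of $A^{\wedge}_p$ lies in the image of $A$ exactly when its image in $A^{\wedge}_p[1/p]$ comes from $A[1/p] = A \otimes \Q$; injectivity and the description of the fiber product follow from $\bigcap_k p^k A = 0$ together with the exact sequence $0 \to A \to A^{\wedge}_p \oplus A[1/p] \to A^{\wedge}_p[1/p] \to 0$. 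So the whole statement reduces to: (i) $R$ is $p$-torsion free, and (ii) $(S^{-1}R)^{\wedge}_p \otimes \Q$ is the correct common localization, i.e. $S^{-1}R$ is itself $p$-torsion free so that its $p$-completion behaves well.

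For step (i), I would argue that $R = W(\kappa)\powser{u_1,\ldots,u_{n-1}}[u_t^{-1}]^{\wedge}_{I_t}$ is $p$-torsion free because it is built from $W(\kappa)$, which is $p$-torsion free (it is a complete DVR of mixed characteristic with uniformizer $p$), by operations that preserve flatness over $\Z_p$: the power series ring $W(\kappa)\powser{u_1,\ldots,u_{n-1}}$ is free hence $\Z_p$-flat, localization at $u_t$ is flat, and $I_t$-adic completion of a Noetherian ring preserves flatness over the base (or more directly, $\bigcap_k I_t^k = 0$ and $p$ is a nonzerodivisor throughout, using that $u_t^{-1}$ and the $I_t$-completion do not introduce $p$-torsion since $I_t \not\ni p$... wait, actually $I_t = (p, u_1, \ldots, u_{t-1})$ does contain $p$; so instead I would complete the argument by noting $\E^0$ is a regular local ring, $p$ is a regular element, $\E^0[u_t^{-1}]$ is still a domain, and $I_t$-adic completion of a Noetherian domain in which $p \ne 0$ is again a ring in which $p$ is a nonzerodivisor, because completion is flat over a Noetherian ring). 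Thus $p$ acts injectively on $R$, and similarly on $S^{-1}R$ since $S \subseteq R - (p)$ consists of nonzerodivisors after localization — here I would use that $R$ is a domain (each of the building rings is a domain: $W(\kappa)\powser{\underline{u}}$ is a domain, inverting $u_t$ keeps it a domain, and $I_t$-adic completion of this excellent domain is again a domain) so that any nonzero element, in particular any $s \in S$, is a nonzerodivisor.

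Then step (ii) is essentially automatic: $S^{-1}R$ is a localization of a $p$-torsion free domain, hence $p$-torsion free, hence the fracture square for the ring $A = S^{-1}R$ in the first paragraph is a pullback; combining this with the fact that $S^{-1}(R \otimes \Q) = (S^{-1}R) \otimes \Q$ and that $R \otimes \Q \to (S^{-1}R)\otimes \Q$ is the evident localization, I obtain exactly the displayed square. For the final sentence about a finitely generated free module $M$, I would note that $- \otimes_R M \cong (-)^{\oplus \mathrm{rk}(M)}$ is an exact functor commuting with all four constructions in the square (finite direct sums commute with localization, with completion since $M$ is finitely generated over a Noetherian ring, and with $\otimes \Q$), so applying it to a pullback square of the above form again yields a pullback square; equivalently, a finite direct sum of cartesian squares of abelian groups is cartesian. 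The main obstacle I anticipate is purely bookkeeping: making sure that $p$ really is a nonzerodivisor after the $I_t$-adic completion even though $p \in I_t$, which is where I would lean on the Noetherian/excellent-domain structure of $\E^0$ and flatness of completion rather than on any naive "$p \notin$ the completed ideal" reasoning.
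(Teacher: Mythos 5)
Your reduction stops one step short of the statement. The arithmetic fracture square you invoke for $A = S^{-1}R$ has corners $S^{-1}R$, $(S^{-1}R)^{\wedge}_p$, $\Q \otimes S^{-1}R$, $\Q \otimes (S^{-1}R)^{\wedge}_p$, whereas the lemma's square has $R$ and $\Q \otimes R$ in the left-hand column. Pasting therefore reduces the lemma to the \emph{additional} claim that the square with corners $R$, $S^{-1}R$, $\Q \otimes R$, $\Q \otimes S^{-1}R$ is a pullback, i.e.\ that $(\Q \otimes R) \cap S^{-1}R = R$ inside $\Q \otimes S^{-1}R$: an element that can be written both as $r'/p^l$ and as $r/s^k$ must already lie in $R$. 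Your proposal never proves this, and it is not ``essentially automatic'': it is exactly where the hypothesis $S \subseteq R - (p)$ enters, together with the fact that $R$ is a domain in which one can run the divisibility argument $s^k r' = p^l r \Rightarrow p^l \mid r'$ (using that $(p)$ is prime, or a UFD/regularity argument). This is the ``left-hand square'' half of the paper's proof; the Dwyer--Greenlees arithmetic square you cite is only the right-hand half. That the missing step carries real content is visible the moment the hypothesis on $S$ is dropped: for $R = \Z_p$ and $S = \{p\}$ every compatibility fact you list still holds and the fracture square for $S^{-1}R = \Q_p$ is (trivially) cartesian, yet $(S^{-1}R)^{\wedge}_p = 0$ and the outer square has pullback $\Q_p \neq \Z_p$. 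So no argument that never uses $S \subseteq R-(p)$ beyond ``elements of $S$ are nonzerodivisors'' can be complete.

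Two lesser remarks. Your justification that $R$ is a domain relies on ``the $I_t$-adic completion of an excellent domain is again a domain,'' which is false as a general principle (complete a nodal curve at the node); the paper likewise asserts the domain property without proof, but if you want to argue it you need normality/regularity of the completed ring rather than excellence alone. Also, the obstacle you flag at the end ($p$ remaining a nonzerodivisor after $I_t$-completion) is not the crux; granting the intersection statement above, the rest of your outline — $p$-torsion-freeness, Noetherianity making ordinary and derived completion agree, and the finitely generated free module case via exactness of $(-)^{\oplus \mathrm{rk}\, M}$ — is fine and agrees with the paper's treatment of the right-hand square.
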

\begin{proof}
The square in the proposition is the composite of two squares:
\[
\xymatrix{R \ar[r] \ar[d] & S^{-1}R \ar[r] \ar[d] & (S^{-1}R)^{\wedge}_{p} \ar[d] \\ \Q \otimes R \ar[r] & \Q \otimes S^{-1}R \ar[r] & \Q \otimes (S^{-1}R)^{\wedge}_{p}.
}
\]
The right-hand square is an ``arithmetic square". It is a pullback square by applying \cite[4.12, 4.13]{dwyergreenlees}. In \cite{dwyergreenlees}, they show that the derived functors of localization and completion fit into a homotopy pullback square. The result we want is recovered by noticing that $S^{-1}R$ is free as an $S^{-1}R$-module so $H_0(S^{-1}R) = S^{-1}R$ and that the derived completion and ordinary completion agree because $S^{-1}R$ is Noetherian.

The left-hand square is a pullback by direct computation. Let $\frac{r}{{s^k}} \in S^{-1}R$ and $\frac{r'}{p^l} \in \Q \otimes R$ such that 
\[
\frac{r}{{s^k}} = \frac{r'}{p^l} \in \Q \otimes S^{-1}R,
\]
where $r,r' \in R$. Thus $s^k r' = p^l r$ and 
\[
r \mapsto \frac{s^k r'}{p^l} \in \Q \otimes R.
\]
But this implies that $\frac{s^k r'}{p^l} \in R$ since the map is an inclusion. Since $R$ is an integral domain and $s$ is, by definition, not in the ideal generated by $p$,
\[
p^l | r' \in R.
\]
Now $r'/p^l$ is the pullback.

\end{proof}

\begin{lemma} \mylabel{free}
Let $M \lra{f} N$ be an injective map of finitely generated free $R$-modules. If $x \in N$ has the property that $p^lx \in \im f$ and $s^kx \in \im f$ for $s \in S$, then $x \in \im f$.
\end{lemma}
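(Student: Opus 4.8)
The plan is to exploit the pullback square of Lemma~\myref{pb}, tensored with the finitely generated free module $N$, in order to reconstruct $x$ from compatible elements over the two localizations. First I would observe that, since $M$ and $N$ are finitely generated free $R$-modules and $f$ is injective, the image $\im f$ is a submodule of $N$, and the claim is really a statement about when an element of $N$ that becomes divisible — in two complementary senses — by elements of $\im f$ already lies in $\im f$. The key idea: an element of $N$ lies in $\im f$ if and only if its images in $(S^{-1}R)^\wedge_p \otimes_R N$ and in $\Q \otimes_R N$ lie in the respective images of $f$, by the pullback square applied to $N$ (note $R$ is an integral domain by the discussion before Lemma~\myref{pb}, so all the relevant maps are injective and $M \to N$ stays injective after each localization).

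The main steps would be: (1) From $p^l x \in \im f$, deduce that the image of $x$ in $(S^{-1}R)^\wedge_p \otimes_R N$ lies in the image of $f$; indeed $p$ is invertible in — wait, $p$ is \emph{not} invertible in $(S^{-1}R)^\wedge_p$, so instead I would use $s^k x \in \im f$ here: $s$ becomes a unit in $S^{-1}R$ hence in its $p$-completion, so $x = s^{-k}(s^k x)$ lies in the image of $f \otimes (S^{-1}R)^\wedge_p$. (2) Symmetrically, from $p^l x \in \im f$, conclude that the image of $x$ in $\Q \otimes_R N$ lies in the image of $f \otimes \Q$, since $p$ is invertible rationally. (3) Check that these two elements of $\im(f)$ — lifts of $x$ over the two corners — agree over the common corner $\Q \otimes (S^{-1}R)^\wedge_p \otimes_R N$; this follows because both are lifts of the single element $x \in N$ mapped into that corner, and $f$ is injective over the corner so the lift is unique. (4) Apply the pullback square (for $N$) to obtain a genuine element $y \in N$ mapping to $x$ in each corner; since $N \to (S^{-1}R)^\wedge_p \otimes N$ and $N \to \Q \otimes N$ are jointly injective (the square is a pullback), $y = x$. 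But the preimages at each corner came from $\im f$, i.e., there are $a \in (S^{-1}R)^\wedge_p \otimes M$ and $b \in \Q \otimes M$ with $f(a) = x$, $f(b) = x$; the same pullback square applied to $M$ then produces $c \in M$ with $f(c) = x$, using that $f$ is injective everywhere so $a, b, c$ are forced to be compatible.

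The step I expect to be the main obstacle is verifying that the pullback square of Lemma~\myref{pb}, which is stated for $R$ itself and noted to hold "for any finitely generated free $R$-module $M$", genuinely gives the exactness/pullback property simultaneously for $M$ and for $N$ in a way that lets me transport an element of $\im f \subseteq N$ back to an element of $M$ — in other words, making precise that $\im f$ (rather than just $N$) satisfies a fracture square, which ultimately rests on flatness of the localization and completion functors involved and on $f$ remaining injective after each base change. Since $R$ is Noetherian and the modules are finitely generated free, completion is exact, localization is exact, and these are enough; so the obstacle is more bookkeeping than substance. Once the fracture square is in hand for both $M$ and $N$ compatibly, the element $x$ is pinned down by its rational image and its $S$-local $p$-complete image, each of which lies in the image of $f$, and a straightforward diagram chase closes the argument.
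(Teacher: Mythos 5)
Your proposal is correct and follows essentially the same route as the paper: use the fracture square of \myref{pb} for both $M$ and $N$ (the paper assembles these into a commutative cube), produce preimages of $x$ over $(S^{-1}R)^{\wedge}_p$ and over $\Q\otimes R$ using invertibility of $s$ and $p$ respectively, match them over the common corner by injectivity, and glue via the pullback for $M$ to get $m\in M$ with $f(m)=x$. The paper additionally notes one may reduce to $k=l=1$, but this is inessential.
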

\begin{proof}
It suffices to set $k = l =1$. We will use \myref{pb}. There are $m_1, m_2 \in M$ such that $\frac{m_1}{s} \in S^{-1}M^{\wedge}_p$ maps to $x$ and $\frac{m_2}{p} \in p^{-1}M$ maps to $x$. Now consider the commutative cube
\[
\xymatrix{& M \ar[ld]_{f} \ar'[d][dd] \ar[rr] & & S^{-1}M^{\wedge}_p \ar[ld]^{} \ar[dd] \\
N \ar[dd] \ar[rr] & &  S^{-1}N^{\wedge}_p \ar[dd] \\
& p^{-1}M \ar[ld]_{} \ar'[r][rr] & &  p^{-1}(S^{-1}M^{\wedge}_p) \ar[ld]^{} \\
p^{-1}N \ar[rr]  &&  p^{-1}(S^{-1}N^{\wedge}_p).}
\]
The front and back faces are pullback squares. The elements $\frac{m_1}{s}$ and $\frac{m_2}{p}$ must agree in $p^{-1}(S^{-1}M^{\wedge}_p)$ and thus there is an element $m \in M$ that maps to both of these. By commutativity we must have that $f(m) = x$.
\end{proof}

By the torsion of an $R$-module $M$ we will always mean the set of elements $x \in M$ such that $nx = 0$ for some nonzero $n \in \N$.

\begin{definition}
For an abelian group $M$ let $e(M)$ be the exponent of the torsion in $M$. That is, let $K$ be the kernel of $M \lra{} \Q \otimes M$, then $e(M)$ is the minimum number $n \in \N \cup \{\infty\}$ such that $nK=0$.
\end{definition}

\begin{proposition}
Let $R$ be a Noetherian ring and $M$ be a finitely generated $R$-module, then the torsion in $M$ is bounded.
\end{proposition}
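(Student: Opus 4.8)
The plan is to reduce the statement to the case of a finitely generated module over a Noetherian \emph{domain} by a standard primary decomposition/filtration argument. First I would observe that the torsion submodule $T(M) = \ker(M \to \Q \otimes M)$ is itself finitely generated, since $R$ is Noetherian and $M$ is finitely generated, so it suffices to show $T(M)$ is annihilated by some nonzero integer. Equivalently, writing $T(M) = \langle x_1, \ldots, x_r \rangle$, each $x_i$ is killed by some nonzero $n_i \in \N$, and then $n_1 \cdots n_r$ annihilates all of $T(M)$. So the whole proposition comes down to the finite generation of the torsion submodule together with the trivial fact that each individual element of a torsion module has finite order.

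Concretely, the key steps in order: (1) Note $R$ Noetherian and $M$ finitely generated imply every submodule of $M$ is finitely generated; in particular $T(M)$ is finitely generated, say by $x_1, \ldots, x_r$. (2) For each generator $x_i$, since $x_i$ maps to $0$ in $\Q \otimes M = M \otimes_{\Z} \Q$, there is a nonzero integer $n_i$ with $n_i x_i = 0$ (this is just the definition of an element being torsion over $\Z$). (3) Set $N = n_1 n_2 \cdots n_r$, a nonzero integer. (4) For an arbitrary $x \in T(M)$, write $x = \sum_i r_i x_i$ with $r_i \in R$; then $N x = \sum_i r_i (N x_i) = 0$ since $n_i \mid N$. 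Hence $e(M) \le N < \infty$, i.e., the torsion in $M$ is bounded.

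There is really no serious obstacle here: the content is entirely the Noetherian hypothesis, which guarantees the submodule $T(M) \subseteq M$ is again finitely generated. The only mild subtlety worth a sentence is checking that $T(M)$ is actually an $R$-submodule (not merely a subgroup): if $nx = 0$ with $n \in \Z \setminus \{0\}$ and $r \in R$, then $n(rx) = r(nx) = 0$, so $rx \in T(M)$; thus $T(M)$ is an $R$-submodule and the finite generation applies. I would keep the write-up to a few lines, emphasizing that this lemma is the bridge from the abstract commutative algebra of this subsection to the later applications, where $R$ will be (a completion/localization of) $\E^0$ and $M$ will be the $E$-cohomology of a classifying space, which is finitely generated over $R$ by the Weierstrass preparation/finiteness results already recalled.
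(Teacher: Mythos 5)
Your proof is correct and is essentially the paper's argument: the kernel of $M \to \Q \otimes M$ is finitely generated because $R$ is Noetherian, a single integer kills all generators, and hence kills every $R$-linear combination. (The opening mention of a primary decomposition/reduction to domains is never used and can be dropped.)
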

\begin{proof}
Let $K$ be the kernel
\[
0 \lra{} K \lra{} M \lra{} \Q \otimes M = (\Q \otimes R) \otimes_R M.
\]
The $R$-module $K$ is finitely generated since $R$ is Noetherian and $M$ is finitely generated. Choose generators $k_1, \ldots, k_m$ of $K$; there exists an $n$ such that $nk_i = 0 $ for all $i$. Now any element of $K$ is of the form $\sum r_i k_i$ and this is also killed by $n$.
\end{proof}

\begin{proposition} \mylabel{diagonal}
Let $R$ be a torsion-free Noetherian ring. Let $f \colon N \hookrightarrow M$ be an injection of finitely generated free $R$-modules and let $\Delta:M \hookrightarrow M \times M$ be the diagonal, then $e(\Cok(f)) = e(\Cok(\Delta f))$.
\end{proposition}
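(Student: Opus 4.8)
The plan is to reduce the computation of $\Cok(\Delta f)$ to one involving $\Cok(f)$ together with the torsion-free module $M$, after which the equality of torsion exponents is immediate.

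The first step is to identify $\im(\Delta f)$ inside $M\times M$. Writing $L=\im f\subseteq M$, the submodule $\im(\Delta f)$ is the diagonal copy $\{(y,y):y\in L\}$ of $L$. The key observation is that the $R$-linear automorphism
\[
\psi\colon M\times M \lra{\cong} M\times M,\qquad (a,b)\longmapsto (a,b-a),
\]
with inverse $(a,c)\mapsto(a,a+c)$, carries $\{(y,y):y\in L\}$ onto $L\times\{0\}$. Hence $\psi$ descends to an isomorphism
\[
\Cok(\Delta f)=(M\times M)/\{(y,y):y\in L\}\;\cong\;(M\times M)/(L\times\{0\})\;\cong\;(M/L)\times M\;=\;\Cok(f)\times M.
\]

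The second step uses the hypothesis that $R$ is torsion-free: since $M$ is a finitely generated free $R$-module, the map $M\to\Q\otimes M$ is injective, so $M$ has no torsion. As the torsion submodule of a product is the product of the torsion submodules, the kernel of $\Cok(f)\times M\to\Q\otimes(\Cok(f)\times M)$ is $K\times\{0\}$, where $K$ is the kernel of $\Cok(f)\to\Q\otimes\Cok(f)$. Therefore $e(\Cok(f)\times M)=e(\Cok(f))$, and combining this with the isomorphism of the first step gives $e(\Cok(\Delta f))=e(\Cok(f))$.

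There is no genuine obstacle here; the only point that deserves care is checking that $\psi$ really is an $R$-module isomorphism taking the diagonal copy of $L$ exactly onto $L\times\{0\}$, which is a direct verification from the explicit formula for $\psi$ and its inverse. Note that the Noetherian hypothesis plays no role in this particular statement --- only torsion-freeness of $R$ is used --- though it is of course relevant to the surrounding results (e.g.\ the finite generation of torsion submodules).
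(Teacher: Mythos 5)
Your proof is correct, but it takes a different (and in fact slightly stronger) route than the paper. The paper maps the short exact sequence $N \to M \to \Cok(f)$ into $N \to M \times M \to \Cok(\Delta f)$ via the identity on $N$ and $\Delta$ on $M$; since $\Delta$ is split by a projection, the induced map $g\colon \Cok(f) \to \Cok(\Delta f)$ is a split injection, and one then checks directly—using torsion-freeness of $M$, since $n m_1 = n m_2$ forces $m_1 = m_2$—that every torsion element of $\Cok(\Delta f)$ is represented by a diagonal element $(m,m)$, so $g$ restricts to an isomorphism on torsion submodules. You instead apply the shear automorphism $(a,b)\mapsto(a,b-a)$ of $M\times M$, which carries the diagonal copy of $\im f$ onto $\im f \times \{0\}$ and hence identifies $\Cok(\Delta f)\cong \Cok(f)\times M$ outright; torsion-freeness of $M$ then gives the equality of exponents. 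Your version buys an explicit structural description of the whole cokernel (not just of its torsion), which makes the conclusion and the irrelevance of the Noetherian hypothesis transparent—and indeed the paper's proof does not use Noetherianness either—while the paper's version avoids writing down any isomorphism and argues only at the level of torsion elements, which is all that is needed for the exponent statement. Both arguments are complete; your only care point, that the shear is an $R$-module automorphism taking $\Delta(\im f)$ exactly onto $\im f\times\{0\}$, is a correct and immediate verification.
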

\begin{proof}
Consider the map of short exact sequences
\[
\xymatrix{N \ar[r]^{f} \ar[d]^{=} & M \ar[r] \ar[d]^{\Delta} & \Cok(f) \ar[d]^{g} \\ N \ar[r] & M \times M \ar[r] & \Cok(\Delta f).} 
\]
The diagonal $\Delta$ is a section of the projection onto a factor and the left vertical arrow is an equality so $g$ is an injection. Thus there there is an induced map $\Cok(\Delta f) \lra{r} \Cok(f)$ such that $rg = 1$. Since integer torsion is sent to integer torsion, this implies that the map induced by $g$ on integer torsion is a split injection. It is an isomorphism since a torsion element must be of the form $(m,m) \in M \times M$ with $(nm,nm) \in \Delta f N$ and thus the kernel of $r$ is trivial.
\end{proof}

\begin{proposition} \mylabel{extension}
Let $R$ be Noetherian and torsion free. For a flat $R$-algebra $T$ and a finitely generated $R$-module $M$, $e(M) \geq e(T\otimes_R M)$. If $T$ is a faithfully flat $R$-algebra then there is an equality $e(M) = e(T \otimes_R M)$.
\end{proposition}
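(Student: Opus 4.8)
The plan is to track the $\Z$-torsion submodule $K := \ker(M \to \Q \otimes M)$ of $M$ through the flat base change $T \otimes_R -$. By the very definition of $K$ there is a short exact sequence $0 \to K \to M \to M/K \to 0$ in which $M/K$ is torsion-free, and since $T$ is flat over $R$ this stays exact after applying $T \otimes_R -$. The first point I would verify is that $T \otimes_R (M/K)$ is again torsion-free: the inclusion $M/K \hookrightarrow \Q \otimes_\Z (M/K)$ remains injective after $T \otimes_R -$, and since $\Q \otimes_\Z -$ is a filtered colimit it commutes with $T \otimes_R -$, producing an injection $T \otimes_R (M/K) \hookrightarrow \Q \otimes_\Z (T \otimes_R M/K)$.

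Next I would identify the torsion submodule of $T \otimes_R M$ with $T \otimes_R K$. On the one hand, every element of $T \otimes_R K$ is a finite sum of simple tensors, each annihilated by an integer killing the relevant element of $K$, so $T \otimes_R K$ is pointwise torsion; on the other hand, any torsion element of $T \otimes_R M$ maps to zero in the torsion-free quotient $T \otimes_R (M/K)$ and hence lies in the image of $T \otimes_R K$. It is worth emphasising that this argument uses only pointwise torsion and not a uniform bound, so the identification is available before we know anything about the finiteness of $e(M)$.

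Granting this, the inequality $e(M) \geq e(T \otimes_R M)$ is immediate: if $e(M) = \infty$ there is nothing to prove, and otherwise multiplication by $e(M)$ annihilates $K$, hence annihilates $T \otimes_R K$, which is the torsion submodule of $T \otimes_R M$. For the faithfully flat statement I would prove the reverse inequality. Set $e' := e(T \otimes_R M)$. If $e' = \infty$, the inequality just established forces $e(M) = \infty$, so equality holds; if $e' < \infty$, then $e'$ annihilates $T \otimes_R K$, which says precisely that the endomorphism ``multiplication by $e'$'' of $K$ becomes the zero map after applying the faithfully flat functor $T \otimes_R -$. A standard faithful-flatness argument --- a homomorphism $f$ of $R$-modules with $T \otimes_R f = 0$ already vanishes, since $T \otimes_R (\im f) = 0$ and faithful flatness gives $\im f = 0$ --- then shows $e' \cdot K = 0$, so $e(M) \leq e'$ and we obtain equality.

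I do not anticipate a genuine obstacle; the only care required is bookkeeping: treating the infinite-exponent case separately, keeping the coefficient ring for ``torsion'' fixed at $\Z$ so that $\Q \otimes_\Z -$ commutes with $T \otimes_R -$, and making sure the torsion submodule of $T \otimes_R M$ is recognised as $T \otimes_R K$ using only pointwise (rather than uniformly bounded) torsion.
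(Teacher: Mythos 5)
Your proposal is correct and follows essentially the same route as the paper: split off the torsion submodule $K$, use flatness to keep the sequence exact and to see that the quotient stays torsion free, identify the torsion of $T\otimes_R M$ with $T\otimes_R K$, and then use faithful flatness (the paper via injectivity of $K\to T\otimes_R K$, you via the equivalent fact that a map killed by a faithfully flat base change is zero) to get the reverse bound. The only cosmetic differences are that the paper works with multiplication by $n=e(M)$ and its image (finiteness of $e(M)$ being guaranteed by the preceding proposition), while you work with $M/K$ directly and treat the infinite-exponent case separately.
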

\begin{proof}
Assume that $e(M) =n$ and let $[n]\colon M \lra{} M$ be the multiplication by $n$ map. Consider the exact sequence
\[
0 \lra{} K \lra{} M \lra{[n]} \im [n] \lra{} 0,
\]
where $K$ is the torsion in $M$. Base change to $T$ gives a short exact sequence and $T \otimes_R \im [n]$ is necessarily torsion free, thus 
\[
e(T \otimes_R K) = e(T \otimes_R M) \leq n. 
\]
If $T$ is faithfully flat then $K \lra{} T \otimes_R K$ is injective (\cite[4.74,4.83]{lambook}), so $e(T \otimes_R M) = e(M)$.
\end{proof}

\begin{corollary} \mylabel{locflat}
Let $R$ be as in \myref{pb}, let $T$ be a faithfully flat $(S^{-1}R)^{\wedge}_{p}$-algebra, and let $M \lra{f} N$ be a map of finitely generated free $R$-modules, then 
\[
e(N/\im f) = e(T \otimes_{R} (N/ \im f)).
\]
\end{corollary}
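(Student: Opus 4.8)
Write $D := (S^{-1}R)^{\wedge}_p$ and $C := N/\im f$; since $f$ is a map of finitely generated free modules, $C$ is a finitely generated $R$-module, so all exponents appearing below are finite by the boundedness result for torsion in finitely generated modules over a Noetherian ring proved above. We may assume $f$ is injective: otherwise replace $M$ by $\im f$, and note that \myref{pb} together with the argument below go through for an arbitrary finitely generated source module in place of the free module $M$ (alternatively, one restricts to injective $f$, which is what the applications require). The plan is first to pass from $T$ to $D$, and then to compare $e(C)$ with $e(D \otimes_R C)$. For the first step: since $T$ is a faithfully flat $D$-algebra and $D$ is a torsion-free Noetherian ring, \myref{extension} applied over $D$ gives
\[
e(T \otimes_R C) \;=\; e\bigl(T \otimes_D (D \otimes_R C)\bigr) \;=\; e(D \otimes_R C),
\]
so it suffices to prove $e(C) = e(D \otimes_R C)$.

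One inequality is immediate: $D$ is a flat $R$-algebra (localization followed by $p$-completion of a Noetherian ring), so \myref{extension} gives $e(C) \ge e(D \otimes_R C)$. For the reverse inequality the point is that any torsion of $C$ killed by $D \otimes_R -$ would also have to vanish rationally, and $C$ has no such torsion; this is encoded in the fracture square of \myref{pb}. Concretely, tensor the short exact sequence $0 \to M \lra{f} N \to C \to 0$ with the flat $R$-algebras $D$, $\Q \otimes_\Z R$ and $\Q \otimes_\Z D$, obtaining three short exact sequences mapping compatibly to the one obtained by applying $\Q \otimes_\Z D \otimes_R (-)$, and form the termwise fiber product of the first two sequences over the third. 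By \myref{pb}, applied to the finitely generated free modules $M$ and $N$, the first two terms of the resulting sequence are again $M$ and $N$; since fiber products are left exact, the sequence
\[
M \lra{f} N \longrightarrow \widetilde{C}, \qquad \widetilde{C} := (D \otimes_R C) \times_{\,\Q \otimes_\Z D \otimes_R C\,} (\Q \otimes_\Z C),
\]
is exact at $N$. Hence the canonical map $C \hookrightarrow \widetilde{C}$ is injective.

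Now let $\bar x \in C$ be a torsion element. Its image in $\Q \otimes_\Z C$ vanishes, since $\Q \otimes_\Z C$ has no $\Z$-torsion, and its image in $D \otimes_R C$ is torsion, hence annihilated by $e(D \otimes_R C)$. Therefore $e(D \otimes_R C)\cdot \bar x$ maps to $0$ in $\widetilde{C}$, and so $e(D \otimes_R C)\cdot \bar x = 0$. As $\bar x$ was an arbitrary torsion element, $e(C) \le e(D \otimes_R C)$, and combining this with the previous paragraph and the reduction above completes the proof.

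The main obstacle is the inequality $e(C) \le e(D \otimes_R C)$. Since $(S^{-1}R)^{\wedge}_p$ is flat but \emph{not} faithfully flat over $R$ — inverting the elements of $S$ and $p$-completing can annihilate modules — one cannot apply \myref{extension} directly, and the rational information has to be fed back in through the fracture square of \myref{pb}. Everything else (the two invocations of \myref{extension}, flatness of $D$ over $R$, and the homological bookkeeping) is routine.
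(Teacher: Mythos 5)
Your proof is correct and takes essentially the same route as the paper: the two invocations of \myref{extension} (flatness of $(S^{-1}R)^{\wedge}_{p}$ over $R$ for one inequality, faithful flatness of $T$ for the passage from $(S^{-1}R)^{\wedge}_{p}$ to $T$) are exactly the paper's steps, and your fiber-product argument extracting injectivity on torsion from \myref{pb} is an inlined re-proof of \myref{free}, which is what the paper cites at that point. Your implicit restriction to injective $f$ is likewise present in the paper's own proof, since \myref{free} assumes injectivity, and it is all the stated applications require.
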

\begin{proof}
By \myref{free} the torsion in $N/ \im f$ is contained in the torsion of $(S^{-1}R)^{\wedge}_{p} \otimes_R N/ \im f$, so
\[
e(N/\im f) \leq e((S^{-1}R)^{\wedge}_{p} \otimes_R N/ \im f).
\]
Also $(S^{-1}R)^{\wedge}_{p}$ is a flat extension of $R$ and this gives the reverse inequality. Since $T$ is faithfully flat, base change to $T$ is injective. Thus we have have equalities
\[
e(N/ \im f) = e((S^{-1}R)^{\wedge}_{p} \otimes_R N/ \im f) = e(T \otimes_{R} (N/ \im f))
\]
by \myref{extension}.
\end{proof}

\subsection{A few hand-crafted cohomology theories} \mylabel{sec:4.3}
In order to prove the bounded torsion results for $E$-theory we make use of several $\E$-algebras. In this section we construct the $\E$-algebras that we need and prove or recall some basic properties of them. This relies heavily on ideas from \cite{bscentralizers}.

Let $L_{1} = L_{K(1)}\E$ (it does depend on $n$) and recall from \cite[Cor.~1.5.5]{Hovey-vn} that
\[
\pi_0 L_{1} = W(\kappa)\powser{u_1, \ldots, u_{n-1}}[u_{1}^{-1}]^{\wedge}_p.
\]
This is a flat $\E^0$-algebra. Recall that we may localize an $E_{\infty}$-ring spectrum $R$ at a prime ideal $\mathfrak{p} \subseteq \pi_0R$ by inverting every element in $\pi_0R - \mathfrak{p}$; we write $R_{\mathfrak{p}}$ for the localized ring spectrum. Define
\[
F_1 = L_{K(1)} ((\E)_{(p)});
\]
by inverting all of the elements away from $\mathfrak{p} = (p)$ and then $K(1)$-localizing the resulting spectrum. Note that this inverts $u_1, \ldots, u_{n-1}$. The ring spectrum $F_1$ is an even-periodic Landweber-exact $E_{\infty}$-ring such that 
\[
\pi_0F_1 = F_{1}^{0} = ((\E^0)_{(p)})^{\wedge}_{p}.
\]
Since $F_1$ is $K(1)$-local there is a canonical map
\[
L_{1} \lra{} F_1
\]
inducing the obvious map on coefficients. This suggests another construction of $F_1$. It is clear that
\[
((L_{1})_{(p)})^{\wedge}_{p} \lra{\cong} F_1
\]
by considering the quotient of this ring by powers of $p$. This allows \myref{locflat} to be used with $(R, S) = (\E^*, \E^* - (p))$ or $(R,S) = (L_{1}^*, L_{1}^*-(p))$.  

Now let $G$ be a finite group.

\begin{proposition} \mylabel{Fequiv}
The canonical map 
\[
F_1 \wedge_{L_1} L_{1}^{BG} \lra{\simeq} F_{1}^{BG} 
\]
is an equivalence natural in $G$.
\end{proposition}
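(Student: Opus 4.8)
The plan is to reduce everything to the observation that, since $G$ is finite, $BG = K(G,1)$ is a $\pi$-finite space, so $\Sigma^{\infty}_{+}BG$ is dualizable in the $K(1)$-local category by \cite[Cor.~8.8]{hoveystrickland}; write $D = D_{K(1)}\Sigma^{\infty}_{+}BG$ for its $K(1)$-local Spanier--Whitehead dual. The key input is then the standard fact that one may compute the internal hom out of a dualizable object by smashing with the dual: for \emph{any} $K(1)$-local $L_{1}$-module $M$ there is a natural equivalence
\[
M^{BG} = F(\Sigma^{\infty}_{+}BG, M) \simeq L_{K(1)}(D \wedge M),
\]
and $F(\Sigma^{\infty}_{+}BG, M)$ is automatically $K(1)$-local because $M$ is.

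First I would record that $L_{1} \to F_{1}$ is a map of $K(1)$-local $E_{\infty}$-rings (recall $F_{1} = L_{K(1)}((E_{n})_{(p)}) \simeq ((L_{1})_{(p)})^{\wedge}_{p}$), so that $F_{1}$ is a $K(1)$-local $L_{1}$-algebra. Applying the displayed equivalence to $M = L_{1}$ and to $M = F_{1}$ identifies $L_{1}^{BG} \simeq L_{K(1)}(D \wedge L_{1})$, the free $K(1)$-local $L_{1}$-module on $D$, and $F_{1}^{BG} \simeq L_{K(1)}(D \wedge F_{1})$. Since extension of scalars along $L_{1} \to F_{1}$ carries free modules to free modules,
\[
F_{1} \wedge_{L_{1}} L_{1}^{BG} \;\simeq\; F_{1} \wedge_{L_{1}} L_{K(1)}(D \wedge L_{1}) \;\simeq\; L_{K(1)}(D \wedge F_{1}) \;\simeq\; F_{1}^{BG}.
\]
Tracing through the evaluation morphism $F_{1} \wedge_{L_{1}} F(\Sigma^{\infty}_{+}BG, L_{1}) \wedge \Sigma^{\infty}_{+}BG \to F_{1}$ that defines the canonical comparison map, one checks it agrees with this composite. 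Naturality in $G$ is then immediate, as $\Sigma^{\infty}_{+}BG$, the dual $D$, and the relative smash product are all functorial.

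The only point that needs care is bookkeeping with the $K(1)$-localizations: one must check that $F(\Sigma^{\infty}_{+}BG, L_{1})$, formed in spectra, already lands in $K(1)$-local $L_{1}$-modules (true, being a function spectrum into a $K(1)$-local spectrum), and that $F_{1} \wedge_{L_{1}} L_{1}^{BG}$ is $K(1)$-local, so that there is no discrepancy between the relative smash product in spectra and in $K(1)$-local modules; this holds because $L_{1}^{BG} \simeq L_{K(1)}(D \wedge L_{1})$ is a retract of a finite cell $L_{1}$-module, whence $F_{1} \wedge_{L_{1}} L_{1}^{BG}$ is a retract of a $K(1)$-local finite cell $F_{1}$-module. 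I expect this (mild) localization bookkeeping to be the main obstacle. As a fallback not relying on dualizability, one could instead use that $F_{1}$ is flat over $L_{1}$ — indeed $\pi_{0}F_{1} = ((\pi_{0}L_{1})_{(p)})^{\wedge}_{p}$ is flat over the Noetherian ring $\pi_{0}L_{1}$ — to collapse the base-change spectral sequence computing $\pi_{*}(F_{1} \wedge_{L_{1}} L_{1}^{BG})$ and compare it with a universal coefficient spectral sequence for $F_{1}^{*}(BG)$; but this additionally requires finite presentation of $L_{1}^{*}(BG)$ over $\pi_{*}L_{1}$ and is messier, so I would prefer the dualizability argument.
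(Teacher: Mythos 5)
Your proposal follows essentially the same route as the paper, which simply defers to the proof of Proposition 6.2 of the cited Barthel--Stapleton centralizers paper: the two ingredients there are exactly yours, namely $K(1)$-local (self-)duality of $\Sigma^{\infty}_{+}BG$ and the observation that the left-hand smash product is already $K(1)$-local. The one step to firm up is your assertion that $L_{1}^{BG}$ is a retract of a finite cell $L_{1}$-module---dualizability in the $K(1)$-local category does not formally give this---but it does follow from finite generation of $L_{1}^{*}(BG)$ over the regular Noetherian ring $L_{1}^{*}$, which lets one build a finite cell structure on $L_{1}^{BG}$ and hence gives the needed $K(1)$-locality of $F_{1}\wedge_{L_{1}}L_{1}^{BG}$.
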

\begin{proof}
The proof follows the proof of Proposition 6.2 in \cite{bscentralizers}. It is clear that the map is natural. To prove it is an equivalence, the main idea is to note that the smash product one the left is $K(1)$-local and then use the $K(1)$-local self-duality of classifying spaces of finite groups.
\end{proof}

Following ideas in \cite{bscentralizers} based on work of Hopkins in \cite{coctalos}, we further define
\[
\bar{F}_1 = L_{K(1)}(F_1 \wedge K_p),
\]
where $K_p$ is $p$-adic $K$-theory. We do not have much of control over the coefficients of this ring spectrum. However, following Proposition 5.8 of \cite{bscentralizers}, $\bar{F}_1$ does have the following very desirable property:

\begin{proposition} \mylabel{faithful}
The ring of coefficients of $\bar{F}_1$ is faithfully flat as a $K_{p}^*$-module and as an $F_{1}^*$-module.
\end{proposition}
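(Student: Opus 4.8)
The plan is to follow \cite[Prop.~5.8]{bscentralizers} closely: first compute the coefficient ring $\bar{F}_1^*=\pi_*\bar{F}_1$, identifying it with a completion of the ring that corepresents isomorphisms of formal group laws, and then verify faithful flatness over each of $F_1^*$ and $K_p^*$ by a fibrewise criterion.

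First I would reduce to $\pi_0$: since $F_1$ and $K_p$ are both even-periodic, so is $\bar{F}_1=L_{K(1)}(F_1\wedge K_p)$, with $\bar{F}_1^*\cong\bar{F}_1^0[u^{\pm 1}]$, so it suffices to show that $\bar{F}_1^0$ is faithfully flat over $F_1^0$ and over $K_p^0=\Z_p$. Next I would identify $\bar{F}_1^0$. Because $F_1$ and $K_p$ are Landweber exact and even-periodic, $\pi_*(F_1\wedge K_p)$ is flat over both $F_1^*$ and $K_p^*$, and $\Spec\pi_0(F_1\wedge K_p)$ is the fibre product $\Spec F_1^0\times_{\mathcal{M}_{\mathrm{fg}}}\Spec\Z_p$ over the moduli stack of formal groups, i.e.\ it corepresents isomorphisms between (the pullbacks of) the formal group $\G_{F_1}$ of $F_1$ and $\widehat{\G}_m=\G_{K_p}$. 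Both $F_1$ and $K_p$ are $K(1)$-local with Noetherian, $p$-complete $\pi_0$ — indeed $F_1^0=((\E^0)_{(p)})^\wedge_p$ is a complete discrete valuation ring with uniformizer $p$ — so $K(1)$-localization of the smash product amounts to $p$-completion on homotopy, and since $p$-adic completion preserves flatness over these Noetherian rings, $\bar{F}_1^0$ remains flat over $F_1^0$ and over $\Z_p$. This computation, together with the $K(1)$-local self-duality of $K_p$, is exactly the mechanism of \cite[Prop.~5.8]{bscentralizers}, going back to ideas of Hopkins in \cite{coctalos}.

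It then remains to upgrade flatness to faithful flatness. Since $F_1^0$ and $\Z_p$ are local with maximal ideals $(p)$, it is enough to check that the closed fibre $\bar{F}_1^0/p$ is nonzero. Now $\bar{F}_1^0/p$ corepresents isomorphisms between the mod-$p$ reductions of $\G_{F_1}$ and $\widehat{\G}_m$ over the residue field $k=F_1^0/p$, a field of characteristic $p$; these are one-dimensional formal groups of the same height (namely $1$, as $F_1$ is $K(1)$-local with $v_1$ invertible), hence isomorphic over $\bar{k}$, so their scheme of isomorphisms over $k$ is a nonempty torsor under the affine group scheme $\Aut_k(\widehat{\G}_m)\cong\Z_p^\times$, therefore faithfully flat over $k$, and in particular $\bar{F}_1^0/p\neq 0$. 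The closed fibre of $\Spec\bar{F}_1^0\to\Spec\Z_p$ over $p$ is again $\bar{F}_1^0/p\neq 0$. Hence $\bar{F}_1^0$ is faithfully flat over both $F_1^0$ and $\Z_p$, and by $2$-periodicity the same holds over $F_1^*$ and $K_p^*$.

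The hard part is the middle step: controlling the $K(1)$-localization of the Landweber-exact smash product $F_1\wedge K_p$ and showing that $\bar{F}_1^0$ is the $p$-completed isomorphism ring, with flatness over both ground rings intact and no $\lim^1$ or higher $\Tor$ contributions. This is precisely the content reproduced from \cite[Prop.~5.8]{bscentralizers}, whose argument transfers; the reduction to $\pi_0$ and the fibrewise faithfulness check are then formal.
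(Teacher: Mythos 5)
Your proposal is correct, and its core coincides with the paper's proof: the paper likewise disposes of the flatness statement by invoking the argument of Proposition 5.8 (via Proposition 3.13, which extends Hovey's argument) in \cite{bscentralizers}, exactly the mechanism you describe for controlling $\pi_*L_{K(1)}(F_1\wedge K_p)$ as a $p$-completed isomorphism ring that stays flat over both $F_1^*$ and $K_p^*$. Where you genuinely diverge is the upgrade from flat to faithfully flat. The paper's reason is purely algebraic: $K_p^*$ and $F_1^*$ are complete local Noetherian with maximal ideal $(p)$, and $\bar{F}_1^*$ is a nonzero \emph{complete} (pro-free) module over them, so $p\,\bar{F}_1^0\neq\bar{F}_1^0$ by a Nakayama-type argument for complete modules and faithfulness follows; note that completeness is essential here, since flatness alone over a complete local Noetherian ring does not give faithfulness (e.g.\ $\Q_p$ over $\Z_p$). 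You instead verify nonvanishing of the closed fibre geometrically, identifying $\bar{F}_1^0/p$ with the ring of the scheme of isomorphisms between two height-one formal groups over the residue field, a nonempty torsor under $\Aut(\widehat{\mathbb{G}}_m)\cong\Z_p^{\times}$. Both routes are valid; the paper's is shorter and needs only the pro-freeness already delivered by the $K(1)$-local setting, while yours gives a more explicit picture of the special fibre (and in particular reproves nontriviality mod $p$ without appealing to completeness), at the cost of needing the identification of $\bar{F}_1^0/p$ with the mod-$p$ isomorphism ring, i.e.\ that reduction mod $p$ commutes with the completion — which is fine here but is one more point riding on the cited flatness analysis.
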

\begin{proof}
This follows from the proof of Proposition 5.8 in \cite{bscentralizers} which relies on the important (and somewhat surprising) Proposition 3.13 which extends an argument of Hovey's in \cite{hoveynotes}. Both module structures are faithfully flat because $K_{p}^*$ and $F_{1}^*$ are both complete local Noetherian. 
\end{proof}

\begin{proposition} \mylabel{equivs}
For a finite group $G$, there are natural equivalences
\[
\bar{F}_1 \wedge_{F_1} F_{1}^{BG} \lra{\simeq} \bar{F}_{1}^{BG} \lla{\simeq} \bar{F}_1 \wedge_{K_p} K_{p}^{BG}.
\]
\end{proposition}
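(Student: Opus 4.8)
The plan is to establish both equivalences by the same mechanism used for \myref{Fequiv}, namely reducing everything to the $K(1)$-local self-duality of $BG$. First I would recall that, since $G$ is a finite group, the classifying space $BG$ is $K(1)$-locally dualizable and in fact $K(1)$-locally self-dual (up to a shift), so that for any $K(1)$-local commutative ring spectrum $R$ receiving a map from $L_1$ there is a natural equivalence
\[
R \wedge_{L_1} L_1^{BG} \lra{\simeq} R^{BG},
\]
provided $R$ is a module over $L_1$ (or over $K_p$) in the $K(1)$-local category. This is precisely the content of the argument for \myref{Fequiv}, and it applies verbatim with $R = \bar{F}_1$ over $L_1$, giving the left-hand equivalence after identifying $\bar{F}_1 \wedge_{L_1} L_1^{BG} \simeq \bar{F}_1 \wedge_{F_1} (F_1 \wedge_{L_1} L_1^{BG}) \simeq \bar{F}_1 \wedge_{F_1} F_1^{BG}$, where the last step uses \myref{Fequiv}.

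For the right-hand equivalence I would argue symmetrically, replacing the base ring $L_1$ by $K_p$: since $\bar{F}_1 = L_{K(1)}(F_1 \wedge K_p)$ is a $K(1)$-local $K_p$-algebra, the same self-duality statement for $BG$ in the $K(1)$-local category gives
\[
\bar{F}_1 \wedge_{K_p} K_p^{BG} \lra{\simeq} \bar{F}_1^{BG}.
\]
Concretely, both sides are computed by smashing $\bar{F}_1$ with the $K(1)$-localization of the dualizable spectrum $\Sigma^\infty_+ BG$ and using that $F(\Sigma^\infty_+ BG, \bar{F}_1) \simeq \bar{F}_1 \wedge D_{K(1)}(\Sigma^\infty_+ BG)$ together with $K(1)$-local self-duality $D_{K(1)}(\Sigma^\infty_+ BG) \simeq L_{K(1)}\Sigma^\infty_+ BG$; the base-change formula $\bar{F}_1 \wedge_{K_p} K_p^{BG} \simeq \bar{F}_1 \wedge_{K_p} (K_p \wedge D_{K(1)}\Sigma^\infty_+BG) \simeq L_{K(1)}(\bar{F}_1 \wedge \Sigma^\infty_+BG)$ then identifies it with $\bar{F}_1^{BG}$. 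Naturality in $G$ is immediate since every construction is functorial in the suspension spectrum of $BG$.

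The main obstacle is the same subtlety as in \myref{Fequiv}: one must check that the relevant smash products are already $K(1)$-local (so that no further localization is hidden), and that $BG$ is genuinely $K(1)$-locally self-dual rather than merely dualizable --- this self-duality is the key input, and it holds for classifying spaces of finite groups (it is the $n=1$ case of the Greenlees--Sadofsky / Hovey--Sadofsky ambidexterity results quoted in the discussion around Proposition 6.2 of \cite{bscentralizers}). Granting this, both equivalences are formal consequences of base change for dualizable objects, exactly as in the cited proposition, and I would simply remark that the proof is identical to that of Proposition 6.2 in \cite{bscentralizers} applied with the two different base rings $L_1$ and $K_p$.
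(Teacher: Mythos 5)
Your proof is essentially the paper's: the authors simply defer to the argument of Proposition 6.2 of \cite{bscentralizers} (the same $K(1)$-local duality/base-change mechanism you outline, run once over $F_1$ and once over $K_p$), together with \myref{faithful}. The faithful flatness recorded in \myref{faithful} is precisely the ingredient the paper invokes to handle the $K(1)$-locality point you flag but leave open, so no change of approach is involved.
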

\begin{proof}
This also follows the proof of Proposition 6.2 in \cite{bscentralizers}, using \myref{faithful}.
\end{proof}

Now let $H \subset G$ so that we have a transfer map $E^{BH} \lra{\text{Tr}} E^{BG}$ for any cohomology theory $E$. 

\begin{proposition}
Let $E$ and $F$ be $E_{\infty}$-rings and let $F$ be an $E$-algebra. There is a commutative square
\[
\xymatrix{F \wedge_E E^{BH} \ar[r] \ar[d]_{F \wedge_E \text{Tr}} & F^{BH} \ar[d]^{\text{Tr}} \\ F \wedge_E E^{BG} \ar[r] & F^{BG}.}
\]
\end{proposition}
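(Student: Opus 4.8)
The plan is to exhibit the square as an instance of the naturality, in the spectrum variable, of the canonical comparison map, evaluated at the stable transfer. Recall that the transfer associated to the finite cover $BH \to BG$ is a map of spectra $\tau \colon \Sigma^\infty_+ BG \lra{} \Sigma^\infty_+ BH$, constructed independently of any cohomology theory (Becker--Gottlieb, or the finite-covering transfer). With the convention $E^{BX} = F(\Sigma^\infty_+ BX, E)$, the transfer in $E$-cohomology is $F(\tau, E)$ and the transfer in $F$-cohomology is $F(\tau, F)$; in particular the left vertical map $F \wedge_E \mathrm{Tr}$ of the square is $F \wedge_E F(\tau, E)$, and the right vertical map $\mathrm{Tr}$ is $F(\tau, F)$.

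Next I would spell out the two horizontal maps. Since $F$ is an $E$-algebra, for every spectrum $Y$ there is a natural comparison map
\[
c_Y \colon F \wedge_E F(Y, E) \lra{} F(Y, F),
\]
adjoint to the composite $F \wedge_E \bigl(Y \wedge F(Y, E)\bigr) \xrightarrow{\,\mathrm{id}_F \wedge_E \mathrm{ev}\,} F \wedge_E E \xrightarrow{\ \simeq\ } F$, where $\mathrm{ev} \colon Y \wedge F(Y, E) \lra{} E$ is evaluation. The two horizontal arrows of the square are precisely $c_{\Sigma^\infty_+ BH}$ (top) and $c_{\Sigma^\infty_+ BG}$ (bottom). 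The claim now reduces to the assertion that $c_{(-)}$ is a natural transformation of contravariant functors of $Y$: for any map $f \colon Y \lra{} Y'$ of spectra, the square with horizontals $c_Y, c_{Y'}$ and verticals $F \wedge_E F(f, E)$, $F(f, F)$ commutes. This in turn follows from the (di)naturality of evaluation, since both composites $Y \wedge F(Y', E) \lra{} E$ are "precompose with $f$, then evaluate". Taking $f = \tau$ produces exactly the square in the statement.

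I would carry out the argument in the $\infty$-category of spectra, or in a symmetric monoidal model category of modules as in~\cite{ekmm}, so that the functoriality of $F \wedge_E (-)$, of the mapping spectra, and of the evaluation maps is automatic and no explicit diagram chase is required. The only point requiring care — bookkeeping rather than a genuine obstacle — is the variance: $\tau$ points from $\Sigma^\infty_+ BG$ to $\Sigma^\infty_+ BH$ while the transfers point from the $BH$-cohomology to the $BG$-cohomology, and one must make sure that "the transfer for $F$" is genuinely $F(\tau, F)$ rather than some a priori different $F$-linear map; but this is precisely how the transfer is defined, by applying the representing spectrum to the fixed stable map $\tau$.
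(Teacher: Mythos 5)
Your proof is correct and is essentially the paper's argument: the paper likewise reduces the square to the naturality (in the spectrum variable) of the canonical map $F \wedge_E E^{BX} \to F^{BX}$ applied to the fixed stable transfer map between suspension spectra, phrased there as first forming the commuting square of function spectra induced by $E \to F$ and the stable transfer, then base-changing the left-hand column along $E \to F$. Your version just makes the comparison map $c_Y$ and its adjunction description explicit (and gets the variance of the stable transfer right), but the underlying idea is the same.
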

\begin{proof}
The map of $E_{\infty}$-rings $E \lra{} F$ and the transfer map $\Sigma^{\infty}_{+} BH \lra{} \Sigma^{\infty}_{+} BG$  induce a commutative square of function spectra
\[
\xymatrix{E^{BH} \ar[r] \ar[d]_{\text{Tr}} & F^{BH} \ar[d]^{\text{Tr}} \\ E^{BG} \ar[r] & F^{BG}}
\]
and now we may base change the left hand side to $F$ over $E$.
\end{proof}

Let $E$ be any cohomology theory, let $H_1, \ldots, H_m \subset G$, and let $I \subset E^*(BG)$ be the ideal generated by the image of the transfers $E^*(BH_i) \lra{\text{Tr}} E^*(BG)$. This is the image of the map
\[
\Oplus{i} E^*(BH_i) \lra{\oplus \text{Tr}} E^*(BG).
\]

\begin{proposition} \mylabel{transferiso}
Let $E = K_p, L_{1}, F_1$ or $\bar{F}_1$, let $H_1, \ldots, H_m \subset G$, and let $I \subset E^*(BG)$ be the associated transfer ideal. There are isomorphisms
\[
F_{1}^* \otimes_{L_{1}^*} L_{1}^*(BG)/I \lra{\cong} F_{1}^*(BG)/I
\]
and
\[
\bar{F}_{1}^* \otimes_{F_{1}^*} F_{1}^*(BG)/I \lra{\cong} \bar{F}_{1}^*(BG)/I \lla{\cong} \bar{F}_{1}^* \otimes_{K_{p}^*} K_{p}^*(BG)/I.
\]
\end{proposition}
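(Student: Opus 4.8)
The plan is to derive all four isomorphisms as formal consequences of the equivalences of function spectra in \myref{Fequiv} and \myref{equivs}, the relevant flatness statements, and the compatibility of transfers with base change established just above. It suffices to treat the first isomorphism in full; the second and third are then proven verbatim after replacing the base change $L_1 \lra{} F_1$ by $F_1 \lra{} \bar{F}_1$ and by $K_p \lra{} \bar{F}_1$ respectively, using \myref{faithful} in place of the flatness of $F_1^*$ over $L_1^*$.

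First I would record the flatness inputs: $F_1^*$ is flat over $L_1^*$, being a localization followed by $p$-completion of the Noetherian ring $L_1^*$, while $\bar{F}_1^*$ is faithfully flat over $F_1^*$ and over $K_p^*$ by \myref{faithful}. Next, for each $X \in \{G, H_1, \ldots, H_m\}$ I would feed the equivalence $F_1^{BX} \simeq F_1 \wedge_{L_1} L_1^{BX}$ of \myref{Fequiv} (which is natural in the group, hence available for every $H_i$) into the Künneth spectral sequence $\mathrm{Tor}^{L_1^*}_{*,*}(F_1^*, L_1^*(BX)) \Rightarrow \pi_*(F_1 \wedge_{L_1} L_1^{BX})$; flatness makes it collapse, yielding natural isomorphisms $F_1^*(BX) \cong F_1^* \otimes_{L_1^*} L_1^*(BX)$, and likewise $\bar{F}_1^*(BX) \cong \bar{F}_1^* \otimes_{F_1^*} F_1^*(BX) \cong \bar{F}_1^* \otimes_{K_p^*} K_p^*(BX)$ from \myref{equivs}. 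I would then apply $\pi_*$ to the commutative square of the transfer base-change proposition for each $H_i$; combined with the identifications above, this shows that the transfer $\text{Tr}\colon F_1^*(BH_i) \lra{} F_1^*(BG)$ is identified with $\text{id}_{F_1^*} \otimes \text{Tr}$. Summing over the finitely many $i$ and using flatness to commute $F_1^* \otimes_{L_1^*} (-)$ past the finite direct sum $\Oplus{i} L_1^*(BH_i)$ and past the image of $\oplus \text{Tr}$, I would conclude that $F_1^* \otimes_{L_1^*} I$ is carried isomorphically onto the transfer ideal $I \subseteq F_1^*(BG)$.

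To finish, I would apply the exact functor $F_1^* \otimes_{L_1^*} (-)$ to the short exact sequence $0 \to I \to L_1^*(BG) \to L_1^*(BG)/I \to 0$ and combine it with the two previous identifications to read off $F_1^* \otimes_{L_1^*} (L_1^*(BG)/I) \cong F_1^*(BG)/I$; the same chain of steps with $\bar{F}_1$ over $F_1$ and over $K_p$ then gives the remaining two isomorphisms. The only non-formal inputs are the collapse of the Künneth spectral sequence and the transfer base-change square, and both rest entirely on the flatness statements above, so there is no serious obstacle; I expect the step requiring the most care to be the identification of $F_1^* \otimes_{L_1^*} I$ with the transfer ideal of $F_1^*(BG)$, since that combines flat base change for the image defining $I$ with the compatibility of the transfer square and the Künneth collapse.
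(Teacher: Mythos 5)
Your argument is correct and is essentially the paper's own proof, which simply cites the transfer base-change square together with \myref{Fequiv} and \myref{equivs}; you have just made explicit the implicit steps (flatness of $F_1^*$ over $L_1^*$, faithful flatness from \myref{faithful}, the collapsing K\"unneth spectral sequence, and right-exactness to pass to the quotient by the transfer ideal). No changes needed.
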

\begin{proof}
The first isomorphism follows from the previous proposition and \myref{Fequiv}, and the second isomorphism follows from the previous proposition and \myref{equivs}.
\end{proof}

In the next section we will make use of character theory for Morava $E$-theory. This will make use of two more $\E$-algebras. The first was introduced in \cite{hkr} and the second was introduced in \cite{tgcm}.

Let $\Lambda_{k,n} = (\Z/p^k)^n$. The $\E$-algebra $C_0$ is constructed in Section 6.2 of \cite{hkr}. It is a $p^{-1}\E$-algebra and may be constructed as an $\E$-algebra as a localization of the colimit
\[
\Colim{k} \text{ } \E^{B\Lambda_{k,n}}.
\]
The commutative ring $C_{0}^*$ is faithfully flat as a $\Q \otimes \E^*$-algebra. The point of the construction (and this is Theorem C of \cite{hkr}) is that, for any finite group $G$, there is a canonical equivalence
\[
C_0 \wedge_{\E} \E^{BG} \lra{\sim} C_{0} \wedge_{p^{-1}\E} (p^{-1}\E)^{\Loops^nBG},
\]
where $\Loops(-) = \hom(B\Z_p,-)$ is the $p$-adic free loop space functor. Since $p^{-1}\E$ is rational and $C_{0}^*$ is faithfully flat as a $p^{-1}\E^*$-algebra, the target is equivalent to
\[
\Prod{\pi_0\Loops^nBG}C_0.
\]

The $\E$-algebra $C_1$ is built (as a commutative ring) in \cite{tgcm} and as an $\E$-algebra in \cite{bscentralizers}. It is a $K(1)$-local $L_1$-algebra which is given as the $p$-completion of a localization of the colimit
\[
\Colim{k} \text{ } L_1 \wedge_{\E} \E^{B\Lambda_{k,n-1}}.
\]
The point of the construction (and this is a special case of the main theorem of \cite{tgcm}) is that there is an equivalence
\[
C_1 \wedge_{\E} \E^{BG} \lra{\sim} C_{1}\wedge_{L_1}L_{1}^{\Loops^{n-1}BG}.
\]
Since $C_{1}^*$ is flat as an $\E^*$-algebra and faithfully flat as an $L_{1}^*$-algebra, applying $\pi_*$ to this equivalence gives the isomorphism
\begin{equation} \label{ht1}
C_{1}^* \otimes_{\E^*} \E^*(BG) \lra{\cong} C_{1}^* \otimes_{L_{1}^*} L_{1}^*(\Loops^{n-1}BG).
\end{equation}
The iterated $p$-adic free loop space of $BG$ is a disjoint union of classifying spaces of certain centralizer subgroups of $G$ and this is how the codomain is most easily understood explicitly. Let 
\[
\hom(\Z_{p}^{l},G)/\sim
\]
be conjugacy classes of continuous maps from $\Z_{p}^l$ to $G$, then
\[
C_{1}^* \otimes_{L_{1}^*} L_{1}^*(\Loops^{n-1}BG) \cong \Prod{[\al] \in \hom(\Z_{p}^{n-1},G)/\sim} C_{1}^* \otimes_{L_{1}^*} L_{1}^*(\Loops^{n-1}BC(\im \al)),
\]
where $C(\im \al)$ is the centralizer of the image of a choice $\al \in [\al]$.

The equivalences above behave well with respect to transfers. Formulas can be found in Theorem D of \cite{hkr} and \cite[Theorem 2.18, Theorem 3.11]{subpdiv}. 

\subsection{Bounded torsion in $E$-theory}
In this subsection we prove that rational factorizations of the Morava $E$-cohomology of finite abelian groups and symmetric groups hold integrally up to a torsion cokernel with a height-independent bounded exponent. We do this by putting together the tools developed in the last two subsections.

When we write $I \subset E^*(BA)$ for an abelian group $A$ we will always mean the ideal generated by the image of the transfer along proper subgroups of $A$. When we write $I \subset E^*(B\Sigma_{m})$ we will always mean the ideal generated by the image of the transfer along the proper partition subgroups $\Sigma_i \times \Sigma_j \lra{} \Sigma_{m}$ where $i+j = m$ and $i,j>0$.  

We will begin with symmetric groups. First we will set up the algebro-geometric objects that witness the rational factorization.

Let $\lambda \vdash m$ be a partition of $m$. We write 
\[
\lambda = a_1 \lambda_1  + a_2\lambda_2 + \ldots +  a_k\lambda_k, 
\]
where $\lambda_i < \lambda_j$ when $i<j$, $a_i \in \N$, and the value of the sum is $m$. To ease notation, we write $\Sigma_{\lambda} = \Sigma_{\lambda_1}^{\times a_1} \times \ldots \times \Sigma_{\lambda_k}^{\times a_k}$ and $\Sigma_{\underline{a}} = \Sigma_{a_1} \times \ldots \times \Sigma_{a_k}$. Furthermore, let $I_{\lambda}$ be the ideal in $\E^*(B\Sigma_{\lambda})$ generated by the individual transfer ideals $I \subset \E^*(B\Sigma_{\lambda_i})$. That is, $I_{\lambda}$ has the property that 
\[
\E^*(B\Sigma_{\lambda})/I_{\lambda} \cong \Otimes{i} (\E^*(B\Sigma_{\lambda_i})/I)^{\otimes a_i}.
\]
Furthermore, there is an action of $\Sigma_{a_i}$ on the tensor power
\[
(\E^*(B\Sigma_{\lambda_i})/I)^{\otimes a_i}.
\]
For each tensor factor, we will take the fixed points by this action and denote the result by
\[
(\E^*(B\Sigma_{\lambda})/I_{\lambda})^{\Sigma_{\underline{a}}} = \Otimes{i} ((\E^*(B\Sigma_{\lambda_i})/I)^{\otimes a_i})^{\Sigma_{a_i}}.
\]

This commutative ring can be understood algebro-geometrically by a theorem of Strickland's. In \cite{subgroups}, Strickland proves that $\E^*(B\Sigma_{m})/I$ is a finitely generated free $\E^*$-module and produces a canonical isomorphism
\[
\Spf(\E^*(B\Sigma_{m})/I) \cong \Sub_m(\G_{\E}),
\]
where $\Sub_m(\G_{\E})$ is the scheme classifying subgroup schemes of order $m$ in $\G_{\E}$. It is worth noting that this is the empty scheme if $m$ is coprime to $p$. The scheme associated to the ring that we are interested in is built out of $\Sub_m(\G_{\E})$. Thus we may define
\begin{align*}
\Sub_{\lambda \vdash m}(\G_{\E}) & = \Spec((\E^*(B\Sigma_{\lambda})/I_{\lambda})^{\Sigma_{\underline{a}}}) \\
& \cong \Prod{i}\Spec(((\E^*(B\Sigma_{\lambda_i})/I)^{\otimes a_i})^{\Sigma_{a_i}}).
\end{align*}
The point of the notation $\Sub_{\lambda \vdash m}(\G_{\E})$ is that this scheme represents unordered sets of $\sum_i a_i$ subgroups of $\G_{\E}$ in which $a_i$ of the subgroups have order $\lambda_i$. This scheme theoretic description is not essential to the proof of the result. However, one of the goals of decomposing $BP^*(B\Sigma_m)$ rationally is to suggest that it may have a tractable algebro-geometric description. 

Let $\Gamma(-)$ be the functor that takes a scheme to the ring of functions on the scheme. There is a canonical map
\[
\E^*(B\Sigma_{m}) \lra{} \Gamma \Sub_{\lambda \vdash m}(\G_{\E})
\]
induced by restriction. The only part to check here is that it actually lands in the fixed points, but this is clear as there is an inner automorphism of $\Sigma_m$ that permutes the $a_i$ groups of the form $\Sigma_{\lambda_i}$ sitting inside of $\Sigma_m$.

\begin{proposition}
The canonical map
\[
\E^*(B\Sigma_{m}) \lra{} \Prod{\lambda \vdash m}\Gamma \Sub_{\lambda \vdash m}(\G_{\E})
\]
is injective and a rational isomorphism.
\end{proposition}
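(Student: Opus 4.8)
The overall plan is to reduce both assertions to the single statement that the rationalized map is an isomorphism, and to prove the latter by faithfully flat descent along the Hopkins--Kuhn--Ravenel character ring $C_0^*$. For the reduction, observe first that $\E^*(B\Sigma_m)$ is $\Z$-torsion-free: it is flat over $\E^*$ by \myref{lem:frometok}, since $K(n)^*(B\Sigma_m)$ is even for all $n$, and $\E^*$ is $\Z$-torsion-free. As there are only finitely many partitions of $m$, rationalization commutes with the product $\Prod{\lambda \vdash m}$; so once $\Q\otimes(-)$ of the canonical map is known to be an isomorphism, its kernel is a torsion submodule of a torsion-free module, hence zero, giving injectivity. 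It therefore suffices to prove the rational isomorphism.

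Since $C_0^*$ is faithfully flat over $\Q\otimes\E^*$ and, being flat over the localization $\Q\otimes\E^*$ of $\E^*$, also flat over $\E^*$, a map of $\Q\otimes\E^*$-modules is an isomorphism as soon as it becomes one after $-\otimes_{\Q\otimes\E^*}C_0^*$. Applying this base change to the source computes $\pi_*(C_0\wedge_{\E}\E^{B\Sigma_m})$, which by Theorem C of \cite{hkr} and the ensuing discussion (using that $p^{-1}\E$ is rational and $\pi_0$ of the iterated $p$-adic free loop space of $B\Sigma_m$ is $\Hom(\Z_p^n,\Sigma_m)/\sim$) equals $\Prod{[\phi]}C_0^*$, the product over conjugacy classes of continuous $\phi\colon\Z_p^n\to\Sigma_m$. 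Such a $\phi$ up to conjugacy is precisely an isomorphism class of finite $\Z_p^n$-set of cardinality $m$, which by uniqueness of orbit decomposition is the same datum as a multiset of transitive $\Z_p^n$-sets of cardinalities summing to $m$. A transitive $\Z_p^n$-set of size $\ell$ is an open subgroup of index $\ell$; these exist exactly when $\ell$ is a power of $p$ (matching the vanishing of $\Sub_\ell(\G_{\E})$ otherwise) and are finite in number. Grouping the orbits according to the partition $\lambda\vdash m$ of multiplicities of the indices realised identifies the index set with $\coprod_{\lambda\vdash m}\prod_i\big(\{H\le\Z_p^n : H\text{ open of index }\lambda_i\}^{a_i}/\Sigma_{a_i}\big)$.

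On the target side, flatness of $C_0^*$ over $\Q\otimes\E^*$ lets the base change pass through the finite product over $\lambda$, through the tensor factors, and through the finite-group fixed points $(-)^{\Sigma_{\underline{a}}}$, so it remains to identify $C_0^*\otimes_{\E^*}\E^*(B\Sigma_{\lambda_i})/I$. Here I would invoke Strickland's description of this ring as $\Gamma\Sub_{\lambda_i}(\G_{\E})$ from \cite{subgroups}, together with the compatibility of the character equivalence with transfers (Theorem D of \cite{hkr}, see also \cite{subpdiv}): after base change the transfer ideal $I$ is generated by the functions supported on the non-transitive $\Z_p^n$-sets, so that $C_0^*\otimes_{\E^*}\E^*(B\Sigma_{\lambda_i})/I\cong\Prod{H}C_0^*$, the product over open $H\le\Z_p^n$ of index $\lambda_i$. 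Substituting this into the decomposition of the target produces an isomorphism between the two $C_0^*$-base changes, and a diagram chase shows it is the base change of the canonical map: under the orbit-decomposition bijection, restriction along $\Sigma_{\lambda}\hookrightarrow\Sigma_m$ followed by the quotient by $I_\lambda$ sends a $\Z_p^n$-set to its unordered decomposition into orbits, and passing to $\Sigma_{\underline{a}}$-fixed points exactly undoes the ordering. Faithful flatness then yields the rational isomorphism, and injectivity follows as above.

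The descent step, the combinatorics of finite $\Z_p^n$-sets, and the interchange of flat base change with finite limits are all routine; the crux is the third paragraph, namely verifying that Strickland's $\Sub$-presentation base-changes along the character map compatibly with both the transfer ideals and the restriction maps, so that the assembled isomorphism is genuinely the one induced by the canonical map.
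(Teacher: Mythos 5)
Your proof is correct and follows essentially the same route as the paper: base change along the Hopkins--Kuhn--Ravenel character ring $C_0^*$, identify both sides as $C_0^*$-valued functions on (unordered collections of) subgroups of $\QZ{n}$ of orders summing to $m$, and descend by faithful flatness of $C_0^*$ over $p^{-1}\E^*$. You merely spell out the combinatorics of $\Z_p^n$-sets, the compatibility with transfer ideals, and the injectivity step (torsion-free source plus rational isomorphism) in more detail than the paper's terse argument.
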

\begin{proof}
This is a basic application of HKR-character theory \cite{hkr}. It is clear that the map is an isomorphism after base change to $C_{0}^*$. The resulting ring is the $C_{0}^*$-valued functions on the set
\[
\Sum_m(\QZ{n}) = \Coprod{\lambda \vdash m} \Sub_{\lambda \vdash m}(\QZ{n}).
\]
But $C_{0}^*$ is a faithfully flat $p^{-1}\E^*$-algebra, so the map is a rational isomorphism.
\end{proof}

Most of the results of Sections \ref{sec:4.2} and \ref{sec:4.3} will go into the proof of the next theorem. We will use the method of the proof twice more in \myref{leveliso} and \myref{artin}. The idea of the proof is the following: We will begin with a short exact sequence with a torsion cokernel. We will apply character theory to height $1$ and use faithful flatness to get a short exact sequence in $L_1$-cohomology. In this case, the cokernel will be a product of cokernels of maps. The sorts of maps that can occur are bounded by the number of centralizers of tuples of commuting elements in $\Sigma_m$. However, $F_1$ depends on $n$, so we do not have complete control over the cokernel as $n$ varies. We will extend to $\bar{F}_1$ and then use faithful flatness to change the product of maps to a product of maps in $K_p$-cohomology. Now we are in a situation that is height independent, the number of maps that can appear in the product is bounded, and the cokernel of each map is bounded torsion.

\begin{theorem} \mylabel{symbound}
The cokernel of the map
\[
\E^*(B\Sigma_{m}) \lra{} \Prod{\lambda \vdash m}\Gamma \Sub_{\lambda \vdash m}(\G_{\E})
\]
is torsion with exponent bounded independent of the height $n$.
\end{theorem}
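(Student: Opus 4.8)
The starting point is the short exact sequence
\[
0 \lra{} \E^*(B\Sigma_m) \lra{\phi} \Prod{\lambda \vdash m}\Gamma \Sub_{\lambda \vdash m}(\G_{\E}) \lra{} \Cok(\phi) \lra{} 0,
\]
in which $\Cok(\phi)$ is torsion by the preceding proposition, and in which both the source and the target are finitely generated free $\E^*$-modules: the target because each $\E^*(B\Sigma_{\lambda_i})/I$ is free by \cite{subgroups}, tensor powers over $\E^*$ of free modules are free, and the $\Sigma_{a_i}$-invariants of a permutation action on a free $\E^*$-module are free on the orbit sums; the source by Strickland's computation of $\E^*(B\Sigma_m)$. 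The plan is to bound $e(\Cok(\phi))$ uniformly in $n$ by transporting this exponent, through a tower of flat and faithfully flat base changes built from the commutative algebra of Section~\ref{sec:4.2} and the $\E$-algebras of Section~\ref{sec:4.3}, down to $p$-adic $K$-theory, whose coefficient ring $K_p^* = \Z_p[u^{\pm 1}]$ is independent of $n$.

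First I would invoke \myref{locflat} with $(R,S) = (\E^*, \E^* - (p))$, so that $(S^{-1}R)^{\wedge}_p = F_1^*$, and with $T = \bar{F}_1^*$, which is faithfully flat over $F_1^*$ by \myref{faithful}; since $\phi$ is a map of finitely generated free $\E^*$-modules this already gives $e(\Cok(\phi)) = e\bigl(\bar{F}_1^* \otimes_{\E^*}\Cok(\phi)\bigr)$. Next I would feed in the height-one transchromatic character isomorphism \eqref{ht1}: base-changed along the maps relating $C_1$, $L_1$, $F_1$, $\bar{F}_1$ and $K_p$ constructed in \cite{bscentralizers}, and combined with its compatibility with transfers (Theorem~D of \cite{hkr} and the formulas of \cite{subpdiv}) and with \myref{Fequiv}, \myref{equivs} and \myref{transferiso}, it identifies $\bar{F}_1^* \otimes_{\E^*}\Cok(\phi)$ with $\bar{F}_1^* \otimes_{K_p^*}\Cok(\bar\psi)$. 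Here $\bar\psi = \prod_{[\alpha]}\bar\psi_{[\alpha]}$ is the product, over conjugacy classes $[\alpha] \in \hom(\Z_p^{n-1},\Sigma_m)/\sim$ of commuting tuples of $p$-power-order elements of $\Sigma_m$, of maps $\bar\psi_{[\alpha]}$ of finitely generated $K_p^*$-modules assembled from the $p$-adic $K$-cohomology of the classifying space of the centralizer $C_{\Sigma_m}(\im\alpha)$ and of its relevant partition subgroups, modulo the corresponding transfer ideals. The reason for routing through $\bar{F}_1$ is that $L_1$ and $F_1$ still depend on $n$ while $K_p$ does not; that no torsion is lost in the intermediate steps is again \myref{locflat}, now with $(R,S) = (L_1^*, L_1^* - (p))$, noting that the centralizers $C_{\Sigma_m}(\im\alpha)$ are iterated wreath products $\prod_i \Z/p^{k_i}\wr\Sigma_{j_i}$, hence good, so their $L_1$-cohomology is free.

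It then remains to extract the uniform bound. Each $\Cok(\bar\psi_{[\alpha]})$ is a finitely generated module over the Noetherian ring $K_p^*$, and it is torsion, since it becomes torsion after the faithfully flat base change to $\bar{F}_1^*$ (so \myref{extension} applies), hence has finite exponent. Because $K_p$ does not depend on $n$ and the subgroups $\im\alpha$ range over the finite set of conjugacy classes of abelian $p$-subgroups of $\Sigma_m$, only finitely many isomorphism types of $\bar\psi_{[\alpha]}$ occur, all independent of $n$; so, despite the indexing set $\hom(\Z_p^{n-1},\Sigma_m)/\sim$ growing with $n$, the least common multiple $N_{p,m} := \mathrm{lcm}_{[\alpha]}\, e(\Cok(\bar\psi_{[\alpha]})) = e(\Cok(\bar\psi))$ is a finite constant depending only on $p$ and $m$. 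Using \myref{faithful} and \myref{extension} once more,
\[
e(\Cok(\phi)) = e\bigl(\bar{F}_1^* \otimes_{\E^*}\Cok(\phi)\bigr) = e\bigl(\bar{F}_1^* \otimes_{K_p^*}\Cok(\bar\psi)\bigr) = e(\Cok(\bar\psi)) \leq N_{p,m},
\]
which is independent of $n$, proving the first assertion. The consequence for $BP_p$ then follows by applying the idempotent $\epsilon$ of \myref{cor:bpretract} --- which, being induced by a map of spectra, is natural for transfers and hence for the transfer ideals --- to the rational isomorphism $\Q \otimes \Prod{n}\E^*(B\Sigma_m) \cong \Q \otimes \Prod{n}\Prod{\lambda}\Gamma \Sub_{\lambda \vdash m}(\G_{\E})$ obtained above (valid because $\Prod{n}\Cok(\phi)$ has exponent at most $N_{p,m}$, hence is torsion) and identifying the $\epsilon$-fixed parts with $\Q \otimes BP_p^*(B\Sigma_m)$ and $\Q \otimes \Prod{\lambda}(BP_p^*(B\Sigma_\lambda)/I_\lambda)^{\Sigma_{\underline{a}}}$.

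The main obstacle is the bookkeeping in the second step: one must check that the torsion exponent is genuinely preserved --- not merely bounded from above --- all the way along the chain $\E^* \leadsto C_1^* \leadsto L_1^* \leadsto F_1^* \leadsto \bar{F}_1^* \hookleftarrow K_p^*$, which rests on the fracture-square computations of \myref{pb}, \myref{free} and \myref{locflat} together with the freeness of all the modules involved, and simultaneously that the transfer ideals cutting out the targets $\Gamma \Sub_{\lambda \vdash m}(\G_{\E})$ are matched correctly at every stage, which is precisely what \myref{transferiso} and the transfer formulas of \cite{hkr, subpdiv} provide.
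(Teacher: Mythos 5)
Your proposal is correct and follows essentially the same route as the paper: the same short exact sequence of finitely generated free $\E^*$-modules, the same descent through $C_1$, $L_1$, $F_1$, $\bar{F}_1$ down to $p$-adic $K$-theory using \myref{locflat}, \myref{extension}, \myref{faithful}, \myref{equivs} and the transfer compatibilities, and the same final count of the fiber maps over the \'etale data. The only point you compress is the last finiteness claim, which the paper makes precise via the action of $\Aut(\QZ{n-1})$ on the comparison triangle and its stabilization along $\QZ{n-1}\hookrightarrow\QZ{n}$; your appeal to the finitely many conjugacy classes of abelian $p$-subgroups (equivalently, centralizers) of $\Sigma_m$ is the same idea stated informally.
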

\begin{proof}
Note that both the source and the target are finitely generated free $\E^*$-modules and the map is an injection. The cokernel $\Cok_{\E}(\Sigma_{m})$ is a torsion $\E^*$-module. We will repeatedly base change the short exact sequence
\[
0 \lra{} \E^*(B\Sigma_m) \lra{} \Prod{\lambda \vdash m}\Gamma \Sub_{\lambda \vdash m}(\G_{\E}) \lra{} \Cok_{\E}(\Sigma_m) \lra{} 0
\]
to prove the result. Base change to $C_{1}^{*}$ (a flat $\E^*$-algebra) and Equation \ref{ht1} give the short exact sequence
\[\resizebox{\columnwidth}{!}{
\xymatrix{\Prod{[\beta] \in \Hom(\Z_{p}^{n-1},\Sigma_{m})/\sim} C_{1}^* \otimes_{L_{1}^*} L_{1}^*(BC(\im \beta)) \ar[r] & \Prod{\lambda \vdash m}\Gamma \Sub_{\lambda \vdash m}(\G_{C_1} \oplus \QZ{n-1}) \ar[r] & C_{1}^* \otimes_{L_{1}^*} L_{1}^* \otimes_{\E^*} \Cok_{\E}(\Sigma_m).}}
\]
We have used that the scheme classifying subgroups behaves well under base change (Theorem 3.11 in \cite{subpdiv} is particularly relevant). \myref{locflat} implies that the exponent of the torsion cannot have changed. By faithful flatness (\myref{extension}), we may remove the $C_{1}^*$ without changing the exponent of the torsion. Thus we arrive at the short exact sequence
\begin{equation}\label{eq:ses}
\Prod{[\beta]} L_{1}^*(BC(\im \beta)) \lra{} \Prod{\lambda \vdash m}\Gamma \Sub_{\lambda \vdash m}(\G_{L_1} \oplus \QZ{n-1}) \lra{}  L_{1}^* \otimes_{\E^*} \Cok_{\E}(\Sigma_m).
\end{equation}
By the fact that the first and second terms are finitely generated and free, we may apply \myref{locflat} to base change \eqref{eq:ses} to $F_{1}^*$ without changing the exponent of $L_{1}^* \otimes_{\E^*} \Cok_{\E}(A)$. We may then further base change to $\bar{F}_{1}^*$ (which is faithfully flat over $F_{1}^*$ by \myref{faithful}) giving
\[
\Prod{[\beta]} \bar{F}_{1}^*(BC(\im \beta)) \lra{} \Prod{\lambda \vdash m}\Gamma \Sub_{\lambda \vdash m}(\G_{\bar{F}_1} \oplus \QZ{n-1}) \lra{}  \bar{F}_{1}^* \otimes_{\E^*} \Cok_{\E}(\Sigma_m).
\]
Now \myref{equivs} gives the short exact sequence
\[
\Prod{[\beta]}\bar{F}_{1}^* \otimes_{K_{p}^*} K_{p}^*(BC(\im \beta)) \lra{} \Prod{\lambda \vdash m}\Gamma \Sub_{\lambda \vdash m}(\G_{\bar{F}_1} \oplus \QZ{n-1}) \lra{}  \bar{F}_{1}^* \otimes_{\E^*} \Cok_{\E}(\Sigma_m).
\]
By the naturality of the equivalence, the first map in the sequence is induced by restriction maps. Let 
\[
M_{A,n} = \Cok(\Prod{[\beta]} K_{p}^*(BC(\im \beta)) \lra{} \Prod{\lambda \vdash m}\Gamma \Sub_{\lambda \vdash m}(\G_{m} \oplus \QZ{n-1})),
\]
then 
\[
\bar{F}_{1}^* \otimes_{K_{p}^*} M_{A,n} \cong \bar{F}_{1}^* \otimes_{\E^*} \Cok_{\E}(\Sigma_m).
\]
By \myref{extension}, it suffices to show that the torsion in $M_{A,n}$ is independent of $n$. The map
\[
\Prod{[\beta]} K_{p}^*(BC(\im \beta)) \lra{} \Prod{\lambda \vdash m}\Gamma \Sub_{\lambda \vdash m}(\G_{m} \oplus \QZ{n-1})
\]
is the ring of functions on the canonical map
\[
\Coprod{\lambda \vdash m} \Sub_{\lambda \vdash m}(\G_m \oplus \QZ{n-1}) \lra{f} \Spec(K_{p}^*(\Loops^{n-1} B\Sigma_m)).
\]
There is a commutative triangle
\[
\xymatrix{\Coprod{\lambda \vdash m} \Sub_{\lambda \vdash m}(\G_m \oplus \QZ{n-1}) \ar[r] \ar[dr] & \Spec(K_{p}^*(\Loops^{n-1} B\Sigma_m)) \ar[d] \\ & \Sum_{\leq m}(\QZ{n-1}),}
\]
where 
\[
\Sum_{\leq m}(\QZ{n-1}) = \{\oplus H_i | H_i \subset \QZ{n-1} \text{ and } \sum_i|H_i| \leq m\}
\]
is the set of formal sums of subgroups in $\QZ{n-1}$ with order $\leq m$. For each element $\alpha \in \Sum_{\leq m}(\QZ{n-1})$, the map that lives over $\alpha$ is a rational isomorphism (and an injection after applying $\Gamma(-)$). Thus it suffices to prove that, regardless of the choice of $n$, there are only finitely many different maps that can appear as fibers. This is essentially a consequence of the fact that there are only finitely many groups that are centralizers of $\Sigma_m$. 

To make this precise, note that there is an action of $\Aut(\QZ{n-1})$ on the triangle. This action must send the map over $\alpha \in \Sum_{\leq m}(\QZ{n-1})$ to a map with the same exponent of the torsion in the cokernel. The action of $\Aut(\QZ{n-1})$ is compatible with the inclusion of triangles induced by the inclusion of $\QZ{n-1} \lra{} \QZ{n}$ as the first $n-1$ summands. For $n$ large enough any sum of subgroups of size $\leq m$ in $\QZ{n}$ is isomorphic to a sum of subgroups in $\QZ{n-1}$ under this action, and this finishes the proof.
\end{proof}

It is worth noting that this could probably be factored further using Strickland's schemes called $\text{Type}$ and Theorem 12.4 in \cite{subgroups}.

Now we turn our attention to an analogous theorem for level structures. The theory of level structures for a formal group is developed in \cite{subgroups}. We make use of it now because there is a close relationship between $\E^*(BH)/I$ and $\Gamma \Level(H^*, \G_{\E})$, where $H^*$ is the Pontryagin dual of $H$. In \cite{ahs}, it is proved that the image of $\E^*(BH)/I$ in $\Q \otimes \E^*(BH)/I$ is canonically isomorphic to $\Gamma \Level(H^*, \G_{\E})$.

\begin{theorem} \mylabel{leveliso}
For $A$ a finite abelian group, the cokernel of the map
\[
\E^*(BA) \lra{} \Prod{H \subseteq A} \Gamma \Level(H^*, \G_{\E}) 
\]
is torsion with exponent bounded independent of the height $n$.
\end{theorem}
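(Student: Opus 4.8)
The plan is to run the argument of \myref{symbound} \emph{mutatis mutandis}, with Strickland's subgroup schemes $\Sub_m(\G_{\E})$ replaced throughout by the level structure schemes $\Level(H^*,\G_{\E})$; the abelian case is in fact a cleaner instance of the same template, with no fixed points to keep track of. Two structural inputs make the template run. First, the displayed map is a rational isomorphism of finitely generated free $\E^*$-modules: the source $\E^*(BA)$ is free by Weierstrass preparation, the target $\Prod{H\subseteq A}\Gamma\Level(H^*,\G_{\E})$ is free by the appendix to this paper, and the map is a rational isomorphism by HKR character theory together with the rational decomposition $\Hom(A^*,\G_{\E})\cong_{\Q}\Coprod{H\subseteq A}\Level(H^*,\G_{\E})$ of \cite{subgroups}. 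Since a rational isomorphism out of a torsion-free module is injective, this gives a short exact sequence
\[
0 \lra{} \E^*(BA) \lra{} \Prod{H\subseteq A}\Gamma\Level(H^*,\G_{\E}) \lra{} \Cok_{\E}(A) \lra{} 0
\]
with $\Cok_{\E}(A)$ torsion and middle term free. Second, under transchromatic base change to height $1$ the target is again a product of rings of functions on level structure schemes, now in $\G_{C_1}\oplus\QZ{n-1}$, compatibly with restriction and transfer maps; this is the base change statement for $\Level$ that plays the role of \cite[Thm.~3.11]{subpdiv} for $\Sub_m$, and is again supplied by the appendix.

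Granting these, I would base change the short exact sequence through the chain of $\E^*$-algebras of Section~\ref{sec:4.3}, exactly as in \myref{symbound}. Base changing to the flat algebra $C_1^*$ and applying \eqref{ht1} (using that, $A$ being abelian, $\Loops^{n-1}BA$ is a disjoint union of copies of $BA$) yields a short exact sequence whose first term is $\Prod{\al\in\Hom(\Z_p^{n-1},A)}C_1^*\otimes_{L_1^*}L_1^*(BA)$ and whose second term is $\Prod{H\subseteq A}\Gamma\Level(H^*,\G_{C_1}\oplus\QZ{n-1})$; by \myref{locflat} the exponent of the torsion cokernel is unchanged, and by faithful flatness (\myref{extension}) we may strip off $C_1^*$ to land over $L_1^*$. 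Since the first two terms are finitely generated and free, \myref{locflat} lets us base change to $F_1^*$ without changing the exponent, \myref{faithful} lets us pass to the faithfully flat $\bar{F}_1^*$, and \myref{equivs} rewrites the whole sequence as $\bar{F}_1^*\otimes_{K_p^*}(-)$ applied to a sequence built only from $K_p$-cohomology and level structures in $\G_{K_p}\oplus\QZ{n-1}$. By \myref{extension} it then suffices to bound, independently of $n$, the torsion in the cokernel $M_{A,n}$ of this height-$1$ map.

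For the final stabilization I would argue as in the last part of \myref{symbound}: the height-$1$ map is the ring of functions on a morphism lying over the finite set of formal sums $\bigoplus_i H_i$ of subgroups $H_i\subseteq\QZ{n-1}$ with $\sum_i|H_i|\le|A|$, the fibre over each point is a rational isomorphism (hence injective on global sections) of finitely generated free modules, and $\Aut(\QZ{n-1})$ acts on this picture compatibly with the inclusions $\QZ{n-1}\hookrightarrow\QZ{n}$; for $n$ large every point over $\QZ{n}$ lies in the orbit of one coming from $\QZ{n-1}$, so only finitely many distinct maps occur, each with finitely generated --- hence bounded torsion --- cokernel, and the bound is independent of $n$. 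I expect the genuinely hard part to be the second structural input above: unlike $\Sub_m(\G_{\E})$, whose freeness and base change are Strickland's theorems in \cite{subgroups,subpdiv}, the corresponding facts for $\Gamma\Level(H^*,\G_{\E})$ are not formal, and isolating and proving them is precisely the purpose of Hahn's appendix.
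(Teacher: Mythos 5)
Your proposal is correct and is essentially the paper's own proof: the same chain of base changes through $C_1^*$, $L_1^*$, $F_1^*$, $\bar{F}_1^*$ down to $K_p$-cohomology via \myref{locflat}, \myref{extension}, \myref{faithful} and \myref{equivs}, followed by the same $\Aut(\QZ{n-1})$-equivariant stabilization argument bounding the torsion of $M_{A,n}$ independently of $n$. Two small corrections to your framing: the freeness and base-change properties of $\Gamma\Level(H^*,\G_{\E})$ are supplied by Drinfeld and Strickland (and \cite{ahs}), not by Hahn's appendix --- the appendix proves \myref{transfertorsion} about the torsion in $\E^*(BH)/I$, which enters only afterwards in \myref{cor:bpabdcomp} --- and in the abelian case the height-one triangle sits over the set $\Hom(A^*,\QZ{n-1})\cong\Hom(\Z_p^{n-1},A)$ rather than over formal sums of subgroups, though this changes nothing in the stabilization argument.
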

\begin{proof}
Since the argument is similar to the proof of \myref{symbound}, we only point out the differences. We can again reduce to showing that the torsion in 
\[
M_{A,n} = \Cok(\Prod{\beta} K_{p}^*(BA) \lra{} \Prod{H \subseteq A} \Gamma \Level(H^*, \G_{m} \oplus \QZ{n-1}))
\]
has exponent independent of $n$. To this end, note that the map
\[
\Prod{\beta} K_{p}^*(BA) \lra{} \Prod{H \subseteq A} \Gamma \Level(H^*, \G_{m} \oplus \QZ{n-1})
\]
is obtained by taking global sections on the canonical map
\[
\Coprod{H \subset A}\Level(H^*, \G_{m} \oplus \QZ{n-1}) \lra{f} \Hom(A^*,\G_{m} \oplus \QZ{n-1}).
\]
It is a product of ring maps since we have the following commutative triangle
\[
\xymatrix{\Coprod{H \subset A}\Level(H^*, \G_{m} \oplus \QZ{n-1}) \ar[r] \ar[dr] & \Hom(A^*,\G_{m} \oplus \QZ{n-1}) \ar[d] \\ & \Hom(A^*, \QZ{n-1})}
\]
and $\Hom(A^*, \QZ{n-1}) \cong \Hom(\Z_{p}^{n-1},A)$ is a set and thus disconnects the horizontal map. The diagonal arrow sends a level structure $H^* \hookrightarrow \G_{m} \oplus \QZ{n-1}$ to the composite
\[
A^* \twoheadrightarrow H^* \hookrightarrow \G_{m} \oplus \QZ{n-1} \twoheadrightarrow \QZ{n-1}.
\]
Let $f_{\beta}$ be the fiber over $\beta$. It is a map of schemes. It is clear that there is an action of $\Aut(\QZ{n-1})$ on the triangle above. This implies that the fibers over maps $\beta, \beta' \in \Hom(\Z_{p}^{n-1},A)$ with $\im \beta = \im \beta'$ are isomorphic. Thus the exponent of the cokernel of $\Gamma(f_{\beta})$ equals the exponent of the cokernel of $\Gamma(f_{\beta'})$. Adding another $\QZ{}$ to the constant \'etale part does not affect this in the sense that the induced inclusion
\[
\Hom(A^*, \QZ{n-1}) \lra{} \Hom(A^*, \QZ{n})
\]
lifts to an inclusion of triangles. As $n$ increases above the number of generators of $A$, $e(M_{A,n})$ remains constant. The fibers over all of the maps in $\Hom(A^*, \QZ{n})$ are isomorphic to the fibers over maps that were already there in $\Hom(A^*, \QZ{n-1})$.  Thus the exponent of the torsion in $M_{A,n}$ is bounded independent of $n$.
\end{proof}

The proof of the following result is given in the appendix, due to Jeremy Hahn.

\begin{proposition}\mylabel{transfertorsion}
Let $H$ be a finite abelian group. The exponent of the torsion in 
\[
\E^*(BH)/I
\]
is bounded independent of the height $n$.
\end{proposition}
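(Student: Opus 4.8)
Following the strategy of \myref{symbound} and \myref{leveliso}, the plan is to reduce the statement by transchromatic descent to a height-one computation that is visibly independent of $n$. By \cite{ahs}, the ring $\Gamma\Level(H^*,\G_{\E})$ is the image of $\E^*(BH)/I$ in its rationalization, so the torsion submodule $T_H$ of $\E^*(BH)/I$ is exactly the kernel of the canonical surjection $\E^*(BH)/I\twoheadrightarrow\Gamma\Level(H^*,\G_{\E})$. Since $\E^*(BH)$ is a free $\E^*$-module of finite rank --- an iterated completed tensor product over $\E^*$ of the cyclic rings $\E^*\llbracket x\rrbracket/[p^k](x)$ --- and $\Gamma\Level(H^*,\G_{\E})$ is finitely generated free over $\E^*$ by \cite{subgroups}, the module $T_H$ is a finitely generated torsion $\E^*$-module, so $e(\E^*(BH)/I)=e(T_H)$ is finite for each $n$; what must be shown is that it does not grow with $n$.

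First I would base change the short exact sequence $0\to T_H\to\E^*(BH)/I\to\Gamma\Level(H^*,\G_{\E})\to 0$ along $\E^*\to C_1^*$, then --- after removing $C_1^*$ using its faithful flatness over $L_1^*$ --- along $L_1^*\to F_1^*\to\bar{F}_1^*$, and finally rewrite the outcome via \myref{equivs} as a base change from $K_p^*$; as in the proofs of \myref{symbound} and \myref{leveliso}, \myref{locflat} and \myref{extension} guarantee that each step leaves the torsion exponent unchanged. The first step uses the transchromatic character isomorphism of \cite{tgcm,bscentralizers} recalled in \S\ref{sec:4.3} together with the splitting $\Loops^{n-1}BH\simeq\coprod_{\beta\in\Hom(\Z_p^{n-1},H)}BH$, which holds because $H$ is abelian so that every centralizer equals $H$. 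Tracking the transfer ideal by means of the transfer formulas of \cite{hkr,subpdiv}, and using that for abelian $H$ a homomorphism $\beta\colon\Z_p^{n-1}\to H$ lifts, necessarily uniquely, into a subgroup $A'\subseteq H$ exactly when $\im\beta\subseteq A'$, one identifies $C_1^*\otimes_{\E^*}\E^*(BH)/I$ with $\prod_\beta C_1^*\otimes_{L_1^*}L_1^*(BH)/I_\beta$, where $I_\beta\subseteq L_1^*(BH)$ is the ideal generated by the transfers from those proper subgroups of $H$ which contain $\im\beta$ (so $I_\beta=0$ when $\beta$ is surjective). Carrying this all the way down yields $e(T_H)=e\big(\prod_{\beta\in\Hom(\Z_p^{n-1},H)}K_p^*(BH)/I_\beta\big)$. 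Now $K_p^*(BH)/I_\beta$ depends only on the subgroup $\im\beta\subseteq H$, which ranges over a subset of the finite, $n$-independent set of all subgroups of $H$, and each $K_p^*(BH)/I_A$ is a finitely generated $\Z_p$-module and hence of finite torsion exponent; consequently the torsion of the product is the product of the torsions, and $e(T_H)\le\max_{A\subseteq H}e(K_p^*(BH)/I_A)$ --- a bound independent of $n$. (This is the analogue of, and simpler than, the stabilization step in \myref{symbound} and \myref{leveliso}, since here the target group $H$ does not vary with $n$.)

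I expect the main obstacle to be the transfer-ideal bookkeeping: rigorously establishing the identification $C_1^*\otimes_{\E^*}\E^*(BH)/I\cong\prod_\beta C_1^*\otimes_{L_1^*}L_1^*(BH)/I_\beta$, and its analogues over $F_1$ and $\bar{F}_1$, which combines the naturality of the character map with the explicit transfer formulas of \cite{hkr,subpdiv} and requires checking that a transfer from a proper subgroup $A'$ contributes a transfer in the $\beta$-component precisely when $\im\beta\subseteq A'$ and vanishes otherwise. A secondary point is that $\E^*(BH)/I$ is presented as the cokernel of a map of free modules which need not be injective, so \myref{locflat} does not apply to it verbatim; this can be handled either by passing to a finite free resolution of $T_H$ over the regular ring $\E^*$, or by comparing with the injective map $\E^*(BH)\hookrightarrow\prod_{A'\subseteq H}\Gamma\Level((A')^*,\G_{\E})$ of finitely generated free modules already analysed in \myref{leveliso}, to which \myref{locflat} does apply. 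An alternative route avoiding character theory altogether would be an induction on $|H|$ via a Rezk-type pullback square generalizing \cite[Prop.~10.5]{rezkcongruence} from $\Sigma_p$ to finite abelian $p$-groups, with cyclic $H$ (torsion-free by Weierstrass preparation) as the base case; constructing and controlling such a square, however, appears at least as delicate as the transfer-formula analysis above.
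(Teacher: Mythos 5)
There is a genuine gap, and it is exactly the point you dismiss as "secondary." Your whole reduction runs through the base change $\E^*\to C_1^*$ (and, after removing the faithfully flat overrings, effectively $\E^*\to L_1^*\to F_1^*$), which is flat but \emph{not} faithfully flat: by \myref{extension} this can only give $e(T\otimes_{\E^*}M)\le e(M)$. So bounding the torsion exponent of the height-one object $\prod_\beta K_p^*(BH)/I_\beta$ independently of $n$ bounds only the torsion of the \emph{localized} module, not of $\E^*(BH)/I$ itself. In \myref{symbound} and \myref{leveliso} this loss is ruled out by \myref{free}/\myref{locflat}, whose proofs use the fracture square \myref{pb} and require the module in question to be $N/\im f$ for an \emph{injective} map $f$ of finitely generated free modules; that hypothesis is what forces the torsion to survive inverting $u_1$ and $p$-completing. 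The module $\E^*(BH)/I$ has no such presentation: the natural presentation is the sum-of-transfers map $\bigoplus_{A'\subsetneq H}\E^*(BA')\to\E^*(BH)$, which is not injective, and what you would actually need is the unproved structural statement that the $p$-power torsion $T_H\subset\E^*(BH)/I$ maps injectively into $F_1^*\otimes_{\E^*}(\E^*(BH)/I)$, i.e.\ that no torsion class is killed by inverting the $u_i$. For a general finitely generated module over $\E^*$ this fails (e.g.\ $\E^*/(p,u_1,\dots,u_{n-1})$ dies after such a base change), so it cannot be waved through. Neither of your proposed fixes supplies it: a finite free resolution of $T_H$ says nothing about whether $T_H$ survives the localization, and the injective map of \myref{leveliso} controls the \emph{cokernel} of $\E^*(BH)\to\prod_{A'}\Gamma\Level((A')^*,\G_{\E})$, which has no formal relation to the \emph{kernel} $T_H$ of $\E^*(BH)/I\twoheadrightarrow\Gamma\Level(H^*,\G_{\E})$ --- indeed, if \myref{transfertorsion} followed formally from \myref{leveliso}, the paper would not need a separate argument for it. By contrast, the transfer-ideal bookkeeping you flag as the main obstacle is fine and is essentially contained in the cited results of \cite{subpdiv}.

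For comparison, the paper's actual proof (the appendix, by Hahn) avoids character theory entirely: writing $H$ as a product of cyclic groups, it inducts on the number of factors using Drinfeld's presentation $R_n/T_n\cong(R'_n/T'_n)[z]/g(z)$ of the ring of level structures, together with the lemma that $\phi(\chi)$ divides $p$ in $R_n$ for every nonzero character $\chi$; multiplying a lift $\tilde g(z)$ by $\prod_\chi(z-\phi'(\chi))$ lands in $T'_n$, which is killed by $p^{k'}$ by induction, yielding the explicit height-independent exponent. If you want to salvage your transchromatic route, the missing ingredient you must prove is precisely that the integer torsion of $\E^*(BH)/I$ is detected after base change to $F_1^*$; that statement is not in the paper and appears to be of comparable difficulty to the proposition itself.
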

 
\begin{corollary}\mylabel{cor:bpabdcomp}
If $A$ is a finite abelian group, then the map
\[
\Prod{n} \E^*(BA) \lra{} \Prod{n} \Prod{H \subseteq A} \E^*(BH)/I
\]
is a rational isomorphism.
\end{corollary}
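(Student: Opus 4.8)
The plan is to deduce this from \myref{leveliso} and \myref{transfertorsion} together with the description of level structures from \cite{ahs}. By \cite{ahs}, for each subgroup $H \subseteq A$ there is a natural surjection $q_H\colon \E^*(BH)/I \twoheadrightarrow \Gamma\Level(H^*,\G_{\E})$ which identifies the target with the image of $\E^*(BH)/I$ in its rationalization; since $\E^*(BH)$ is a finitely generated free $\E^*$-module, the kernel of $q_H$ is precisely the torsion submodule of $\E^*(BH)/I$. By \myref{transfertorsion} this torsion submodule is annihilated by an integer bounded independently of $n$, and as $A$ has only finitely many subgroups, the kernel $T_n$ of the product map $q_n = \prod_{H\subseteq A} q_H$ is annihilated by a single integer $C'$ that is independent of $n$.

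Next I would compare the two restriction maps $\phi_n\colon \E^*(BA) \to \prod_{H\subseteq A}\E^*(BH)/I$ and $\psi_n\colon \E^*(BA) \to \prod_{H\subseteq A}\Gamma\Level(H^*,\G_{\E})$, which satisfy $\psi_n = q_n\circ\phi_n$. The level-structure decomposition recalled in the introduction (see \cite{subgroups, hkr}) shows that $\psi_n$ is a rational isomorphism, and since $\E^*(BA)$ is torsion free this forces $\psi_n$, hence also $\phi_n$, to be injective. By \myref{leveliso} the cokernel of $\psi_n$ is annihilated by an integer $C$ independent of $n$. A short diagram chase now bounds $\Cok(\phi_n)$: given $y$ in the target of $\phi_n$, choose $a$ with $C\cdot q_n(y) = \psi_n(a) = q_n\phi_n(a)$, so that $q_n\bigl(Cy - \phi_n(a)\bigr) = 0$ and hence $Cy - \phi_n(a) \in T_n$; therefore $CC'y = \phi_n(C'a) \in \im\phi_n$, which shows $\Cok(\phi_n)$ is annihilated by $CC'$, a bound independent of $n$.

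Finally, since products are exact in the category of abelian groups, $\prod_n\phi_n$ is injective with cokernel $\prod_n\Cok(\phi_n)$, and this product is annihilated by $CC'$, hence torsion. Tensoring the associated short exact sequence with the flat $\Z$-module $\Q$ annihilates the cokernel, so $\Q\otimes\prod_n\phi_n$ is an isomorphism; this is exactly the asserted rational isomorphism.

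I do not expect a serious obstacle here, since the real work is carried out in \myref{leveliso}, the appendix \myref{transfertorsion}, and the methods behind \myref{symbound}. The two points that genuinely need care are that the torsion exponents above be \emph{uniform} in $n$ --- an infinite product of torsion groups need not be torsion, so a height-dependent bound would be useless when forming the product over $n$ --- and that the bound on $\Cok(\psi_n)$ be correctly transported across the surjection $q_n$ to yield the bound on $\Cok(\phi_n)$.
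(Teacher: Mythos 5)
Your proof is correct and uses exactly the same ingredients and factorization as the paper: the composite through $\prod_H\Gamma\Level(H^*,\G_{\E})$, with \myref{transfertorsion} controlling the kernel of the quotient maps and \myref{leveliso} controlling the cokernel of the composite. The paper simply phrases it as a two-out-of-three argument for rational isomorphisms after taking the product over $n$, whereas you carry out the equivalent height-wise exponent chase explicitly; the content is the same.
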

\begin{proof}
Consider the composite
\[
\Prod{n} \E^*(BA) \lra{} \Prod{n} \Prod{H \subseteq A} \E^*(BH)/I \lra{} \Prod{n}\Prod{H \subseteq A} \Gamma \Level(H^*, \G_{\E}).
\]
By \myref{transfertorsion} the second map is a rational isomorphism and by \myref{leveliso} the composite 
is a rational isomorphism. Thus the first map is a rational isomorphism.
\end{proof}

\subsection{$BP$-cohomology of abelian and symmetric groups}

We show that the rational decomposition of \myref{symbound} extends to a rational decomposition of $BP$-cohomology.

\begin{lemma}\mylabel{lem:bpretracttr}
Let $G$ be a finite group and let $H_1, \ldots, H_m \subset G$ be subgroups. Let $I$ be the ideal generated by the image of the transfer maps along the inclusions $H_i \subset G$. There is a split inclusion
\[\xymatrix{BP^*(BG)/I \ar[r] & \Prod{n>0}E_n^*(BG)/I}\]
which is compatible with the map of \myref{cor:bpretract}.
\end{lemma}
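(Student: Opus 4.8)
The plan is to deduce the lemma directly from the additive splitting of Corollary~\ref{cor:bpretract}, using only that transfer maps are natural with respect to arbitrary maps of spectra.

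First I would fix notation: let $\iota\colon BP \to \prod_{n>0}E_n$ be the split inclusion of Corollary~\ref{cor:bpretract} and $r\colon \prod_{n>0}E_n \to BP$ a chosen retraction, so that $r\circ\iota\simeq\id_{BP}$. For any spectrum $Z$, post-composition with $\iota$ and $r$ yields additive maps $\iota_{*}\colon BP^{*}(Z)\to\prod_{n>0}E_n^{*}(Z)$ and $r_{*}\colon\prod_{n>0}E_n^{*}(Z)\to BP^{*}(Z)$, natural in $Z$, with $r_{*}\iota_{*}=\id$; here I use that mapping out of $Z$ commutes with products, so $(\prod_{n>0}E_n)^{*}(Z)\cong\prod_{n>0}E_n^{*}(Z)$, and I apply this with $Z=\Sigma^{\infty}_{+}BG$. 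The key observation is that if $t_i\colon\Sigma^{\infty}_{+}BG\to\Sigma^{\infty}_{+}BH_i$ denotes the stable transfer, then $t_i^{*}$ is pre-composition with $t_i$ while $\iota_{*},r_{*}$ are post-composition with $\iota,r$, so these commute: $\iota_{*}t_i^{*}=t_i^{*}\iota_{*}$ and $r_{*}t_i^{*}=t_i^{*}r_{*}$, where on the right $t_i^{*}$ is the transfer $\prod_{n}E_n^{*}(BH_i)\to\prod_n E_n^{*}(BG)$, itself the product over $n$ of the individual transfers.

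Next, I would invoke Frobenius reciprocity. For a homotopy-commutative ring spectrum $E$ and $H_i\subset G$, the projection formula $t_i^{*}(z)\cdot y=t_i^{*}\bigl(z\cdot\mathrm{res}_i^{*}(y)\bigr)$ shows that $\im(t_i^{*})$ is already an ideal, so the transfer ideal is precisely the subgroup $I=\sum_i\im(t_i^{*})$; this holds in $BP^{*}(BG)$ and, computing componentwise, in $\prod_n E_n^{*}(BG)$ (whence also $I=\prod_n I_n$, so $\prod_{n>0}E_n^{*}(BG)/I\cong\prod_{n>0}\bigl(E_n^{*}(BG)/I\bigr)$ and there is no ambiguity in the statement). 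Combining this with the previous paragraph, both additive maps $\iota_{*}$ and $r_{*}$ send the transfer ideal into the transfer ideal: an element $\sum_i t_i^{*}(w_i)$ of $I$ downstairs is carried by $r_{*}$ to $\sum_i t_i^{*}(r_{*}w_i)\in I$, and symmetrically for $\iota_{*}$. Hence $\iota_{*}$ and $r_{*}$ descend to the quotients, giving $\bar\iota_{*}\colon BP^{*}(BG)/I\to\prod_{n>0}E_n^{*}(BG)/I$ and $\bar r_{*}$ in the other direction with $\bar r_{*}\bar\iota_{*}=\id$. This exhibits $\bar\iota_{*}$ as a split inclusion, compatible with Corollary~\ref{cor:bpretract} by construction.

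The only delicate point — rather than a genuine obstacle — is that $r$ is merely additive and not a map of ring spectra (cf.\ the remark after Theorem~\ref{thm:generalretract}), so one cannot argue that $r_{*}$ preserves ideals simply because it is a ring map; this is precisely why it pays to record $I$ as the sum of images of transfers, a description preserved by any additive map commuting with the $t_i^{*}$. Everything else is formal naturality.
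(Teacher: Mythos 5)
Your argument is correct and is essentially the paper's proof: both rest on the fact that the splitting of \myref{cor:bpretract} exists at the level of spectra, so post-composition with the inclusion and retraction commutes with pre-composition by the stable transfers, and hence both maps preserve the transfer ideal and descend to the quotients. You merely make explicit two points the paper leaves implicit, namely the projection formula showing $I$ is the sum of the images of the transfers and the identification $\prod_{n>0}E_n^*(BG)/I\cong\prod_{n>0}\bigl(E_n^*(BG)/I\bigr)$.
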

\begin{proof}
Let $H_1, \ldots H_m \subseteq G$ be subgroups and consider the commutative diagram
\[\xymatrix{\Oplus{i}BP^*(BH_i) \ar[r]^{\mathrm{tr}} \ar@<1ex>[d] & BP^*(BG) \ar[r] \ar@<1ex>[d] & BP^*(BG)/I \ar[r] \ar@<1ex>[d] & 0 \\
\prod_{n>0}\Oplus{i}E_n^*(BH_i) \ar[r]_{\mathrm{tr}} \ar@/^0.5pc/@{-->}[u] & \prod_{n>0}E_n^*(BG) \ar[r]  \ar@/^0.5pc/@{-->}[u] & \prod_{n>0}E_n^*(BG)/I \ar[r] \ar@/^0.5pc/@{..>}[u] & 0,}\]
where $I$ denotes the transfer ideal generated by $H_1, \ldots, H_m$. The dashed arrows are the retracts which exist by \myref{cor:bpretract}, and the transfers make the left square commute by naturality and the fact that the splitting exists on the level of spectra. Therefore, the maps between the quotients exist and the resulting retract is compatible with the rest of the diagram.
\end{proof}

Consequently, for any symmetric group $\Sigma_m$, we have a commutative diagram
\[\xymatrix{\Q \otimes BP^*(B\Sigma_m) \ar@{^{(}->}[r] \ar@<1ex>[d] & \Q \otimes \Prod{\lambda \vdash m} \Otimes{i} ((BP^*(B\Sigma_{\lambda_i})/I)^{\otimes a_i})^{\Sigma_{a_i}} \ar@<1ex>[d] \\
\Q \otimes \Prod{n>0}E_n^*(B\Sigma_m) \ar@{^{(}->}[r] \ar@{^{(}->}[d] \ar@/^0.5pc/@{-->}[u] & \Q \otimes \Prod{n>0}\Prod{\lambda \vdash m} \Otimes{i} ((\E^*(B\Sigma_{\lambda_i})/I)^{\otimes a_i})^{\Sigma_{a_i}} \ar[d] \ar@/^0.5pc/@{-->}[u] \\
\Prod{n>0}\Q \otimes E_n^*(B\Sigma_m) \ar[r]^-{\cong} & \Prod{n>0}\Q \otimes \Prod{\lambda \vdash m} \Otimes{i} ((\E^*(B\Sigma_{\lambda_i})/I)^{\otimes a_i})^{\Sigma_{a_i}}.}\]
In \myref{symbound}, we showed that the middle horizontal map is an isomorphism. Now it is an easy consequence that the top map is an isomorphism.

\begin{corollary} \mylabel{symcor}
There is a rational isomorphism
\[
BP^*(B\Sigma_{m}) \lra{} \Prod{\lambda \vdash m} (BP^*(B\Sigma_{\lambda})/I_{\lambda})^{\Sigma_{\underline{a}}}.
\]
\end{corollary}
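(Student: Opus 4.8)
The plan is to deduce the corollary from \myref{symbound} by combining the retract of \myref{cor:bpretract} with naturality of transfers, exactly as encoded in the commutative diagram preceding the statement. The key structural input is \myref{lem:bpretracttr}, which provides a split inclusion $BP^*(BG)/I \hookrightarrow \prod_{n>0} E_n^*(BG)/I$ compatible with the retract; applying this to each factor $B\Sigma_{\lambda_i}$ and passing to tensor powers and $\Sigma_{a_i}$-fixed points (fixed points are compatible with retracts, since a retract of a $\Sigma_{a_i}$-equivariant splitting is again equivariant) gives a split inclusion
\[
\Otimes{i} \bigl((BP^*(B\Sigma_{\lambda_i})/I)^{\otimes a_i}\bigr)^{\Sigma_{a_i}} \longrightarrow \Prod{n>0} \Otimes{i} \bigl((\E^*(B\Sigma_{\lambda_i})/I)^{\otimes a_i}\bigr)^{\Sigma_{a_i}},
\]
and likewise after taking the product over all partitions $\lambda \vdash m$.

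The steps are as follows. First, rationalize everything: since $\Q$ is flat over $\Z$, rationalizing preserves the split inclusions above and commutes with the finite tensor products and finite-group fixed points appearing on the right. Second, observe that rationalization commutes past the product $\prod_{n>0}$ only up to a map, so one must be slightly careful: the relevant commutative square is the bottom part of the displayed diagram, where the vertical map $\Q \otimes \prod_{n>0}(-) \to \prod_{n>0}\Q\otimes(-)$ is an injection (the kernel of rationalization is the torsion, and a product of torsion-free groups is torsion-free, so the map is injective on the relevant submodules). Third, invoke \myref{symbound}: the cokernel of
\[
\E^*(B\Sigma_m) \longrightarrow \Prod{\lambda \vdash m} \Gamma\Sub_{\lambda\vdash m}(\G_{\E})
\]
is torsion of exponent bounded independently of $n$, so the induced map $\prod_{n>0}\E^*(B\Sigma_m) \to \prod_{n>0}\prod_\lambda \Gamma\Sub_{\lambda\vdash m}(\G_{\E})$ has cokernel a product of torsion groups with common exponent, hence torsion, hence becomes an isomorphism after $\Q\otimes -$. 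This identifies the bottom horizontal map of the diagram as a rational isomorphism.

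Finally, chase the diagram. The middle horizontal map $\Q\otimes\prod_{n>0}E_n^*(B\Sigma_m) \to \Q\otimes\prod_{n>0}\prod_\lambda\Otimes{i}((\E^*(B\Sigma_{\lambda_i})/I)^{\otimes a_i})^{\Sigma_{a_i}}$ is an isomorphism (it sits over the bottom isomorphism via injective vertical maps, and both vertical maps have the same image-theoretic behavior by the bounded-torsion statement). The top horizontal map $\Q\otimes BP^*(B\Sigma_m) \to \Q\otimes\prod_\lambda(BP^*(B\Sigma_\lambda)/I_\lambda)^{\Sigma_{\underline a}}$ is a split inclusion by the discussion above, and the splitting is compatible with the middle isomorphism; since the middle map is an isomorphism and the square commutes, the top map is simultaneously a split injection and a split surjection, hence an isomorphism. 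The main obstacle — really the only subtle point — is bookkeeping the interchange of $\Q\otimes-$ with the infinite product $\prod_{n>0}$: one must check that the failure of this interchange to be an isomorphism is harmless here, which follows because the relevant cokernels are \emph{uniformly} bounded torsion (this is precisely what \myref{symbound} buys us, and why the height-independence of the exponent is essential rather than cosmetic).
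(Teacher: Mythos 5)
Your proposal is correct and follows essentially the same route as the paper: use \myref{symbound} to see that the uniformly bounded torsion cokernels make the rationalized map of products of Morava $E$-theories an isomorphism, and then use the retract of \myref{lem:bpretracttr} (extended through quotients, tensor powers, and fixed points) to exhibit the $BP$-statement as a retract of that isomorphism. One small bookkeeping slip: the argument you give via uniformly bounded exponents ($\Q\otimes\prod_n \mathrm{Cok}=0$) directly establishes the \emph{middle} horizontal map of the diagram as an isomorphism, which is all the paper needs, so the detour through the bottom row and the interchange of $\Q\otimes-$ with $\prod_{n>0}$ is unnecessary.
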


\begin{corollary}
There is a rational isomorphism
\[
BP^*(BA) \lra{} \Prod{H \subseteq A} BP^*(BH)/I.
\]
\end{corollary}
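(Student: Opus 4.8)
The plan is to deduce this corollary formally from \myref{cor:bpabdcomp}, exactly as the rational decomposition for symmetric groups in \myref{symcor} follows from \myref{symbound}: realize the map in question as a retract of the corresponding map for the product of Morava $E$-theories, and then use that $\Q\otimes-$ preserves retracts together with the fact that \myref{cor:bpabdcomp} makes the $E$-theoretic map a rational isomorphism.

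First I would assemble the retract. Applying \myref{lem:bpretracttr} to $G=H$, for each subgroup $H\subseteq A$, with the family of proper subgroups of $H$, produces a split inclusion $BP^*(BH)/I\lra{}\Prod{n>0}E_n^*(BH)/I$ compatible with the retract of \myref{cor:bpretract}, where $I$ is the transfer ideal. Taking the product over all $H\subseteq A$ and commuting the two products gives a split inclusion $\Prod{H\subseteq A}BP^*(BH)/I\lra{}\Prod{n>0}\Prod{H\subseteq A}E_n^*(BH)/I$, and together with the retract $BP^*(BA)\lra{}\Prod{n>0}E_n^*(BA)$ of \myref{cor:bpretract} this fits into a commutative square
\[\xymatrix{BP^*(BA)\ar[r]\ar@{^{(}->}[d] & \Prod{H\subseteq A}BP^*(BH)/I\ar@{^{(}->}[d]\\ \Prod{n>0}E_n^*(BA)\ar[r] & \Prod{n>0}\Prod{H\subseteq A}E_n^*(BH)/I,}\]
in which the two vertical maps are split with retractions that also commute with the horizontal maps; in other words, the top arrow is a retract of the bottom arrow in the arrow category of $BP^*$-modules. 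The compatibility with the transfer ideals is precisely what \myref{lem:bpretracttr} supplies, since the splitting of \myref{cor:bpretract} exists already on the level of spectra and transfers are natural.

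Finally I would apply $\Q\otimes_{\Z}-$. As $\Q$ is $\Z$-flat and the construction is additive, the rationalized square still exhibits the top map as a retract of the bottom map; by \myref{cor:bpabdcomp} the bottom map becomes an isomorphism after rationalization, and a retract of an isomorphism is an isomorphism (if $f=r'gi$ with $g$ invertible, $ri=\id$, $r'i'=\id$, then $f$ is injective because $gi$ is, and surjective because $f(r(z))=y$ whenever $g(z)=i'(y)$). Hence the top map is a rational isomorphism, which is the claim. I do not expect a genuine obstacle here: the argument is entirely formal once \myref{cor:bpabdcomp} and \myref{lem:bpretracttr} are available, and the only point requiring care — that the idempotent of \myref{cor:bpretract} descends compatibly to the transfer-ideal quotients over all subgroups of $A$ — has already been handled in \myref{lem:bpretracttr}.
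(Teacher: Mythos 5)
Your proof is correct and follows the paper's own route: the paper deduces this corollary "as in \myref{symcor}", i.e.\ exactly by exhibiting the rationalized $BP$-map as a retract (via \myref{lem:bpretracttr} and \myref{cor:bpretract}) of the rationalized $E$-theoretic map, which is an isomorphism by \myref{cor:bpabdcomp}. No gaps; the retract-of-an-isomorphism argument is the intended one.
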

\begin{proof}
The argument is similar to the proof of \myref{symcor}.
\end{proof}

\subsection{Artin induction for $BP$}
It is not hard to use the methods of the previous subsections to prove an Artin induction theorem for $BP$. We do this by transferring the result for $\E$ (due to \cite{hkr}) to $BP$. Stronger results than this have been known for some time (Theorem A in \cite{hkr} applies to all complex oriented theories).  

Let $\mathcal{A}(G)$ be the full subcategory of the orbit category of $G$ consisting of quotients of the form $G/A$, where $A$ is an abelian $p$-group.

\begin{proposition} \mylabel{artin}
For $G$ a good group, the canonical map
\[
BP^*(BG) \lra{} \lim_{A \in \mathcal{A(G)}}BP^*(BA)
\]
is a rational isomorphism.
\end{proposition}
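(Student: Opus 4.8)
I would deduce this from the retract theorem \myref{cor:bpretract} together with the method of \myref{symbound} and \myref{leveliso}. Since the idempotent splitting $BP\hookrightarrow\prod_{n>0}E_n$ is induced by a map of spectra, it is natural in the space variable, so exactly as in \myref{lem:bpretracttr} it induces a split inclusion
\[
\lim_{A\in\mathcal{A}(G)}BP^*(BA)\hookrightarrow\prod_{n>0}\;\lim_{A\in\mathcal{A}(G)}E_n^*(BA)
\]
compatible with the Artin restriction maps. Hence the map in the statement is a retract of $\prod_{n>0}\big(E_n^*(BG)\to\lim_{A\in\mathcal{A}(G)}E_n^*(BA)\big)$. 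A retract of an isomorphism is an isomorphism and $\Q\otimes(-)$ preserves retracts, so it suffices to prove that each height-$n$ Artin map is injective with cokernel $\Cok_{E_n}(G)$ torsion of exponent bounded independent of $n$: the product then has zero kernel and cokernel killed by this bound, hence is a rational isomorphism.

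\textbf{Injectivity.} As $G$ is good, $\E^*(BG)$ is even and (by \myref{lem:frometok} together with the usual pro-free--plus--finite-generation argument) free over $\E^*$, so the kernel is torsion free; and the HKR character map shows $\E^*(BG)\to\lim_{A}\E^*(BA)$ is a rational isomorphism (Theorem~A of \cite{hkr}, which applies to the complex orientable theory $\E$), so the kernel is also rationally zero, hence zero.

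\textbf{The cokernel bound.} Here I would run the argument of \myref{symbound}, starting from the short exact sequence
\[
0\longrightarrow \E^*(BG)\longrightarrow \lim_{A\in\mathcal{A}(G)}\E^*(BA)\longrightarrow \Cok_{\E}(G)\longrightarrow 0
\]
of finitely generated $\E^*$-modules. Base changing along the $K(1)$-local comparison maps $\E\to L_1\to F_1\to\bar F_1$ of Section~\ref{sec:4.3} and invoking \myref{Fequiv}, \myref{equivs}, \myref{faithful}, \myref{locflat} and \myref{extension} preserves the torsion exponent, just as in the proof of \myref{symbound}. Using the character isomorphism \eqref{ht1} one identifies $C_1^*\otimes_{\E^*}\E^*(BG)$ with $C_1^*\otimes_{L_1^*}L_1^*\big(\coprod_{[\beta]}BC_G(\im\beta)\big)$, where $[\beta]$ runs over conjugacy classes of homomorphisms $\Z_p^{n-1}\to G$, and decomposes the Artin map accordingly; the fibre over $[\beta]$ is the (relative) Artin map for the centralizer $C_G(\im\beta)$ restricted to abelian $p$-subgroups containing $\im\beta$. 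Passing down to $\bar F_1$ and rewriting $L_1$-cohomology in terms of $p$-adic $K$-theory via \myref{equivs}, the cokernel becomes $\bar F_1^*$-base changed from a finite product of cokernels of Artin maps in $K_p$-cohomology attached to the pairs $(C_G(\im\beta),\im\beta)$. Since $C_G(\im\beta)$ and $\im\beta$ are subgroups of the fixed finite group $G$, only finitely many isomorphism types of such pairs occur as $n$ and $\beta$ vary, and each contributes a $K_p^*$-module cokernel of fixed finite torsion exponent (for instance by classical Artin induction for representation rings); the maximum over these finitely many types is the required $n$-independent bound. The $\E$- and $BP$-statements then follow as above, the latter by the retract of \myref{cor:bpretract}, compare \myref{symcor}.

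\textbf{Main obstacle.} The hard part is the commutative-algebra bookkeeping in the last step: ensuring the modules to which \myref{free} and \myref{locflat} are applied are free where required --- automatic for $\E^*(BG)$ with $G$ good, and for the $K_p$-level modules since $K_p^*$ is a principal ideal domain in each even degree, but requiring the kind of analysis of $\lim_{A}\E^*(BA)$ over $\E^*$ that Strickland's freeness theorems supply in \myref{symbound} and \myref{leveliso} --- and matching, under transchromatic descent, the fibres of the Artin map with the relative Artin maps of the centralizer subgroups. This is exactly the role of the ``commutative triangle'' arguments of \myref{symbound} and \myref{leveliso}; once it is in place, the remainder is a routine adaptation.
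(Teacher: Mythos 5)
Your overall strategy is the same as the paper's: use the retract of \myref{cor:bpretract} to reduce to an $n$-independent torsion bound for the cokernel of the height-$n$ Artin map, get injectivity from goodness plus Theorem A of \cite{hkr}, and bound the cokernel by descending through $C_1$, $F_1$, $\bar F_1$ to $p$-adic $K$-theory and exploiting that $G$ has only finitely many centralizers and abelian subgroups. The one place where your argument has a genuine hole is exactly the point you defer to the ``main obstacle'': you propose to run the machinery of Sections \ref{sec:4.2}--\ref{sec:4.3} on the short exact sequence whose middle term is $\lim_{A\in\mathcal{A}(G)}\E^*(BA)$. The lemmas you invoke (\myref{pb}, \myref{free}, \myref{locflat}) are stated for maps of \emph{finitely generated free} modules, and the limit is only a finitely generated torsion-free $\E^*$-module; over the regular local ring $\E^0$ of dimension $n\ge 2$ that does not imply freeness, and no Strickland-type theorem supplies it. So as written the torsion-preservation steps of your descent are not justified, and hoping to ``analyze $\lim_A\E^*(BA)$'' is not how the difficulty gets resolved.

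The paper's fix is short but essential, and it is the idea missing from your proposal: since $\E^*(BG)\to\lim_A\E^*(BA)$ and $\lim_A\E^*(BA)\to\prod_{A\subset G}\E^*(BA)$ are injective, the cokernel $C_{\E}$ of the Artin map injects into the cokernel $D_{\E}$ of $\E^*(BG)\to\prod_{A\subset G}\E^*(BA)$, so it suffices to bound the torsion exponent of $D_{\E}$; and that map \emph{is} a map of finitely generated free $\E^*$-modules (using goodness of $G$ and freeness of $\E^*(BA)$), so the whole apparatus of \myref{locflat}, \myref{extension}, \myref{Fequiv}, \myref{equivs} applies verbatim. The subsequent bookkeeping at the $K_p$-level is then as in your sketch, except that the factor over a class $[\alpha]$ maps into a product of $K_p^*(BA)$'s in which subgroups can repeat and not every abelian subgroup of $C_G(\im\alpha)$ need occur; the repeats are disposed of by \myref{diagonal}, and the bound comes from finiteness of the set of possible maps together with bounded torsion of finitely generated modules over the Noetherian ring $K_p^*$, rather than from classical Artin induction for representation rings. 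With the limit-to-product comparison added, the rest of your outline matches the paper's proof.
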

\begin{proof}
Because $G$ is good there is no integral torsion in $BP^*(BG)$. This follows immediately from the injection
\[
BP^*(BG) \hookrightarrow \Prod{n}\E^*(BG).
\]
Character theory for good groups \cite{hkr} implies that the natural map
\[
\Prod{n}\E^*(BG) \lra{} \prod_{n>0}\lim_{A \in \mathcal{A(G)}}\E^*(BA)
\]
is injective and, since limits of split injections are split injections, the canonical map
\[
\lim_{A \in \mathcal{A(G)}}BP^*(BA) \lra{} \prod_{n>0}\lim_{A \in \mathcal{A(G)}}\E^*(BA)
\]
is injective. 

We are interested in the commutative diagram
\[
\xymatrix{ & BP^*(BG) \ar[r] \ar@<1ex>[d] & \lim_{A \in \mathcal{A(G)}}BP^*(BA) \ar[r] \ar@<1ex>[d] & C_{BP} \ar[r] \ar@<1ex>[d] & 0 \\
0 \ar[r] &\prod_{n>0}\E^*(BG) \ar[r]\ar@/^0.5pc/@{-->}[u] & \prod_{n>0}\lim_{A \in \mathcal{A(G)}}\E^*(BA) \ar[r]  \ar@/^0.5pc/@{-->}[u] & \Prod{n}C_{\E} \ar@/^0.5pc/@{..>}[u] \ar[r] & 0.}
\]

Since the left vertical arrow is injective and the second bottom horizontal arrow is injective, the natural map
\[
BP^*(BG) \lra{} \lim_{A \in \mathcal{A(G)}}BP^*(BA)
\]
is injective. This implies that the top sequence of maps is short exact. Thus it suffices to prove that $C_{BP}$ is torsion.

This implies that it suffices to show that $C_{\E}$ has exponent bounded independently of the height $n$. By Theorem A of \cite{hkr}, the abelian group $C_{\E}$ is torsion. 

Note the map of short exact sequences
\[
\xymatrix{\E^*(BG) \ar[r] \ar[d]^-{=} & \lim_{A \in \mathcal{A(G)}}\E^*(BA) \ar[r] \ar[d] & C_{\E} \ar[d] \\ \E^*(BG) \ar[r] & \Prod{A \subset G} \E^*(BA) \ar[r] & D_{\E},}
\]
where $D_{\E}$ is the cokernel. The first two vertical arrows are injections (the first is an equality), so the right vertical arrow is an injection. Thus it suffices to show that the exponent of the torsion in $D_{\E}$ is bounded independent of $n$.

To bound the torsion we apply character theory to height $1$. Since $C_{1}^*$ is a flat $\E^*$-module we get a short exact sequence
\[
C_{1}^* \otimes_{\E^*} \E^*(BG) \hookrightarrow \Prod{A \subset G} C_{1}^* \otimes_{\E^*}\E^*(BA) \lra{} C_{1}^* \otimes_{\E^*} D_{\E}.
\]
Now we may follow the argument in \myref{leveliso} to reduce to $p$-adic $K$-theory. This leads to considering the cokernel of the following injection
\[
\Prod{[\al]} K_{p}^*(BC(\im \al)) \hookrightarrow \Prod{A \subset G} \Prod{\beta} K_{p}^*(BA). 
\]
If we fix a conjugacy class $[\al \colon \Z_{p}^{n-1} \rightarrow G]$ then there is a map $K_{p}^*(BC(\im \al)) \lra{} K_{p}^*(BA)$ when the composite
\[
\Z_{p}^{n-1} \lra{\beta} A \subset G
\]
is conjugate to $\al$. In that case $A \subset C(\im \al)$. Let $i_H \colon A \subset G$ be the inclusion. The map out of the factor corresponding to $[\al]$ is
\[
K_{p}^*(BC(\im \al)) \lra{} \Prod{A \subset G} \Big ( \Prod{\{\beta | [i_H\beta] = [\al]\}} K_{p}^*(BA) \Big).
\] 
This map is an injection although it may not be the case that every abelian subgroup of $C(\im \al)$ is represented in the codomain. Abelian subgroups of $G$ that are conjugate to abelian subgroups of $C(\im \al)$ may appear as well. Also, subgroups may appear multiple times. However, \myref{diagonal} implies that repeat subgroups do not contribute to the exponent of the torsion in the cokernel. Finally, since there are only a finite number of abelian subgroups of $C(\im \al)$ (or abelian subgroups of $G$ that may be conjugated into $C(\im \al)$), the exponent of the torsion in the cokernel must be bounded by the maximal exponent of this finite number of options.
\end{proof}

This subsection is not just an exercise. Artin induction for $\E$ can be proved independently of Theorem A in \cite{hkr} by using character theory. The retract theorem now allows us to bootstrap to $BP$. It is not hard to imagine moving from there to any Landweber flat theory (or perhaps complex oriented). One might hope that this could provide an independent proof of Theorem A in \cite{hkr}, at least when $G$ is good and $X = *$. 


\appendix

\section{A proof of Proposition \ref{transfertorsion}, by Jeremy Hahn}

\acknowledgements{
I thank Danny Shi and Mike Hopkins for instilling me with enough practical knowledge of level structures to contribute to this project. \\}

We recall our notational conventions. Let $H$ be a finite abelian group, $n$ a positive integer, and $E_n$ a height $n$ Morava $E$-theory at the prime $p$.  We use $H^*$ to denote the Pontryagin dual group $\Hom(H,S^1)$.  In the ring $E_n^*(BH)$, $I$ will denote the ideal generated by the images of transfers from proper subgroups of $H$.  We let $R_n$ denote the quotient ring $E_n^*(BH)/I$. 

Algebro-geometrically, $E_n^*(BH)$ may be interpreted as global sections of the affine formal scheme $\Hom(H^*,\mathbb{G}_{E_n})$.  The closed subscheme with global sections $R_n=E_n^*(BH)/I$ is not as well-behaved as one might hope.  In particular, for almost no values of $H$ and $n$ is $R_n$ an integral domain.  Let $T_n$ denote the ideal of $p$-power torsion in $R_n$ (i.e., $x \in T_n$ if and only if $p^a x = 0$ for some positive integer $a$).  In \cite{ahs}, Ando, Hopkins, and Strickland identify the further quotient $R_n/T_n$ with an object of algebro-geometric significance: it is the global sections of the formal scheme of Drinfeld level structures $\text{Level}(H^*,\mathbb{G}_{E_n})$.  In \cite{drinfeld}, Drinfeld proved that $\Gamma \text{Level}(H^*,\mathbb{G}_{E_n})$ is a regular local ring, so $R_n/T_n$ is in fact very well-behaved.

The Ando-Hopkins-Strickland result allows one to identify $\mathbb{Q} \otimes E_n^*(BH)/I \text{ with } \mathbb{Q} \otimes \Gamma \text{Level}(H^*,\mathbb{G}_{E_n})$.  In this appendix, our goal will be to extend this to an isomorphism
$$\mathbb{Q} \otimes \prod_{n>0} E_n^*(BH)/I \cong \mathbb{Q} \otimes \prod_{n>0} \Gamma \text{Level}(H^*,\mathbb{G}_{E_n}).$$

The isomorphism follows immediately from the following proposition, whose proof occupies the remainder of the appendix. 

\begin{proposition} \label{appendixprop}
There is a positive integer $k$, depending on $H$ but \textbf{not} depending on $n$, such that $p^{k} T_n = 0 \subset R_n$. 
\end{proposition}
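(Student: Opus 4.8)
The plan is to reduce the statement to a uniform bound on the failure of the transfer ideal to be $p$-saturated, and then to obtain this bound by an induction on $|H|$ modelled on the cyclic computation of \myref{cyclicdecomp}. First one may assume $H$ is a finite abelian $p$-group: if a prime $\ell\neq p$ divides $|H|$, then $H$ has a subgroup of index $\ell$ along which the composite of transfer with restriction is multiplication by the unit $\ell\in E_n^*$, so $I=E_n^*(BH)$, $R_n=0$, and nothing needs to be proved. For $H$ a finite abelian $p$-group, $E_n^*(BH)$ is a free $E_n^*$-module by the K{\"u}nneth isomorphism and the cyclic case, hence torsion-free; restriction to any maximal subgroup $K$ is surjective (its kernel is generated by the coordinate of the universal homomorphism at the associated order-$p$ subgroup of $H^*$); and since transfers compose, $I$ is generated by the finitely many elements $\Tr_K(1)$, with $K$ running over the $(p^r-1)/(p-1)$ maximal subgroups of $H$, a number independent of $n$. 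Writing $I^{\mathrm{sat}}=\{x\in E_n^*(BH):p^a x\in I\text{ for some }a\}$ for the $p$-saturation, the torsion submodule of $R_n$ is $T_n=I^{\mathrm{sat}}/I$; by Ando--Hopkins--Strickland \cite{ahs} one has $E_n^*(BH)/I^{\mathrm{sat}}\cong\Gamma\Level(H^*,\G_{E_n})$, which is torsion-free (indeed regular local by Drinfeld \cite{drinfeld}). Thus $e(R_n)$, the exponent of $T_n$, is the least $k$ with $p^k I^{\mathrm{sat}}\subseteq I$, and it suffices to bound this $k$ independently of $n$.

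The core of the proof is then an induction on $|H|$. When $H$ is cyclic, $R_n\cong E_n^*\llbracket x\rrbracket/\langle p^k\rangle(x)$ is already free over $E_n^*$ by Weierstrass preparation, so $T_n=0$ and $k=0$ works. For the inductive step I would build up $\Gamma\Level(H^*,\G_{E_n})$ and $R_n$ simultaneously along a composition series of $H^*$, equivalently a chain of subgroups of $H$ of successive index $p$: by Drinfeld \cite{drinfeld}, each step up the level-structure tower adjoins a single coordinate subject to one division-polynomial relation which, on the relevant stratum, reduces to the $\langle p\rangle$-series as in \eqref{eq:pjseries}, while the corresponding step for the $\Hom$-scheme and transfer-ideal side adjoins the same coordinate subject to a $[p]$-series relation. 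The discrepancy between the two at each step is governed by precisely the situation analysed in \myref{cyclicdecomp}, whose proof shows that a single division by $p$ — realised by the height-independent element $-f(x)/p$ coming from $BP$-cohomology — suffices at that step, uniformly in $n$. Accumulating over the $\log_p|H|$ steps then gives $e(R_n)\le\log_p|H|$, independent of $n$, which completes the proof.

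The main difficulty is making this inductive step precise. The obstruction is that neither $E_n^*(BH)$ nor the transfer ideal $I$ decomposes along a maximal subgroup as cleanly as the level-structure ring does: a general finite abelian $p$-group need not split off a $\Z/p$ direct factor compatibly with a chosen maximal subgroup, and, more seriously, restriction to a maximal subgroup $K$ does not carry $I$ into the transfer ideal $I_K$ — indeed $\mathrm{res}_K\Tr_K(1)=p$, the norm over $H/K\cong\Z/p$, rather than a unit. The induction therefore has to be organised on the level-structure side, identifying $R_n$ and its analogue for the subquotient directly with the appropriate subquotients of the Drinfeld presentations and propagating the torsion layer by layer; the one point at which height-independence is genuinely used is exactly the input of \myref{cyclicdecomp}. (A variant would be to first prove, generalising the cyclic decomposition, that $E_n^*(BH)\hookrightarrow\prod_{H'\subseteq H}\Gamma\Level((H')^*,\G_{E_n})$ has cokernel of $n$-independent exponent and then project onto the top factor; the same obstruction — matching the transfer ideal to this product decomposition — reappears there.)
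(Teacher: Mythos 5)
Your overall skeleton --- reduce to finite abelian $p$-groups, identify $T_n$ as $I^{\mathrm{sat}}/I$ with $R_n/T_n\cong\Gamma\Level(H^*,\G_{E_n})$ via Ando--Hopkins--Strickland, and induct using Drinfeld's presentation of the level-structure ring --- is the same as the paper's (the appendix inducts on the number of cyclic factors and uses exactly the presentation $R_n/T_n\cong(R'_n/T'_n)[z]/g(z)$). But the step you explicitly leave imprecise is the entire content of the proposition, and the device you propose to fill it --- ``one division by $p$ per composition-series step, as in \myref{cyclicdecomp}'' --- does not do the job. What must be bounded independently of $n$ is the power of $p$ killing the image in $R_n$ of a lift $\tilde{g}(z)\in R'_n[z]$ of the Drinfeld relation, where $g(z)$ is $[p^{m_{\ell}}](z)$ divided by $\prod_{\chi}(z-\phi'(\chi))$ with $\chi$ ranging over \emph{all} characters of $H'$ of order at most $p^{m_{\ell}}$ --- not a single $\langle p\rangle$-series. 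The paper's key input here has no analogue in your sketch: for every nonzero $\chi\in H^*$ the class $\phi(\chi)$ divides $p$ in $R_n$, because $\langle p^k\rangle(\phi(\chi))$ is the image of $1$ under the transfer from an index-$p$ subgroup (hence zero in $R_n$) and is a power series in $\phi(\chi)$ with constant term $p$ (Lemma \ref{appendixdivis}). Granting this, $\tilde{g}(z)\prod_{\chi}(z-\phi'(\chi))$ has coefficients in $T'_n$, so multiplying by $p^{k'}$ and replacing each linear factor by $p$ kills the image of $\tilde{g}$, with exponent $k'$ plus the number of factors. Corollary \myref{cyclicdecomp} cannot substitute for this: it bounds the cokernel of the subgroup-decomposition map for \emph{cyclic} groups, where $R_n$ is torsion-free to begin with, and its height-independent element $-f(x)/p$ lives in the rationalization of $BP^*(B\Z/p^k)$; it says nothing about the discrepancy between the transfer quotient and the level-structure ring, which only arises in rank at least $2$ and is created precisely by the interaction of the transfer ideal with the product $\prod_{\chi}(z-\phi'(\chi))$.

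Two further problems. Your asserted bound $e(R_n)\le\log_p|H|$ is not proved by anything you write and is not what this kind of induction yields: the number of divisions by $p$ needed when adjoining the last cyclic factor grows with the number of characters of $H'$ of order at most $p^{m_{\ell}}$, and already for $H=(\Z/2)^3$ the paper's Example \ref{appendixexample} produces the exponent $16=p^4$, exceeding $p^{\log_2|H|}=8$; a sharper bound would require a genuinely new argument, not a citation of the cyclic case. Finally, the level-structure tower behaves as you describe only when $n$ is at least the $p$-rank of $H$ (otherwise $\Level(H^*,\G_{E_n})$ is empty and $R_n$ is entirely torsion); the finitely many small heights must be handled separately, e.g.\ by Noetherianness of $E_n^*(BH)$ for each fixed $n$, a point your write-up omits.
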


In other words, we claim the exponent of the $p$-power torsion in $E_n^*(BH)/I_n$ may be bounded independently of the height $n$.

We can make two immediate reductions.  First, note that $H$ may be written as a direct sum of a $p$-group and a group of order prime to $p$, the prime to $p$ part having no contribution to $E$-theory.  Furthermore, since $E_n^*(BH)$ is Noetherian, the exponent of the $p$-power torsion in $R_n$ is bounded for any fixed $n$.  In light of these facts, for the remainder of the appendix we assume that $H$ is a $p$-group and study $E_n^*(BH)$ for $n$ larger than the $p$-rank of $H$.

Since $E_n^*(BH) \cong \Gamma\Hom(H^*,\mathbb{G}_{E_n})$, there is a natural inclusion of $H^*$ into $E_n^*(BH)$.  This is the map that takes a character $\chi:H \rightarrow S^1$ to the first Chern class of the induced map $BH \rightarrow BS^1$.  Composing with the projection $E_n^*(BH) \rightarrow R_n$, we obtain a map $\phi:H^* \rightarrow R_n$.

\begin{lemma} \label{appendixdivis}
For each non-zero $\chi \in H^*$, $\phi(\chi)$ divides $p$ in $R_n$.
\end{lemma}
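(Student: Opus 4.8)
The plan is to reduce the claim to a divisibility statement in the Morava $E$-cohomology of a cyclic group, where it follows from the explicit presentation recalled in Section~4.1 together with naturality of Chern classes. Fix a non-zero character $\chi\in H^*$ and let $p^{j}$ (with $j\geq 1$) be its order, so that $\chi$ factors as a surjection $\bar\chi\colon H\twoheadrightarrow \Z/p^{j}$ followed by the standard inclusion $\Z/p^{j}\cong\mu_{p^{j}}\hookrightarrow S^1$. Set $L=\bar\chi^{-1}(\Z/p^{j-1})\subseteq H$, the preimage of the index-$p$ subgroup; since $\bar\chi$ is surjective, $L$ is a proper subgroup of $H$, of index $p$.

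The first step is to observe that the square of classifying spaces with horizontal maps the evident $p$-fold coverings and vertical maps induced by $\bar\chi$,
\[
\xymatrix{BL \ar[r]\ar[d] & BH \ar[d]^{B\bar\chi}\\ B\Z/p^{j-1}\ar[r] & B\Z/p^{j},}
\]
is a homotopy pullback: the pullback of the covering $B\Z/p^{j-1}\to B\Z/p^{j}$ along $B\bar\chi$ is the covering of $BH$ classified by $\bar\chi^{-1}(\Z/p^{j-1})=L$. Applying the base-change (Mackey double-coset) formula for transfers to this square --- the same type of argument already used in Section~4.1 for $\Sigma_p$ --- shows that $(B\bar\chi)^*$ carries the transfer ideal $J\subseteq E_n^*(B\Z/p^{j})$ generated by the image of the transfer from $\Z/p^{j-1}$ into the ideal of $E_n^*(BH)$ generated by the image of the transfer from the proper subgroup $L$, hence into $I$. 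Consequently $(B\bar\chi)^*$ descends to a ring homomorphism
\[
\psi\colon E_n^*(B\Z/p^{j})/J \longrightarrow E_n^*(BH)/I = R_n,
\]
and by naturality of the first Chern class $\psi$ sends the canonical generator $x\in E_n^*(B\Z/p^{j})$ to $\phi(\chi)$.

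It then remains to show that $x$ divides $p$ in $E_n^*(B\Z/p^{j})/J$; applying $\psi$ gives $\phi(\chi)\mid\psi(p)=p$ in $R_n$, as desired. For this I would invoke the identification $E_n^*(B\Z/p^{j})/J\cong E_n^*\powser{x}/\langle p^{j}\rangle(x)$ (the $E_n$-analogue of the $BP$-presentation recalled in Section~4.1, ultimately a consequence of Quillen's computation) together with the formula $\langle p^{j}\rangle(x)=p+f([p^{j-1}](x))$, where $\langle p\rangle(x)=p+f(x)$ with $x\mid f(x)$: since $x\mid[p^{j-1}](x)$ and $f(y)$ is divisible by $y$, we get $x\mid f([p^{j-1}](x))$ in $E_n^*\powser{x}$, and $\langle p^{j}\rangle(x)=0$ in the quotient then forces $x\mid p$ there. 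The only genuinely non-formal ingredient is the transfer base-change step of the second paragraph --- identifying $BL$ as the relevant homotopy pullback and thereby landing the cyclic transfer ideal inside $I$ --- and that is the point where I would concentrate the care; everything else is a combination of the cyclic-group computation already present in the paper with the functoriality of Chern classes.
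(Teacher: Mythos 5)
Your proposal is correct and is essentially the paper's own argument in different packaging: the subgroup $L=\bar\chi^{-1}(\Z/p^{j-1})$ is exactly the kernel of $p^{j-1}\chi$ used in the paper, and both proofs combine compatibility of the transfer with the pullback of coverings along $B\bar\chi$ with the identity $\langle p^{j}\rangle(x)=p+f([p^{j-1}](x))$. The only cosmetic difference is that you factor the divisibility through the ring map $E_n^*(B\Z/p^{j})/J\to R_n$ and Quillen's presentation of the cyclic quotient, whereas the paper directly evaluates the image of $1$ under the transfer from that index-$p$ subgroup as $\langle p^{j}\rangle(\phi(\chi))$ and reads off the same conclusion.
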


\begin{proof}
Suppose $\chi:H \rightarrow S^1$ has kernel $A \subset H$.  The quotient $H/A$ is isomorphic to a non-trivial subgroup of $S^1$. Assume that $H/A$ is cyclic of order $p^k$.  It follows that the kernel of $p^{k-1}\chi$ is an index $p$ subgroup $A'$ of $H$ containing $A$. The composite of the transfer and projection 
\[
E_n^*(BA') \rightarrow E_n^*(BH) \rightarrow R_n
\]
then sends $1$ to 
\[
\langle p^k \rangle (\phi(\chi)) = \frac{[p^{k}](\phi(\chi))}{[p^{k-1}](\phi(\chi))},
\]
by the corresponding formula for the transfer $e \subset \mathbb{Z}/p$ and naturality of transfer.
In particular, $\langle p^k \rangle (\phi(\chi))=0$ in $R_n$.
But $\langle p^k \rangle (\phi(\chi))$ is a power series in $\phi(\chi)$ with constant term $p$, so it follows that $\phi(\chi)$ divides $p$.
\end{proof}

Suppose now that
$$H \cong \mathbb{Z}/p^{m_1} \mathbb{Z} \times \mathbb{Z}/p^{m_2} \mathbb{Z} \times ... \times \mathbb{Z}/p^{m_{\ell}} \mathbb{Z},$$
where $m_1 \ge m_2 \ge ... \ge m_{\ell}$.  We proceed to prove Proposition \ref{appendixprop} by induction on $\ell$.

If $\ell=1$, then $H \cong \mathbb{Z}/p^{m_1} \mathbb{Z}$.  The case is covered in Section 4.1 of this paper, but see also \cite[Example 9.22]{ahs}.

For $\ell>1$, consider $$H' \cong \mathbb{Z}/p^{m_1} \mathbb{Z} \times \mathbb{Z}/p^{m_2} \mathbb{Z} \times ... \times \mathbb{Z}/p^{m_{\ell-1}} \mathbb{Z}$$ equipped with the natural inclusion $H' \subset H$.  Let $R'_n$ denote $E_n^*(BH')$ modulo its ideal of transfers, and let $T_n'$ denote the ideal of $p$-power torsion in $R'_n$.  Finally, we use the notation $\phi':(H')^* \rightarrow R'_n$ to denote the obvious analogue of $\phi$.

The power series ring $(R'_n)[[z]]$ is an $E^*_n$-algebra, so one may consider the element $[p^{m_{\ell}}](z)$, which we denote by $q(z)$.  We use $\bar{q}(z)$ to denote projection of $q(z)$ to $(R'_n/T'_n)[[z]]$.  For each $\chi \in (H')^*$ of order at most $p^{m_{\ell}}$, one has that $[p^{m_{\ell}}](\phi'(\chi))=0$.  Since $R'_n/T'_n$ is a regular local ring, and since $n$ is larger than the rank of $H$, it follows that $\bar{q}(z)$ is divisible by $$\prod_{\chi} (z-\phi'(\chi)),$$ where the product runs across all $\chi \in (H')^*$ of order at most $p^{m_{\ell}}$.  The quotient of $\bar{q}(z)$ by this product may, by the Weierstrass preparation theorem, be written as a unit times a monic polynomial $g(z)$.

The following key lemma was proved by Drinfeld \cite[Proof of Prop.~4.3]{drinfeld} and realized in topology by Strickland \cite[Section 26]{stricklandfp}, \cite[Section 7]{subgroups}:

\begin{lemma}[Drinfeld] \label{drinfeldpresentation}
$R_n/T_n \cong (R'_n/T'_n)[z]/g(z)$
\end{lemma}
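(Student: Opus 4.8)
The plan is to reduce the statement to the structure theory of Drinfeld level structures and then assemble it from the available inputs. First, recall the Ando--Hopkins--Strickland identification \cite{ahs}: $R_n/T_n \cong \Gamma\Level(H^*,\G_{E_n})$ and, applied to $H'$, also $R'_n/T'_n \cong \Gamma\Level((H')^*,\G_{E_n})$, compatibly with restriction along $H' \subseteq H$. Since $n$ exceeds the rank of $H$, Drinfeld's theorem \cite{drinfeld} guarantees that both of these rings are regular local, in particular integral domains, and that the restriction map $\Level(H^*,\G) \to \Level((H')^*,\G)$, obtained by restricting a level structure to the subgroup $(H')^* \subseteq H^*$, is finite flat. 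Under the splitting $H^* \cong (H')^* \times \Z/p^{m_\ell}$, a point of this map over a given level structure $\phi'$ on $(H')^*$ is recorded by the image $z$ of a generator of the last cyclic factor, and the task is to identify the ring of functions on the fibre with $R'_n/T'_n$ with a single root of $g$ adjoined.

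Concretely, I would proceed as follows. Let $\psi\in H^*$ be the tautological character of the last factor $\Z/p^{m_\ell}$ and form the $(R'_n/T'_n)$-algebra map $\varphi\colon (R'_n/T'_n)[z] \to R_n/T_n$ sending $z$ to the image of the first Chern class of $\psi$. Since $E_n^*(BH) \cong E_n^*(BH')\powser{z}/[p^{m_\ell}](z)$ is generated over $E_n^*(BH')$ by this class, and $I' E_n^*(BH)\subseteq I$, the map $\varphi$ is surjective. Next I would check that $g(z)\in\ker\varphi$: in $R_n/T_n$ one has $[p^{m_\ell}](z)=0$, while over $R'_n/T'_n$ the Weierstrass factorization reads $\bar q(z) = (\mathrm{unit})\cdot g(z)\prod_{\chi}(z - \phi'(\chi))$, the product over the $\chi \in (H')^*$ of order at most $p^{m_\ell}$; as $R_n/T_n$ is a domain, it suffices to see that each factor $z - \phi'(\chi)$ is nonzero in $R_n/T_n$. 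For $\chi = 0$ this factor is $z = \phi(\psi)$, and for $\chi\ne 0$ it is $\phi(\psi) - \phi(\widetilde\chi)$, where $\widetilde\chi$ is the pullback of $\chi$ to $H^*$; in either case the relevant characters $\psi$ and $\psi - \widetilde\chi$ are nonzero, since each restricts to a generator of the last cyclic factor, so nonvanishing follows from injectivity of the universal level structure $\phi\colon H^* \hookrightarrow \G$ over the domain $R_n/T_n$ (which holds rationally, hence everywhere since $R_n/T_n$ is torsion free). Thus $\varphi$ descends to a surjection $(R'_n/T'_n)[z]/g(z) \twoheadrightarrow R_n/T_n$. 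To conclude that this is an isomorphism I would compare ranks over the local ring $R'_n/T'_n$: the source is free of rank $\deg g = p^{m_\ell n} - p^{(\ell-1)m_\ell}$ because $g$ is monic, and by Drinfeld's structure theorem for level structures (equivalently, the Katz--Mazur--Strickland count, see \cite{subgroups}) the target $\Gamma\Level(H^*,\G)$ is finite free over $\Gamma\Level((H')^*,\G)$ of exactly the same rank; a surjection of finite free modules of equal finite rank over a Noetherian ring is an isomorphism.

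The step I expect to be the real obstacle is the rank comparison, i.e.\ verifying that the combinatorial exponent $p^{m_\ell n} - p^{(\ell-1)m_\ell}$ coming from the Weierstrass degree of $g$ agrees with the rank of $\Gamma\Level(H^*,\G)$ over $\Gamma\Level((H')^*,\G)$; this is precisely the content of Drinfeld's analysis \cite[Proof of Prop.~4.3]{drinfeld}, realized topologically and made compatible with the ring structures and with the class $z$ by Strickland \cite[Section 26]{stricklandfp}, \cite[Section 7]{subgroups}. If one prefers, the cleanest route is simply to invoke these references for the whole statement, in which case the only remaining work is bookkeeping: that their ``$g$'' coincides with ours and that their isomorphism is $(R'_n/T'_n)$-linear and carries the adjoined root to the Chern class of $\psi$, so that the presentation can be iterated in the induction on $\ell$.
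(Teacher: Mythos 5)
The paper offers no proof of this lemma beyond citing Drinfeld~\cite[Proof of Prop.~4.3]{drinfeld} and Strickland~\cite[Section 26]{stricklandfp}, \cite[Section 7]{subgroups}, and your argument ultimately rests on exactly the same input, since the finiteness/freeness of $\Gamma \Level(H^*,\G_{\E})$ over $\Gamma \Level((H')^*,\G_{\E})$ of rank $\deg g$ \emph{is} the content of Drinfeld's proposition, as you acknowledge. Your supplementary verifications are correct --- the surjection $(R'_n/T'_n)[z]/g(z) \twoheadrightarrow R_n/T_n$ via the K{\"u}nneth presentation, the membership $g \in \ker$ using that $R_n/T_n$ is a domain, and the nonvanishing of $\phi$ on nonzero characters (which also follows more directly from Lemma~\ref{appendixdivis}, since $\phi(\chi)$ divides $p \neq 0$ there) --- so this is essentially the paper's approach with some of the bookkeeping made explicit.
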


The coefficients of $g(z)$ lie in $R_n'/T_n'$ and may be lifted arbitrarily to elements of $R_n'$.  This defines a non-canonical lift $\tilde{g}(z) \in R_n'[z]$ of $g(z)$.  Lemma \ref{drinfeldpresentation} then implies that $R_n/T_n$ is isomorphic to the quotient of $R_n'[z]$ by the ideal $(\tilde{g}(z),T_n')$ generated by $\tilde{g}(z)$ and the elements of $T_n'$.  Note that $R_n$ is already a quotient of $R_n'[z]$ by the K\"unneth isomorphism applied to $E_n^*(BH) \cong E_n^*(BH' \times B\mathbb{Z}/p^{m_{\ell}} \mathbb{Z})$ and elementary properties of transfers.

\begin{example} \label{appendixexample}
Suppose $H$ is the group $\mathbb{Z}/2 \times \mathbb{Z}/2 \times \mathbb{Z}/2$.  Then, for sufficiently large $n$, Lemma \ref{drinfeldpresentation} implies that 
$$R_n/T_n \cong E_n^*[[x,y,z]]/(\tilde{g}_1,\tilde{g}_2,\tilde{g}_3),$$
where 
\begin{itemize}
\item $\tilde{g}_1=\frac{[2](x)}{x} \text{ is an element defined in } E_n^*[[x]],$
\item $\tilde{g}_2=\frac{[2](y)}{y(y-x)} \text{ is a lift of an element uniquely defined in } E_n^*[[x,y]]/\tilde{g}_1, \text{ and}$
\item $\tilde{g}_3=\frac{[2](z)}{z(z-x)(z-y)(z-(x+_\mathbb{G} y)) }$ \text{ is a lift of an element defined in } $E_n^*[[x,y,z]]/(\tilde{g}_1,\tilde{g}_2).$
\end{itemize}
It follows that, if $w \in T_n$, then $w=a_1 \tilde{g}_1 + a_2 \tilde{g}_2 + a_3 \tilde{g}_3$ for some $a_1,a_2,a_3 \in R_n$.  The arguments below will show that, regardless of the height $n$, $16 \tilde{g}_1 = 16 \tilde{g}_2 = 16 \tilde{g}_3 = 0$, so $16w=0$.
\end{example}

\begin{remark} The reader familiar with \cite{ahs} should be wary of a slight typo in the statement of Proposition $9.15$.  The correct statement appears at the end of the proof of Proposition $9.15$ on page 29.  The difference between the stated and proved versions of Proposition $9.15$ is elucidated by the existence of elements such as $\tilde{g}_2$ in the above example, which annihilates $y-_{\mathbb{G}_{E_n}}x$ in $E^*(BH)/(\text{ann}(x),\text{ann(y)})$ but is not defined in $E_n^*(BH)$ itself.
\end{remark}

By the inductive assumption, there is an integer $k'$ such that $p^{k'} T'_n = 0 \subset R_n'$.  As a corollary of the comments preceding Example \ref{appendixexample}, we see that the complete proof of Proposition \ref{appendixprop} rests only on the following lemma.

\begin{lemma}
Let $h$ denote the image of $\tilde{g}(z)$ under the projection $\pi:R_n'[z] \rightarrow R_n$.  Then there is a positive integer $r$, independent of $n$, such that $p^r h = 0$.
\end{lemma}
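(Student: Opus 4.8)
The plan is to lift the Weierstrass factorization defining $g(z)$ back to $R'_n$ and combine it with Lemma~\ref{appendixdivis}. Over $R'_n/T'_n$ we have $\bar q(z) = \bar u(z)\,\bar g(z)\prod_{\chi}\bigl(z-\phi'(\chi)\bigr)$, with $\bar u$ a unit power series, $\bar g$ monic, and the product running over the finitely many characters $\chi\in (H')^*$ of order at most $p^{m_\ell}$. Choosing coefficient-wise lifts of $\bar u$ and of $g$ — the latter being the lift $\tilde g$ — one obtains an identity
\[
[p^{m_\ell}](z) \;=\; \tilde u(z)\,\tilde g(z)\prod_{\chi}\bigl(z-\phi'(\chi)\bigr) \;+\; \theta(z) \qquad\text{in } R'_n[[z]],
\]
where $\tilde u$ reduces to a unit modulo $T'_n$ and $\theta\in T'_nR'_n[[z]]$. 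By the inductive hypothesis $p^{k'}T'_n = 0$, so multiplying through by $p^{k'}$ kills $\theta$, leaving $p^{k'}[p^{m_\ell}](z) = p^{k'}\tilde u(z)\tilde g(z)\prod_{\chi}\bigl(z-\phi'(\chi)\bigr)$.

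Next I would extend $\pi$ to a ring homomorphism $R'_n[[z]]\to R_n$. This is legitimate because $R_n$ is finite over $R'_n$: by the Künneth description, $R_n$ is a quotient of $R'_n[z]/(P(z))$ for the monic distinguished polynomial $P$ with $[p^{m_\ell}](z) = (\text{unit})\cdot P(z)$, and $R'_n[[z]]\twoheadrightarrow R'_n[z]/(P(z))$. Applying $\pi$ to the displayed identity, the left-hand side maps to $0$ since $[p^{m_\ell}](z)$ lies in $\ker\pi$. Moreover a coefficient-wise lift $\tilde v$ of $\bar u^{-1}$ satisfies $\tilde u\tilde v\equiv 1$ modulo $T'_n$, hence $p^{k'}\tilde u\tilde v = p^{k'}$, so multiplying by $\pi(\tilde v)$ cancels the unit and we are left with
\[
p^{k'}\,h\,\prod_{\chi}\bigl(\pi(z)-\pi(\phi'(\chi))\bigr) \;=\; 0 \qquad\text{in } R_n .
\]

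The heart of the matter is that each factor $\pi(z)-\pi(\phi'(\chi))$ divides $p$ in $R_n$, with the number of factors independent of $n$. Here $\pi(z) = \phi(\psi)$ for the tautological character $\psi\colon H \twoheadrightarrow \mathbb{Z}/p^{m_\ell}\mathbb{Z} \hookrightarrow S^1$, which has order $p^{m_\ell}>1$, and $\pi(\phi'(\chi)) = \phi(\hat\chi)$ where $\hat\chi\colon H\twoheadrightarrow H'\xrightarrow{\chi} S^1$ is the extension of $\chi$. Hence $\pi(z)-_{\mathbb{G}_{E_n}}\phi(\hat\chi) = \phi(\psi-\hat\chi)$, and $\psi-\hat\chi$ is a nonzero character of $H$: its restriction to $H'$ is $-\chi$, nonzero when $\chi\neq 0$, while for $\chi=0$ the factor is simply $\pi(z) = \phi(\psi)$. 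Lemma~\ref{appendixdivis} gives $\phi(\psi-\hat\chi)\mid p$, and since $\pi(z)$ and $\phi(\hat\chi)$ — first Chern classes of finite-order characters — lie in the Jacobson radical of $R_n$, the formal difference $\pi(z)-_{\mathbb{G}_{E_n}}\phi(\hat\chi)$ and the ordinary difference $\pi(z)-\phi(\hat\chi)$ differ by a unit, so $\pi(z)-\pi(\phi'(\chi))$ divides $p$ as well. Let $N$ be the number of characters of $(H')^*$ of order at most $p^{m_\ell}$, a quantity depending only on $H'$ and $m_\ell$, not on $n$. Then $\prod_{\chi}\bigl(\pi(z)-\pi(\phi'(\chi))\bigr)$ divides $p^N$, and multiplying the last display by the corresponding cofactor gives $p^{\,k'+N}h=0$. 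So $r=k'+N$ works and is independent of the height.

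I expect the main obstacle to be not a hard theorem but the bookkeeping around the non-canonical lifts and the completed ring $R'_n[[z]]$: one must verify that $\pi$ extends over power series, that $[p^{m_\ell}](z)$ lies in its kernel, and that Lemma~\ref{appendixdivis} applies verbatim to the characters $\psi$ and $\psi-\hat\chi$ of $H$. All of this is routine given the Künneth decomposition of $E_n^*(BH)$ and the fact that first Chern classes of finite-order characters are topologically nilpotent; no new input beyond Lemma~\ref{appendixdivis} and the inductive hypothesis is needed.
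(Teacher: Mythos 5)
Your argument is correct and takes essentially the same route as the paper's proof: the image of each linear factor $\pi(z-\phi'(\chi))$ divides $p$ by Lemma \ref{appendixdivis}, the discrepancy coming from the non-canonical lift is killed by $p^{k'}$ via the inductive hypothesis, and $r=k'+N$ with $N$ the height-independent number of characters in the product. Your explicit use of $\pi([p^{m_\ell}](z))=0$ and of a lift of the inverse unit merely fills in the step the paper compresses when it asserts that $\tilde{g}(z)\prod_\chi(z-\phi'(\chi))$ has coefficients in $T_n'$.
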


\begin{proof}
Let $\chi:H' \rightarrow S^1$ denote any element of $(H')^*$.  We first claim that $\pi\left(z-\phi'(\chi)\right)$ divides $p$ in $R_n$.

\begin{itemize}

\item Let $\chi_1:H \rightarrow S^1$ denote the composite of the projection $H \rightarrow \mathbb{Z}/p^{m_{\ell}}$ with the inclusion $\mathbb{Z}/p^{m_{\ell}} \rightarrow S^1$.  Note that $\phi(\chi_1)=\pi(z)$.
\item Let $\chi_2:H \rightarrow S^1$ denote the composite of the projection $H \rightarrow H'$ with $\chi$.  Note that $\phi(\chi_2)=\pi(\phi'(\chi))$.
\end{itemize}

Now $(z-\phi'(\chi))$ is a unit times $z-_{\mathbb{G}_{E_n}} \phi'(\chi)$.  Furthermore,
$$\pi(z-_{\mathbb{G}_{E_n}} \phi'(\chi))=\phi(\chi_1)-_{\mathbb{G}_{E_n}} \phi(\chi_2)=\phi(\chi_1-\chi_2),$$
and by Lemma \ref{appendixdivis} the claim follows.

Turning to the proof of the lemma, recall that by definition $\tilde{g}(z) \prod_{\chi} (z-\phi'(\chi))$ has coefficients in $T_n'$, where the product ranges over all $\chi:H' \rightarrow S^1$ of order at most $p^{m_{\ell}}$.  It follows that $$\pi \left( p^{k'} \tilde{g}(z) \prod_{\chi} (z- \phi'(\chi)) \right)=0$$ in $R_n$.  Then by the claim 
$$\pi \left( p^{k'} \tilde{g}(z) \prod_{\chi} p \right)=0,$$
and so we may take $r$ to be the sum of $k'$ and the number of $\chi$ appearing in the product.
\end{proof}

\bibliographystyle{amsalpha}
\bibliography{mybib}

\providecommand{\bysame}{\leavevmode\hbox to3em{\hrulefill}\thinspace}
\providecommand{\MR}{\relax\ifhmode\unskip\space\fi MR }
\providecommand{\MRhref}[2]{%
  \href{http://www.ams.org/mathscinet-getitem?mr=#1}{#2}
}
\providecommand{\href}[2]{#2}
\begin{thebibliography}{EKMM97}

\bibitem[AHS04]{ahs}
Matthew Ando, Michael~J. Hopkins, and Neil~P. Strickland, \emph{The sigma
  orientation is an {$H_\infty$} map}, Amer. J. Math. \textbf{126} (2004),
  no.~2, 247--334. \MR{2045503 (2005d:55009)}

\bibitem[BF14]{frankland}
Tobias Barthel and Martin Frankland, \emph{Completed power operations for
  {M}orava {$E$}-theory}, \url{http://arxiv.org/abs/1311.7123}, accepted for
  publication in {A}lgebraic {\&} {G}eometric {T}opology, 2014.

\bibitem[Bou79]{bousfieldloc1}
A.~K. Bousfield, \emph{The localization of spectra with respect to homology},
  Topology \textbf{18} (1979), no.~4, 257--281. \MR{551009 (80m:55006)}

\bibitem[Bou87]{bousfieldbkf}
\bysame, \emph{Uniqueness of infinite deloopings for {$K$}-theoretic spaces},
  Pacific J. Math. \textbf{129} (1987), no.~1, 1--31. \MR{901254 (89g:55017)}

\bibitem[BS16]{bscentralizers}
Tobias Barthel and Nathaniel Stapleton, \emph{Centralizers in good groups are
  good}, Algebr. Geom. Topol. \textbf{16} (2016), no.~3, 1453--1472.

\bibitem[DG02]{dwyergreenlees}
W.~G. Dwyer and J.~P.~C. Greenlees, \emph{Complete modules and torsion
  modules}, Amer. J. Math. \textbf{124} (2002), no.~1, 199--220. \MR{1879003}

\bibitem[DHS88]{nilp1}
Ethan~S. Devinatz, Michael~J. Hopkins, and Jeffrey~H. Smith, \emph{Nilpotence
  and stable homotopy theory. {I}}, Ann. of Math. (2) \textbf{128} (1988),
  no.~2, 207--241. \MR{960945 (89m:55009)}

\bibitem[Dri74]{drinfeld}
V.~G. Drinfeld, \emph{Elliptic modules}, Math. USSR-Sb \textbf{23} (1974),
  561--592.

\bibitem[EKMM97]{ekmm}
A.~D. Elmendorf, I.~Kriz, M.~A. Mandell, and J.~P. May, \emph{Rings, modules,
  and algebras in stable homotopy theory}, Mathematical Surveys and Monographs,
  vol.~47, American Mathematical Society, Providence, RI, 1997, With an
  appendix by M. Cole. \MR{1417719 (97h:55006)}

\bibitem[GS96]{greenlees-sadofsky}
J.~P.~C. Greenlees and Hal Sadofsky, \emph{The {T}ate spectrum of
  {$v_n$}-periodic complex oriented theories}, Math. Z. \textbf{222} (1996),
  no.~3, 391--405. \MR{1400199 (97d:55010)}

\bibitem[HKR00]{hkr}
Michael~J. Hopkins, Nicholas~J. Kuhn, and Douglas~C. Ravenel,
  \emph{{Generalized group characters and complex oriented cohomology
  theories.}}, J. Am. Math. Soc. \textbf{13} (2000), no.~3, 553--594 (English).

\bibitem[Hop]{coctalos}
Michael Hopkins, \emph{Complex oriented cohomology theories and the language of
  stacks},
  \url{http://www.math.rochester.edu/people/faculty/doug/otherpapers/coctalos.pdf}.

\bibitem[Hov95]{hoveycsc}
Mark Hovey, \emph{Bousfield localization functors and {H}opkins' chromatic
  splitting conjecture}, The \v {C}ech centennial ({B}oston, {MA}, 1993),
  Contemp. Math., vol. 181, Amer. Math. Soc., Providence, RI, 1995,
  pp.~225--250. \MR{1320994 (96m:55010)}

\bibitem[Hov97]{Hovey-vn}
Mark~A. Hovey, \emph{{$v_n$}-elements in ring spectra and applications to
  bordism theory}, Duke Math. J. \textbf{88} (1997), no.~2, 327--356.
  \MR{1455523 (98d:55017)}

\bibitem[Hov04a]{hoveyoperations}
Mark Hovey, \emph{Operations and co-operations in {M}orava {$E$}-theory},
  Homology Homotopy Appl. \textbf{6} (2004), no.~1, 201--236. \MR{2076002
  (2005f:55003)}

\bibitem[Hov04b]{hoveynotes}
\bysame, \emph{Some spectral sequences in {M}orava {$E$}-theory}.

\bibitem[HS98]{nilp2}
Michael~J. Hopkins and Jeffrey~H. Smith, \emph{Nilpotence and stable homotopy
  theory. {II}}, Ann. of Math. (2) \textbf{148} (1998), no.~1, 1--49.
  \MR{1652975 (99h:55009)}

\bibitem[HS99a]{hovsadofsky}
Mark Hovey and Hal Sadofsky, \emph{Invertible spectra in the {$E(n)$}-local
  stable homotopy category}, J. London Math. Soc. (2) \textbf{60} (1999),
  no.~1, 284--302. \MR{1722151 (2000h:55017)}

\bibitem[HS99b]{hoveystrickland}
Mark Hovey and Neil~P. Strickland, \emph{Morava {$K$}-theories and
  localisation}, Mem. Amer. Math. Soc. \textbf{139} (1999), no.~666, viii+100.
  \MR{MR1601906 (99b:55017)}

\bibitem[JW73]{jwbpdim}
David~Copeland Johnson and W.~Stephen Wilson, \emph{Projective dimension and
  {B}rown-{P}eterson homology}, Topology \textbf{12} (1973), 327--353.
  \MR{0334257 (48 \#12576)}

\bibitem[JW75]{jwbpops}
\bysame, \emph{{$BP$} operations and {M}orava's extraordinary {$K$}-theories},
  Math. Z. \textbf{144} (1975), no.~1, 55--75. \MR{0377856 (51 \#14025)}

\bibitem[Kas98]{kashiwabarabps2n}
Takuji Kashiwabara, \emph{Brown-{P}eterson cohomology of
  {$\Omega^\infty\Sigma^\infty S^{2n}$}}, Quart. J. Math. Oxford Ser. (2)
  \textbf{49} (1998), no.~195, 345--362. \MR{1645564 (2000d:55013)}

\bibitem[Kas01]{kashiwabarabpqx}
\bysame, \emph{On {B}rown-{P}eterson cohomology of {$QX$}}, Ann. of Math. (2)
  \textbf{153} (2001), no.~2, 297--328. \MR{1829752 (2002f:55013)}

\bibitem[Kuh89]{kuhnbkf}
Nicholas~J. Kuhn, \emph{Morava {$K$}-theories and infinite loop spaces},
  Algebraic topology ({A}rcata, {CA}, 1986), Lecture Notes in Math., vol. 1370,
  Springer, Berlin, 1989, pp.~243--257. \MR{1000381 (90d:55014)}

\bibitem[Kuh06]{kuhnaqgt}
\bysame, \emph{Localization of {A}ndr\'e-{Q}uillen-{G}oodwillie towers, and the
  periodic homology of infinite loopspaces}, Adv. Math. \textbf{201} (2006),
  no.~2, 318--378. \MR{2211532 (2007d:55006)}

\bibitem[KY93]{konoyagita}
Akira Kono and Nobuaki Yagita, \emph{Brown-{P}eterson and ordinary cohomology
  theories of classifying spaces for compact {L}ie groups}, Trans. Amer. Math.
  Soc. \textbf{339} (1993), no.~2, 781--798. \MR{1139493 (93m:55006)}

\bibitem[Lam99]{lambook}
T.~Y. Lam, \emph{Lectures on modules and rings}, Graduate Texts in Mathematics,
  vol. 189, Springer-Verlag, New York, 1999. \MR{1653294 (99i:16001)}

\bibitem[Lan73]{landweberideals}
Peter~S. Landweber, \emph{Annihilator ideals and primitive elements in complex
  bordism}, Illinois J. Math. \textbf{17} (1973), 273--284. \MR{0322874 (48
  \#1235)}

\bibitem[Lan76]{landweber}
\bysame, \emph{Homological properties of comodules over {$M{\rm U}\sb\ast
  (M{\rm U})$}\ and {BP{$\sb\ast $}}({BP})}, Amer. J. Math. \textbf{98} (1976),
  no.~3, 591--610. \MR{0423332 (54 \#11311)}

\bibitem[Mar83]{margolisbook}
H.~R. Margolis, \emph{Spectra and the {S}teenrod algebra}, North-Holland
  Mathematical Library, vol.~29, North-Holland Publishing Co., Amsterdam, 1983,
  Modules over the Steenrod algebra and the stable homotopy category.
  \MR{738973 (86j:55001)}

\bibitem[Min02]{minamikn}
Norihiko Minami, \emph{From {$K(n+1)^*(X)$} to {$K(n)^*(X)$}}, Proc. Amer.
  Math. Soc. \textbf{130} (2002), no.~5, 1557--1562 (electronic). \MR{1879983
  (2003a:55009)}

\bibitem[Qui71]{Quillenelementary}
Daniel Quillen, \emph{Elementary proofs of some results of cobordism theory
  using {S}teenrod operations}, Advances in Math. \textbf{7} (1971), 29--56
  (1971). \MR{0290382}

\bibitem[Rez]{rezkkoszul}
Charles Rezk, \emph{Rings of power operations for {M}orava {E}-theories are
  {K}oszul}, \url{arXiv:math.AT/1204.4831}.

\bibitem[Rez09]{rezkcongruence}
\bysame, \emph{The congruence criterion for power operations in {M}orava
  {$E$}-theory}, Homology, Homotopy Appl. \textbf{11} (2009), no.~2, 327--379.
  \MR{2591924 (2011e:55021)}

\bibitem[RW80]{RW}
Douglas~C. Ravenel and W.~Stephen Wilson, \emph{The {M}orava {$K$}-theories of
  {E}ilenberg-{M}ac {L}ane spaces and the {C}onner-{F}loyd conjecture}, Amer.
  J. Math. \textbf{102} (1980), no.~4, 691--748. \MR{584466 (81i:55005)}

\bibitem[RWY98]{rwy}
Douglas~C. Ravenel, W.~Stephen Wilson, and Nobuaki Yagita,
  \emph{Brown-{P}eterson cohomology from {M}orava {$K$}-theory}, $K$-Theory
  \textbf{15} (1998), no.~2, 147--199. \MR{1648284 (2000d:55012)}

\bibitem[SS14]{genstrickland}
Tomer Schlank and Nathaniel Stapleton, \emph{A transchromatic proof of
  {S}trickland's theorem}, \url{http://arxiv.org/abs/1404.0717}, 2014.

\bibitem[Sta13]{tgcm}
Nathaniel Stapleton, \emph{Transchromatic generalized character maps}, Algebr.
  Geom. Topol. \textbf{13} (2013), no.~1, 171--203. \MR{3031640}

\bibitem[Sta15]{subpdiv}
\bysame, \emph{Subgroups of {$p$}-divisible groups and centralizers in
  symmetric groups}, Trans. Amer. Math. Soc. \textbf{367} (2015), no.~5,
  3733--3757. \MR{3314822}

\bibitem[Str]{stricklandfp}
Neil Strickland, \emph{Functorial philosophy for formal phenomena},
  \url{hopf.math.purdue.edu/Strickland/fpfp.pdf}.

\bibitem[Str97]{subgroups}
Neil~P. Strickland, \emph{Finite subgroups of formal groups}, J. Pure Appl.
  Algebra \textbf{121} (1997), no.~2, 161--208. \MR{MR1473889 (98k:14065)}

\bibitem[Str99]{fsfg}
\bysame, \emph{Formal schemes and formal groups}, Homotopy invariant algebraic
  structures ({B}altimore, {MD}, 1998), Contemp. Math., vol. 239, Amer. Math.
  Soc., Providence, RI, 1999, pp.~263--352. \MR{1718087 (2000j:55011)}

\bibitem[SY01]{schusteryagitabp}
Bj{\"o}rn Schuster and Nobuaki Yagita, \emph{Transfers of {C}hern classes in
  {BP}-cohomology and {C}how rings}, Trans. Amer. Math. Soc. \textbf{353}
  (2001), no.~3, 1039--1054 (electronic). \MR{1804412 (2002b:55030)}

\bibitem[Wil99]{rwy2}
W.~Stephen Wilson, \emph{Brown-{P}eterson cohomology from {M}orava
  {$K$}-theory. {II}}, $K$-Theory \textbf{17} (1999), no.~2, 95--101.
  \MR{1696426 (2000d:55014)}

\bibitem[Yag76]{yagitaleft}
Nobuaki Yagita, \emph{The exact functor theorem for {${\rm BP}\sb\ast
  /I_{n}$}-theory}, Proc. Japan Acad. \textbf{52} (1976), no.~1, 1--3.
  \MR{0394631 (52 \#15432)}

\bibitem[Yos76]{yosimuraleft}
Zen-ichi Yosimura, \emph{Projective dimension of {B}rown-{P}eterson homology
  with modulo {$(p,v_{1},\dots, v_{n-1})$} coefficients}, Osaka J. Math.
  \textbf{13} (1976), no.~2, 289--309. \MR{0415603 (54 \#3686)}

\end{thebibliography}

\end{document}